\newcolumntype{L}{>{$}l<{$}} 
\newcolumntype{C}{>{$}c<{$}} 
\newcolumntype{R}{>{$}r<{$}} 
\newtheorem{thm}{Theorem}[section]
\newtheorem{prop}[thm]{Proposition}
\newtheorem{lemma}[thm]{Lemma}
\newtheorem{cor}[thm]{Corollary}
\theoremstyle{definition}
\newtheorem{defin}[thm]{Definition}
\newtheorem{examp}[thm]{Example}
\theoremstyle{remark}
\newtheorem{rmk}[thm]{Remark}
\newcommand{\normtext}[1]{\text{\normalfont{#1}}}
\newcommand{\spec}[1]{\text{Spec}(#1)}
\newcommand{\equivBun}{\normtext{Bun}_Y^{\Gamma, G}(\boldsymbol{\tau})}
\newcommand{\equivBunSS}{\normtext{Bun}_Y^{\Gamma, G}(\boldsymbol{\tau}, ss)}
\newcommand{\equivBunPTwo}[1]{\normtext{Bun}_Y^{\Gamma, G;P}(\boldsymbol{\tau};\boldsymbol{\tau}_u, #1)}
\newcommand{\equivBunPTwoSS}[1]{\normtext{Bun}_Y^{\Gamma, G;P}(\boldsymbol{\tau}, ss;\boldsymbol{\tau}_u, #1)}
\newcommand{\equivBunL}[1]{\normtext{Bun}_Y^{\Gamma, L}(\boldsymbol{\tau}_u, #1)}
\newcommand{\equivBunLP}{\normtext{Bun}_Y^{\Gamma, L'}(\boldsymbol{\tau}_u')}
\title{Rank Reduction of Conformal Blocks}
\author{Michael Schuster}
\thanks{University of North Carolina, Chapel Hill}
\begin{document}
\begin{abstract}
Let $X$ be a smooth, pointed Riemann surface of genus zero, and $G$ a simple, simply-connected complex algebraic group.
Associated to a finite number of weights of $G$ and a level is a vector space called the space of conformal
blocks, and a vector bundle of conformal blocks over $\overline{\text{M}}_{0,n}$.  We show that, assuming the weights are on a
face of the multiplicative eigenvalue polytope, the space of conformal blocks is isomorphic to a product of conformal
blocks over groups of lower rank.  If the weights are on a degree zero wall, then we also show that there is an isomorphism
of conformal blocks bundles, giving an explicit relation between the associated nef divisors.  The methods of the proof
are geometric, and use the identification of conformal blocks with spaces of generalized theta functions, and the moduli 
stacks of parahoric bundles recently studied by Balaji and Seshadri.  
\end{abstract}
\maketitle
\section{Introduction}
\label{IntroSec}

Conformal blocks are vector spaces $\mathcal{V}^{\dagger}_{\mathfrak{g}, \vec{\lambda}, \ell}(X, \vec{p})$ 
associated to a stable pointed curve $(X, \vec{p})$, a simple Lie algebra $\mathfrak{g}$, a finite set of dominant integral weights $\lambda_1, \ldots, \lambda_n$, 
and a positive integer $\ell$ called the level.  Originally arising from physics in conformal field theory, conformal
blocks were later shown to be isomorphic to spaces of generalized theta functions \cite{SNR,LASOR}.  The dimension of 
conformal blocks does not depend on the pointed curve, and they form 
vector bundles $\mathbb{V}_{\mathfrak{g}, \vec{\lambda}, \ell}$ over
families of curves.  In genus zero these vector bundles correspond to base point free divisors on the
moduli space $\overline{\text{M}}_{0,n}$ of genus zero stable pointed curves.  Fakhruddin recently proved formulas for
conformal blocks divisors in terms of boundary divisors, generating interest in using conformal blocks divisors
to study the geometry of $\overline{\text{M}}_{0,n}$ \cite{FAK}.  Conformal blocks divisors have been shown
to give rise to interesting contraction maps \cite{GG, GJDM}, and there are a number of
open problems related to these divisors \cite{FAK,BGM15,BGM16}.

If $X \cong \mathbb{P}^1$, and assuming $\mathcal{V}^{\dagger}_{\mathfrak{g}, \vec{\lambda}, \ell}(X, \vec{p})$ is non-zero, 
we can associate to $(\vec{\lambda}, \ell)$
a point $\vec{\mu}$ in the \emph{multiplicative eigenvalue polytope} $\Delta_n(G)$, where $G$ is the simply connected algebraic 
group associated to $\mathfrak{g}$.  The multiplicative polytope is the set of solutions of the multiplicative eigenvalue
problem, a version of the additive eigenvalue problem first solved by Klyachko for $G=\text{SL}_{r+1}$ \cite{KLY98}. It has attracted interest because of its 
connection to a number of important problems in linear algebra, representation theory,
the quantum cohomology of flag varieties, moduli spaces of parabolic
bundles, and holonomy of flat vector bundles \cite{AW98,BEL01,TWPB,BK}.  The goal of this paper is to study conformal
blocks when $\vec{\mu}$ is on the boundary of the multiplicative polytope.

Our main result is that conformal blocks \emph{factor} when $\vec{\mu}$ is on a regular facet of the multiplicative
polytope, with the factors arising from groups of reduced rank.
\begin{thm}
If $\mathcal{V}^\dagger_{\mathfrak{g}, \vec{\lambda}, \ell} = \mathcal{V}^\dagger_{\mathfrak{g}, \vec{\lambda}, \ell}(\mathbb{P}^1, \vec{p})$ is non-zero, and the associated $\vec{\mu}$ is on a regular facet of 
the multiplicative polytope, then the space of conformal blocks factors:
\[
\mathcal{V}^{\dagger}_{\mathfrak{g}, \vec{w}} \cong \mathcal{V}^{\dagger}_{\mathfrak{g}_1, \vec{w}_1} \otimes \mathcal{V}^{\dagger}_{\mathfrak{g}_2, \vec{w}_2} 
\]
where $\mathfrak{g}_1 \times \mathfrak{g}_2 \subseteq \mathfrak{g}$ is  subalgebra of semisimple rank one less than $\mathfrak{g}$.
\end{thm}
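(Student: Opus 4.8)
The plan is to realize every conformal block space in the statement as a space of global sections of a theta line bundle on a moduli stack of parahoric bundles, and to extract the factorization geometrically from a parabolic reduction that the facet hypothesis forces on all semistable objects. First I would invoke the identification of conformal blocks with generalized theta functions together with the Balaji--Seshadri presentation of parahoric torsors on $\mathbb{P}^1$ as $\Gamma$-equivariant $G$-bundles on a ramified cover $Y\to\mathbb{P}^1$ of prescribed local type $\boldsymbol{\tau}$, so that $\mathcal{V}^\dagger_{\mathfrak{g},\vec w}\cong H^0(\equivBun,\mathcal{L})$ for the theta bundle $\mathcal{L}$ attached to the level; by the codimension $\geq 2$ estimate for the non-semistable locus in $\equivBun$ this equals $H^0(\equivBunSS,\mathcal{L})$. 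Next I would translate the hypothesis into bundle-theoretic terms: $\vec\mu$ lying on a regular facet means the facet is governed by a maximal parabolic $P\subseteq G$ with Levi $L$ whose semisimple part has Lie algebra $\mathfrak{g}_1\times\mathfrak{g}_2$, and the associated eigenvalue inequality becoming an equality means that every semistable $\Gamma$-equivariant $G$-bundle of type $\boldsymbol{\tau}$ admits a reduction of structure group to $P$ whose induced $\Gamma$-equivariant $L$-bundle has the critical degree (equivalently, is semistable of the borderline parabolic slope); regularity of the facet is what I would use to ensure that such a reduction is rigid --- essentially unique.

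With this in place I would form the correspondence
\[
\equivBunSS \xleftarrow{\ p\ } \equivBunPSS{d} \xrightarrow{\ q\ } \equivBunL{d},
\]
where $\equivBunPSS{d}$ is the stack of semistable $\Gamma$-$G$-bundles together with a critical-degree $P$-reduction, $p$ forgets the reduction, and $q$ pushes it out along $P\twoheadrightarrow L$. The map $p$ is representable and proper (its fibres are schemes of sections of the associated $G/P$-bundle), it is surjective onto $\equivBunSS$ by the facet hypothesis, and by rigidity of the reduction its fibres are connected and carry no non-constant functions; hence $p_{*}\mathcal{O}=\mathcal{O}$ and the projection formula gives $p^{*}\colon H^0(\equivBunSS,\mathcal{L})\xrightarrow{\ \sim\ }H^0(\equivBunPSS{d},p^{*}\mathcal{L})$. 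The map $q$ is, Zariski-locally on the base, an iterated affine-space bundle --- the fibre over an $L$-bundle $E_L$ being built from the successive $H^1$'s of the bundles associated to the graded pieces of the nilradical $\mathfrak{u}\subset\mathfrak{p}$ --- and $p^{*}\mathcal{L}$ restricts trivially along these fibres, so $q_{*}p^{*}\mathcal{L}$ is the theta bundle $\mathcal{L}_L$ on $\equivBunL{d}$ and $H^0(\equivBunPSS{d},p^{*}\mathcal{L})\cong H^0(\equivBunL{d},\mathcal{L}_L)$.

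Finally I would decompose the Levi stack. Writing $L$ as the almost-direct product of its central torus with $L^{\mathrm{ss}}$, and $L^{\mathrm{ss}}$ as an isogenous image of $G_1\times G_2$ (the simply-connected groups of $\mathfrak{g}_1$, $\mathfrak{g}_2$), the moduli of torsors under the central torus on $\mathbb{P}^1$ is a single point once the topological type has been fixed by $\boldsymbol{\tau}$ (and $\mathcal{L}_L$ restricts to the trivial line bundle there after the critical shift); thus $\equivBunL{d}$ is equivalent to a product $\normtext{Bun}_Y^{\Gamma,G_1}(\boldsymbol{\tau}_1)\times\normtext{Bun}_Y^{\Gamma,G_2}(\boldsymbol{\tau}_2)$ of parahoric moduli stacks with induced local data $\boldsymbol{\tau}_i$, the facet equality being exactly what forces the induced weights $\vec w_i$ to be dominant integral and the induced levels (the level $\ell$ scaled by the Dynkin index of $\mathfrak{g}_i\hookrightarrow\mathfrak{g}$, shifted by the critical degree) to be admissible. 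Under this equivalence $\mathcal{L}_L$ becomes an external tensor product $\mathcal{L}_1\boxtimes\mathcal{L}_2$, so by the Künneth formula $H^0(\equivBunL{d},\mathcal{L}_L)\cong H^0(\mathcal{L}_1)\otimes H^0(\mathcal{L}_2)$; re-applying the theta-functions/conformal-blocks identification to each factor turns this into $\mathcal{V}^\dagger_{\mathfrak{g}_1,\vec w_1}\otimes\mathcal{V}^\dagger_{\mathfrak{g}_2,\vec w_2}$, which composes with the earlier isomorphisms to give the theorem.

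I expect the main obstacle to be the rigidity-and-codimension input in the second step: one must prove that on a regular facet the critical-degree $P$-reduction of a semistable $\Gamma$-$G$-bundle is essentially unique, so that $p$ is genuinely birational onto $\equivBunSS$ rather than merely dominant, and that the locus where this fails has codimension at least two --- precisely the feature separating regular facets from the lower-dimensional or degenerate faces of the polytope --- and this will have to be fed by the Schubert/Levi-movement data cutting out the facet. A secondary, more bookkeeping-style, difficulty is to track the theta bundle through $P\twoheadrightarrow L$, through $G_1\times G_2\to L^{\mathrm{ss}}$, and through the central-torus directions, confirming both that $\mathcal{L}_L=\mathcal{L}_1\boxtimes\mathcal{L}_2$ with the stated levels and that no sections are gained or lost along the fibres of $q$. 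Once these two points are secured, the rest is formal.
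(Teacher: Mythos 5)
Your architecture matches the paper's --- theta functions on stacks of equivariant/parahoric bundles, a correspondence through a stack of $P$-reductions, descent to the Levi, and a K\"unneth decomposition at the end --- but the two steps you defer to the end are the actual mathematical content, and the versions of them you assert along the way would fail. First, the correspondence: the paper's reduction stack is not the stack of all critical-degree $P$-reductions but the stack of $P$-reductions of degree $d$ \emph{and prescribed relative position} $(u_1,\ldots,u_n)$ with respect to the parabolic flags; the generic fibre of the forgetful map then has cardinality equal to the Gromov--Witten number $\langle\sigma_{u_1},\ldots,\sigma_{u_n}\rangle_d$, and it is the hypothesis $\sigma_{u_1}\ast\cdots\ast\sigma_{u_n}=q^d[\mathrm{pt}]$ that makes this map birational onto $\normtext{Parbun}_G$. ``Regularity of the facet'' by itself supplies no rigidity: without imposing the Schubert conditions your map $p$ is not even the right moduli problem, and there is no reason for its fibres to be generically a point. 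Second, properness of $p$ over the semistable locus does not follow from ``the fibres are schemes of sections of the associated $G/P$-bundle'': the section scheme is only an open, quasi-projective locus in a Hilbert scheme, and a one-parameter family of sections can degenerate to a section plus vertical components. Ruling this out is Theorem \ref{PropThm}, proved by completing the family inside the Hilbert scheme and invoking the Holla--Narasimhan no-ghosts lemma together with $\Gamma$-semistability of the special fibre; it is one of the paper's two main technical theorems and cannot be absorbed into a parenthesis.

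Two further points. Your treatment of $q$ and of the central torus hides where the facet equality enters the line-bundle bookkeeping: the paper's diagram relating $\pi$ and $\iota\circ\xi$ is explicitly \emph{not} $2$-commutative, and the identification of $\pi^*\mathcal{L}_{\vec{w}}$ with $(\iota\circ\xi)^*\mathcal{L}_{\vec{w}}$ --- equivalently the triviality of the $\mathbb{G}_m$- and $Z_0$-actions on the determinant line --- is a weight computation (Proposition \ref{PULLBCK}) whose vanishing is precisely the statement that the twisted degree is zero on the facet; without it, $\mathcal{L}_L$ restricts to a nontrivial character on the residual gerbe of the central torus and every section dies, so your claim that the torus ``contributes a point'' is exactly the thing to be proved. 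Finally, the theorem covers regular facets of arbitrary degree $d$, and for $d>0$ the isogeny $L'\times Z_0\to L$ only permits descent when $k_L$ divides the degree; the paper handles this by adding $d_0$ auxiliary marked points with weights $\ell\mu_P^*$ and twisting by the cocharacter $\mu_P$, a step your Levi decomposition does not accommodate.
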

On certain facets we also get an isomorphism of conformal blocks bundles.
\begin{cor}
If $\vec{\mu}$ is on a degree zero facet, then the above isomorphism extends to a vector bundle isomorphism
over $\overline{\normtext{M}}_{0,n}$, giving a relation of conformal blocks divisors:
\[
\mathbb{D}_{\mathfrak{g}, \vec{w}} = \text{rk}(\mathbb{V}_{\mathfrak{g}_2, \vec{w}_2}) \cdot \mathbb{D}_{\mathfrak{g}_1, \vec{w}_1} + \text{rk}(\mathbb{V}_{\mathfrak{g}_1, \vec{w}_1}) \cdot \mathbb{D}_{\mathfrak{g}_2, \vec{w}_2}
\]
\end{cor}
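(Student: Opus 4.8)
The plan is to realize both sides of the desired isomorphism as pushforwards of theta line bundles from \emph{relative} moduli stacks of parahoric torsors over the universal curve, and then to show that on a degree-zero facet these stacks, together with the relevant theta bundles, decompose as external products over all of $\overline{\text{M}}_{0,n}$, so that a relative K\"unneth argument upgrades the fiberwise isomorphism of the preceding theorem to an isomorphism of bundles. First I would invoke the family version of the identification of conformal blocks with generalized theta functions: over $\overline{\text{M}}_{0,n}$ the bundle $\mathbb{V}_{\mathfrak{g}, \vec{w}}$ is $\pi_*\Theta$ for a theta line bundle $\Theta$ on the relative moduli stack $\pi\colon \text{Bun}\to\overline{\text{M}}_{0,n}$ of parahoric $G$-torsors on the universal curve with the prescribed ramification at the $n$ sections, and likewise $\mathbb{V}_{\mathfrak{g}_i,\vec{w}_i}=\pi_{i*}\Theta_i$ for the simply-connected groups $G_i$ with $\text{Lie}(G_i)=\mathfrak{g}_i$; since these constructions are functorial in the base, it suffices to identify the pushforwards compatibly over $\overline{\text{M}}_{0,n}$. (Note that this really is needed: a morphism of bundles that is a fiberwise isomorphism over the dense open $\text{M}_{0,n}$ need not be an isomorphism over the boundary, so the decomposition must be controlled globally.)

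The heart of the argument is the product decomposition. On a regular facet there is a parabolic $P\subseteq G$ with Levi $L$ such that $\mathfrak{g}_1\times\mathfrak{g}_2$ is the semisimple part of $\mathfrak{l}$, and — by the Harder--Narasimhan/rigidity analysis on the facet already carried out in the proof of the preceding theorem — every semistable parahoric torsor in the relevant moduli admits a canonical reduction to $L$, identifying the semistable locus of $\text{Bun}$ with a moduli of parahoric $L$-torsors. When the facet has degree zero, the integer measuring the twisting of this $L$-reduction vanishes; concretely, the determinant-of-cohomology data that would otherwise obstruct splitting the $L$-moduli as a fibered product $\text{Bun}_1\times_{\overline{\text{M}}_{0,n}}\text{Bun}_2$ is trivial, and this holds relative to $\overline{\text{M}}_{0,n}$, not merely fiber by fiber. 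Under this identification $\Theta$ restricts to the external product $\Theta_1\boxtimes\Theta_2$, the central torus factor of $L$ contributing trivially because the weights lie on the facet.

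The point where I expect the real work to lie is globalizing this product decomposition across the boundary of $\overline{\text{M}}_{0,n}$. On the smooth locus it follows directly from the preceding theorem applied pointwise, but over the boundary strata one must check compatibility with nodal degenerations, using the factorization rules for conformal blocks together with the corresponding factorization of the relative parahoric moduli. Here the degree-zero hypothesis is exactly what is used: it guarantees that the facet condition is inherited by each pair of sub-configurations of weights appearing in a factorization at a node (a degree-positive wall could instead acquire a quantum correction under degeneration), so that the preceding theorem applies to those pieces and the fiberwise decomposition extends over every boundary stratum by induction on its codimension.

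Finally, relative base change / K\"unneth for the projection from $\text{Bun}_1\times_{\overline{\text{M}}_{0,n}}\text{Bun}_2$ gives $\pi_*(\Theta_1\boxtimes\Theta_2)\cong \pi_{1*}\Theta_1\otimes_{\mathcal{O}}\pi_{2*}\Theta_2$, i.e.\ a vector bundle isomorphism $\mathbb{V}_{\mathfrak{g},\vec{w}}\cong\mathbb{V}_{\mathfrak{g}_1,\vec{w}_1}\otimes\mathbb{V}_{\mathfrak{g}_2,\vec{w}_2}$ restricting on each fiber to the isomorphism of the preceding theorem. Taking first Chern classes (equivalently, determinants) and using $\det(E\otimes F)\cong(\det E)^{\otimes\text{rk}F}\otimes(\det F)^{\otimes\text{rk}E}$ with $E=\mathbb{V}_{\mathfrak{g}_1,\vec{w}_1}$, $F=\mathbb{V}_{\mathfrak{g}_2,\vec{w}_2}$, together with the fact that conformal blocks ranks are independent of the pointed curve, yields
\[
\mathbb{D}_{\mathfrak{g}, \vec{w}} = \text{rk}(\mathbb{V}_{\mathfrak{g}_2, \vec{w}_2}) \cdot \mathbb{D}_{\mathfrak{g}_1, \vec{w}_1} + \text{rk}(\mathbb{V}_{\mathfrak{g}_1, \vec{w}_1}) \cdot \mathbb{D}_{\mathfrak{g}_2, \vec{w}_2},
\]
as claimed.
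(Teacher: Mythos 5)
There is a genuine gap at the heart of your argument: the ``product decomposition'' you rely on is not available. The main theorem does \emph{not} identify the semistable locus of the (relative) moduli of parahoric $G$-torsors with a fibered product of moduli for $G_1$ and $G_2$, nor even with a moduli of $L$-torsors. What the proof of the theorem actually produces is a correspondence: a stack $\mathcal{Y}$ of $P$-reductions that is proper and \emph{birational} over $\normtext{Parbun}_G^{ss}$ and surjects onto the $L$-side, from which one deduces an isomorphism only at the level of $\normtext{H}^0$ of the theta line bundles (via Zariski's main theorem and a diagram chase). There is no isomorphism of stacks $\normtext{Bun}^{ss}\cong \normtext{Bun}_1\times\normtext{Bun}_2$ under which $\Theta$ becomes $\Theta_1\boxtimes\Theta_2$, so the relative K\"unneth step has no input. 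Compounding this, the part you flag as ``where the real work lies'' --- extending the decomposition over the boundary of $\overline{\normtext{M}}_{0,n}$ via factorization of relative parahoric moduli at nodes --- is precisely the hard content, and neither the paper nor the references it uses set up theta functions on moduli of parahoric torsors over nodal curves in the form your argument would need.

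The paper's actual proof is far more elementary and sidesteps all of this. In genus zero the conformal blocks bundle is globally generated: there is a surjection of vector bundles $\mathbb{A}_{\mathfrak{g},\vec{\lambda}}\twoheadrightarrow \mathbb{V}_{\mathfrak{g},\vec{w}}$ from the \emph{trivial} bundle of classical coinvariants, and likewise for each $\mathfrak{g}_i$. Roth's reduction theorem (or the main theorem at high level) gives an isomorphism of the trivial bundles $\mathbb{A}_{\mathfrak{g},\vec{\lambda}}\cong\bigotimes_i\mathbb{A}_{\mathfrak{g}_i,\vec{\lambda}_i}$. One then checks that the composite surjection onto $\mathbb{V}_{\mathfrak{g},\vec{w}}$ kills the kernel of $\bigotimes_i\mathbb{A}_{\mathfrak{g}_i,\vec{\lambda}_i}\twoheadrightarrow\bigotimes_i\mathbb{V}_{\mathfrak{g}_i,\vec{w}_i}$; since kernel containment is a closed condition, it suffices to verify it fiberwise over the dense open $\normtext{M}_{0,n}$, where it follows from the diagram of substacks of trivial bundles (this is where $d=0$ enters: the trivial $L'$-bundle must map to the trivial $G$-bundle). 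The resulting map $\bigotimes_i\mathbb{V}_{\mathfrak{g}_i,\vec{w}_i}\to\mathbb{V}_{\mathfrak{g},\vec{w}}$ is then a surjection of vector bundles of equal rank (equality of ranks is the main theorem applied pointwise), hence an isomorphism over all of $\overline{\normtext{M}}_{0,n}$ --- no analysis of boundary strata or of moduli over nodal curves is needed. The final Chern class computation in your last paragraph is correct and matches the paper.
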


These results can be seen as a generalization of factorization results for Littlewood-Richardson coefficients, proven for $\mathfrak{sl}_{r+1}$
by King, Tollu, and Toumazet \cite{KRT}, then in all types by Roth \cite{RTH}; Ressayre has generalized Roth's result to general branching coefficients \cite{RESS12}.
The assumption on the weights is that they lie on a face of the \emph{additive eigencone}, which is an analogue of the 
multiplicative polytope for Lie algebras. 

To prove the reduction theorem, we use the fact that spaces of conformal blocks can be canonically
identified with spaces of generalized theta functions.  More precisely, there is a line bundle $\mathcal{L}_{\vec{w}}$ over
the moduli stack of parabolic bundles $\text{Parbun}_G$ such that 
$\text{H}^0(\text{Parbun}_G, \mathcal{L}_{\vec{w}}) \cong \mathcal{V}^{\dagger}_{\mathfrak{g}, \vec{w}}$.
Parabolic bundles are principal $G$-bundles together with extra data over each point $p_i$, see section \ref{CBDef}
for a precise definition.  For arbitrary weight data, we need to work over a generalization of the stack
of parabolic bundles: the moduli stack of parahoric bundles.

Parahoric bundles are torsors over a smooth group scheme $\mathcal{G} \rightarrow X$ determined by the choice of weight 
data $\vec{w}$.  Moduli stacks of parahoric bundles are the natural setting in which work with conformal
blocks as generalized theta functions when the weight data is on the affine wall of the alcove $\mathcal{A}$.
When $G = \text{SL}_{r+1}$, parahoric bundles can be identified with parabolic vector bundles where the
underlying vector bundle has nonzero degree.  For a general group, parahoric bundles can be more exotic.

Nevertheless, using the identification of parahoric bundles with equivariant bundles proven by Balaji and Seshadri \cite{BAS},
we have the following theorem.
\begin{thm}\label{GlblSecDesc}
Let $\mathcal{G}$ be the parahoric group scheme associated to the weight data $\vec{w}$ over a smooth projective
curve of arbitrary genus $X$. Then the line bundle 
$\mathcal{L}_{\vec{w}}$ descends to $\normtext{Bun}_{\mathcal{G}}$, and 
$\normtext{H}^0(\normtext{Bun}_{\mathcal{G}}, \mathcal{L}_{\vec{w}})$ is naturally isomorphic
to the space of conformal blocks $\mathcal{V}^{\dagger}_{\mathfrak{g}, \vec{w}}(X,\vec{p})$.
\end{thm}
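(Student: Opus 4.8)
The plan is to compute $\normtext{H}^0(\normtext{Bun}_{\mathcal{G}},\mathcal{L}_{\vec{w}})$ by transporting the problem, through the Balaji--Seshadri equivalence \cite{BAS}, to the moduli stack of $(\Gamma,G)$-bundles on a ramified cover, where the classical loop-group uniformization of Kumar--Narasimhan--Ramanathan and Laszlo--Sorger can be run $\Gamma$-equivariantly, and then to recognize the resulting space of covariants as the conformal block. The theorem has two halves --- that $\mathcal{L}_{\vec{w}}$ descends to $\normtext{Bun}_{\mathcal{G}}$, and that its global sections are the conformal blocks --- and this approach settles them together.

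First, by \cite{BAS} there is a tamely ramified $\Gamma$-Galois cover $\pi\colon Y\to X$, with isotropy groups over $\vec{p}$ the cyclic groups prescribed by $\vec{w}$, together with an equivalence of stacks $\normtext{Bun}_{\mathcal{G}}\cong\equivBun$, where $\boldsymbol{\tau}=\boldsymbol{\tau}(\vec{w})$ records the local isotropy representations. Starting from the standard presentation $\normtext{Bun}_Y^G\cong G\big(Y\setminus\pi^{-1}(\vec{p})\big)\,\backslash\,\prod_{y}\big(LG_{y}/L^{+}G_{y}\big)$ and passing to $\Gamma$-equivariant data in the evident way, one realizes $\equivBun$ as a double quotient
\[
\equivBun\;\cong\;\mathcal{G}(X\setminus\vec{p})\,\backslash\,\textstyle\prod_{i}\mathcal{Q}_{\mathcal{G},p_i},
\]
where $\mathcal{Q}_{\mathcal{G},p_i}$ is the twisted (parahoric) partial affine flag variety of Pappas--Rapoport attached to $\mathcal{G}$ at $p_i$ and $\mathcal{G}(X\setminus\vec{p})=G\big(Y\setminus\pi^{-1}(\vec{p})\big)^{\Gamma}$ is the ind-group of sections of $\mathcal{G}$ away from $\vec{p}$; this is also the presentation produced intrinsically by Heinloth's uniformization of $\mathcal{G}$-bundles.

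Second, $\mathcal{L}_{\vec{w}}$ is characterized on this presentation as the external product over the points $\vec{p}$ of the Borel--Weil line bundles of level $\ell$ and highest weights $\lambda_1,\dots,\lambda_n$ (equivalently, the $\ell$-th power of the ample generator of $\normtext{Pic}(\normtext{Bun}_Y^G)$, pulled back and twisted by the $\boldsymbol{\tau}$-linearization at the ramification points). By the Borel--Weil theorem of Kumar and Mathieu, in its parahoric form, $\normtext{H}^0\big(\prod_i\mathcal{Q}_{\mathcal{G},p_i},\,\mathcal{L}_{\vec{w}}\big)\cong\bigotimes_i\mathcal{H}_{\lambda_i,\ell}^{\,*}$, the restricted dual of the tensor product of the integrable highest-weight $\widehat{\mathfrak{g}}$-modules. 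That $\mathcal{L}_{\vec{w}}$ descends to $\normtext{Bun}_{\mathcal{G}}$ amounts to two checks: that the $\mathcal{G}(X\setminus\vec{p})$-action lifts to this line bundle --- which holds because $\mathcal{G}(X\setminus\vec{p})=G\big(Y\setminus\pi^{-1}(\vec{p})\big)^{\Gamma}$ sits inside $G\big(Y\setminus\pi^{-1}(\vec{p})\big)$, over which the relevant central extension of $LG$ splits canonically (hence $\Gamma$-equivariantly) by the Kumar--Narasimhan--Ramanathan argument, using that $G$ is simply connected --- and that the generic central automorphism acts trivially on the fibers, a condition on $(\vec{\lambda},\ell)$ expressed through $\boldsymbol{\tau}(\vec{w})$. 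Granting descent, taking invariants commutes with the quotient presentation, so
\[
\normtext{H}^0(\normtext{Bun}_{\mathcal{G}},\mathcal{L}_{\vec{w}})\;\cong\;\Big(\textstyle\bigotimes_i\mathcal{H}_{\lambda_i,\ell}^{\,*}\Big)^{\mathfrak{g}(X\setminus\vec{p})}\;=\;\mathcal{V}^{\dagger}_{\mathfrak{g},\vec{w}}(X,\vec{p}),
\]
the last equality being one of the standard descriptions of the space of conformal blocks (equivalently, the dual of the $\mathfrak{g}(X\setminus\vec{p})$-coinvariants of $\bigotimes_i\mathcal{H}_{\lambda_i,\ell}$). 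Since $X\setminus\vec{p}$ is affine in every genus no vacuum insertion or propagation of vacua is needed, and every step is canonical and compatible with variation of $(X,\vec{p})$ in families, as the corollary requires.

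The step I expect to be the genuine obstacle is the Borel--Weil computation in the truly parahoric case: when $\vec{w}$ lies on the affine wall of the alcove, $L^{+}\mathcal{G}_{p_i}$ is not the preimage of a parabolic subgroup of $G$, so $\mathcal{Q}_{\mathcal{G},p_i}$ is an honest Pappas--Rapoport twisted partial affine flag variety, and one must check that its Picard group and the relevant vanishing and global-generation statements behave as in the untwisted case and that the module extracted is precisely $\mathcal{H}_{\lambda_i,\ell}$ with the correct normalization. Two further points need care but are bookkeeping rather than obstruction: first, assembling the local $\boldsymbol{\tau}$-twists at the branch points into a single consistent linearization and verifying the central-character condition, which is exactly where the dictionary between $\boldsymbol{\tau}(\vec{w})$ and $(\vec{\lambda},\ell)$ is used; and second, since $\normtext{Bun}_{\mathcal{G}}$ is not quasi-compact, upgrading ``sections on a presentation are invariants'' to an identification of $\normtext{H}^0$ requires restricting to a quasi-compact open substack whose complement has codimension at least two and invoking finiteness there, as in \cite{LASOR}.
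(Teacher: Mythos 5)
Your proposal is correct in outline, but it takes a genuinely different route from the paper. You prove the theorem ``from scratch'': uniformize $\normtext{Bun}_{\mathcal{G}}$ as a double quotient of a product of partial affine flag varieties $LG/L^+\mathcal{P}_i$, compute sections over those flag varieties by an affine Borel--Weil theorem to get $\bigotimes_i\mathcal{H}_{\lambda_i,\ell}^{\,*}$, split the central extension over the ind-group of sections of $\mathcal{G}$ away from $\vec{p}$, and take invariants. The paper instead treats the Laszlo--Sorger identification $\normtext{H}^0(\normtext{Parbun}_G,\mathcal{L}_{\vec{w}})\cong\mathcal{V}^{\dagger}_{\mathfrak{g},\vec{w}}$ as a black box and reduces everything to showing that the forgetful morphism $f\colon\normtext{Parbun}_G\to\normtext{Bun}_{\mathcal{G}}$ induces an isomorphism on sections: the structure theory of the closed fibers of the Bruhat--Tits group schemes in section \ref{flags} (Proposition \ref{CLSDFIBER} and Proposition \ref{CONNGRP}) identifies the geometric fibers of $f$ as products of connected projective flag varieties $C_G(\tau_i)/Q_i$ for the connected centralizers $C_G(\tau_i)$ (Corollary \ref{ProjFib}); Stein factorization (Proposition \ref{ZarIsom}) then gives the isomorphism on sections, and descent of $\mathcal{L}_{\vec{w}}$ itself follows because a power of it is a determinant bundle pulled back from $\equivBun$, the Picard groups of the fibers are torsion-free, and Grauert's theorem applies. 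Your route buys a self-contained proof and an explicit description of the answer as a space of covariants, at the cost of needing uniformization of $\normtext{Bun}_{\mathcal{G}}$, the equivariant splitting of the central extension, and the parahoric Borel--Weil computation you rightly flag as the main obstacle --- though note that since the group schemes here arise from the split group $G(K)$, the relevant flag varieties are ordinary partial affine flag varieties $LG/L^+\mathcal{P}$ handled by Kumar--Mathieu theory, not genuinely twisted Pappas--Rapoport ones, so that obstacle is less severe than you fear. The paper's route avoids all affine representation theory but requires the finite-dimensional analysis of the special fibers, which is also what makes the comparison with $\normtext{Parbun}_G$ available for the rest of the argument in sections \ref{PReductionSec} and \ref{ProofSec}.
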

\begin{rmk}
The parahoric group schemes in the above theorem are those associated
to the split group $G(K)$, where $K=k((z))$ is the field of formal Laurent series.
\end{rmk}

In the remainder of the introduction, we discuss in more detail the geometry of the multiplicative polytope,
give precise statements of the reduction theorem, and outline the proof of these results.

\subsection{Reduction rules on the regular faces}

Let $G$ be a simple, simply connected algebraic group over $\mathbb{C}$ of rank $r$.  Choose a maximal torus $T$ and Borel subgroup $B$ of $G$.
Let $K \subseteq G$ be a maximal compact subgroup.  
Let $\mathfrak{g}$ be the Lie algebra of $G$, and $\mathfrak{h} \subseteq \mathfrak{g}$ the Cartan subalgebra associated to $T$.  Let $\alpha_1, \ldots, \alpha_r \in \mathfrak{h}^*$
be the simple roots of $\mathfrak{g}$, and let $\theta$ be the highest root.	
Then the \emph{fundamental alcove} $\mathcal{A}$ is defined as follows:
\[
\mathcal{A} = \{\mu \in \mathfrak{h} | \theta(\mu) \leq 1, \alpha_i(\mu) \geq 0 \text{ for all } i  \}.
\]
The \emph{multiplicative polytope} is defined as
\[
\Delta_n = \{\vec{\mu} \in  \mathcal{A}^n | \text{Id} \in C(\mu_1) \cdots C(\mu_n) \},
\]
where $C(\mu)$ denotes the conjugacy class of $\text{exp}(2\pi i \mu)$ in $K$.  This subset was shown to 
be a convex polytope in \cite{MW98} using symplectic methods.

Now to weights $\lambda_1, \ldots, \lambda_n$ and a level $\ell$ we can associate points of the fundamental alcove
$\mu_i = \frac{\kappa(\lambda_i)}{\ell} \in \mathcal{A}$, where $\kappa: \mathfrak{h}^* \xrightarrow{\sim} \mathfrak{h}$ is the
isomorphism induced by the (normalized) Killing form.  Fix distinct points $p_1, \ldots, p_n$ in $X = \mathbb{P}^1$.
Then the space $\mathcal{V}^{\dagger}_{\mathfrak{g}, \vec{\lambda}, \ell} = \mathcal{V}^{\dagger}_{\mathfrak{g}, \vec{\lambda}, \ell}(X, \vec{p})$
has positive dimension (possibly after scaling the weights and level) if and only if the tuple 
$(\mu_1, \ldots, \mu_n)$ lies in the multiplicative polytope.  
We say that the weight data $\vec{w} = (\lambda_1, \ldots, \lambda_n, \ell)$ 
is on a face of the multiplicative polytope if the associated $(\mu_1, \ldots, \mu_n)$ is on the face.

The effect of weights being on a face of the multiplicative polytope is a reduction of the problem
to a lower rank group.  For example, if $\vec{\mu} \in \Delta_n$ is
on a regular facet, then one can find $A_i \in C(\mu_i)$ such that $A_1 \cdots A_n = \text{Id}$ and each $A_i$ is
block diagonal of the same dimensions.  See Knutson's proof \cite{KNT00} of a similar result for Hermitian
matrices.

\subsubsection{Regular faces of the multiplicative polytope}

As a subset of $\mathcal{A}^n$, the multiplicative polytope is determined by its \emph{regular facets},
that is, the facets intersecting the interior of $\mathcal{A}^n$.
The problem of finding the inequalities defining the faces of $\Delta_n$ has a long history.  The general
form of the answer is that the inequalities are parametrized by certain cohomology products in the cohomology
ring of Grassmannians $G/P$.  It was first solved for $\text{SL}_2$ by Biswas \cite{BIS98}, then by Agnihotri and Woodward
for $\text{SL}_n$ \cite{AW98} and independently by Belkale in \cite{BEL01}; Belkale furthermore reduced the inequalities to an 
irredundant set.  
Teleman and Woodward \cite{TWPB} found inequalities defining $\Delta_n$ in general type, and more recently Belkale 
and Kumar \cite{BK} reduced these inequalities to an irredundant set, building on their work on the additive eigencone 
\cite{BK06} and Ressayre's proof of the irredundancy of Belkale and Kumar's inequalities in \cite{RSS10}.

Teleman and Woodward showed that $\Delta_n$ is determined by
a set of inequalities parametrized by (small) quantum cohomology products in $\text{QH}^*(G/P)$ of the form 
$\sigma_{u_1} \ast \cdots \ast \sigma_{u_n} = q^d [pt]$ for all maximal parabolics $P$.  We call the set of 
faces of the multiplicative polytope associated to these products \emph{TW-faces}, which include the
non-empty regular faces, but in general could include faces that do not intersect the interior of $\mathcal{A}^n$.
Our main theorem assumes that a tuple $\vec{\mu} \in \Delta_n$ associated
to the space of conformal blocks lies on a TW-face associated to one of these products.
For a precise definition of the quantum product and further discussion of its relationship with the multiplicative 
polytope, see section \ref{MULTQUANT}.

\subsubsection{Main theorem for degree zero walls}
Now assume $\vec{w}$ is on the face of the multiplicative polytope associated to the cohomology product
$\sigma_{u_1} \ast \cdots \ast \sigma_{u_n} = \lbrack \text{pt} \rbrack \in \normtext{QH}^*(G/P)$.
Note that since $d=0$, this corresponds to a product in $\text{H}^*(G/P)$.
Let $L \subseteq P$ be the Levi factor containing the chosen maximal torus $T$ of $G$, and let $L' = [L, L]$.  
Then $L'$ is semisimple and simply connected, and therefore is isomorphic to a product of simple groups; for simplicity
we assume that there are two factors: $L' \cong G_1 \times G_2$.  Then our main theorem gives an isomorphism between
$\mathcal{V}^{\dagger}_{\mathfrak{g}, \vec{\lambda}, \ell}$ and conformal blocks associated to $G_1$ and $G_2$, which
together have one less rank than $G$.

\begin{thm}\label{MNTHM}
For weight data $\vec{w} = (\lambda_1, \ldots, \lambda_n, \ell)$ in the multiplicative polytope, lying on the face corresponding to
$\sigma_{u_1}  \cdots  \sigma_{u_n} = \lbrack \text{pt} \rbrack \in \normtext{H}^*(G/P)$, we have a 
natural isomorphism of conformal blocks
\[
\mathcal{V}^{\dagger}_{\mathfrak{g}, \vec{w}} \cong \mathcal{V}^{\dagger}_{\mathfrak{g}_1, \vec{w}_1} \otimes \mathcal{V}^{\dagger}_{\mathfrak{g}_2, \vec{w}_2} 
\]
where $\vec{w}_1$ and $\vec{w}_2$ are the restrictions of following weight data to $\mathfrak{g}_1$ and $\mathfrak{g}_2$:
\begin{enumerate} 
\item Weights $u_1^{-1} \lambda_1, \ldots, u_n^{-1} \lambda_n$.
\item Levels $m_1 \ell$ and $m_2 \ell$, where $m_1$ and $m_2$ are the Dynkin indices of $\mathfrak{g}_1$ and $\mathfrak{g}_2$ 
in $\mathfrak{g}$, respectively.
\end{enumerate}
\end{thm}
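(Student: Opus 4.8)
\emph{Proof strategy.} The plan is to realise both sides as spaces of global sections of line bundles on moduli stacks of equivariant---equivalently, parahoric---bundles via Theorem~\ref{GlblSecDesc}, and to pass between them through a correspondence built out of the parabolic $P$. Passing to the Balaji--Seshadri equivariant model, write $\mathcal V^{\dagger}_{\mathfrak{g},\vec{w}}=\normtext{H}^0(\equivBun,\mathcal L_{\vec{w}})$. Over $\equivBun$ introduce the stack $\normtext{Bun}_Y^{\Gamma,G;P}(\boldsymbol{\tau};\boldsymbol{\tau}_u)$ of equivariant $G$-bundles equipped with an equivariant reduction of structure group to the parahoric subgroup scheme attached to $P$, whose Levi quotient has type $\boldsymbol{\tau}_u$; it carries a forgetful map $p$ to $\equivBun$ and a Levi map $q$ to $\equivBunL{ss}$ obtained by extending structure group along $P\twoheadrightarrow L$. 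The change of type from $\boldsymbol{\tau}$ to $\boldsymbol{\tau}_u$ is exactly what carries the weights $\lambda_i$ to $u_i^{-1}\lambda_i$: the minimal coset representatives $u_i$ record the relative position of the reduction and the flag in the fibre at $p_i$, and taking the associated graded of the reduction leaves the $u_i^{-1}$-translated weight data on $L$.

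\emph{Step 1: rigidity of the reduction.} I would first prove that $p^{*}$ is an isomorphism $\normtext{H}^0(\equivBun,\mathcal L_{\vec{w}})\xrightarrow{\ \sim\ }\normtext{H}^0\big(\normtext{Bun}_Y^{\Gamma,G;P}(\boldsymbol{\tau};\boldsymbol{\tau}_u),\,p^{*}\mathcal L_{\vec{w}}\big)$. Every section of $\mathcal L_{\vec{w}}$ is supported on the semistable locus $\equivBunSS$---the usual theta/GIT argument, available in the parahoric setting by \cite{BAS}. The crucial input is that, because $\vec\mu$ lies on the degree-zero TW-face attached to $\sigma_{u_1}\cdots\sigma_{u_n}=[\mathrm{pt}]\in\normtext{H}^{*}(G/P)$, every semistable bundle carries a reduction to $P$ of type $\boldsymbol{\tau}_u$, this reduction is unique, and the induced $L$-bundle is itself semistable; this is the geometric form of the wall condition, the analogue in this setting of Knutson's rigidity result \cite{KNT00}. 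Consequently $p$ restricts to an isomorphism over the semistable loci, which gives the statement on $\normtext{H}^0$ (in the reverse direction a section of $p^{*}\mathcal L_{\vec{w}}$ is again supported over the semistable part, so it descends).

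\emph{Steps 2--3: eliminate the unipotent radical, then split the Levi.} The map $q$ exhibits the reduction stack as a bundle with contractible (affine-space-like) fibres over $\equivBunL{ss}$, namely a torsor under the equivariant unipotent radical of $P$, so $q_{*}\mathcal O=\mathcal O$ and the projection formula applies; one then checks, by a direct computation with the determinant-of-cohomology line bundle (using that the unipotent radical has no characters and that over $\mathbb{P}^1$ its directions contribute trivially in the degree-zero case), that $p^{*}\mathcal L_{\vec{w}}\cong q^{*}\mathcal L_{\vec{w}_L}$ for a line bundle $\mathcal L_{\vec{w}_L}$ on $\equivBunL{ss}$, whence $\normtext{H}^0(\,\cdot\,,p^{*}\mathcal L_{\vec{w}})=\normtext{H}^0(\equivBunL{ss},\mathcal L_{\vec{w}_L})$. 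Finally $L=Z(L)^{\circ}\!\cdot L'$ with $Z(L)^{\circ}$ one-dimensional and $L'=G_1\times G_2$ simply connected of semisimple rank $r-1$, so over $\mathbb{P}^1$ with the type fixed on the degree-zero wall the stack $\equivBunL{ss}$ is, up to an inconsequential finite central quotient, the product of a copy of $B\mathbb{G}_m$---the rigid torus factor, on which $\mathcal L_{\vec{w}_L}$ restricts trivially (this is precisely the content of $d=0$), contributing only a canonical line---with $\equivBunLP=\normtext{Bun}_Y^{\Gamma,G_1}(\boldsymbol{\tau}_u^{1})\times\normtext{Bun}_Y^{\Gamma,G_2}(\boldsymbol{\tau}_u^{2})$; and $\mathcal L_{\vec{w}_L}$ decomposes there as an external product $\mathcal L_{\vec{w}_1}\boxtimes\mathcal L_{\vec{w}_2}$ in which the level of the $j$-th factor is multiplied by the Dynkin index $m_j$ of $\mathfrak{g}_j\hookrightarrow\mathfrak{g}$ (restricting the basic level-$\ell$ line bundle scales the level by the index, which is just the definition of $m_j$ via the ratio of normalised invariant forms). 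Taking $\normtext{H}^0$, applying K\"unneth, and invoking Theorem~\ref{GlblSecDesc} on each of $G_1$ and $G_2$ yields the asserted isomorphism $\mathcal V^{\dagger}_{\mathfrak{g},\vec{w}}\cong\mathcal V^{\dagger}_{\mathfrak{g}_1,\vec{w}_1}\otimes\mathcal V^{\dagger}_{\mathfrak{g}_2,\vec{w}_2}$, with $\vec{w}_1,\vec{w}_2$ exactly the weight data described in the statement.

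\emph{Main obstacle.} The heart of the argument is Step 1: showing that on the degree-zero TW-face every semistable equivariant $\mathcal G$-bundle admits a \emph{unique} reduction to $P$ with semistable Levi quotient, so that $p$ is an isomorphism over the semistable loci. This is where the hypotheses $d=0$ and $\sigma_{u_1}\cdots\sigma_{u_n}=[\mathrm{pt}]$ must be translated from statements about the quantum cohomology of $G/P$ into (semi)stability statements about equivariant bundles, using the dictionary between the Teleman--Woodward and Belkale--Kumar inequalities \cite{TWPB,BK} and numerical semistability together with the GIT picture of \cite{BAS}. Everything after Step 1 is formal manipulation of moduli stacks plus the determinant-of-cohomology and Dynkin-index bookkeeping indicated above; the remaining subtlety there is the careful tracking of semistable loci through $p$ and $q$ so that all the cohomology groups in the chain are computed on compatible open substacks.
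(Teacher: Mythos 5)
Your overall skeleton (pass to the equivariant/parahoric model, use a stack of $P$-reductions of prescribed relative position, push to the Levi, split $L'$ into $G_1\times G_2$, and track levels via Dynkin indices) matches the paper's architecture, and your level and weight bookkeeping at the end is correct. But Step~1, which you yourself flag as the heart of the matter, asserts something strictly stronger than what the hypothesis gives, and the argument would fail there. The condition $\sigma_{u_1}\cdots\sigma_{u_n}=[\mathrm{pt}]$ only guarantees that the reduction stack $\mathcal{C}\to\normtext{Parbun}_G$ is \emph{birational} --- generically one-to-one --- not that every semistable bundle admits a unique $P$-reduction of relative position $(u_1,\ldots,u_n)$ with semistable Levi quotient. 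On special semistable bundles the reduction can fail to exist with the prescribed open relative position, or can degenerate into the boundary of the Schubert cells; this is exactly why the paper must enlarge $\mathcal{C}$ to the closure $\mathcal{Y}$ inside a stack of all degree-zero equivariant $P$-reductions and then prove only that $\pi:\mathcal{Y}^{ss}\to\normtext{Parbun}_G^{ss}$ is \emph{proper} (Theorem \ref{PropThm}, via completion in a Hilbert scheme and the Holla--Narasimhan no-ghosts lemma), after which Zariski's main theorem --- not bijectivity --- yields the isomorphism on global sections (Proposition \ref{ZarIsom2}). A further nontrivial point you elide is that lifting families of $P$-reductions from $X$ to the equivariant bundles on the cover $Y$ is discontinuous in general; the paper needs the constant-relative-position hypothesis and Proposition \ref{EXTEN} to make this lift work in families, which is what legitimizes working with $\mathcal{Y}$ at all.

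Your Steps~2--3 also diverge from what actually happens: the square relating $\pi$ and $\iota\circ\xi$ is \emph{not} $2$-commutative, and one cannot simply say the fibers of $q$ are contractible and apply the projection formula. The paper instead identifies $\pi^{*}\mathcal{L}_{\vec{w}}$ with $(\iota\circ\xi)^{*}\mathcal{L}_{\vec{w}}$ on global sections via the Levification degeneration (a $\mathbb{G}_m$-action whose weight on the determinant line is computed to be a multiple of $d$, hence vanishes precisely when $d=0$ --- this is where your ``degree-zero directions contribute trivially'' heuristic is made precise), and then concludes by a diagram chase using surjectivity of $\xi$ (hence injectivity of $\xi^{*}$) together with the isomorphism for $\pi^{*}$. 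Your instinct about where the difficulty lies is right, but the proposed resolution (a rigidity/uniqueness statement for reductions of semistable bundles) is not available; the correct mechanism is properness plus birationality plus Zariski's main theorem.
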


\begin{rmk}
The Dynkin indices $m_1$ and $m_2$ for simply-laced groups are always equal to $1$.
\end{rmk}

A simple argument shows that we can extend this isomorphism to conformal blocks \emph{bundles}, 
which are vector bundles $\mathbb{V}_{\vec{w}}$ over the moduli space $\overline{\text{M}}_{0,n}$ of genus zero
stable pointed curves with $n$ marked points,  
such that the fiber over $(X, p_1, \ldots, p_n) \in \overline{\text{M}}_{0,n}$ is the dual
of the space of conformal blocks, denoted $\mathcal{V}_{\mathfrak{g}, \vec{w}}$.  The proof uses the 
fact that in genus $0$ these bundles are globally generated, and Roth's reduction theorem for
invariants \cite{FAK,RTH}.

\begin{cor}\label{CBBCOR}
With the same assumptions as above, we have an isomorphism of conformal blocks bundles on $\overline{\normtext{M}}_{0,n}$:
\[
\mathbb{V}_{\mathfrak{g}, \vec{w}} \cong \mathbb{V}_{\mathfrak{g}_1, \vec{w}_1} \otimes \mathbb{V}_{\mathfrak{g}_2, \vec{w}_2}.
\]
Therefore the divisors $\mathbb{D}_{\mathfrak{g}, \vec{w}}$ given by the first Chern classes of these vector bundles satisfy the relation
\[
\mathbb{D}_{\mathfrak{g}, \vec{w}} = \text{rk}(\mathbb{V}_{\mathfrak{g}_2, \vec{w}_2}) \cdot \mathbb{D}_{\mathfrak{g}_1, \vec{w}_1} + \text{rk}(\mathbb{V}_{\mathfrak{g}_1, \vec{w}_1}) \cdot \mathbb{D}_{\mathfrak{g}_2, \vec{w}_2}
\]
\end{cor}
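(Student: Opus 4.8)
The plan is to upgrade the fiberwise isomorphism of Theorem \ref{MNTHM} to an isomorphism of vector bundles over $\overline{\text{M}}_{0,n}$, and then read off the divisor relation from the first Chern class. First I would recall that Fakhruddin's description of the conformal blocks bundle $\mathbb{V}_{\mathfrak{g}, \vec{w}}$ in genus zero realizes it as a quotient of a trivial bundle: since $\mathbb{V}_{\mathfrak{g},\vec{w}}$ is globally generated on $\overline{\text{M}}_{0,n}$, it is determined by its space of global sections together with the evaluation maps at the (finitely many) boundary points and their factorization data. Concretely, over the interior $\text{M}_{0,n}$ the bundle is a family of the vector spaces $\mathcal{V}_{\mathfrak{g},\vec{w}}(X,\vec{p})$, and the isomorphism of Theorem \ref{MNTHM} is natural in $(\vec{p})$ — it is induced by the combinatorial correspondence $u_i^{-1}\lambda_i$ on weights, which does not involve the points at all — so it glues to a bundle isomorphism over $\text{M}_{0,n}$. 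The content of the corollary is that this extends across the boundary.

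The key step is the extension across the boundary divisors, and here I would invoke Roth's reduction theorem for invariants in the form used by Fakhruddin. The boundary divisor $\Delta_{I}$ of $\overline{\text{M}}_{0,n}$ corresponds to a nodal curve with two genus-zero components, and the fiber of $\mathbb{V}_{\mathfrak{g},\vec{w}}$ there is computed by the factorization (sewing) rule as a direct sum $\bigoplus_{\mu} \mathcal{V}_{\mathfrak{g},\vec{w}_I'(\mu)} \otimes \mathcal{V}_{\mathfrak{g},\vec{w}_I''(\mu)}$ over admissible weights $\mu$ at the node. The point is that the assignment $\lambda \mapsto u^{-1}\lambda$ and the passage to $\mathfrak{g}_1 \times \mathfrak{g}_2$ are compatible with this factorization: applying Theorem \ref{MNTHM} on each component and matching up the node weights (using that the dual of $u^{-1}\mu$ for the node weight on one side pairs with $u^{-1}\mu^*$ on the other) shows that the sewing rule for $\mathbb{V}_{\mathfrak{g},\vec{w}}$ is identified with the sewing rule for $\mathbb{V}_{\mathfrak{g}_1,\vec{w}_1} \otimes \mathbb{V}_{\mathfrak{g}_2,\vec{w}_2}$. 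Since a globally generated vector bundle on $\overline{\text{M}}_{0,n}$ is determined up to isomorphism by its restriction to the interior together with its behavior on the boundary strata, and the isomorphism we have constructed on the interior is compatible with the boundary identifications, it extends (uniquely) to a bundle isomorphism over all of $\overline{\text{M}}_{0,n}$.

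Finally, given the bundle isomorphism $\mathbb{V}_{\mathfrak{g},\vec{w}} \cong \mathbb{V}_{\mathfrak{g}_1,\vec{w}_1} \otimes \mathbb{V}_{\mathfrak{g}_2,\vec{w}_2}$, the divisor relation is a formal consequence of the multiplicativity of the total Chern class under tensor product: for vector bundles $E, F$ one has $c_1(E \otimes F) = \text{rk}(F)\, c_1(E) + \text{rk}(E)\, c_1(F)$, and taking $E = \mathbb{V}_{\mathfrak{g}_1,\vec{w}_1}$, $F = \mathbb{V}_{\mathfrak{g}_2,\vec{w}_2}$ gives exactly
\[
\mathbb{D}_{\mathfrak{g}, \vec{w}} = \text{rk}(\mathbb{V}_{\mathfrak{g}_2, \vec{w}_2}) \cdot \mathbb{D}_{\mathfrak{g}_1, \vec{w}_1} + \text{rk}(\mathbb{V}_{\mathfrak{g}_1, \vec{w}_1}) \cdot \mathbb{D}_{\mathfrak{g}_2, \vec{w}_2}.
\]
The main obstacle is the boundary extension: one must check carefully that the weight bijection $\lambda_i \mapsto u_i^{-1}\lambda_i$ respects the restriction maps to the boundary and, in particular, that the node weights occurring in the factorization of $\mathbb{V}_{\mathfrak{g},\vec{w}}$ are in bijection (via the same Levi/quantum-Schubert data) with those occurring in the factorization of the tensor product bundle — in other words, that Theorem \ref{MNTHM} is genuinely natural with respect to degeneration, not merely true pointwise. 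Once that compatibility is in hand, everything else is formal.
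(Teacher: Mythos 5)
Your final step (the formula $c_1(E\otimes F)=\mathrm{rk}(F)c_1(E)+\mathrm{rk}(E)c_1(F)$) is correct and is exactly how the divisor relation is obtained. The problem is the step you yourself flag as the ``main obstacle'': the extension across the boundary via the sewing rule. This is not just a compatibility left to be checked --- the proposed mechanism does not go through as stated. At a boundary divisor the fiber of $\mathbb{V}_{\mathfrak{g},\vec{w}}$ factorizes as $\bigoplus_{\mu}\mathcal{V}_{\mathfrak{g},\vec{w}_I'(\mu)}\otimes\mathcal{V}_{\mathfrak{g},\vec{w}_I''(\mu)}$, but Theorem \ref{MNTHM} only applies to weight data lying on a TW-face of the multiplicative polytope, and the component data $((\lambda_i)_{i\in I},\mu,\ell)$ has no reason to lie on such a face of $\Delta_{|I|+1}$; so you cannot ``apply Theorem \ref{MNTHM} on each component.'' Moreover, the opening claim that the fiberwise isomorphism is ``natural in $\vec{p}$'' and hence glues over $\text{M}_{0,n}$ also needs justification: the isomorphism is built from a chain of stack-theoretic identifications (parahoric and reduction stacks) whose behavior in families is not automatic from the fact that the recipe $\lambda_i\mapsto u_i^{-1}\lambda_i$ is combinatorial.

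The paper's actual argument avoids the boundary entirely and is worth internalizing. In genus zero each conformal blocks bundle is a quotient of the \emph{trivial} bundle of invariants, so one has globally defined surjections $\mathbb{A}_{\mathfrak{g},\vec{\lambda}}\twoheadrightarrow\mathbb{V}_{\mathfrak{g},\vec{w}}$ and $\bigotimes_i\mathbb{A}_{\mathfrak{g}_i,\vec{\lambda}_i}\twoheadrightarrow\bigotimes_i\mathbb{V}_{\mathfrak{g}_i,\vec{w}_i}$ over all of $\overline{\text{M}}_{0,n}$. Roth's factorization identifies $\mathbb{A}_{\mathfrak{g},\vec{\lambda}}\cong\bigotimes_i\mathbb{A}_{\mathfrak{g}_i,\vec{\lambda}_i}$ as trivial bundles, and the only thing to verify is that the composite $\bigotimes_i\mathbb{A}_{\mathfrak{g}_i,\vec{\lambda}_i}\to\mathbb{V}_{\mathfrak{g},\vec{w}}$ kills the kernel of $\bigotimes_i\mathbb{A}_{\mathfrak{g}_i,\vec{\lambda}_i}\to\bigotimes_i\mathbb{V}_{\mathfrak{g}_i,\vec{w}_i}$. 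Since all maps are morphisms of vector bundles defined over the whole of $\overline{\text{M}}_{0,n}$, this inclusion of kernels is a closed condition and may be checked fiberwise over the dense open locus $\text{M}_{0,n}$, where it follows from a diagram of stacks involving the substacks of trivial bundles (and it is precisely here that $d=0$ is used, so that trivial $L'$-bundles map to trivial $G$-bundles). The induced surjection $\bigotimes_i\mathbb{V}_{\mathfrak{g}_i,\vec{w}_i}\twoheadrightarrow\mathbb{V}_{\mathfrak{g},\vec{w}}$ is then an isomorphism because the ranks agree by the already-proven fiberwise Theorem \ref{MNTHM}. If you want to salvage your route, you would have to prove a genuine factorization-compatibility statement for the reduction isomorphism, which is substantially harder than the corollary itself.
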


Finally, since the inequality associated to $\sigma_{u_1}  \cdots \sigma_{u_n} =\lbrack \text{pt} \rbrack$ 
does not depend on the level $\ell$, we can increase $\ell$ and the weight data will still be on the corresponding face
of the multiplicative polytope.  In fact these faces also define a cone called the \emph{additive eigencone} (see section \ref{MULTQUANT}).
  It is well known that at high enough level conformal blocks become isomorphic to spaces of invariants 
$\mathbb{A}_{\mathfrak{g}, \vec{\lambda}} \vcentcolon= (V_{\lambda_1} \otimes \cdots \otimes V_{\lambda_n})^{\mathfrak{g}}$.
Therefore as a final corollary we get Roth's reduction theorem for tensor product invariants:

\begin{cor}(\cite{RTH})
Given weights $\vec{\lambda}$ lying on a face of the additive eigencone corresponding to 
$\sigma_{u_1}  \cdots  \sigma_{u_n} =\lbrack \text{pt} \rbrack$, we have a canonical
isomorphism of invariants:
\[
\mathbb{A}_{\mathfrak{g}, \vec{\lambda}} \cong \mathbb{A}_{\mathfrak{g}_1, \vec{\lambda}_1} \otimes \mathbb{A}_{\mathfrak{g}_2, \vec{\lambda}_2},
\]
where $\vec{\lambda}_1$, $\vec{\lambda}_2$ are given by the restrictions of $u_i^{-1} \lambda_i$ to $\mathfrak{g}_1$ and $\mathfrak{g}_2$.
\end{cor}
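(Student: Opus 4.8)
The plan is to deduce the statement from Theorem~\ref{MNTHM} by sending the level to infinity. Two ingredients are needed: (i) the facet inequality attached to a degree-zero product $\sigma_{u_1}\cdots\sigma_{u_n}=[\mathrm{pt}]\in\mathrm{H}^*(G/P)$ does not involve $\ell$ --- as recalled in section~\ref{MULTQUANT}, it is precisely the inequality cutting out the corresponding face of the additive eigencone; and (ii) the standard stabilization fact that for fixed dominant weights $\vec\lambda$ there is an $\ell_0$ such that for all $\ell\geq\ell_0$ the canonical map identifies $\mathcal{V}^{\dagger}_{\mathfrak{g},\vec\lambda,\ell}(\mathbb{P}^1,\vec p)$ with the invariant space $\mathbb{A}_{\mathfrak{g},\vec\lambda}$, and these identifications are compatible as $\ell$ grows. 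First I would use (i) to observe that if $\vec\lambda$ lies on the given face of the additive eigencone, then for every sufficiently large $\ell$ the alcove points $\mu_i=\kappa(\lambda_i)/\ell$ lie in $\mathcal{A}$ and the tuple $\vec\mu$ lies on the face of the multiplicative polytope attached to the same product, so Theorem~\ref{MNTHM} applies to $\vec w=(\lambda_1,\ldots,\lambda_n,\ell)$.

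Next I would fix one level $\ell$ large enough that the three spaces $\mathcal{V}^{\dagger}_{\mathfrak{g},\vec w}$, $\mathcal{V}^{\dagger}_{\mathfrak{g}_1,\vec w_1}$, and $\mathcal{V}^{\dagger}_{\mathfrak{g}_2,\vec w_2}$ have all stabilized; this is possible because the levels $m_1\ell$ and $m_2\ell$ occurring on the $\mathfrak{g}_1$- and $\mathfrak{g}_2$-sides also tend to infinity, and the dominance of the restricted weights needed for $\mathbb{A}_{\mathfrak{g}_j,\vec\lambda_j}$ to be defined is already built into Theorem~\ref{MNTHM}. Composing the isomorphism of Theorem~\ref{MNTHM},
\[
\mathcal{V}^{\dagger}_{\mathfrak{g},\vec w}\;\cong\;\mathcal{V}^{\dagger}_{\mathfrak{g}_1,\vec w_1}\otimes\mathcal{V}^{\dagger}_{\mathfrak{g}_2,\vec w_2},
\]
with the three stabilization identifications then produces the asserted isomorphism
\[
\mathbb{A}_{\mathfrak{g},\vec\lambda}\;\cong\;\mathbb{A}_{\mathfrak{g}_1,\vec\lambda_1}\otimes\mathbb{A}_{\mathfrak{g}_2,\vec\lambda_2},
\]
where $\vec\lambda_1,\vec\lambda_2$ are the restrictions of the $u_i^{-1}\lambda_i$ to $\mathfrak{g}_1$ and $\mathfrak{g}_2$.

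Two points require care. The first, minor one is that the deduction must be made directly from the space-level Theorem~\ref{MNTHM} rather than from Corollary~\ref{CBBCOR}, on pain of circularity, since the proof of that corollary invokes Roth's theorem as an input. The second --- the actual obstacle --- is verifying that the resulting isomorphism of invariant spaces is independent of the auxiliary level $\ell$, so that it deserves to be called canonical and indeed coincides with Roth's. I expect this to come down to checking that the isomorphism of Theorem~\ref{MNTHM} is natural in $\ell$: in its construction it is induced by a fixed morphism of moduli stacks (the parabolic, respectively Levi, reduction) together with an isomorphism of line bundles defined uniformly in the weight data, and raising $\ell$ replaces each line bundle by a fixed power while commuting with the reduction map, so the induced maps on global sections form a compatible system. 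Passing to the limit over $\ell$ then gives a level-independent map, and one finishes by checking it agrees with Roth's isomorphism --- for instance by comparing images of a spanning family of extremal invariants, or by observing that both are characterized by the same compatibility with the Levi branching. This last verification is routine, but it is the only step that is more than an immediate application of Theorem~\ref{MNTHM}.
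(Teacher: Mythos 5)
Your proposal is correct and follows essentially the same route as the paper: the paper's own justification (given just before the corollary's statement in the introduction) is precisely that the degree-zero facet inequality is independent of $\ell$, so one may raise the level until conformal blocks stabilize to spaces of invariants and then apply Theorem~\ref{MNTHM}. Your additional remarks on avoiding circularity with Corollary~\ref{CBBCOR} and on checking level-independence of the resulting isomorphism are sensible refinements, but they do not change the argument, which is the one the paper uses.
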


\subsubsection{Reductions on positive degree faces}

We also obtain reductions on the positive degree TW-faces of the multiplicative polytope.  
We continue with the notation from the previous section.  To the Levi subgroup $L \subseteq P$
we assign a degree $k_L$, which is the size of the kernel of the isogeny $Z_0 \rightarrow L/L'$, where $Z_0$ is the connected
component of the identity of $L$.  In \cite{BK}, Belkale and Kumar showed the existence of a cocharacter $\mu_P$
lying in the fundamental alcove of $L$, such that $|\omega_P(\mu_P)| = 1$, where $\omega_P$ is the fundamental
weight associated to $P$.  Let $d_0$ be the smallest integer such that $d+d_0\omega_P(\mu_P) \equiv 0 \text{ (mod } k_L)$.
Then by adding $d_0$ points to our pointed curve and twisting using the cocharacter $\mu_P$, we obtain 
a rank reduction theorem for weight data on positive degree facets (see Proposition \ref{L'Red} for more details).

\begin{thm}\label{MNTHM2}
For weight data $\vec{w} = (\lambda_1, \ldots, \lambda_n, \ell)$ in the multiplicative polytope, lying on the facet corresponding to
$\sigma_{u_1} \ast \cdots \ast \sigma_{u_n} =q^d \lbrack \text{pt} \rbrack \in \normtext{QH}^*(G/P)$, we have a 
natural isomorphism of conformal blocks
\[
\mathcal{V}^{\dagger}_{\mathfrak{g}, \vec{w}} \cong \mathcal{V}^{\dagger}_{\mathfrak{g}_1, \vec{w}_1} \otimes \mathcal{V}^{\dagger}_{\mathfrak{g}_2, \vec{w}_2} 
\]
where $\vec{w}_1$ and $\vec{w}_2$ are the restrictions of following weight data to $\mathfrak{g}_1$ and $\mathfrak{g}_2$:
\begin{enumerate} 
\item The first $n$ weights are $u_1^{-1} \lambda_1, \ldots, u_n^{-1} \lambda_n$.
\item The last $d_0$ weights are $\ell\mu_P^*$, where $\mu_P^*$ is the dual with respect to the Killing form.
\item The levels are $m_1 \ell$ and $m_2 \ell$, where $m_1$ and $m_2$ are the Dynkin indices of $\mathfrak{g}_1 \times \mathfrak{g}_2$ 
in $\mathfrak{g}$.
\end{enumerate}
\end{thm}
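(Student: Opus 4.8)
The plan is to reduce Theorem \ref{MNTHM2} to the degree-zero case (Theorem \ref{MNTHM}) by adding $d_0$ auxiliary marked points and twisting by the cocharacter $\mu_P$. Geometrically, working with conformal blocks as sections of $\mathcal{L}_{\vec{w}}$ over the moduli stack of parahoric bundles $\normtext{Bun}_{\mathcal{G}}$ (Theorem \ref{GlblSecDesc}), I would first interpret the positive-degree facet condition $\sigma_{u_1} \ast \cdots \ast \sigma_{u_n} = q^d [\normtext{pt}]$ in $\normtext{QH}^*(G/P)$ as a statement that a generic parahoric $\mathcal{G}$-bundle carrying a section over the $n$ points, with the prescribed flags in the $u_i$-positions, admits a reduction of structure group to the parabolic-type subgroup scheme attached to $P$, but now with a nonzero ``degree'' (i.e. the $L/L'$-component is twisted by $d$). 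The obstruction to directly applying the degree-zero argument is precisely that this degree $d$ need not vanish; the Belkale–Kumar cocharacter $\mu_P$ with $|\omega_P(\mu_P)|=1$ is the tool that fixes this, since twisting a bundle at a point by $\mu_P$ changes the relevant degree by $\omega_P(\mu_P)$, and $k_L | \omega_P(\mu_P)\mathbb{Z}$ governs when the twist is realizable inside $\normtext{Bun}_{\mathcal{G}}$.

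Concretely, the key steps in order are: (1) Choose $d_0$ additional distinct points $p_{n+1}, \ldots, p_{n+d_0}$ on $X=\mathbb{P}^1$ and attach to each the parahoric weight $\ell\mu_P^*$; show that because the defining inequality of the facet is independent of the level and of $d_0$ (it depends only on the $u_i$ and $P$), the enlarged weight datum $\vec{w}\,' = (u_1^{-1}\lambda_1, \ldots, u_n^{-1}\lambda_n, \ell\mu_P^*, \ldots, \ell\mu_P^*, \ldots)$ lies on a \emph{degree-zero} facet of the multiplicative polytope for the $(n+d_0)$-pointed problem — this is where the minimality of $d_0$ with $d + d_0\omega_P(\mu_P)\equiv 0\pmod{k_L}$ is used, to make the total twist an honest element of $\pi_1(L) = Z_0/(Z_0\cap L')$ that can be absorbed. (2) Identify the conformal block space $\mathcal{V}^\dagger_{\mathfrak{g},\vec{w}}$ with $\mathcal{V}^\dagger_{\mathfrak{g},\vec{w}\,'}$: the added points carry the weight $\ell\mu_P^*$, which up to the parahoric normalization corresponds to an extremal/minuscule-type weight at level $\ell$ for which the fusion with the rest of the data is one-dimensional, so adding these points does not change the space of conformal blocks (propagation of vacua / a one-dimensional fusion product). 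This should be essentially Proposition \ref{L'Red}. (3) Apply Theorem \ref{MNTHM} to $\vec{w}\,'$ on its degree-zero facet, obtaining $\mathcal{V}^\dagger_{\mathfrak{g},\vec{w}\,'} \cong \mathcal{V}^\dagger_{\mathfrak{g}_1,\vec{w}\,'_1}\otimes \mathcal{V}^\dagger_{\mathfrak{g}_2,\vec{w}\,'_2}$. (4) Unwind the definitions of $\vec{w}\,'_1, \vec{w}\,'_2$: the restriction of each $u_i^{-1}\lambda_i$ to $\mathfrak{g}_j$ is as in the statement, the levels pick up the Dynkin indices $m_1, m_2$ as in the degree-zero case, and the added weights $\ell\mu_P^*$ restrict to weights of $\mathfrak{g}_1\times\mathfrak{g}_2$ — one must check these restrictions are exactly the $\ell\mu_P^*$ listed in item (2) of the theorem, i.e. that $\mu_P^*$ already lies in (the coweight lattice of) $L'$ so that its restriction is itself.

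The main obstacle I anticipate is step (1), namely proving that the enlarged datum lands on a genuine degree-zero facet and not merely on the boundary: one has to verify, via the quantum-to-classical principle (Teleman–Woodward / Belkale–Kumar), that a quantum product equal to $q^d[\normtext{pt}]$ in $\normtext{QH}^*(G/P)$ becomes, after adding the $d_0$ copies of the Schubert class dual to $\mu_P$ (each contributing a factor that ``absorbs'' one unit of $q$ up to the $k_L$-periodicity), an honest classical product equal to $[\normtext{pt}]$ in $\normtext{H}^*(G/P)$ — and crucially that this product is still extremal, i.e. defines a facet rather than a lower-dimensional face. This requires a careful bookkeeping of the isogeny $Z_0 \to L/L'$, the integer $k_L$, and the value $\omega_P(\mu_P)$; the role of $d_0$ being the \emph{smallest} such integer is to guarantee that one does not over-twist and leave the polytope. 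Once this combinatorial/cohomological translation is in hand, the rest is a formal consequence of Theorem \ref{MNTHM} together with the propagation-of-vacua identification in step (2), so I would spend the bulk of the write-up on step (1) and defer the routine restriction computations of step (4) to a lemma.
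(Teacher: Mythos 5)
Your proposal diverges from the paper's argument in a way that does not work. The paper never reduces the positive-degree case to Theorem \ref{MNTHM}: both cases are handled by first passing from $\text{Parbun}_G$ to (equivariant) degree-zero $L$-bundles via the stack $\mathcal{Y}$ of $P$-reductions (properness over the semistable locus, Zariski's main theorem, and the Levification argument of Proposition \ref{PULLBCK}), and only \emph{afterwards} are the $d_0$ extra points and the $\mu_P$-twist introduced, at the level of degree-$d$ parabolic $L$-bundles, in order to descend along the isogeny $L' \times Z_0 \rightarrow L$ (Proposition \ref{L'Red}). The congruence $d + d_0\omega_P(\mu_P) \equiv 0 \pmod{k_L}$ is about making the degree of an $L$-bundle divisible by $k_L$ so that it lifts to $L' \times Z_0$; it has no meaning before the reduction to $L$, since $\text{Bun}_G$ over $\mathbb{P}^1$ is connected ($G$ being simply connected) and there is no degree to shift.

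Your step (1) is the fatal gap: a quantum product $\sigma_{u_1} \ast \cdots \ast \sigma_{u_n} = q^d[\text{pt}]$ with $d>0$ cannot be converted into a degree-zero product equal to $[\text{pt}]$ by multiplying by further Schubert classes. Codimensions and degrees only add in $\normtext{QH}^*(G/P)$: if $\sigma_{u_1}\ast\cdots\ast\sigma_{u_n}\ast\sigma_{v_1}\ast\cdots\ast\sigma_{v_{d_0}} = q^e[\text{pt}]$, then comparing total codimensions forces $e \geq d$; there is no mechanism for ``absorbing'' powers of $q$. Step (2) is also unjustified: propagation of vacua applies only to the zero weight, and inserting points with nonzero weight $\ell\mu_P^*$ changes the $G$-conformal block in general (the one-dimensional-fusion/simple-current argument applies only to weights $\ell\omega_j$ with $\omega_j$ cominiscule, and even then it permutes the remaining weights by a diagram automorphism rather than fixing them). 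Moreover $\mu_P$ lies in the fundamental alcove of $L$, not of $G$, and the twisted weights $u_i^{-1}\lambda_i$ are not $G$-dominant, so the space $\mathcal{V}^\dagger_{\mathfrak{g},\vec{w}'}$ you invoke is not even defined as written. Example \ref{POSDEGEX} is a concrete warning that positive-degree facets cannot be traded for degree-zero ones: there the isomorphism fails at high level and does not extend to bundles, which it would have to do if your reduction to Theorem \ref{MNTHM} (whose facets are level-independent) were available.
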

\begin{rmk}
This result of course has no classical analogue for spaces of invariants. Furthermore
Example \ref{POSDEGEX} shows that this isomorphism does not extend to conformal blocks vector bundles in general.  It would
be interesting to know if there is a relationship between these bundles.  
\end{rmk}

\subsection{Outline of proof of the main theorem}

Now we outline the proof of the reduction theorem.  
The proof is simplest when the weight data $\vec{w}$ lies in the interior of the alcove, and the degree of
the wall is zero.  An outline in the general case is given at the beginning of section
\ref{PReductionSec}, after parahoric bundles have been introduced.
Assume that the weights lie on a degree zero facet of the multiplicative polytope corresponding to 
a cohomology product $\sigma_{u_1} \cdots \sigma_{u_n} = [pt] \in \text{H}^*(G/P)$.  

Now let $\text{Parbun}_G$ be the moduli stack of parabolic bundles with full flags over $(X, p_1, \ldots, p_n)$. 
To our weight data $\vec{w}$ we can associate a line bundle $\mathcal{L}_{\vec{w}}$ over
$\text{Parbun}_G$.  Then the space of conformal blocks $\mathcal{V}^{\dagger}_{\mathfrak{g}, \vec{w}}$ can be identified
with the space of global sections of $\mathcal{L}_{\vec{w}}$.  The first step is to show that the following morphism of stacks 
induces an isomorphism of spaces of global sections of $\mathcal{L}_{\vec{w}}$ via pullback by
\[
\text{Parbun}_G \xleftarrow{\iota} \text{Parbun}_L(0),
\]
where the parabolic $L$-bundles are degree $0$, and $\iota$ is given by extension of structure group and by twisting 
the flags over each $p_i$ by $u_i$.  The twisting by the $u_i$'s makes the pullback $\iota^*\mathcal{L}_{\vec{w}}$
isomorphic to $\mathcal{L}_{\vec{w}'}$ where $\vec{w}'$ is the weight data described in the reduction theorem.

Therefore we want to show that global sections of $\mathcal{L}_{\vec{w}}$ over $\text{Parbun}_L(0)$ extend to global sections
of $\text{Parbun}_G$ uniquely.  To show this we use a method originally due to Ressayre \cite{RSS10}: we
use another stack $\mathcal{C}$, with morphisms $\pi: \mathcal{C} \rightarrow \text{Parbun}_G$, and
$\xi: \mathcal{C} \rightarrow \text{Parbun}_L(0)$.  
\[
\includegraphics{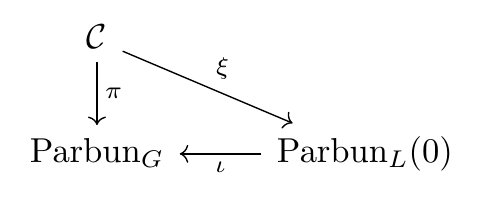}
\]
The fibers of $\mathcal{C}$ over $\widetilde{\mathcal{E}} \in \text{Parbun}_G$ are the  degree $d$ $P$-reductions
of $\mathcal{E}$ with \emph{relative position} $u_1, \ldots, u_n$ (relative to each flag).  Since
$\sigma_{u_1}  \cdots  \sigma_{u_n} = [pt]$, generically $\pi$ is one-to-one,
and in fact is birational (see \cite{BK}).  The morphism $\xi$ in terms of vector bundles $W \subset V$ and flags $F_i^{\bullet}$ in type A is
given by $(V, W, F_i^\bullet) \mapsto W \oplus V/W$  together natural induced flags on $W \oplus V/W$;
this morphism is surjective.  While the above diagram is not $2$-commutative, the pullbacks of $\mathcal{L}_{\vec{w}}$ via $\pi$ and 
$\iota \circ \xi$ can be identified over $\mathcal{C}$.  This identification depends on the weight data $\vec{w}$
being on the facet corresponding to $\sigma_{u_1}  \cdots  \sigma_{u_n} = [pt]$.

The next step of the proof is to show that $\pi$ is proper over the semistable locus of $\text{Parbun}_G$
with respect to $\mathcal{L}_{\vec{w}}$.  Starting with a one parameter family of semistable parabolic bundles
$\mathcal{E} \rightarrow X \times C$ and a family of $P$-reductions of parabolic degree $0$ over the punctured curve $C^*$,
we embed and complete the family of $P$-reductions inside a Hilbert scheme.  Then properness follows
from a no-ghosts theorem proved by Holla and Narasimhan in \cite{HNA}.
Finally, we use Zariski's main theorem to show the pullback of global sections of $\mathcal{L}$ via $\pi$ induces an isomorphism,
and finish the reduction $\text{H}^0(\text{Parbun}_G, \mathcal{L}_{\vec{w}}) \xrightarrow{\sim} \text{H}^0(\text{Parbun}_L(0), \mathcal{L}_{\vec{w}'})$ 
with a simple diagram chase.  For more details about the stack $\mathcal{C}$ and the proof
of the properness of $\pi$ over the semistable locus, see section \ref{PReductionSec}.  For the details
about $\iota$ and $\xi$, and the reduction of conformal blocks
to $\text{Parbun}_L(0)$, see the beginning of section \ref{ProofSec}.

Finally, we need to reduce to the derived subgroup $L'=[L,L]$ to finish the proof of the reduction theorem.
Again, we use a morphism of stacks
\[
\text{Parbun}_L(0) \xleftarrow{\iota'} \text{Parbun}_{L'}
\]
where $\iota'$ is given by extension of structure group.  Then by a straightforward argument in section 7
of \cite{BK}, since $\vec{w}$ is on the facet corresponding to $\sigma_{u_1}  \cdots \sigma_{u_n} = [pt]$,
$\iota'$ induces an isomorphism $\text{H}^0(\text{Parbun}_L(0), \mathcal{L}_{\vec{w}'}) \xrightarrow{\sim} \text{H}^0(\text{Parbun}_{L'}, \mathcal{L}_{\vec{w}'})$,
finishing the proof of the reduction theorem, since $\text{H}^0(\text{Parbun}_{L'}, \mathcal{L}_{\vec{w}'})$ can be identified with
a product of conformal blocks.  This step requires more care when $d>0$; for details see the discussion in section \ref{ProofSec}.

This strategy requires some modifications when the weights are not in the interior of the fundamental alcove.
In this case $\mathcal{C}$ is not proper over $\text{Parbun}_G$, and we need to enlarge $\mathcal{C}$
to a larger stack $\mathcal{Y}$ of $P$-reductions.  Unfortunately there is no extension of $\xi$ to $\mathcal{Y}$.
There is however a morphism $\xi: \mathcal{Y} \rightarrow \text{Bun}_\mathcal{G}$, where $\text{Bun}_\mathcal{G}$
is a moduli stack of \emph{parahoric bundles}.  For this reason, to prove the reduction theorem for
arbitrary weights, we need to work over stacks of parahoric bundles rather than parabolic bundles. 
Having made this change, the proof follows in essentially the same way as above.

\subsection{Outline of the paper}

The paper is organized as follows.  In section \ref{PolySec}, we review some background material on the multiplicative
polytope, quantum cohomology, conformal blocks, and parabolic bundles.  

In section \ref{ParahoricSec} we review the work of Balaji and Seshadri on parahoric
bundles, semistability of parahoric bundles, and their moduli spaces, and prove Theorem \ref{GlblSecDesc}.  
To show this we study the closed fiber of the Bruhat-Tits group schemes, and identify the fibers of the
morphism $\text{Parbun}_G \rightarrow \text{Bun}_{\mathcal{G}}$ as certain flag varieties determined by
the weight data.

In section \ref{PReductionSec} we start with an outline of the proof of the reduction theorem (Theorem \ref{MNTHM2})  in terms of equivariant
bundles.  We then construct $\mathcal{Y}$, and show that $\pi: \mathcal{Y} \rightarrow \text{Bun}_\mathcal{G}$ is proper 
over the semistable locus.  Key to the properness proof is the lifting of families $P$-reductions
of parabolic bundles to equivariant $G$-bundles over a ramified cover $Y \rightarrow X$.  This process in general
is discontinuous, but it is sufficient to assume that the family of $P$-reductions is of constant relative position.

Finally, in section \ref{ProofSec} we use the results of the previous chapters to prove the reduction result as outlined above.  We 
also prove that when $d=0$, we get an isomorphism of conformal blocks bundles.

\subsection{Notation and conventions}  

Let $X$ be a smooth, connected and projective curve over $k = \mathbb{C}$ of genus $g$, and $p_1, \ldots, p_n$
be distinct points in $X$.  Starting in section \ref{PReductionSec}, we will assume $X \cong \mathbb{P}^1$.
Assume we have sufficiently many points to make the automorphism
group of this pointed curve finite.  All schemes and algebraic stacks are defined over $k$.  
For a morphism of algebraic stacks $f: \mathcal{X} \rightarrow \mathcal{Y}$, we say $f$ is representable
if it is representable by schemes.

Let $G$ be a simply connected simple algebraic group over $k$.  Fix a Borel 
subgroup $B$, and a maximal torus $T$.  Let $W=N(T)/T$
be the Weyl group.  For a standard parabolic $P \supseteq B$ let $U = U_P$ be its
unipotent radical, and let $L = L_P$ be the Levi subgroup of $P$ containing $T$, so
that $P$ is a semi-direct product of $L$ and $U$.  Then $B_L = B \cap L$ is a Borel subgroup
of $L$.  We denote the Lie algebras of the groups $G$, $B$, $P$, $U$, $L$, $B_L$
by $\mathfrak{g}$, $\mathfrak{b}$, $\mathfrak{p}$, $\mathfrak{u}$, 
$\mathfrak{l}$, and $\mathfrak{b}_L$, respectively, and we denote by $\mathfrak{h}$ the Cartan subalgebra of
$\mathfrak{g}$ corresponding to $T$.

Let $R \subseteq \mathfrak{h}^*$ be the set of roots of $\mathfrak{g}$ with respect to
the Cartan algebra $\mathfrak{h}$, and let $R^+$ be the set of positive roots (fixed 
by the choice of Borel subgroup).  Let $\Delta = \{\alpha_1, \ldots, \alpha_r\}$ be the set of simple roots,
where $r$ is the rank of $G$.  Define the elements $x_i \in \mathfrak{h}$ by the equations $\alpha_i(x_j) = \delta_{ij}$
 We denote the Killing form on $\mathfrak{h}$ and $\mathfrak{h}^*$
using angle brackets $\langle,\rangle$; assume it is normalized so that $\langle \theta, \theta \rangle = 2$,
where $\theta$ is the highest root.  The isomorphisms $\mathfrak{h} \rightarrow \mathfrak{h}^*$ 
$\mathfrak{h} \rightarrow \mathfrak{h}^*$ induced by the Killing form we will denote by $\kappa$.
Define the coroots $\alpha_1^\vee, \ldots, \alpha_r^\vee \in \mathfrak{h}^*$
as $\alpha_i^\vee = \frac{2\alpha_i}{\langle \alpha_i, \alpha_i \rangle}$, and the fundamental
weights $\omega_i \in \mathfrak{h}^*$ by $\langle \omega_i, \alpha_j \rangle = \delta_{ij}$.
Let $\mathfrak{h}_+$ and $\mathfrak{h}_+^*$ be the dominant Weyl chambers, and 
$\Lambda_+ \subseteq \mathfrak{h}_+^*$ be the semigroup of dominant integral weights.
For a parabolic $P$, let $\Delta_P$ be the associated set of simple roots, and let $R_L$
be the set of roots of $\mathfrak{l}$ with respect to $\mathfrak{h}$.  If $P$ is maximal,
we will usually denote the excluded root and fundamental weight as $\alpha_P$ and $\omega_P$, 
respectively.  Let $U_\alpha$ denote the root group in $G$ associated to $\alpha$.

For any positive root $\beta$ we will denote the associated reflection by $s_\beta$.  Let $\{s_1, \ldots, s_r\} \in W$ 
be the generating set of simple reflections.  For a parabolic subgroup $P$ let
$W_P$ be the associated Weyl group, which is also the Weyl group of $L$.  For every coset in
$W/W_P$ there is a unique minimal length representative.  Let $W^P$ be the set of minimal representatives.
For any $w \in W$, we denote by $l(w)$ its length.  The symbols $<,>,\leq,\geq$ will denote the
Bruhat ordering in $W$.
For any $u \in W^P$ let $C_u = BuP \subseteq G/P$ be the Schubert cell associated to $u$ and 
$X_u = \overline{BuP}$ the associated Schubert variety.  Let $Z_u \subseteq X_u$ be the nonsingular
locus.  We denote by $\sigma_u \in \text{H}^0(G/P)$ the Poincar\'{e} dual of the homology class 
associated to $X_u$.

Let $\lambda_1, \ldots, \lambda_n$ be dominant integral weights and $\ell$ be a positive integer.  We call the tuple 
$\vec{w} = (\lambda_1, \ldots, \lambda_n, \ell)$ the Kac-Moody weight data associated to $\lambda_1, \ldots, \lambda_n$
and $\ell$.  The integer $\ell$ is will be called the level. Given a weight $\lambda$ and a level
$\ell$ such that $\langle \lambda, \theta \rangle \leq \ell$, we can associate a point of the fundamental alcove
$\mu = \frac{\kappa(\lambda)}{\ell}$.  We say that $\vec{w}$ is in
the multiplicative polytope if $\vec{\mu} = (\frac{\kappa(\lambda_1)}{\ell}, \ldots, \frac{\kappa(\lambda_n)}{\ell})$ is
in the multiplicative polytope.

\subsubsection{Acknowledgements}
I am grateful to Prakash Belkale for his help and encouragement in the 
preparation of this work.  I also thank Shrawan Kumar for helpful discussions about loop groups; Vikraman Balaji 
for useful clarifications on parahoric group schemes; and Angela Gibney and Swarnava Mukhopadhyay for helpful comments about 
the exposition, and potential applications to conformal blocks divisors.  This work forms part of the author's
Ph.D. thesis at the University of North Carolina, Chapel Hill.

\section{Preliminaries on the multiplicative polytope and conformal blocks}\label{PolySec}

In this section we discuss in more detail the main objects of interest in this paper.  First we recall how the quantum 
cohomology of generalized Grassmannians $G/P$ parametrize
the faces of the polytope.  Then we give a definition of conformal blocks as spaces of coinvariants,
and describe how conformal blocks can be identified with space of generalized theta functions.  The methods
we use to prove the reduction theorem rely on this identification, and for the bulk of the proof we 
will be working with conformal blocks as generalized theta functions.

\subsection{The multiplicative polytope and quantum cohomology}\label{MULTQUANT}

In this section we explain in more detail the inequalities
defining its faces of the multiplicative polytope.  As part of describing the faces we will also discuss
the small quantum cohomology ring of the flag varieties $G/P$.

Since the multiplicative polytope is a convex polyhedron, it is defined by a unique set of irredundant linear inequalities.
The inequalities are parametrized by products in the small quantum cohomology ring of the flag varieties $G/P$, where $P$ is 
a maximal parabolic.  In type A these are the complex Grassmannians.  Let us begin by reviewing the general type
combinatorics of the cohomology of $G/P$.

For any flag variety $G/P$ there is a canonical cell decomposition into \emph{Schubert cells}.  The Schubert cells
are parametrized by cosets in $W/W_P$, where $W$ is the Weyl group of $G$, and $W_P$ is the Weyl group
of $P$.  These cosets each have a unique (minimal length) representative, the set of which
is denoted $W^P$. We denote by $C_w$ the Schubert cell corresponding to $w \in W^P$, and by $\sigma_w \in \text{H}^*(G/P)$
the Poincar\'{e} dual of the homology class of $C_w$.  It is well known that the cohomology
ring $\text{H}^*(G/P)$ is generated by the Schubert classes $\sigma_w$, and therefore the cohomology ring
is determined by the set of positive numbers $c_{u,v}^w$ such that
\[
\sigma_{u} \cdot \sigma_{v} = \sum_{w \in W^P } c_{u,v}^{w} \cdot \sigma_w^*.
\]

The small quantum cohomology ring of $G/P$ is defined as follows.  Let $X = \mathbb{P}^1$ and fix 3 distinct points in
$X$: say $p_1, p_2, p_3 \in X$.  Then for any $d \geq 0$ and $w_1, w_2, w_3 \in W^P$, the \emph{Gromov-Witten invariant}
$\langle \sigma_{w_1}, \sigma_{w_2}, \sigma_{w_3} \rangle_d$ is defined as the number of degree $d$ maps $f:X \rightarrow G/P$
such that $f(p_i) \in g_i C_{w_i}$ for generic $g_i \in G$, where the invariant is zero in the case that there 
are an infinite number of such maps.  Additively the quantum cohomology ring is just $\text{QH}^*(G/P) = \text{H}^*(G/P)[q]$, where
$q$ is an indeterminant.  The quantum product is then defined in terms of the Gromov-Witten invariants:
\[
\sigma_{w_1} \ast \sigma_{w_2} = \sum_{\underset{w_3 \in W^P}{d \geq 0} } q^d\langle \sigma_{w_1},\sigma_{w_2},\sigma_{w_3} \rangle_d \cdot \sigma_{w_3}^*
\]
where $\sigma_w^*$ is the unique class such that $\sigma_w \cdot \sigma_w^* = \text{[pt]}$. Since the image of a degree zero map
$f: X \rightarrow G/P$ is just a point,  by the uniqueness of $\sigma_w^*$ it follows that 
$\langle \sigma_{w_1},\sigma_{w_2},\sigma_{w_3} \rangle_0 = c_{w_1,w_2}^{w_3}$.

Remarkably, the multiplicative polytope is determined by inequalities parametrized by products in $\text{QH}^*(G/P)$.
In \cite{TWPB}, Teleman and Woodward proved the following, building on the work of Agnihotri, Woodward, and Belkale
for the group $\text{SL}_{r+1}$ \cite{AW98,BEL01}.

\begin{thm}\cite{TWPB}
For any maximal parabolic $P \subseteq G$, and any product
$\sigma_{w_1} \ast \cdots \ast \sigma_{w_n} = q^d \text{[pt]}$, a point $\vec{\mu} \in \Delta_n(G)$ must satisfy
\[
\sum_{i=1}^n \omega_P(w_i^{-1} \mu_i) \leq d,
\]
where $\omega_P$ is the fundamental weight associated to $P$.
The multiplicative polytope $\Delta_n(G)$ as a subset of the $n$-fold alcove $\mathcal{A}^n$ is determined by the above
inequalities.
\end{thm}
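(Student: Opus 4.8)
The plan is to prove the inequality $\sum_{i=1}^n \omega_P(w_i^{-1}\mu_i) \le d$ for points $\vec\mu \in \Delta_n(G)$ by a gauge-theoretic / moduli-of-parabolic-bundles argument, and to obtain the sufficiency of these inequalities by an intersection-theoretic transversality statement. For the necessity, I would translate the condition $\mathrm{Id} \in C(\mu_1)\cdots C(\mu_n)$ into the existence of a flat $K$-connection on $\mathbb{P}^1 \setminus \{p_1,\dots,p_n\}$ with prescribed holonomy conjugacy classes $C(\mu_i)$ around the punctures. By the Mehta–Seshadri correspondence (extended to general $G$ and to the parahoric setting by the work cited in the excerpt), such a flat connection corresponds to a polystable parabolic (parahoric) $G$-bundle $\mathcal{E}$ on $(\mathbb{P}^1, \vec p)$ with weights $\mu_i$ at $p_i$. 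The key point is then that parabolic degree, being zero for this polystable bundle and non-positive for any parabolic subbundle, gives a numerical constraint when applied to the parabolic $P$-reduction determined by the Schubert data.

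The main step is therefore the following: given a product $\sigma_{w_1}\ast\cdots\ast\sigma_{w_n} = q^d[\mathrm{pt}]$ in $\mathrm{QH}^*(G/P)$, produce a $P$-reduction of $\mathcal{E}$ of degree $-d$ (or $+d$ depending on sign conventions) whose relative position with respect to the $i$-th parabolic flag is $w_i$. This is exactly the content of the Gromov–Witten invariant being nonzero: a degree-$d$ map $f:\mathbb{P}^1 \to \mathcal{E}/P$ sending $p_i$ into the translate $g_i C_{w_i}$ of the Schubert cell exists for generic $g_i$, and since $\mathcal{E}$ is polystable one can arrange the generic translates to be the ones induced by the parabolic structure. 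Pulling back the relative flag bundle, $f$ gives a section of $\mathcal{E}/P \to \mathbb{P}^1$, i.e.\ a $P$-reduction $\mathcal{E}_P$; its degree is computed by $\deg f^*\mathcal{O}(1)$ against the ample generator corresponding to $\omega_P$, namely $d$. Computing the parabolic degree of the associated line bundle $\mathcal{E}_P \times^P \mathbb{C}_{\omega_P}$ then yields $\mathrm{pardeg} = -\langle\omega_P, \text{(ordinary degree)}\rangle + \sum_i \omega_P(w_i^{-1}\mu_i)$, and semistability forces this to be $\le 0$, which after substituting the ordinary degree $d$ gives the claimed inequality $\sum_i \omega_P(w_i^{-1}\mu_i) \le d$.

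For the converse — that these inequalities cut out $\Delta_n(G)$ exactly — I would argue that if $\vec\mu \in \mathcal{A}^n$ satisfies all the inequalities strictly, then every parabolic $G$-bundle with weights $\vec\mu$ admits a semistable (indeed the generic one is polystable), hence by Mehta–Seshadri a corresponding flat unitary connection, hence $\mathrm{Id} \in C(\mu_1)\cdots C(\mu_n)$; the boundary case follows by a limiting argument using convexity and the already-known convexity of $\Delta_n(G)$ from \cite{MW98}. The nontrivial input here is that the inequalities coming from the above construction, ranging over all maximal parabolics $P$ and all quantum products equal to $q^d[\mathrm{pt}]$, are \emph{enough} to detect semistability of parabolic $G$-bundles; this is a Hilbert–Mumford / Ramanathan-type criterion for parabolic bundles, where destabilizing reductions are classified by such Schubert-position data.

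The hardest part, and the heart of the Teleman–Woodward argument, is this last point: showing the inequalities are \emph{sufficient}, equivalently that every Hilbert–Mumford test configuration destabilizing a parabolic $G$-bundle is accounted for by a quantum Schubert product $q^d[\mathrm{pt}]$ rather than a more general quantum structure constant. The subtlety is precisely why quantum (not merely ordinary) cohomology appears: reductions of negative ordinary degree must be allowed, and one must verify that the relevant Gromov–Witten invariant is nonzero exactly when such a destabilizing reduction exists and moves in the expected-dimensional family, so that the numerical inequality it produces is actually attained on the polytope. I expect to handle this by a dimension count on the space of sections of the relative $G/P$-bundle — matching the virtual dimension of the space of degree-$d$ maps to $G/P$ with the count of $P$-reductions of a generic stable bundle — invoking the birationality of the relevant evaluation map (as in \cite{BK}) to guarantee that generic choices of flags admit exactly the expected reductions. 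Everything else (the Mehta–Seshadri dictionary, the parabolic degree computation, the convexity limiting argument) is routine given the cited results.
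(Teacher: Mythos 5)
This theorem is quoted in the paper from \cite{TWPB} and is not proved there, so there is no internal proof to compare against; I can only assess your sketch on its own terms. Your overall strategy (holonomy $\to$ Mehta--Seshadri polystable parabolic bundle $\to$ semistability inequality applied to a $P$-reduction produced by the nonvanishing Gromov--Witten invariant, plus a Hilbert--Mumford-type converse) is indeed the shape of the Agnihotri--Woodward/Belkale/Teleman--Woodward argument, and your identification of where quantum cohomology enters is correct.

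There is, however, a genuine gap in your necessity step. The nonvanishing of $\langle\sigma_{w_1},\dots,\sigma_{w_n}\rangle_d$ only guarantees a degree-$d$ map $f:\mathbb{P}^1\to G/P$ meeting \emph{generic} translates $g_iC_{w_i}$; the polystable bundle $\mathcal{E}$ coming from the flat connection carries \emph{specific} flags, and you cannot ``arrange the generic translates to be the ones induced by the parabolic structure.'' To produce a $P$-reduction of $\mathcal{E}$ of degree $d$ and relative position $(w_1,\dots,w_n)$ with respect to those particular flags, one needs a properness/specialization argument for the space of such reductions as the bundle-with-flags degenerates from the generic point --- exactly the kind of no-ghosts/compactness statement that the present paper establishes over the semistable locus (Theorem \ref{PropThm}, via Holla--Narasimhan), and that Teleman--Woodward handle with a Kontsevich-compactness argument controlling bubbling. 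Without this, the inequality is only verified for generic data, not for the given $\vec{\mu}$. A second, smaller issue: in the sufficiency direction you must show that any destabilizing reduction of the trivial bundle with generic flags forces the violation of an inequality \emph{of the listed form}, i.e.\ one whose quantum product is exactly $q^d[\mathrm{pt}]$ rather than a product with a larger structure constant or extra terms; you flag this as the hard point but do not supply the reduction to the case of coefficient one, which requires an argument that the relevant evaluation map can be made generically injective (the birationality input you attribute to \cite{BK}). As it stands the proposal is a correct roadmap with the two load-bearing compactness/enumerative lemmas left unproved.
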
 
To facilitate discussion about the geometry of the multiplicative polytope, we make the following definitions.

\begin{defin}
The linear faces of $\mathcal{A}$ are called the \emph{(Weyl) chamber walls}, and the affine face is called
the \emph{alcove wall}.  A point in $\mathcal{A}^n$ is on a chamber or alcove wall if at least one $\mu_i$ in 
$(\mu_1, \ldots, \mu_n)$ is on a chamber or alcove wall.  A \emph{regular face} of $\Delta_n(G)$ is a face that 
intersects the interior of $\mathcal{A}^n$; a \emph{TW-face} is a face corresponding to a product 
$\sigma_{w_1} \ast \cdots \ast \sigma_{w_n} = q^d \text{[pt]}$ as above.  If a face of $\Delta_n(G)$ corresponds to a product 
$\sigma_{w_1} \ast \cdots \ast \sigma_{w_n} = q^d \text{[pt]}$ in $\text{QH}^*(G/P)$, we say that the 
face is \emph{degree $d$} and \emph{of type $P$}.
\end{defin}

While the above inequalities indeed determine the multiplicative polytope, they are not irredundant.  
The facets (codimension-one faces) intersecting the interior of $\mathcal{A}^n$ correspond to a subset of the
TW-faces. The subset of irredundant inequalities can be selected by replacing the quantum product
$\ast$ with the quantum-BK-product $\circledast_0$.
Belkale and Kumar first defined a degeneration of the ordinary cohomology product in \cite{BK06}, 
and proved that the reduced set of inequalities are sufficient to define
the additive eigencone.  Ressayre then proved that these inequalities are irredundant in \cite{RSS10}.
Finally, Belkale and Kumar built upon their work and Ressayre's to define a new quantum product $\circledast_0$
that gives rise to an irredundant set of inequalities for the multiplicative polytope \cite{BK}.

We will not need the product $\circledast_0$ for the reduction theorem, which holds even on TW-faces that
are not regular facets.  The regular facets are however the most accessible source of examples for the
theorem, since it is not clear in general which products correspond to planes intersecting the multiplicative polytope
non-trivially.
Lastly, we note that Belkale and Kumar prove that the product $\circledast_0$ coincides with the usual quantum product when
$G/P$ is \emph{cominiscule}, which includes all Grassmannians and Lagrangian Grassmannians, among
other flag varieties.  In general, a flag variety $G/P$ is cominiscule if $\alpha_P$ appears with coefficient
$1$ in the highest root $\theta$ of $G$.

\subsection{Conformal blocks}\label{CBDef}

In this section we define the main objects of interest in this paper: spaces and bundles of conformal blocks.
We start by describing the construction of spaces of conformal blocks as certain invariant
spaces of representations of infinite dimensional algebras related to the untwisted affine Kac-Moody
algebra associated to $G$.  We also describe the connection between conformal blocks and the multiplicative
polytope.  Finally, we introduce principal and parabolic $G$-bundles, and 
give an alternative definition of spaces of conformal blocks as spaces of generalized theta functions.

\subsubsection{Conformal blocks as spaces of coinvariants}

Let $X$ be a smooth projective and connected algebraic curve over $k=\mathbb{C}$.  We now give the definition
of the space of conformal blocks over $X$ in terms of representations of an infinite dimensional lie algebra $\hat{\mathfrak{g}}$.  
We will not use this definition in the rest of the paper.  For more details on this definition see \cite{FUSION};
for a more comprehensive treatment of conformal blocks see \cite{UENO}; for background on Kac-Moody algebras, see
\cite{KACINF}.

Let $K=\mathbb{C}((z))$ be the field of formal Laurent series with complex coefficients.  Let 
$\hat{\mathfrak{g}} = (\mathfrak{g} \otimes K) \oplus \mathbb{C}\cdot c$. The bracket for $\hat{\mathfrak{g}}$
is given by
\[
[X \otimes f, Y \otimes g] = [X,Y] \otimes fg + c \cdot \langle X, Y \rangle \text{Res}(g \cdot \text{d}f)
\]
where $X,Y \in \mathfrak{g}$, $f, g \in K$, the product $\langle, \rangle$ is the normalized Killing form,
and $\text{Res}(g \cdot \text{d}f)$ is the residue of $g \cdot \text{d}f$.  The vector $c$ is central.  This Lie algebra is
a subalgebra of the completion of the untwisted affine Kac-Moody Lie algebra associated to $G$.
Given a dominant integral weight $\lambda$, and an integer $\ell$ such that $\ell(\lambda) \leq \ell$,
there is a unique associated simple $\hat{\mathfrak{g}}$-module $\mathcal{H}_{\lambda, \ell}$.
Given a tuple of weights $(\lambda_1, \ldots, \lambda_n)$ and a level $\ell$, we write
$\mathcal{H}_{\vec{\lambda}, \ell} = \mathcal{H}_{\lambda_1, \ell} \otimes \cdots \otimes \mathcal{H}_{\lambda_n, \ell}$.

The curve $X$, along with a collection of distinct points $p_1, \ldots, p_n \in X$ determines an action 
on $\mathcal{H}_{\vec{\lambda}}$ as follows.  Let $X^* = X \setminus \{p_1, \ldots, p_n\}$.  Then
any function $f \in \mathcal{O}(X^*)$ determines $f_1, \ldots, f_n \in K$ by taking its Laurent
series at each point.  Thus given $X \otimes f \in \mathfrak{g}(X^*)$, we get an element $X \otimes f_i$ of the loop
algebra $\mathfrak{g} \otimes K \subseteq \hat{\mathfrak{g}}$ for each $i$.  The action of $\mathfrak{g}(X^*)$
on $\mathcal{H}_{\vec{\lambda}}$ is given in the obvious way:
\[
(X \otimes f)(v_1 \otimes \cdots \otimes v_n) = \sum_{i=1}^n v_1 \otimes \cdots \otimes (X \otimes f_i) v_i \otimes \cdots \otimes v_n.
\]

Then the space of conformal blocks is defined as follows.
\begin{defin}
For any tuple of weights $(\lambda_1, \ldots, \lambda_n)$ and level $\ell$,
the \emph{space of conformal blocks} $\mathcal{V}^{\dagger}_{\vec{\lambda}, \ell}(X, \vec{p})$ is defined as
\[
\mathcal{V}^{\dagger}_{\vec{\lambda}, \ell}(X, \vec{p}) = \text{Hom}_{\mathfrak{g}(X^*)}(\mathcal{H}_{\vec{\lambda}}, \mathbb{C})
\]
where $\mathbb{C}$ has the trivial $\mathfrak{g}(X^*)$-action.  We denote the dual of this space as 
$\mathcal{V}_{\vec{\lambda}, \ell}(X, \vec{p})$.
\end{defin}

It is not obvious from the above definition, but the space of conformal blocks is in fact \emph{finite dimensional}.
Furthermore, the space of conformal blocks depends on the choice of points $p_1, \ldots, p_n \in X$, but remarkably its dimension
does not.  In fact the above definition works for families of pointed curves, and even allows degeneration to \emph{stable pointed curves},
leading to the following definition.
\begin{defin}
For any tuple of weights $(\lambda_1, \ldots, \lambda_n)$, level $\ell$, and genus $g$,
the \emph{bundle of conformal blocks} $\mathbb{V}_{\vec{\lambda}, \ell}$ is the vector bundle over the moduli stack
$\overline{\mathcal{M}}_{g,n}$ of genus $g$ stable pointed curves with $n$ marked points,
such that the fiber over $(X,\vec{p}) \in \overline{\mathcal{M}}_{g,n}$ is $\mathcal{V}_{\vec{\lambda}, \ell}(X, \vec{p})$.
\end{defin}
For the construction of the sheaf of conformal blocks for families of stable pointed curves, and for the proof
that this sheaf is of finite rank and locally free over $\overline{\mathcal{M}}_{g,n}$, see \cite{UENO}.

\subsubsection{Conformal blocks as generalized theta functions}\label{CBDEF}

Let $X$ be a scheme over $k$.  Then a \emph{principal G-bundle over $X$} is a $G$-scheme (with a right $G$-action) 
$\mathcal{E}$ together with a projection morphism $\pi : \mathcal{E} \rightarrow X$ such that $\mathcal{E}$ is 
locally trivial in the \'{e}tale topology.  In other words, there is a surjective \'{e}tale cover $S \rightarrow X$ 
such that the pullback $\mathcal{E}_{|S}$ over $S$ is isomorphic as a $G$-scheme to the trivial $G$-scheme $S \times G$.

Now fix a smooth, projective, and connected curve $X$ over $k$.  A \emph{family of principal $G$-bundles over $X$}
is simply a principal $G$-bundle over $X \times S$, for any $k$-scheme $S$.  The category $\text{Bun}_G$ of families of
principal $G$-bundles over $X$ forms a smooth algebraic stack over $k$.  For an introduction to the moduli stack
of $G$-bundles see Sorger's notes \cite{SOR}; for detailed proofs of some of its basic geometric properties see
Wang's senior thesis \cite{WAN}.

Now let $\vec{p} = (p_1, \ldots, p_n)$ be distinct closed points of $X$, and assume that $G$ is semisimple, with the
associated notations and conventions described in the introduction. 
\begin{defin} A \emph{quasi-parabolic $G$-bundle $\widetilde{\mathcal{E}} = (\mathcal{E},\vec{g})$ over $X$ (with full flags)} is a principal $G$-bundle 
$\mathcal{E} \rightarrow X$ together with choices of flags
$\overline{g}_i \in (\mathcal{E}/B)_{p_i}$.   A \emph{family} of quasi-parabolic bundles parametrized by $S$ is a
principal $G$-bundle $\mathcal{E} \rightarrow X \times S$ together with sections $\overline{g}_i$ of $\mathcal{E}_{|p_i}/B \rightarrow S$
for each $i$.  We denote the moduli stack of quasi-parabolic bundles $\normtext{Parbun}_G(X, \vec{p})$,
or more concisely as $\normtext{Parbun}_G$ when $X$ and $\vec{p}$ are understood.
\end{defin}

We will often abuse terminology and call quasi-parabolic bundles simply
parabolic bundles.  The phrase ``full flags" indicates that the flags are elements of a fiber of $\mathcal{E}$ modulo
the Borel $B$, as opposed to ``partial flags", which would be elements of a fiber modulo a parabolic subgroup 
$Q \supset B$. The stack $\text{Parbun}_G$ is again a smooth Artin stack, since the forgetful morphism
$\text{Parbun}_G \rightarrow \text{Bun}_G$ is smooth.

Our methods rely on the fact that conformal blocks can be identified with spaces of \emph{generalized theta functions}.  
Let $X(B)$ denote the character group of $B$.  Then we have the following:

\begin{thm}[{\cite{LASOR, SOR99}}]
For any simple, simply-connected algebraic group $G$, there is a line bundle $\mathcal{L}$ on $\text{Parbun}_G$ such that
\[
\normtext{Pic}(\normtext{Parbun}_G) \cong \mathbb{Z}\mathcal{L} \times \prod_{i=1}^n{X(B)}.
\]
\end{thm}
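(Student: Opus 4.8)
The plan is to compute the Picard group of $\text{Parbun}_G$ by using the smooth forgetful morphism $\pi: \text{Parbun}_G \to \text{Bun}_G$, whose fibers are products of flag varieties $\prod_{i=1}^n G/B$. First I would recall the known description of $\text{Pic}(\text{Bun}_G)$: for $G$ simple and simply-connected, it is free of rank one, generated by a natural ample line bundle, by the work of Laszlo--Sorger and Kumar--Narasimhan--Ramanathan. This gives the $\mathbb{Z}\mathcal{L}$ summand, where $\mathcal{L}$ is the pullback of (a suitable twist of) the ample generator on $\text{Bun}_G$. The point is then to understand how the Picard group changes when one adds the flag data at the marked points $p_1,\dots,p_n$.

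The key step is to analyze the relative Picard group of $\pi$. Since $\text{Parbun}_G \to \text{Bun}_G$ is a Zariski-locally trivial $\prod_i G/B$-bundle (locally on $\text{Bun}_G$, a choice of trivialization of $\mathcal{E}$ near each $p_i$ identifies the fiber of $\mathcal{E}/B$ at $p_i$ with $G/B$), one has a Leray-type exact sequence relating $\text{Pic}(\text{Parbun}_G)$, $\text{Pic}(\text{Bun}_G)$, and the fiberwise Picard groups $\text{Pic}(G/B)^n \cong X(B)^n$ (the isomorphism $\text{Pic}(G/B)\cong X(B)$ being standard, via associated line bundles $\mathcal{L}_\chi = G \times^B \mathbb{C}_{-\chi}$). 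Concretely I would argue: a line bundle on $\text{Parbun}_G$ restricts on each geometric fiber to a tuple of characters $(\chi_1,\dots,\chi_n) \in X(B)^n$, giving a homomorphism $\text{Pic}(\text{Parbun}_G) \to X(B)^n$; its kernel is $\pi^*\text{Pic}(\text{Bun}_G)$ (a line bundle trivial on all fibers descends, because $\pi_*\mathcal{O} = \mathcal{O}$ and $\text{Bun}_G$ is the good quotient in the appropriate sense), so $\text{Pic}(\text{Parbun}_G) \cong \text{Pic}(\text{Bun}_G) \times X(B)^n$ once one produces a splitting. The splitting comes from the explicit line bundles $\mathcal{L}_{\chi_i}$ obtained by the associated-bundle construction applied to the universal flag $\mathcal{E}/B|_{p_i}$: these are genuine (not just fiberwise) line bundles on $\text{Parbun}_G$ whose fiberwise restrictions realize $X(B)^n$. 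Combining with $\text{Pic}(\text{Bun}_G) \cong \mathbb{Z}$ yields the claimed decomposition $\text{Pic}(\text{Parbun}_G) \cong \mathbb{Z}\mathcal{L} \times \prod_{i=1}^n X(B)$.

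The main obstacle I anticipate is the descent/exactness step: showing that a line bundle on $\text{Parbun}_G$ which is trivial on every fiber of $\pi$ is the pullback of a line bundle on $\text{Bun}_G$, and that there are no further contributions (e.g. from $H^1(\text{Bun}_G, \pi_*\mathcal{O}^\times)$-type obstructions or from the stacky structure). This requires knowing that $\text{Bun}_G$ has no nonconstant global units and that the relevant $R^1\pi_*$ of the structure sheaf vanishes — both of which follow from $\text{Bun}_G$ being connected with $H^0(\mathcal{O}) = k$ and from the fibers $\prod_i G/B$ being rational projective varieties with $H^1(\mathcal{O}) = 0$ and $\text{Pic}$ finitely generated free. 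One must also be slightly careful that everything is happening on Artin stacks rather than schemes, but since $\pi$ is representable, smooth, and projective with rational fibers, the usual arguments (e.g. as in Sorger's notes on $\text{Bun}_G$, or Laszlo--Sorger) go through verbatim. I would cite \cite{LASOR, SOR99, SOR} for the computation of $\text{Pic}(\text{Bun}_G)$ and for the descent machinery, and treat the flag-variety bookkeeping as routine.
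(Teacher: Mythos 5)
This theorem is not proved in the paper at all: it is quoted from \cite{LASOR, SOR99}, so the only meaningful comparison is with the argument in those references. Your proposal is essentially correct, but it takes a different route from the cited sources. You reduce the parabolic statement to the equality $\text{Pic}(\text{Bun}_G)\cong\mathbb{Z}$ by a seesaw/descent argument along the forgetful morphism $\pi:\text{Parbun}_G\to\text{Bun}_G$: the kernel of restriction-to-fibers is $\pi^*\text{Pic}(\text{Bun}_G)$ (by Grauert plus $\pi_*\mathcal{O}=\mathcal{O}$, exactly the mechanism this paper itself uses in the proof of Theorem \ref{GlblSecDesc}), and the associated-bundle construction $\chi\mapsto\mathcal{L}_\chi$ splits the resulting sequence onto $X(B)^n$. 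That is sound; the one point you should make explicit is that the fiberwise character tuple is well defined independently of the chosen fiber, which follows from connectedness of $\text{Bun}_G$ (true since $G$ is simply connected). By contrast, Laszlo--Sorger compute $\text{Pic}(\text{Parbun}_G)$ directly via the uniformization theorem, presenting $\text{Parbun}_G$ as a quotient of a product of affine flag varieties (or of $\mathcal{Q}_G\times\prod_i G/B$) by $G(\mathcal{O}(X\setminus p))$ and using the Kac--Moody description of the Picard group of the affine Grassmannian. The two approaches are not independent: the hard input in your argument, $\text{Pic}(\text{Bun}_G)\cong\mathbb{Z}$, is itself established by that same uniformization machinery, so your proof buys a cleaner separation of the parabolic bookkeeping from the deep part, at the cost of still having to cite the deep part. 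As a proof of the displayed statement it is acceptable provided $\text{Pic}(\text{Bun}_G)\cong\mathbb{Z}$ is taken as known.
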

\begin{rmk}
In types $A$ and $C$, $\mathcal{L}$ is a determinant of cohomology line bundle; in types $B$, $D$, and for $G=G_2$, 
$\mathcal{L}$ is the Pffafian line bundle, a canonical square root of a given determinant line bundle.
\end{rmk}

By identifying weights with characters, Kac-Moody weight data $\vec{w} = (\lambda_1, \ldots, \lambda_n, \ell)$ corresponds to a line bundle 
$\mathcal{L}_{\vec{w}} = \mathcal{L}^\ell \otimes \mathcal{L}_{\lambda_1} \otimes \cdots \otimes \mathcal{L}_{\lambda_n}$ 
over $\text{Parbun}_G$.  Suppose that we have a parabolic bundle $\mathcal{E}$ with full flags 
$\overline{g}_1, \ldots, \overline{g}_n$, corresponding to a point in $\text{Parbun}_G$.  Then the fiber of 
$\mathcal{L}_{\lambda_i}$ is the fiber over $\overline{g}_i$ of $\mathcal{E} \times^B \chi_i \rightarrow \mathcal{E}/B$,
where $\chi_i$ is the character of $B$ corresponding to $\lambda_i$.  

The line bundle $\mathcal{L}$ is a canonical root of a determinant of cohomology line bundle.
Let $V$ be a finite dimensional irreducible representation of $G$.
Let $\mathcal{E}$ be a principal $G$-bundle over $X$, and $\mathcal{E}(V)$ the associated vector bundle.  Then we
make the following definition.

\begin{defin}
The \emph{determinant of cohomology} line bundle $D(V)$ over $\text{Parbun}_G$ associated to the representation $V$ 
is the line bundle whose fiber over $(\mathcal{E}, \overline{g}_1, \ldots, \overline{g}_n)$ is 
\[
\left( \bigwedge^{\text{max}} \text{H}^0(X, \mathcal{E}(V)) \right)^* \otimes \bigwedge^{\text{max}} \text{H}^1(X, \mathcal{E}(V)).
\]
\end{defin}

The bundle $D(V)$ can be identified in $\text{Pic}(\text{Parbun}_G)$ as the line bundle corresponding to trivial characters
and a level $\ell$ equal to the \emph{Dynkin index} of $V$: 
let $f: \mathfrak{g}_1 \rightarrow \mathfrak{g}$ be an embedding of simple Lie algebras, and assume that
the Killing forms $\langle, \rangle_1$ and $\langle, \rangle$ of the algebras are normalized so that
$\langle \theta_1, \theta_1 \rangle_1 = \langle \theta, \theta \rangle =2$, where $\theta_1, \theta$
are the highest roots of $\mathfrak{g}_1$, $\mathfrak{g}$, respectively.
Then there is a unique integer $m_f$ (the Dykin index) such that for any $x, y \in \mathfrak{g}$, 
\[
\langle f(x), f(y) \rangle = m_f \langle x, y \rangle_1.
\]
For a faithful representation $V$ of $\mathfrak{g}$, the Dynkin index is defined as the Dynkin index of 
$\mathfrak{g} \rightarrow \mathfrak{sl}(V)$.
See Theorem 5.4 in \cite{SNR} and section 6 of \cite{LASOR} for a proof of the identification of the
level of $D(V)$.

Our main interest in parabolic bundles is that the global section of the line bundles $\mathcal{L}_{\vec{w}}$ can be
identified with spaces of conformal block.  The following theorem was proven in the form we need by Laszlo and Sorger in \cite{LASOR}.

\begin{thm}{\cite{LASOR}}
Given Kac-Moody weight data $\vec{w}$,  the space of global sections 
$\normtext{H}^0(\normtext{Parbun}_G, \mathcal{L}_{\vec{w}})$ is naturally isomorphic to the
space of conformal blocks $\mathcal{V}^{\dagger}_{\mathfrak{g}, \vec{w}}(X, \vec{p})$.
\end{thm}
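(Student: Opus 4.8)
The plan is to realize both sides of the asserted isomorphism as invariants arising from the representation theory of the affine Kac--Moody algebra $\hat{\mathfrak g}$, by combining the uniformization of $\text{Parbun}_G$ by loop groups with the Borel--Weil theorem in the affine setting. Since $G$ is semisimple, the theorem of Drinfeld--Simpson guarantees that every principal $G$-bundle on $X$ is trivial (fppf-locally in the base) on the affine curve $X^* = X\setminus\{p_1,\dots,p_n\}$; assuming $n\geq 1$, as we may, this gives a presentation of the moduli stack as a double quotient
\[
\text{Parbun}_G \;\cong\; \bigl[\,\Gamma_X\,\backslash\,\textstyle\prod_{i=1}^n (LG/\mathcal{I})\,\bigr],
\]
where $LG=G(k((z)))$, where $\mathcal{I}\subseteq L^+G=G(k[[z]])$ is the Iwahori subgroup (the preimage of $B$, so that $LG/\mathcal{I}$ is the full affine flag variety of $G$), and where $\Gamma_X=G(\mathcal{O}(X^*))$ acts on the $i$-th factor through Laurent expansion at $p_i$. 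Crucially, this presentation exhibits $\prod_i(LG/\mathcal{I})\to\text{Parbun}_G$ as an fppf $\Gamma_X$-torsor, so that line bundles and their spaces of global sections descend.

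First I would compute the pullback of $\mathcal{L}_{\vec w}$ along this presentation. On each factor one has $\text{Pic}(LG/\mathcal{I})\cong\mathbb{Z}\Lambda_0\oplus\bigoplus_{j=1}^r\mathbb{Z}\omega_j$, and the point is that $\mathcal{L}_{\vec w}$ pulls back to the external tensor product $\boxtimes_{i=1}^n\mathcal{L}_{\ell\Lambda_0+\lambda_i}$, where $\mathcal{L}_{\ell\Lambda_0+\lambda_i}$ is the line bundle of central charge $\ell$ with finite part $\lambda_i$; the tensor factor $\mathcal{L}^\ell$ contributes the $\ell\Lambda_0$ and each $\mathcal{L}_{\lambda_i}$ contributes $\lambda_i$. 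The affine Borel--Weil theorem (Kumar, Mathieu) then produces a $\Gamma_X$-equivariant identification
\[
\text{H}^0\bigl(LG/\mathcal{I},\,\mathcal{L}_{\ell\Lambda_0+\lambda_i}\bigr)\;\cong\;\mathcal{H}_{\lambda_i,\ell}^{\vee},
\]
the restricted dual of the integrable highest weight $\hat{\mathfrak g}$-module of level $\ell$ and finite highest weight $\lambda_i$. Taking the (completed) external tensor product over $i$ and then $\Gamma_X$-invariants, fppf descent along the torsor gives
\[
\text{H}^0(\text{Parbun}_G,\mathcal{L}_{\vec w})\;\cong\;\bigl(\mathcal{H}_{\vec\lambda,\ell}^{\vee}\bigr)^{\Gamma_X}.
\]
Finally, since $\Gamma_X$ is a connected ind-affine group scheme with Lie algebra $\mathfrak g\otimes\mathcal{O}(X^*)=\mathfrak g(X^*)$ acting locally finitely on the integrable module, its invariants coincide with the $\mathfrak g(X^*)$-invariants; and $\bigl(\mathcal{H}_{\vec\lambda,\ell}^{\vee}\bigr)^{\mathfrak g(X^*)}$ is canonically $\text{Hom}_{\mathfrak g(X^*)}(\mathcal{H}_{\vec\lambda,\ell},\mathbb{C})=\mathcal{V}^{\dagger}_{\mathfrak g,\vec w}(X,\vec p)$ — this last canonical identification being where the finite-dimensionality of conformal blocks enters, ensuring that the annihilator of $\mathfrak g(X^*)\cdot\mathcal{H}_{\vec\lambda,\ell}$ inside the full dual already lies in the restricted dual. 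Naturality in $(X,\vec p)$ is inherited from the functoriality of the uniformization presentation.

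The main obstacle is the middle step: identifying the pullback of $\mathcal{L}_{\vec w}$, and in particular of the distinguished generator $\mathcal{L}\in\text{Pic}(\text{Parbun}_G)$, with the level-one line bundle $\boxtimes_i\mathcal{L}_{\Lambda_0}$ on the affine flag varieties. This is a statement about $\text{Pic}$ of the stack and the pullback map $\text{Pic}(\text{Parbun}_G)\to\bigoplus_i\text{Pic}(LG/\mathcal{I})$, and it is delicate precisely because $\mathcal{L}$ is only type-dependently a determinant-of-cohomology bundle: in types $A$ and $C$ it is $D(V)$ for the defining representation, but in types $B$, $D$ and for $G_2$ it is merely a canonical square root (the Pfaffian bundle) of such a determinant line bundle, so matching it with $\mathcal{L}_{\Lambda_0}$ reduces to a central-charge computation via the Dynkin-index formula recalled above, and it is here that the simple-connectedness of $G$ is used — to guarantee that $\text{Pic}(\text{Parbun}_G)$ is torsion-free and the square root is unique. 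A secondary technical point, handled by now-standard uniformization machinery (Beauville--Laszlo, Heinloth) once Drinfeld--Simpson is in place, is checking that the quotient presentation is genuinely an fppf $\Gamma_X$-torsor so that $\text{H}^0$ of a descended line bundle computes the $\Gamma_X$-invariants upstairs, and that global sections behave well on the ind-projective ind-scheme $\prod_i(LG/\mathcal{I})$.
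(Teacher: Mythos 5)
Your proposal is correct and follows essentially the same route as the source the paper cites for this statement (Laszlo--Sorger), namely uniformization of $\text{Parbun}_G$ as a quotient of a product of affine flag varieties $LG/\mathcal{I}$ by $G(\mathcal{O}(X^*))$, identification of the pullback of $\mathcal{L}_{\vec{w}}$ with the level-$\ell$ line bundles $\mathcal{L}_{\ell\Lambda_0+\lambda_i}$, the affine Borel--Weil theorem, and descent to $\mathfrak{g}(X^*)$-invariants, including the correct handling of the restricted-dual versus full-dual subtlety. The paper gives no independent argument for this theorem --- it is stated by citation --- so your reconstruction matches the intended proof.
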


Finally, conformal blocks in genus zero are connected to the multiplicative polytope in the following way.  
This theorem follows from the description of conformal blocks
as generalized theta functions discussed below, and Theorem 5.2 in \cite{BK}.

\begin{thm}
If $X \cong \mathbb{P}^1$, then for any tuple of weights $(\lambda_1, \ldots, \lambda_n)$ and level $\ell$, there exists an integer $N>0$ such that
$\normtext{dim}(\mathcal{V}^{\dagger}_{\vec{N\lambda}, N\ell}(X, \vec{p})) > 0$ if and only if 
$(\frac{\kappa(\lambda_1)}{\ell}, \ldots, \frac{\kappa(\lambda_n)}{\ell}) \in \mathcal{A}^n$ lies in the multiplicative
polytope.
\end{thm}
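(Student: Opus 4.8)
The plan is to deduce the multiplicative polytope characterization from the identification of conformal blocks with spaces of generalized theta functions over $\text{Parbun}_G$ together with the Belkale--Kumar geometric invariant theory of semistable parabolic $G$-bundles (Theorem 5.2 in \cite{BK}). The bridge between the two is the translation dictionary between Kac--Moody weight data $\vec{w}=(\lambda_1,\ldots,\lambda_n,\ell)$ and points $\vec{\mu}=(\kappa(\lambda_1)/\ell,\ldots,\kappa(\lambda_n)/\ell)\in\mathcal{A}^n$. Concretely, by the Laszlo--Sorger theorem we have $\mathcal{V}^\dagger_{\vec{N\lambda},N\ell}(X,\vec p)\cong \text{H}^0(\text{Parbun}_G,\mathcal{L}_{\vec{w}}^{\otimes N})$, so $\dim \mathcal{V}^\dagger_{\vec{N\lambda},N\ell}>0$ for some $N>0$ precisely when $\mathcal{L}_{\vec{w}}$ is on the effective cone of $\text{Parbun}_G$ (more precisely, when some positive power of $\mathcal{L}_{\vec w}$ has a nonzero section). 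The first step I would carry out is to make this reformulation precise: $\mathcal{L}_{\vec w}$ (a priori only a class, since negative scalar multiples of weights may appear) is \emph{semiample} or at least \emph{effective up to a power} exactly when the corresponding GIT quotient $\text{Parbun}_G^{ss}(\vec w)$ is nonempty.

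The second step is to invoke the GIT description of semistability, which is where the polytope enters. For $X=\mathbb{P}^1$, a parabolic $G$-bundle together with the linearization $\mathcal{L}_{\vec w}$ has a semistable locus that is nonempty if and only if the slope conditions defining the Mumford--Narasimhan parabolic semistability can be satisfied; by Belkale--Kumar's Theorem 5.2 (building on Teleman--Woodward and the symplectic picture of \cite{MW98}), this happens exactly when $\vec\mu$ lies in $\Delta_n(G)$. The key point is that on $\mathbb{P}^1$ the trivial bundle $\mathcal{E}_0$ is the generic bundle, and a choice of full flags $\overline g_i\in(\mathcal{E}_0/B)_{p_i}$ gives a point whose semistability with respect to $\mathcal{L}_{\vec w}$ is governed by a finite list of numerical inequalities indexed by one-parameter subgroups, which after the standard translation become precisely the Teleman--Woodward inequalities $\sum_i \omega_P(w_i^{-1}\mu_i)\le d$. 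Thus $\text{Parbun}_G^{ss}(\vec w)\neq\emptyset \iff \vec\mu\in\Delta_n(G)$.

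For the third step I would close the loop between ``effective up to a power'' and ``semistable locus nonempty.'' One direction is immediate: a nonzero section of $\mathcal{L}_{\vec w}^{\otimes N}$ forces the base locus to be proper, hence $\text{Parbun}_G^{ss}(\vec w)$ is nonempty. For the converse I would use that $\text{Parbun}_G$ (over $\mathbb{P}^1$, with enough marked points for finite automorphisms) admits a good moduli space of semistable objects that is projective, so if the semistable locus is nonempty then some power of the ample line bundle descending $\mathcal{L}_{\vec w}$ has sections, which pull back to nonzero elements of $\text{H}^0(\text{Parbun}_G,\mathcal{L}_{\vec w}^{\otimes N})\cong\mathcal{V}^\dagger_{\vec{N\lambda},N\ell}$. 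The passage to a sufficiently divisible $N$ absorbs both the need for a genuine line bundle (clearing denominators in $\vec w$) and the ampleness on the GIT quotient.

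The main obstacle is the careful bookkeeping in the translation between the GIT numerical criterion for parabolic $G$-bundles and the Teleman--Woodward inequalities: one must match one-parameter subgroups of $G$ with the Schubert data $(P,w_i)$, verify that the Hilbert--Mumford weight of $\mathcal{L}_{\vec w}$ along such a subgroup equals $d-\sum_i\omega_P(w_i^{-1}\mu_i)$ up to the normalization of the Killing form, and check that it suffices to test semistability of the trivial bundle (genericity on $\mathbb{P}^1$). All of this is exactly the content of Theorem 5.2 of \cite{BK}, so in the write-up I would cite that theorem for the equivalence $\text{Parbun}_G^{ss}(\vec w)\neq\emptyset\iff\vec\mu\in\Delta_n(G)$ and restrict the original argument to the (routine) passage between nonvanishing of conformal blocks and nonemptiness of the semistable locus via the projectivity of the good moduli space.
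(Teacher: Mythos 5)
Your proposal is correct and follows essentially the same route as the paper, which proves this statement simply by combining the Laszlo--Sorger identification $\mathcal{V}^{\dagger}_{\vec{N\lambda}, N\ell} \cong \normtext{H}^0(\normtext{Parbun}_G, \mathcal{L}_{\vec{w}}^{\otimes N})$ with Theorem 5.2 of \cite{BK}. The additional detail you supply about the GIT/Hilbert--Mumford translation is exactly the content you correctly attribute to \cite{BK}, so nothing further is needed.
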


\section{Parahoric bundles}\label{ParahoricSec}
The goal of this section is to prove some basic results about line bundles and their global sections on
stacks of parahoric bundles.  The main fact that we need is that conformal blocks descend to moduli stacks 
of parahoric bundles.  We need to work with parahoric bundles to do the properness calculation in section 
\ref{PReductionSec}, and we need conformal blocks to descend to complete the proof of the reduction theorem
in section \ref{ProofSec}.

In section \ref{Moduli} we begin with a brief discussion of moduli spaces of parabolic $G$-bundles,
which will serve to motivate the introduction of parahoric bundles.  In the section \ref{ParahBun}
we introduce parahoric bundles and their moduli stacks, and review the main results of Balaji
and Seshadri in \cite{BAS}, where they show that these stacks can be identified with stacks 
of equivariant $G$-bundles over a Galois cover of our curve $Y \rightarrow X$.  In section
\ref{flags} we study the special fibers of parahoric group schemes and the relative flag structure
of stacks of parahoric bundles.  Finally, in section \ref{PProp}, we prove that conformal
blocks descend to stacks of parahoric bundles.

Throughout this section, $G$ is a semisimple, connected and simply-connected algebraic group
over $k = \mathbb{C}$, and $X$ is a smooth, projective and connected curve over $k$ of
arbitrary genus.

\subsection{Moduli spaces of parabolic bundles}\label{Moduli}

The stack of parabolic bundles $\text{Parbun}_G$, while algebraic, smooth and irreducible, is not proper, or even
separated.  However, it is possible to construct projective moduli spaces of parabolic bundles.
Moduli spaces $\mathbb{M}_{\vec{w}}$ of parabolic bundles depend on weight data $\vec{w} = (\lambda_1, \ldots, \lambda_n, \ell)$.
If the weight data corresponds to an interior point of $\mathcal{A}^n$, then the $k$-points
of this moduli space correspond to \emph{grade equivalence classes} of \emph{semistable} parabolic $G$-bundles
with flags in $G/B$.  The locus of semistable parabolic bundles with respect to $\vec{w}$
is defined as the set of bundles $\widetilde{\mathcal{E}} \in \text{Parbun}_G$ such that there exists
a section $s \in \text{H}^0(\text{Parbun}_G, \mathcal{L}_{\vec{w}}^N)$ for some $N$ such that $s(\widetilde{\mathcal{E}}) \neq 0$.
Grade equivalence identifies bundles that must be identified in any separated moduli space of $G$-bundles; for the
definition of grade equivalence see \cite{TWPB}.

Alternatively, semistability can be defined in terms of $P$-reductions.  Let $P \supseteq B$ be 
a maximal parabolic.  Consider a 
parabolic $G$-bundle $\widetilde{\mathcal{E}} = (\mathcal{E}, \overline{g}_1, \ldots, \overline{g}_n) \in \text{Parbun}_G$ and a $P$-reduction 
$\phi: X \rightarrow \mathcal{E}/P$.  We can trivialize $\mathcal{E}$ over an open set $U$ containing the points 
$p_1, \ldots, p_n$, and then clearly there are unique Weyl group elements $w_1, \ldots, w_n \in W^P$ such that 
$\phi(p_i) \in \tilde{g}_i C_{w_i}$.  The $w_i$'s do not depend on the trivialization and together are called the \emph{relative position}
of the $P$-reduction in $\widetilde{\mathcal{E}}$.  Then semistability of $\widetilde{\mathcal{E}}$ is defined in
\cite{TWPB} as follows.
We say that 
$(\mathcal{E}, \overline{g}_1, \ldots, \overline{g}_n)$ is \emph{semistable} if for every maximal parabolic $P$ and
every $P$-reduction the following inequality is satisfied
\[
\sum_{i=1}^n \langle \omega_P, w_i^{-1}\lambda_i \rangle \leq \ell d,
\]
where $w_1, \ldots, w_n \in W^P$ give the relative position of the reduction in $\widetilde{\mathcal{E}}$,
and $d$ is the degree of the reduction.  The bundle is \emph{stable} if strict inequality is satisfied for every $P$-reduction.
We say that $\sum_{i=1}^n \langle \omega_P, w_i^{-1}\lambda_i \rangle - \ell d$ is the \emph{parabolic degree} of the
$P$-reduction.
For a proof that these two definitions of semistability are equivalent, see Proposition \ref{SEMIEQ}.

\subsubsection{Moduli spaces on the boundary of $\mathcal{A}$}

When one or more of the weights are on a chamber wall, we can still construct a moduli space $\mathbb{M}_{\vec{w}}$,
however it is too small to be a moduli space of parabolic bundles with full flags.
Instead $\mathbb{M}_{\vec{w}}$ is a  moduli space of parabolic bundles with partial flags in $G/Q$, for some $Q \supseteq B$.  

A weight $\lambda$ corresponds to a standard parabolic $Q \subseteq G$ in the following way: if $\Delta_Q$
is the set of simple roots $\alpha$ such that $\langle \lambda , \alpha \rangle = 0$, then $Q$ is the
parabolic corresponding to $\Delta_Q$.  So given weights
$\lambda_1, \ldots, \lambda_n$ we get parabolics $Q_1, \ldots, Q_n$.  Then we define $\text{Parbun}_G(\vec{Q})$
to be the moduli stack of principal $G$-bundles over $X$ along with choices of flags $\overline{g}_i \in \mathcal{E}_{|p_i}/Q_i$
for each $i$.  The space $\mathbb{M}_{\vec{w}}$ is the moduli space of grade equivalence class of semistable
parabolic bundles in $\text{Parbun}_G(\vec{Q})$.

When one or more weight is on the alcove wall, i.e. if $\langle \lambda_i, \theta \rangle = \ell$, then 
$\mathbb{M}_{\vec{w}}$ identifies parabolic bundles in a similar way, but will identify bundles with
different underlying principal $G$-bundles.  In this case $\mathbb{M}_{\vec{w}}$ is naturally a moduli
space of \emph{parahoric bundles}.  Parahoric bundles are by definition torsors over a smooth
group scheme $\mathcal{G}$ over $X$ associated to \emph{parahoric subgroups} $\mathcal{P}_1, \ldots, \mathcal{P}_n$,
which in turn are determined by our choice of weight data $\vec{w}$.  

There is a natural morphism $\text{Parbun}_G \rightarrow \text{Bun}_{\mathcal{G}}$, where $\text{Bun}_{\mathcal{G}}$
is the stack of parahoric bundles.  The rest of the section is devoted to introducing 
parahoric bundles and studying this morphism.  Specifically, we will show that it is exactly analogous to
the quotient morphism $\text{Parbun}_G \rightarrow \text{Parbun}_G(\vec{Q})$, and use this description
to show that conformal blocks descend to $\text{Bun}_{\mathcal{G}}$.

\subsection{Parahoric bundles}\label{ParahBun}

Parahoric bundles are torsors over parahoric group schemes over $X$
which are generically the trivial group scheme $X^* \times G$, but near each $p_i$ are smooth group schemes originally
arising in Bruhat-Tits theory \cite{BTII}.  The parabolic bundles above 
can be identified with parahoric bundles, but there are parahoric bundles that do not have 
an underlying principal $G$-bundle and therefore cannot be described as parabolic bundles.  

Our primary technique for working with parahoric bundles is to work with associated equivariant bundles over
a ramified extension $Y \rightarrow X$, following the work of Balaji and Seshadri in \cite{BAS}; in this way we can view 
parahoric bundles as ``orbifold bundles", with equivariant bundles acting as orbifold charts.  We use this
description of parahoric group schemes and bundles to describe the closed fibers of the group schemes over each $p_i$.

\subsubsection{Basic definitions}

We mostly follow the notation and conventions of Tits in \cite{TITS}.  
Let $K = k((z))$ and $A = k[[z]]$. Then $K$ is naturally a local field with valuation $\nu$.
Assume that $G$ is a connected, simply connected, semi-simple group over $k$.  We will denote the associated split group over $K$ as $G(K)$. 
Choose a maximal torus $T$ of $G$, and let $X^*(T(K)) = \text{Hom}_K(T(K), \mathbb{G}_m^K)$ denote the group of $K$-valued characters of $T$, 
and $X_*(T(K)) = \text{Hom}_K(\mathbb{G}_m^K, T(K))$ the group of cocharacters.  Let  $V = X_*(T(K)) \otimes \mathbb{R}$.  Then an 
\emph{affine root} $\alpha + k$ is an affine function on $V$ given by a root $\alpha \in R$ and an integer $k$ (it
will be clear whether $k$ is an integer or a field in context).  The vector space $V$ acts on the \emph{apartment} $A(T(K))$
associated to $T(K)$, making $A(T(K))$ an affine space.  A choice of origin in $A(T(K))$ allows us to identify $A(T(K))$ with $V$,
which we fix from now on.  For every affine root $\alpha +k$, there is an associated \emph{half-apartment} $A_{\alpha + k}$
defined as $A_{\alpha + k} = (\alpha + k)^{-1}([0, \infty))$, with its boundary denoted $\delta A_{\alpha + k}$.
The \emph{chambers} of $A(T(K))$ are the connected components of the complement of all the walls $\delta A_{\alpha + k}$.
When $G$ is simple, the chambers are simplices and the fundamental alcove $\mathcal{A}$ is identified with the chamber bounded
by the walls corresponding to the simple roots $\alpha_1, \ldots, \alpha_r$, and the affine root $\theta - 1$.
When $G$ is semisimple the chambers are polysimplices, and when $G$ is not semisimple, the chambers are products 
of polysimplices and real affine spaces.  

The \emph{Bruhat-Tits building} $\mathcal{B}(G(K))$ is a space constructed by gluing together the apartments associated to each torus.
Associated to each affine root $\alpha + k$ is a subgroup $X_{\alpha + k}$ of $U_\alpha(K)$: the choice of origin of $A(T(K))$
determines an isomorphism $U_\alpha(K) \cong \mathbb{G}_{a,K}$, and $X_{\alpha + k}$ is defined as $\nu^{-1}([k, \infty))$ in
$U_\alpha(K)$ with respect to this isomorphism, which justifies writing $X_{\alpha + k}$ as $U_\alpha(z^{-k}A)$.  
Then the building $\mathcal{B}(G(K))$ has a $G(K)$-action such that $X_{\alpha + k} \cong U_\alpha(z^{-k}A)$ 
fixes the half-apartment $A_{\alpha + k}$ pointwise.  Furthermore $\mathcal{B}(G(K))$ is the union of $g A(T(K))$ for
$g \in G(K)$, and the normalizer $N(K)$ of $T(K)$ fixes $A(T(K))$.

For simplicity assume $G$ is simple.  Just like we associate a parabolic subgroup to a weight $\lambda$, we can
associate a \emph{parahoric subgroup} $\mathcal{P}$ of $G(K)$ to each pair $(\lambda, \ell)$ such that 
$\ell(\lambda) = \langle \lambda, \theta \rangle \leq \ell$.

\begin{defin}
Let $\lambda$ be a dominant integral weight, and $\ell$ be a level, corresponding to a point $\mu = \frac{1}{\ell}\kappa(\lambda)$ of the
fundamental alcove.  The point $\mu$ lies in the interior of a unique face $F$ of the building $\mathcal{B}(G(K))$.
Then the \emph{parahoric subgroup associated to $(\lambda, \ell)$} is defined as the stabilizer $\mathcal{P}$ in $G(K)$ of $F$.
Alternatively, $\mathcal{P}$ is generated as a subgroup as follows:
\[
\mathcal{P} = \langle T(A), U_\alpha(z^{-k}A) \mid \mu \in A_{\alpha + k} \rangle.
\]
\end{defin} 
\begin{rmk}
Letting $G(A') = G(k[[z^{1/m}]])$ for some integer $m$, Balaji and Sesahdri showed that $\mathcal{P}$ can be identified 
(non-canonically) with an invariant subgroup of $G(A')$ under an action by a finite cyclic group $\Gamma$ \cite{BAS}.  
We will make this explicit in section \ref{FIBERS}.
\end{rmk}
\begin{rmk}
Parahoric subgroups also correspond to subsets of the vertices $\{ v_0, \ldots, v_r \}$
 of the affine Dynkin diagram of $G$, with the empty set corresponding
to an Iwahori subgroup $\mathcal{I}$.  The Iwahori subgroup corresponding to $B$ is defined as the inverse
image of $B$ with respect to the evaluation map $ev_0 : G(A) \rightarrow G$.  Each standard parabolic $P$
corresponds in the same way to a parahoric subgroup contained in $G(A)$, with vertex set the same as $P$.
The vertex set of $G(A)$ is the set of all vertices $v_1, \ldots, v_r$ of the finite Dynkin diagram.
The bijection between vertex sets and parahoric subgroups is inclusion preserving, and so
parahoric subgroups corresponding to vertex sets containing the vertex $v_0$ are not contained in $G(A)$.
The vertex set corresponding to $(\lambda, \ell)$ is the subset of $\{v_1, \ldots, v_r\}$ corresponding
to simple roots $\alpha_i$ such that $\langle \lambda, \alpha_i \rangle = 0$, adding $v_0$ if in addition
$\langle \lambda, \theta \rangle = \ell$.
\end{rmk}
\begin{rmk}
When $G$ is semisimple, the definition of a parahoric subgroup is exactly the same.
However in this case there is a highest weight $\theta_i$ and level $\ell_i$ for each factor
of the Dynkin diagram of $G$, making the identification of weight data $\vec{w}$ and a
point in the alcove more complicated.
\end{rmk}

\subsubsection{Parahoric group schemes, bundles, and associated loop groups}

One of the main results of \cite{BTII} is the existence of a group scheme $\mathcal{G}$, smooth over $\text{Spec}(A)$, such
that $\mathcal{G}(K) \cong G(K)$ and $\mathcal{G}(A) \cong \mathcal{P}$, for any parahoric $\mathcal{P}$.  These
group schemes are \emph{\'{e}ttof\'{e}}, which means the following: given any $A$-scheme $\mathcal{N}$
and $K$-morphism $u_K: \mathcal{G}_K \rightarrow \mathcal{N}_K$ such that $u(\mathcal{G}(A)) \subseteq \mathcal{N}(A)$,
there is a unique extension to an $A$-morphism $u: \mathcal{G} \rightarrow \mathcal{N}$.  This implies the 
uniqueness of $\mathcal{G}$ up to unique isomorphism.

Then we have the following definitions.

\begin{defin}
To weight data $\vec{w} = (\lambda_1, \ldots, \lambda_n, \ell)$ we associate a smooth group scheme $\mathcal{G}$ over $X$,
which is the trivial group scheme $X^* \times G$ over $X^* = X \setminus \{p_1, \ldots, p_n\}$, and in a formal
neighborhood of each $p_i$ is isomorphic to the parahoric group scheme associated to each $(\lambda_i, \ell)$.  A
\emph{parahoric $\mathcal{G}$-bundle} is simply a $\mathcal{G}$-torsor; that is, a scheme over $X$ with a right $\mathcal{G}$-action
that is \'{e}tale-locally isomorphic to $\mathcal{G}$.  We denote the moduli stack of $\mathcal{G}$-bundles by
$\text{Bun}_{\mathcal{G}}$.
\end{defin}

We will also use the \emph{loop groups} associated to the parahoric group schemes.  For a $k$-algebra $R$,
let $R[[z]]$ and $R((z))$ denote the ring of formal power series and formal Laurent series with coefficients in $R$,
respectively.  Note that $R((z))$ is a $K$-algebra, and $R[[z]]$ is an $A$-algebra.
Then the loop groups associated to $G(K)$ and $\mathcal{P}$ are defined as follows.
\begin{defin}
For $K = k((z))$, the \emph{loop group} $LG$ associated to $G(K)$ is defined as the ind-scheme given by the functor
\[
R \mapsto G(R((z))).
\]
for any $k$-algebra $R$.  
The loop group $L^+\mathcal{P}$ associated to the parahoric subgroup $\mathcal{P}$ is the (infinite
dimensional) affine group scheme associated to the functor
\[
R \mapsto \mathcal{G}(R[[z]]),
\]
where $\mathcal{G}$ is the group scheme associated to $\mathcal{P}$.
\end{defin}

Loop groups and their affine flag varieties
are studied in much greater generality by Pappas and Rapoport in \cite{PR08}.

\subsubsection{Parahoric bundles as quotients of equivariant bundles}

Fixing weight data $\vec{w}$ we can understand parahoric bundles as quotients of bundles on a Galois cover $p:Y \rightarrow X$
that are equivariant with respect to the action of the Galois group $\Gamma$.

Let $E$ be a $\Gamma$-equivariant principal $G$-bundle over a Galois cover $p: Y \rightarrow X$, with a right $G$-action and 
left $\Gamma$-action.  If $y \in \mathcal{R}$ is a ramification point of $p$ then by the work done in
\cite{TWPB} we can find a formal neighborhood $N_y$ of $y$ such that $E$ is isomorphic over $N_y$
to the trivial bundle $N_y \times G$, with the action of $\Gamma_y$ given by $\gamma \cdot (\omega, g) = (\gamma \omega , \tau(\gamma) g)$,
where $\tau: \Gamma_y \rightarrow G$ does not depend on the formal parameter $\omega$.  We say that the \emph{local type}
of $E$ at $y$ is the conjugacy class of $\tau$.  The local type does not depend on the trivialization and is the same for
every ramification point over $p_i$.  The local type of $E$ is the collection of local types 
$\boldsymbol{\tau} = (\tau_1, \ldots, \tau_n)$ over each $p_i$.

\begin{defin}
The stack of $(\Gamma, G)$-bundles of local type $\boldsymbol{\tau}$ will be denoted $\equivBun$.  This
stack is a smooth and connected Artin stack.
\end{defin}

The weight data $\vec{w}$ determines ramification indices and local type representations.  
Let $m_i$ be a positive integer such that $\frac{m_i }{\ell}\cdot \lambda_i$ is integral.  Then it is well known 
that if $n \geq 3$ or $g \geq 2$ there exists a Galois covering $p: Y \rightarrow X$, ramified over each $p_i$ 
with ramification index $m_i$.  Let $\Gamma$ be the Galois group of $Y$ over $X$.
To each weight $\lambda_i$ we can associate a coweight $\frac{m_i }{\ell}\cdot \kappa(\lambda_i)$ and the associated cocharacter 
$\chi_i: \mathbb{G}_m \rightarrow T \subseteq G$. Let $\zeta_i$ be a primitive $m_i$-th root of unity, and 
let $\tau_i: \mathbb{Z}_{m_i} \rightarrow T$ be defined as $\tau_i(\gamma) = \chi_i(\zeta_i^\gamma)$.  
If $y \in Y$ is a ramification point, and $\Gamma_y$ the isotropy subgroup of $\Gamma$ at $y$, then 
$\Gamma_y \cong \mathbb{Z}_{m_i}$ and we can therefore think of $\tau_i$ as a representation of $\Gamma_y$.

Let $\mathcal{G}$ be the parahoric group scheme associated to $\vec{w}$ as above.  Then one of the main theorems in
\cite{BAS} is the following.

\begin{thm}{\cite{BAS}}
Given weight data $\vec{w} = (\lambda_1, \ldots, \lambda_n, \ell)$, we have a natural isomorphism of stacks
\[
\equivBun \xrightarrow{\sim} \normtext{Bun}_{\mathcal{G}}.
\]
where the ramification indices of $Y \rightarrow X$, local type $\boldsymbol{\tau}$, and $\mathcal{G}$ are
determined by $\vec{w}$ as above.
\end{thm}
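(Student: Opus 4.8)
The plan is to establish the equivalence $\equivBun \xrightarrow{\sim} \normtext{Bun}_{\mathcal{G}}$ by exhibiting mutually inverse functors at the level of groupoids, working locally around each marked point and gluing over $X^*$, where both sides reduce to ordinary $G$-bundles. First I would recall the local picture: near a ramification point $y$ over $p_i$, with $\Gamma_y \cong \mathbb{Z}_{m_i}$ acting on $\spec{k[[z^{1/m_i}]]}$ and on $G$ via $\tau_i$, a $\Gamma_y$-equivariant $G$-bundle on the formal disk is determined by descent data; the key input is that the ring of $\Gamma_y$-invariants $k[[z^{1/m_i}]]^{\Gamma_y} = k[[z]] = A$, so taking $\Gamma_y$-invariants of the pushforward of the equivariant bundle produces a torsor over precisely the Bruhat--Tits group scheme $\mathcal{G}_i$ attached to $(\lambda_i,\ell)$. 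This is essentially the content recorded in the remark after the definition of parahoric subgroups: $\mathcal{P}_i$ is the $\Gamma_i$-invariant subgroup of $G(k[[z^{1/m_i}]])$, so $L^+\mathcal{P}_i$-torsors correspond to $(\Gamma_i, G)$-equivariant torsors on the ramified disk.

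Next I would globalize. Given $E \in \equivBun$ with local type $\boldsymbol{\tau}$, define $\mathcal{F}(E) := (p_* E)^{\Gamma}$, the sheaf of $\Gamma$-invariants of the pushforward along $p\colon Y \to X$. Over $X^*$ the cover is étale, so $(p_*E)^\Gamma$ is an ordinary $G$-torsor, matching $\mathcal{G}|_{X^*} = X^* \times G$; over each formal neighborhood of $p_i$ the local computation above identifies $(p_*E)^\Gamma$ with a $\mathcal{G}_i = \mathcal{G}|_{N_{p_i}}$-torsor. A flat-descent / Beauville--Laszlo gluing argument then shows $\mathcal{F}(E)$ is a well-defined $\mathcal{G}$-torsor on $X$, functorially in $E$ and compatibly with base change, so $\mathcal{F}$ is a morphism of stacks $\equivBun \to \normtext{Bun}_{\mathcal{G}}$. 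Conversely, given a $\mathcal{G}$-torsor $\mathcal{E}'$, one reconstructs an equivariant bundle by pulling back along $p$ and "saturating": over $X^*$ take $p^*\mathcal{E}'$ with its natural $\Gamma$-action, and near each $y$ over $p_i$ extend using the fixed trivialization in which $\mathcal{G}_i$-torsors become $\Gamma_i$-equivariant $G$-torsors on the ramified disk. This yields $\mathcal{G}^{-1}\colon \normtext{Bun}_{\mathcal{G}} \to \equivBun$, and I would check $\mathcal{F} \circ \mathcal{G}^{-1} \simeq \mathrm{id}$ and $\mathcal{G}^{-1}\circ \mathcal{F} \simeq \mathrm{id}$ by comparing again over $X^*$ (where both operations are inverse to each other by étale descent for the $\Gamma$-action) and over the formal disks (where it is the local statement).

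The main obstacle I expect is the \emph{local} identification on the ramified disk: precisely matching a $\Gamma_i$-equivariant $G$-bundle of fixed local type $\tau_i$ with a torsor over the Bruhat--Tits group scheme $\mathcal{G}_i$, including checking that this is an equivalence of groupoids and not merely a bijection on isomorphism classes, and that it is compatible with families (i.e. that the construction $E \mapsto (p_*E)^\Gamma$ commutes with arbitrary base change $S \to \spec k$, which requires flatness of $p_*E$ and that invariants commute with the relevant base changes — this is where tameness of $\Gamma$, i.e. $\mathrm{char}\, k = 0$, is used). A secondary technical point is verifying that the global object $\mathcal{F}(E)$ is genuinely a $\mathcal{G}$-torsor — smoothness of $\mathcal{G}$ guarantees local triviality in the étale topology once one knows local triviality, which follows from the local model. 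Once these local and descent statements are in place, connectedness and smoothness of both stacks (already asserted for $\equivBun$, and standard for $\normtext{Bun}_{\mathcal{G}}$ since $\mathcal{G}$ is smooth with geometrically connected fibers) make the isomorphism automatic, but the substantive work is entirely in the faithfully-flat-descent bookkeeping around the $p_i$. I would cite \cite{BAS} for the parts already carried out there and focus the exposition on the family version needed for the stack statement.
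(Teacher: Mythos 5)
There is a genuine gap, and it sits at the heart of your construction. You define the forward functor as $\mathcal{F}(E) := (p_*E)^\Gamma$, the invariant direct image of the equivariant bundle itself, and claim that near $p_i$ this is a torsor under the parahoric group scheme $\mathcal{G}_i$. This fails whenever the local type $\tau_i$ is nontrivial. In the local model $E|_{N_y}\cong N_y\times G$ with $\gamma\cdot(\omega,g)=(\gamma\omega,\tau_i(\gamma)g)$, a $\Gamma_y$-invariant section corresponds to $f\in G(A')$ with $f(\gamma\omega)=\tau_i(\gamma)f(\omega)$; evaluating at the fixed point $\omega=0$ forces $\tau_i(\gamma)=e$ for all $\gamma$. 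So for nontrivial $\tau_i$ the sheaf $(p_*E)^\Gamma$ has \emph{no} sections over a formal (or \'etale) neighborhood of $p_i$, hence is not locally trivial and not a $\mathcal{G}$-torsor at all. The identification of $\mathcal{P}_i$ with $G(A')^{\Gamma}$ that you invoke uses the \emph{conjugation} action $(\gamma f)(\omega)=\tau_i(\gamma)f(\gamma^{-1}\omega)\tau_i(\gamma)^{-1}$ (whose invariants at $\omega=0$ land in the nontrivial group $C_G(\tau_i)$), not the left-translation action that governs sections of $E$; conflating the two is where the argument breaks. Note also that even this identification is non-canonical: it is conjugation by the element $\Delta_i\in G(K')$ attached to $\mu_i$, and your proposed inverse ("pull back along $p$ and saturate") silently needs this twist, since $p^*\mathcal{G}$ is not the trivial group scheme near a ramification point.

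The repair is exactly the route of Balaji--Seshadri, which the paper sketches after the theorem statement and makes explicit in the transition-function discussion of Section 4: fix a reference bundle $F\in\equivBun$ of the same local type, form the adjoint group scheme $\mathcal{G}_F=F\times^G G$ (conjugation action) and the bitorsor $\normtext{Isom}(E,F)$, and take invariant direct images of \emph{these}. Because the $\Gamma$-action on $\normtext{Isom}(E,F)$ and on $\mathcal{G}_F$ is by conjugation, invariant local sections exist at the ramification points, $p_*^\Gamma(\mathcal{G}_F)$ is identified (via conjugation by $\Delta_i$, using the \'ettof\'e property for uniqueness) with the parahoric group scheme $\mathcal{G}$, and $p_*^\Gamma(\normtext{Isom}(E,F))$ is the desired $\mathcal{G}$-torsor. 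Your surrounding remarks about \'etale descent over $X^*$, tameness in characteristic $0$, and checking the equivalence in families are all reasonable and would be needed in a full proof, but they cannot rescue the construction until the torsor is built from $\normtext{Isom}(E,F)$ rather than from $E$ itself.
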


A quick sketch of the proof will be useful for what follows.  Balaji and Seshadri identify $\equivBun$
with a stack of torsors over a group scheme $\mathcal{G}_0'$ over $Y$ equivariant with respect to the Galois action.
They then show that this stack is isomorphic to the stack of torsors over the invariant pushforward 
group scheme $\mathcal{G}' = p_*^{\Gamma} \mathcal{G}'_0$ over $X$.  Finally they identify $\mathcal{G}'$
with the parahoric group scheme $\mathcal{G}$.  

This theorem allows one to define the semistability of parahoric bundles in terms of semistability of equivariant
bundles.  It is easy to see that this definition does not depend on the choice of Galois cover $Y$.

\begin{defin}
We say that a $(\Gamma, G)$-bundle is \emph{$\Gamma$-semistable} if for every maximal parabolic $P \subseteq G$
and every $\Gamma$-equivariant $P$-reduction $\sigma: Y \rightarrow \mathcal{F}/P$ we have 
$\sigma^* \mathcal{F}(\mathfrak{g}/\mathfrak{p}) \geq 0$.  Stability is defined in the same way, replacing inequalities
with strict inequalities. 
A $\mathcal{G}$-bundle is said to be \emph{semistable} (resp. \emph{stable}) with respect to some weight data
$\vec{w}$ if the corresponding $(\Gamma, G)$-bundle is semistable (resp. stable).
\end{defin}

\subsection{Relative flag structures for parahoric bundles}
\label{flags}

In this section we study the morphism 
$\text{Bun}_{\mathcal{G}_\mathcal{Q}} \rightarrow \text{Bun}_\mathcal{G}$, where $\mathcal{G}_\mathcal{Q}$
is the group scheme associated to subgroups $\mathcal{Q}_i$ of the parahoric groups $\mathcal{P}_i$ defining $\mathcal{G}$.
The main results of this section 
are the construction of an isomorphism of $\text{Bun}_{\mathcal{G}_\mathcal{Q}}$ with the stack $\text{Parbun}_\mathcal{G}(\vec{\mathcal{Q}})$
 of $\mathcal{G}$-bundles with flags (Proposition \ref{FLGSTR}), and the identification of the fibers of 
$\text{Bun}_{\mathcal{G}_{\mathcal{Q}}} \rightarrow \text{Bun}_{\mathcal{G}}$ (Corollary \ref{ProjFib})
as connected flag varieties.
This can be seen as a generalization of the well-known facts that $\text{Bun}_{\mathcal{G}_{\mathcal{I}}} \cong \text{Parbun}_G$ --
here $\mathcal{G}_{\mathcal{I}}$ is the parahoric group scheme associated to the Iwahori subgroup $\mathcal{I}$ as
each point $p_1, \ldots, p_n$ --  and that the forgetful morphism $\text{Parbun}_G \rightarrow \text{Bun}_G$
has projective and connected geometric fibers.
We will use these results in section \ref{PProp} to show that conformal blocks descend to stacks of parahoric 
bundles.

\subsubsection{Special fibers of parahoric group schemes}\label{FIBERS}

Let $\mathcal{G}$ be a parahoric group scheme over $\spec{A}$ corresponding to a parahoric subgroup $\mathcal{P}$.
We want to describe the special fiber $\mathcal{G}(k)$.  Choose a rational cocharacter $\mu$ in the interior of the face of the fundamental
alcove corresponding to $\mathcal{P}$.  Let $m$ be an integer such that $m \mu$ is integral, and let 
$K' = k((z^{1/m})) = k((\omega))$ and $A' = k[[z^{1/m}]] = k[[\omega]]$.   Write $\Delta$ for the restriction of $m \mu$ to $\spec{K'}$,
and let $\tau$ be the representation of $\Gamma \cong \mathbb{Z}_m$ given by $\tau(\gamma) = m \mu(\zeta^\gamma)$,
where $\zeta$ is a primitive $m^{th}$ root of unity.  Then the isomorphism of $\mathcal{G}$ with the invariant pushforward
group scheme $\mathcal{G}'$ is the morphism induced by conjugation by $\Delta$.  In particular 
\[
\Delta \mathcal{P} \Delta^{-1} =\Delta \mathcal{G}(A) \Delta^{-1} = \mathcal{G}'(A) = G(A')^\Gamma,
\]
where the action of $\Gamma$ on $G(A')$ is the one induced by $\tau$: for $f \in G(A')$, 
$(\gamma f)(\omega) = \tau(\gamma) f(\gamma^{-1} \omega) \tau(\gamma)^{-1}$.  This identification induces an isomorphism of the
group schemes $\mathcal{G} \cong \mathcal{G}'$ because both group schemes are
\'{e}ttof\'{e}.  
	
Now considering $k$ as an $A$-module via the isomorphism $k \cong A/(z)$, we have $k \otimes_A A' \cong k[[\omega]]/(\omega^m)$. 
We will write this ring as $k[\epsilon]$. Then
$\mathcal{G}'(k) = G(k[\epsilon])^\Gamma$.  Furthermore the homomorphisms $k \rightarrow k[\epsilon] \rightarrow k$
induce homomorphisms $G \rightarrow G(k[\epsilon]) \rightarrow G$ that commutes with the action of $\Gamma$, which
is just conjugation by $\tau$ on $G$.  Then we have homomorphisms $C_G(\tau) \rightarrow \mathcal{G}'(k) \rightarrow C_G(\tau)$,
with the composition being the identity.  Then we have the following description of the special fiber of $\mathcal{G}$.

\begin{prop} \label{CLSDFIBER}
Fixing $\mu$, there is a canonical pair of homomorphisms $C_G(\tau) \xrightarrow{\iota} \mathcal{G}(k) \xrightarrow{\pi} C_G(\tau)$
with $\pi \circ \iota = Id$.  Furthermore, the kernel of $\pi$ is isomorphic to the group of $m^{th}$ order $\Gamma$-invariant deformations of 
the identity of $G$, and is the unipotent radical of $\mathcal{G}(k)$.  Finally, for any scheme $S$ the natural map
$L^+\mathcal{P}(S) \rightarrow \mathcal{G}(k)(S)$ is surjective.  Its composition with $\pi$ is given by conjugation by
$\Delta$ and setting $\omega$ equal to zero.  
\end{prop}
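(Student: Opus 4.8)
\emph{Proof plan.} The plan is to prove all four assertions for the invariant pushforward group scheme $\mathcal{G}'$, and then transport them along the isomorphism $\mathcal{G} \cong \mathcal{G}'$ given by conjugation by $\Delta$ that was set up just above. Under that isomorphism $\mathcal{G}(k)$ becomes $G(k[\epsilon])^{\Gamma}$ with $k[\epsilon] = k[\omega]/(\omega^m)$, and $\iota$, $\pi$ are precisely the homomorphisms induced on $\Gamma$-invariants by the $k$-algebra maps $k \hookrightarrow k[\epsilon]$ and $k[\epsilon] \twoheadrightarrow k$ ($\epsilon \mapsto 0$). First I would check that these two maps are $\Gamma$-equivariant for the relevant actions: on the constant and residue copies of $G$ the twisted $\Gamma$-action degenerates to conjugation by $\tau$, whose fixed subgroup is exactly $C_G(\tau)$, so both $\iota$ and $\pi$ are defined between $C_G(\tau)$ and $G(k[\epsilon])^\Gamma$. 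Then $\pi \circ \iota = \mathrm{Id}$ is immediate from functoriality, since the composite ring map $k \to k[\epsilon] \to k$ is the identity. Transporting back through $\Delta$-conjugation yields the canonical pair $C_G(\tau) \xrightarrow{\iota} \mathcal{G}(k) \xrightarrow{\pi} C_G(\tau)$ with $\pi \circ \iota = \mathrm{Id}$.

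Next I would identify $\ker\pi$. Tautologically $\ker\pi$ consists of the $\Gamma$-invariant elements of $G_1 := \ker\!\big(G(k[\epsilon]) \to G\big)$, and $G_1$ is by definition the group of deformations of the identity of $G$ of order $m$; this gives the first claim. For the unipotent radical assertion I would use the congruence filtration $G_1 \supseteq G_2 \supseteq \cdots \supseteq G_m = 1$, where $G_j = \ker(G(k[\epsilon]) \to G(k[\omega]/\omega^j))$, whose successive quotients $G_j/G_{j+1}$ are each isomorphic to the additive group of $\mathfrak{g}$; hence $G_1$ is a connected split unipotent group. The $\Gamma$-action preserves this filtration and acts linearly on each graded piece, and since $\Gamma$ is finite and the base field has characteristic zero, taking $\Gamma$-invariants is exact along this tower (the relevant $H^1(\Gamma,-)$ vanish on the vector-group subquotients), so $\ker\pi = G_1^{\Gamma}$ is again connected split unipotent. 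It is normal in $\mathcal{G}(k)$ as the kernel of $\pi$, and by the section $\iota$ the quotient $\mathcal{G}(k)/\ker\pi \cong C_G(\tau)$ is reductive, being the centralizer in $G$ of the semisimple subgroup $\tau(\Gamma)$. A connected normal unipotent subgroup with reductive quotient is the unipotent radical, which finishes that part.

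For the loop group statement, write $S = \spec{R}$ (the statement is local on $S$). Then $L^+\mathcal{P}(S) = \mathcal{G}(R[[z]])$, and the natural map to $\mathcal{G}(k)(S) = \mathcal{G}_k(R)$ is reduction modulo $z$. Since $\mathcal{G}$ is smooth over $A$, and $R[[z]] \twoheadrightarrow R$ has kernel $(z)$ with $R[[z]]$ being $(z)$-adically complete, formal smoothness lets one lift an $R$-point of $\mathcal{G}_k$ successively along the square-zero thickenings $R[z]/z^{n+1} \to R[z]/z^{n}$ and then pass to the limit $\mathcal{G}(R[[z]]) = \varprojlim_n \mathcal{G}(R[z]/z^n)$; this gives surjectivity. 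For the composition with $\pi$, transport through $\mathcal{G} \cong \mathcal{G}'$: conjugation by $\Delta$ identifies $\mathcal{G}(R[[z]])$ with $G(R[[\omega]])^{\Gamma}$ using $R[[z]] \otimes_A A' \cong R[[\omega]]$, and under this identification the natural map to $\mathcal{G}(k)(S)$ followed by $\pi$ is reduction $\omega \mapsto 0$ into $C_G(\tau)(R) \subseteq G(R)$. Tracing back, the composite sends $f \mapsto (\Delta f \Delta^{-1})|_{\omega = 0}$, i.e. it is conjugation by $\Delta$ followed by setting $\omega$ equal to zero, as asserted.

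The step I expect to be the main obstacle is the unipotent radical identification, and specifically checking that $\ker\pi = G_1^{\Gamma}$ stays \emph{connected} after passing to $\Gamma$-invariants; this is where one genuinely uses characteristic zero (vanishing of $H^1(\Gamma,-)$ on the vector-group subquotients of the congruence filtration, so that the filtration of $G_1$ descends to one of $G_1^\Gamma$ with connected graded pieces). Everything else is bookkeeping: the $\Gamma$-equivariance of the two evaluation maps, the reductivity of $C_G(\tau)$, and the fact that conjugation by $\Delta$ — which a priori has a pole in $\omega$ — genuinely carries $\mathcal{G}(R[[z]])$ into $G(R[[\omega]])^{\Gamma}$, which is just the already-established identity $\Delta\mathcal{P}\Delta^{-1} = \mathcal{G}'(A) = G(A')^{\Gamma}$ spread out over the base ring $R$.
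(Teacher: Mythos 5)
Your proof is correct, and it lives inside the same framework the paper sets up just before the proposition (transporting everything through $\Delta$-conjugation to $G(k[\epsilon])^{\Gamma}$, with $\iota$ and $\pi$ induced by $k\hookrightarrow k[\epsilon]\twoheadrightarrow k$), but two of the substantive sub-steps are argued by genuinely different means. For the kernel of $\pi$, the paper gets unipotence by embedding $f\in\ker\pi$ into $\mathrm{GL}(V\otimes_k k[\epsilon])$ for a faithful representation $V$, and gets connectedness by the rescaling homotopy $f(\omega)\mapsto f(a\omega)$, $a\to 0$, which visibly stays $\Gamma$-invariant; you instead run the congruence filtration $G_1\supseteq G_2\supseteq\cdots\supseteq G_m=1$ with vector-group graded pieces and use vanishing of $\mathrm{H}^1(\Gamma,-)$ on those pieces to see that $G_1^{\Gamma}$ is an iterated extension of vector groups, hence connected split unipotent. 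That is essentially the same $\mathrm{H}^1$-vanishing computation the paper does deploy, only it deploys it for the surjectivity statement rather than for connectedness. On surjectivity the routes truly diverge: the paper stays on the invariant side and proves $\mathrm{H}^0(\Gamma,G_{n+1})\to \mathrm{H}^0(\Gamma,G_n)$ surjective level by level via the cohomology exact sequence of $1\to K_n\to G_{n+1}\to G_n\to 1$ and $\mathrm{H}^1(\Gamma,K_n)=0$, whereas you lift an $R$-point of $\mathcal{G}_k$ directly along the square-zero thickenings $R[z]/z^{n+1}\to R[z]/z^{n}$ using formal smoothness of $\mathcal{G}$ over $A$ and pass to the limit $\mathcal{G}(R[[z]])=\varprojlim_n\mathcal{G}(R[z]/z^n)$. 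Your version is shorter and uses the Bruhat--Tits smoothness of $\mathcal{G}$ as a black box; the paper's version in effect re-derives the needed lifting by hand on $\Gamma$-invariants, keeping the argument internal to the equivariant picture. Both treatments of $\pi\circ\iota=\mathrm{Id}$ and of the description of the composite with $\pi$ agree. No gaps.
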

\begin{proof}
The kernel of $\pi$ contains the unipotent radical of $\mathcal{G}(k)$ since $C_G(\tau)$ is reductive.
 Let $f \in \text{ker}(\pi)$, and let $G \rightarrow \text{GL}(V)$ be any faithful representation.  Then clearly, 
we can identify $f$ with a unipotent element of $\text{GL}(V \otimes_k k[\epsilon])$, and therefore $\text{ker}(\pi)$
is unipotent.  Furthermore, shifting the parameter of $f$ by $a \in k$ gives an $m^{th}$-order deformation $f(a\omega)$
that is still $\Gamma$-invariant and in $\text{ker}(\pi)$.  Taking $a \rightarrow 0$ connects $f$ with the identity of $\mathcal{G}(k)$,
showing that $\text{ker}(\pi)$ is connected.  Therefore $\text{ker}(\pi)$ is the unipotent radical of $\mathcal{G}(k)$.

It remains to show that  
$L^+\mathcal{P}(S) \rightarrow \mathcal{G}(k)(S)$ is surjective for any scheme $S$.  This argument is essentially
identical to part of the proof of Lemma 2.5 in \cite{TWPB}.  

We use non-abelian cohomology, letting $\Gamma$ act on $G(S[[\omega]])$ and $G(S[\epsilon])$ as above, following the notation
and conventions in Serre's \emph{Cohomologie galoisienne} \cite{GALCOH}.  Let $G_n = G(S[[\omega]]/\omega^n)$.
We want to show that the natural map 
$\mathcal{P} \cong \text{H}^0(\Gamma, G(S[[\omega]])) \rightarrow \text{H}^0(\Gamma, G(S[\epsilon])) \cong \mathcal{G}(k)$ is surjective.
It is sufficient to show that each $\text{H}^0(\Gamma, G_{n+1}) \rightarrow \text{H}^0(\Gamma, G_n)$ is surjective,
since 
\[
\lim_{\leftarrow} \text{H}^0(\Gamma, G_n) \subseteq \text{H}^0(\Gamma, G(S[[\omega]])).
\]
Since $G$ is smooth, it follows that the morphism $G_{n+1} \rightarrow G_n$ is surjective for all $n$.
Then consider the short exact sequence 
\[
1 \rightarrow K_n \rightarrow G_{n+1} \rightarrow G_n \rightarrow 1,
\]
which induces an exact sequence of pointed sets
\[
\text{H}^0(\Gamma, G_{n+1}) \rightarrow \text{H}^0(\Gamma, G_n) \rightarrow \text{H}^1(\Gamma, K_n).
\]
It is easy to see then that the kernel $K_n$ is a $\mathbb{C}$-vector space: consider for example the case $n=1$,
where $K_n$ is isomorphic to the Lie algebra of $G$.  Therefore, since $\Gamma$ is finite, we see that $\text{H}^1(\Gamma, K_n)$
is trivial by \cite[Proposition 6]{HS53}. This proves the desired surjectivity.
\end{proof}

\subsubsection{The image of parahoric subgroups in $C_G(\tau)$}

We want to describe the image of parahoric subgroups $\mathcal{Q} \subseteq \mathcal{P}$ in $C_G(\tau)$.  
For a root $\alpha$, let $(\mu, \alpha)$ denote the pairing of characters and cocharacters, and square
brackets $[x]$ denote the smallest integer less than or equal to $x$.  Then the parahoric subgroup $\mathcal{P}$
associated to $\mu \in \mathcal{A}$ is defined as the group
\[
\mathcal{P} = \langle T(A), U_\alpha(z^{-[(\mu, \alpha)]} A), \alpha \in R \rangle,
\]
where $U_\alpha$ denotes the root group associated to $\alpha$ and $U_\alpha(z^{-[(\mu, \alpha)]} A)$
is the group $X_{\alpha + [(\mu, \alpha)]}$ fixing the affine half-apartment $A_{\alpha + [(\mu, \alpha)]}$.  
We have the following proposition.

\begin{lemma}
Let $\mathcal{Q} \subseteq \mathcal{P}$ be a parahoric subgroup corresponding to $\mu'$.  Then the
image of $\mathcal{Q}$ in $C_G(\tau)$ is exactly the group generated by $T$ and the root groups $U_\alpha$
such that $(\mu, \alpha) = [(\mu', \alpha)]$.
\end{lemma}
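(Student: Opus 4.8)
The plan is to compute the image of $\mathcal{Q}$ under $\pi: \mathcal{G}(k) \to C_G(\tau)$ by tracking what happens to the generators of $\mathcal{Q}$ listed in the displayed formula for a parahoric subgroup. Recall from Proposition \ref{CLSDFIBER} that $\pi$ is, concretely, the map ``conjugate by $\Delta$, then set $\omega = 0$'', where $\Delta = m\mu$ restricted to $\spec{K'}$. So the first step is to write $\mathcal{Q} = \langle T(A), U_\alpha(z^{-[(\mu',\alpha)]}A), \alpha \in R\rangle$ and push each generator through this recipe. The torus part is immediate: $T(A)$ conjugated by $\Delta$ is still $T(A')^\Gamma$, and setting $\omega = 0$ gives $T \subseteq C_G(\tau)$, so $T$ is always in the image.

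The substance is in the root groups. First I would pass to the cover $K' = k((\omega))$, $\omega = z^{1/m}$, where $U_\alpha(z^{-[(\mu',\alpha)]}A)$ becomes $U_\alpha(\omega^{-m[(\mu',\alpha)]}A')$. Conjugating by $\Delta = m\mu$ rescales the root group coordinate by $\omega^{m(\mu,\alpha)}$ — this is the standard identity $\Delta\, x_\alpha(t)\, \Delta^{-1} = x_\alpha(\omega^{m(\mu,\alpha)} t)$ for the root homomorphism $x_\alpha$. Hence after conjugation the generator sits inside $U_\alpha\big(\omega^{\,m(\mu,\alpha) - m[(\mu',\alpha)]}A'\big)$. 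Now reduce mod $\omega^m$ (i.e. set $\omega = 0$ in $k[\epsilon] = k[[\omega]]/(\omega^m)$, then project to $C_G(\tau)$ via $\pi$): the image of $U_\alpha(\omega^j A')$ in $C_G(\tau)$ is the full one-parameter root subgroup $U_\alpha$ when $j \le 0$, and is trivial when $j > 0$ (the group is killed on reduction, since all its elements are $\equiv \mathrm{Id}$ mod $\omega$, landing in $\ker\pi$). So $U_\alpha$ appears in the image of $\mathcal{Q}$ precisely when $m(\mu,\alpha) - m[(\mu',\alpha)] \le 0$, that is, $(\mu,\alpha) \le [(\mu',\alpha)]$. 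But $\mathcal{Q} \subseteq \mathcal{P}$ forces $[(\mu',\alpha)] \le (\mu,\alpha)$ for the root $-\alpha$ as well — more precisely, the containment of parahoric subgroups translates into $[(\mu',\alpha)] = [(\mu,\alpha)]$ or $[(\mu',\alpha)] = [(\mu,\alpha)] - 1$ depending on whether $\alpha$ is in the ``wall directions'' of the face of $\mathcal{Q}$, and when one combines $U_\alpha$ and $U_{-\alpha}$ the surviving direction is exactly $(\mu,\alpha) = [(\mu',\alpha)]$. I would make this precise by noting: $\alpha$ contributes a nontrivial root group to the image iff $m(\mu,\alpha) \le m[(\mu',\alpha)]$, and combined with $\mu \in \mathcal{A}$ (so $0 \le (\mu,\alpha) \le$ the relevant bound) and $(\mu',\alpha) \le (\mu,\alpha)$ componentwise from $\mathcal{Q}\subseteq\mathcal{P}$, equality $(\mu,\alpha) = [(\mu',\alpha)]$ is forced whenever the image is nontrivial.

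Finally I would assemble: since $\pi$ restricted to the image of $L^+\mathcal{Q}$ is surjective onto a subgroup generated by $T$ and these surviving $U_\alpha$'s (using the surjectivity of $L^+\mathcal{P}(S) \to \mathcal{G}(k)(S)$ from Proposition \ref{CLSDFIBER} applied to $\mathcal{Q}$ in place of $\mathcal{P}$), and since $C_G(\tau)$ is generated by $T$ together with the root groups $U_\alpha$ for which $(\mu,\alpha) \in \mathbb{Z}$ (these being exactly the roots fixed by conjugation by $\tau$), the image is $\langle T, U_\alpha : (\mu,\alpha) = [(\mu',\alpha)]\rangle$, as claimed. The main obstacle I anticipate is the bookkeeping with the floor functions and the two sign conventions — making sure that ``$\mathcal{Q} \subseteq \mathcal{P}$'' is correctly translated into the inequalities on $[(\mu',\alpha)]$ versus $[(\mu,\alpha)]$ for each root $\alpha$ and its negative, and checking the boundary cases where $(\mu,\alpha)$ is an integer (so that $\mu$ lies on a wall). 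Everything else is a direct computation with the conjugation-by-$\Delta$ recipe of Proposition \ref{CLSDFIBER}.
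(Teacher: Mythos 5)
Your proposal is correct and follows essentially the same route as the paper: the paper's proof is precisely the one-line computation $\Delta\, U_\alpha(z^{-[(\mu',\alpha)]}A)\,\Delta^{-1} = U_\alpha(z^{(\mu,\alpha)-[(\mu',\alpha)]}A)$, after which the root group survives reduction to the special fiber exactly when the exponent vanishes. Your additional bookkeeping about $\mathcal{Q}\subseteq\mathcal{P}$ forcing the exponent to be nonnegative (so that ``$\le 0$'' collapses to ``$=0$'') is a correct, if slightly more verbose, way of saying what the paper leaves implicit.
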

\begin{proof}
For any $\alpha \in R$, $\Delta U_\alpha(z^{-[(\mu', \alpha)]} A) \Delta^{-1} = U_\alpha(z^{(\mu, \alpha) - [(\mu', \alpha)]} A)$.
Therefore the result follows.
\end{proof}

\begin{cor}
$C_G(\tau)$ is the group generated by $T$ and the root groups $U_\alpha$ such that $(\mu, \alpha)$ is an integer,
and the image of the Iwahori subgroup $\mathcal{I}$ in $C_G(\tau)$ is the group generated
by $T$ and root groups $U_\alpha$ such that $(\mu, \alpha)$ is a nonpositive integer.
\end{cor}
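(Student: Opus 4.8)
The plan is to obtain both statements as direct applications of the preceding lemma, using Proposition \ref{CLSDFIBER} only to supply the surjectivity that the lemma itself does not record.

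For the description of $C_G(\tau)$, apply the lemma with $\mathcal{Q} = \mathcal{P}$, so that $\mu' = \mu$. The condition $(\mu,\alpha) = [(\mu',\alpha)]$ then becomes $(\mu,\alpha) = [(\mu,\alpha)]$, which holds precisely when $(\mu,\alpha) \in \mathbb{Z}$; hence the image of $\mathcal{P}$ under $\mathcal{P} \to \mathcal{G}(k) \xrightarrow{\pi} C_G(\tau)$ is the subgroup generated by $T$ and the root groups $U_\alpha$ with $(\mu,\alpha) \in \mathbb{Z}$. It remains to check that this image is all of $C_G(\tau)$. By Proposition \ref{CLSDFIBER} the natural map $L^+\mathcal{P}(k) \to \mathcal{G}(k)$ is surjective, and $L^+\mathcal{P}(k) = \mathcal{G}(A) = \mathcal{P}$; moreover $\pi$ is surjective since $\pi \circ \iota = \mathrm{Id}$. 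Thus the composite $\mathcal{P} \to C_G(\tau)$ is surjective, which gives the first equality.

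For the image of the Iwahori, the key observation is that $\mathcal{I} = ev_0^{-1}(B) = \langle T(A),\, U_\alpha(A) \ (\alpha \in R^+),\, U_\alpha(zA) \ (\alpha \in R \setminus R^+) \rangle$ is exactly the parahoric subgroup attached to any cocharacter $\mu'$ lying in the interior of $\mathcal{A}$; for such a $\mu'$ one has $0 < (\mu',\alpha) < 1$ for every $\alpha \in R^+$, hence $[(\mu',\alpha)] = 0$ for $\alpha \in R^+$ and $[(\mu',\alpha)] = -1$ for $\alpha \in R \setminus R^+$. Since $\mu \in \mathcal{A}$ forces $(\mu,\alpha) \ge 0$ on $R^+$ and $(\mu,\alpha) \ge -1$ on $R \setminus R^+$, we get $[(\mu,\alpha)] \ge [(\mu',\alpha)]$ for all $\alpha$, so $\mathcal{I} \subseteq \mathcal{P}$ and the lemma applies with $\mathcal{Q} = \mathcal{I}$. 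It identifies the image of $\mathcal{I}$ in $C_G(\tau)$ with the group generated by $T$ together with the root groups $U_\alpha$ for which $(\mu,\alpha) = 0$ when $\alpha \in R^+$, and $(\mu,\alpha) = -1$ when $\alpha \in R \setminus R^+$; comparing this list with the bounds $(\mu,\alpha) \in [0,1]$ on $R^+$ and $(\mu,\alpha) \in [-1,0]$ on $R \setminus R^+$ coming from $\mu \in \mathcal{A}$ yields the asserted description.

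I expect no genuine obstacle here, as the lemma already carries out the computation. The only points that need care are (i) upgrading ``image of $\mathcal{P}$'' to ``all of $C_G(\tau)$'' by invoking the surjectivity of Proposition \ref{CLSDFIBER}, and (ii) correctly recognizing $\mathcal{I}$ as the parahoric subgroup of a generic interior cocharacter $\mu'$ and reading off the integer parts $[(\mu',\alpha)]$ that the lemma depends on.
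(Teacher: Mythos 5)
Your proof follows the paper's argument exactly: both statements are read off from the preceding lemma by taking $\mathcal{Q} = \mathcal{P}$ for the first and $\mathcal{Q} = \mathcal{I}$ for the second, recognizing $\mathcal{I}$ as the parahoric subgroup of an interior cocharacter $\mu'$ with $[(\mu',\alpha)] = 0$ on $R^+$ and $-1$ on $R\setminus R^+$. The only difference is that you make explicit the surjectivity of $\mathcal{P} \rightarrow C_G(\tau)$ via Proposition \ref{CLSDFIBER}, a step the paper leaves implicit.
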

\begin{proof}
For the description of $C_G(\tau)$, simply take $\mathcal{Q} = \mathcal{P}$ in the proposition above.

The Iwahori subgroup corresponds to a cocharacter $\mu'$ in the interior of the alcove, and therefore 
if $(\mu, \alpha) = [(\mu', \alpha)]$, then  $(\mu, \alpha)$ is either $0$ or $-1$.
\end{proof}

The images of sub-parahoric subgroups of $\mathcal{P}$ are in fact parabolic subgroups of $C_G(\tau)$. This
will allow us to identify the flags for parahoric bundles defined below with points in a (connected) flag variety.

\begin{prop}\label{CONNGRP}
The group $C_G(\tau)$ is a connected reductive subgroup of $G$, and the image of $\mathcal{I}$ in $C_G(\tau)$ is a Borel subgroup $B'$.
Furthermore $B'$ is the intersection of a Borel subgroup $B_\mu$ of $G$ with $C_G(\tau)$, and $B_\mu = w_\mu B w_\mu$, where
$w_\mu \in W$ is of order 2.  In particular, if $\mathcal{P}$ is maximal, then $C_G(\tau)$ is a subgroup
of $G$ of maximal rank.
\end{prop}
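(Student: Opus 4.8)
The plan is to work throughout with the description of $C_G(\tau)$ from the previous corollary: $C_G(\tau)$ is generated by $T$ together with the root groups $U_\alpha$ for which $(\mu,\alpha) \in \mathbb{Z}$. First I would recall from Bruhat–Tits theory (or directly from the structure of reductive groups) that the centralizer in $G$ of a torus, hence of the one-parameter subgroup underlying $\tau$, is connected and reductive; this is standard since $G$ is simply connected, so $C_G(\tau)$ is connected reductive. Its root system is $R_\tau = \{\alpha \in R : (\mu,\alpha) \in \mathbb{Z}\}$, a sub-root-system of $R$ containing a full set of coroots spanning the same torus $T$ — so $C_G(\tau)$ has maximal rank $r$. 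In particular, if $\mathcal{P}$ is maximal, $\mu$ lies in the relative interior of a codimension-one face, only one affine wall is saturated, and one checks directly that $R_\tau$ is still of full rank $r$, giving the last sentence.

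Next I would identify the image $B'$ of $\mathcal{I}$ in $C_G(\tau)$. By the corollary, $B'$ is generated by $T$ and the $U_\alpha$ with $(\mu,\alpha)$ a nonpositive integer. I would verify that the set $R_\tau^- := \{\alpha \in R_\tau : (\mu,\alpha) \le 0\}$ is a system of positive roots inside $R_\tau$: it is closed under addition within $R_\tau$, and for every $\alpha \in R_\tau$ exactly one of $\alpha, -\alpha$ lies in it unless $(\mu,\alpha)=0$, in which case one uses the sign convention coming from the alcove (i.e. break the tie using the original positivity on $R$, since $\mu \in \mathcal{A}$ means $\alpha_i(\mu)\ge 0$). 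Hence $B' = T \cdot \prod_{\alpha \in R_\tau^-} U_\alpha$ is a Borel subgroup of $C_G(\tau)$. To exhibit $B'$ as $B_\mu \cap C_G(\tau)$ for a Borel $B_\mu$ of $G$, I would take $B_\mu$ to be generated by $T$ and all $U_\alpha$ with $(\mu,\alpha) \le 0$ (again with the alcove tie-breaking), check this is genuinely a Borel of $G$ — i.e. $\{\alpha : (\mu,\alpha)\le 0\}$ is a positive system in $R$ — and observe that intersecting with $C_G(\tau)$ exactly extracts the roots with $(\mu,\alpha) \in \mathbb{Z}$, yielding $B'$.

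Finally, to write $B_\mu = w_\mu B w_\mu$ with $w_\mu$ an involution: the positive system defining $B_\mu$ is $\{\alpha \in R^+ : (\mu,\alpha) = 0\} \cup \{-\alpha : \alpha \in R^+, (\mu,\alpha) > 0\}$, which is $w_\mu(R^+)$ for the unique Weyl group element $w_\mu$ sending $R^+$ to this system; because $\mu$ lies in the closed fundamental alcove, this $w_\mu$ is the longest element of the parabolic subgroup $W_{R_\tau}$ relative to $\ldots$ — more simply, it is the element carrying the dominant chamber to the chamber opposite across the face containing $\mu$, which is an involution since it is a longest-element-type reflection. I would make this precise by noting $w_\mu$ is the longest element $w_0^{(\mu)}$ of the stabilizer $W_\mu = \langle s_\alpha : (\mu,\alpha)=0\rangle$ composed appropriately, and longest elements of (standard parabolic) Coxeter groups have order dividing $2$; alternatively one checks $w_\mu^2$ fixes $R^+$ pointwise as a set hence is the identity. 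The main obstacle I anticipate is pinning down the tie-breaking on the roots with $(\mu,\alpha)=0$ consistently across the three claims (so that the same $B_\mu$ works for all of them) and verifying that the resulting $w_\mu$ is literally an involution rather than merely finite order; handling the boundary case $\mu$ on several walls requires care, but the maximal-parabolic case needed later is clean since then $W_\mu$ has a single generator and $w_\mu$ is a simple reflection.
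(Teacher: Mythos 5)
Your overall route matches the paper's: connectedness and reductivity of $C_G(\tau)$ are quoted from the standard theory of centralizers in simply connected groups (the paper cites Springer--Steinberg, and Borel--de Siebenthal for the maximal-rank case), $B_\mu$ is defined by exactly the same positive system $\{\alpha : (\mu,\alpha)<0\}\cup\{\alpha\in R^+ : (\mu,\alpha)=0\}$ with the same tie-breaking on the roots orthogonal to $\mu$, and $B'$ is exhibited as $B_\mu\cap C_G(\tau)$. Up to that point the proposal is sound and is essentially the argument in the text.

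The genuine gap is the one you flag yourself: pinning down $w_\mu$. Your abstract characterization ($w_\mu$ is the unique Weyl element carrying $R^+$ to the positive system above) is correct, but your proposed candidate --- the longest element of the stabilizer $W_\mu=\langle s_\alpha : (\mu,\alpha)=0\rangle$ --- is the wrong element: that element sends $R^+$ to $R_L^-\cup(R^+\setminus R_L)$, i.e.\ it negates precisely the roots orthogonal to $\mu$ and keeps the signs of all the others, which is the \emph{complementary} tie-breaking to the one you need. (Your closing remark that in the maximal case $W_\mu$ has a single generator is also off: when $\mathcal{P}$ is maximal, $\mu$ is a vertex of the alcove and $W_\mu$ is generated by all but at most one of the simple reflections.) The paper resolves this by taking $w_\mu$ to be the product of the longest elements of $W$ and of $W_P$, where $P$ is the parabolic attached to $\mathcal{P}$; it observes that this element interchanges positive and negative roots exactly on $\{\alpha : (\mu,\alpha)\neq 0\}$ and fixes the roots of $R_L$, so that $w_\mu(R^+)=R_L^+\cup(R^-\setminus R_L)$ is the positive system of $B_\mu$ and $w_\mu^2=\mathrm{Id}$. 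Without this explicit identification (or an equivalent one) the assertions ``$w_\mu$ has order $2$'' and ``$B_\mu=w_\mu B w_\mu$'' in the statement remain unproved, so you should supply it.
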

\begin{proof}
A proof that the centralizer of an element of a simply connected group is connected  can be found in the lecture notes on 
conjugacy classes by Springer and Steinberg \cite{SSCC}.
In the case that $\mathcal{P}$ is maximal, then $C_G(\tau)$ is a subgroup of maximal rank, as studied by Borel
and de Siebenthal \cite{BDS}.  
The Dynkin diagram of $C_G(\tau)$ is given by removing the vertices of the affine Dynkin diagram associated to $\mathcal{P}$.  

Let $B_\mu$ be the subgroup of $G$ generated by $T$ and $U_\alpha$ such that either $(\mu, \alpha) < 0$ or $\alpha \in R^+$ and 
$(\mu, \alpha) = 0$. Then we have that the image of $\mathcal{I}$ in $C_G(\tau)$ is 
just $C_G(\tau) \cap B_\mu$.  Let $P$ be the parabolic subgroup associated to $\mathcal{P}$, and let $w_\mu$ be 
the product of the longest words in $W$ and $W_P$.  This element $w_\mu$ switches positive and negative roots
for any root $\alpha$ such that $(\mu, \alpha) \neq 0$, and fixes all other roots.  Clearly then $w_\mu$ is order 2, and
$B_\mu = w_\mu B w_\mu$.
\end{proof}

Recall that the parahoric subgroups contained in $G(A)$ can also be defined as inverse images of parabolic subgroups of $G$.
The following proposition generalizes this fact to other parahorics subgroups.  

\begin{prop}\label{INVSIMG}
If $Q$ is the image of $\mathcal{Q}$ in $C_G(\tau)$, then the inverse image of $Q$ in $\mathcal{P}$ is
exactly $\mathcal{Q}$.
\end{prop}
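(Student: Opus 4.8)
The plan is to reduce the statement to an explicit description of $\mathcal{P}$, $\mathcal{Q}$, and the projection $\pi\colon \mathcal{P} \to C_G(\tau)$ in terms of root groups and then verify the equality of subgroups generator-by-generator. Recall from Proposition~\ref{CLSDFIBER} that the special fiber $\mathcal{G}(k)$ fits into $C_G(\tau) \xrightarrow{\iota} \mathcal{G}(k) \xrightarrow{\pi} C_G(\tau)$ with $\pi\circ\iota = \mathrm{Id}$, that $L^+\mathcal{P}(S) \to \mathcal{G}(k)(S)$ is surjective, and that $\ker(\pi)$ is the unipotent radical of $\mathcal{G}(k)$, computed as conjugation by $\Delta$ followed by $\omega \mapsto 0$. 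The image of $\mathcal{Q} = \mathcal{G}_{\mathcal{Q}}(A)$ in $\mathcal{G}(k)$ under $L^+\mathcal{P}(k) \to \mathcal{G}(k)(k)$ then further maps via $\pi$ to $C_G(\tau)$; by the Lemma preceding this proposition, that image $Q$ is generated by $T$ and the root groups $U_\alpha$ with $(\mu,\alpha) = [(\mu',\alpha)]$. The claim to prove is that $\pi^{-1}(Q) \cap \mathcal{P}$, or more precisely the preimage of $Q$ under the full composite $\mathcal{P} = L^+\mathcal{P}(k) \twoheadrightarrow \mathcal{G}(k) \xrightarrow{\pi} C_G(\tau)$, is exactly $\mathcal{Q}$.

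First I would record the explicit generators. Conjugating by $\Delta$ turns $\mathcal{P} = \langle T(A), U_\alpha(z^{-[(\mu,\alpha)]}A)\rangle$ into $\Delta\mathcal{P}\Delta^{-1} = G(A')^{\Gamma}$, and similarly $\mathcal{Q}$ becomes the $\Gamma$-invariants of $\langle T(A'), U_\alpha(\omega^{m(\mu,\alpha) - m[(\mu',\alpha)]}A')\rangle$ (using the identity from the proof of the Lemma that $\Delta U_\alpha(z^{-[(\mu',\alpha)]}A)\Delta^{-1} = U_\alpha(z^{(\mu,\alpha)-[(\mu',\alpha)]}A)$). Passing to the special fiber means reducing $\omega$-adically: a $\Gamma$-invariant element of $G(A')$ reduces mod $\omega$ to an element of $C_G(\tau)$, and the root group $U_\alpha(\omega^j A')$ with $j>0$ reduces to the identity, while $j=0$ reduces onto all of $U_\alpha \subseteq C_G(\tau)$. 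So $\pi$ sends the $U_\alpha$-part of $\mathcal{Q}$ surjectively onto $U_\alpha$ precisely when $(\mu,\alpha) = [(\mu',\alpha)]$, consistent with the Lemma.

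For the containment $\mathcal{Q} \subseteq (\text{composite})^{-1}(Q)$ there is nothing to do: $\mathcal{Q}$ maps into $Q$ by definition of $Q$ as its image. The substance is the reverse containment. Take $g \in \mathcal{P}$ with $\pi(\bar g) \in Q$. Since $\iota$ splits $\pi$, write $\bar g = \iota(\pi(\bar g)) \cdot u$ with $u \in \ker(\pi) = R_u(\mathcal{G}(k))$, and lift $\iota(\pi(\bar g))$ into $\mathcal{Q}$ (possible because $\pi(\bar g) \in Q = \pi(\mathcal{Q})$ and $\iota$ is a section, so $\iota(Q) \subseteq$ the image of $\mathcal{Q}$ in $\mathcal{G}(k)$). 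Thus modulo multiplying $g$ by an element of $\mathcal{Q}$ and by $\ker$ of the reduction $\mathcal{P} \to \mathcal{G}(k)$ (which is the congruence subgroup $\langle T(1+zA), U_\alpha(z^{-[(\mu,\alpha)]+1}A)\rangle$, and this is already visibly inside $\mathcal{Q}$ whenever the corresponding $\mathcal{Q}$-level exponent is no larger — which holds since $\mu'$ is in the closure of the face of $\mu$, so $[(\mu',\alpha)] \le (\mu,\alpha)$ when $(\mu,\alpha) \in \mathbb{Z}$ and $[(\mu',\alpha)] \le [(\mu,\alpha)]$ always), the problem reduces to showing that $R_u(\mathcal{G}(k))$, together with the congruence subgroup, generates (inside $\mathcal{P}$) only elements of $\mathcal{Q}$. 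I would argue this by the Iwahori-style root-group factorization of $\mathcal{P}$: every $g\in\mathcal{P}$ has a (unique up to the torus and ordering) expression as a product of elements of $T(A)$ and the $U_\alpha(z^{-[(\mu,\alpha)]}A)$, and the condition $\pi(\bar g) \in Q$ forces, for each $\alpha$ with $(\mu,\alpha) = [(\mu',\alpha)]$, nothing (the whole root group is allowed, matching $\mathcal{Q}$), while for each $\alpha$ with $[(\mu',\alpha)] < (\mu,\alpha)$ it forces the leading ($\omega^0$, i.e. $z^{-[(\mu,\alpha)]}$) coefficient to vanish, i.e. $g$'s $\alpha$-component lies in $U_\alpha(z^{-[(\mu,\alpha)]+1}A) = U_\alpha(z^{-[(\mu',\alpha)]}A)$ — which is exactly the $\mathcal{Q}$-level at $\alpha$. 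Hence $g \in \mathcal{Q}$.

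The main obstacle is making the last step fully rigorous without circularity: the "root-group factorization with vanishing leading coefficient" intuition is clean over $k$-points, but the proposition is a statement about group schemes (or at least should hold functorially in $S$), and $\ker(\pi)$ is only the unipotent radical of the special fiber, not something defined level-by-level in $\omega$. I expect to handle this by working with an arbitrary test scheme $S$, using the surjectivity of $L^+\mathcal{P}(S) \to \mathcal{G}(k)(S)$ from Proposition~\ref{CLSDFIBER} and the $\mathbb{Z}_m$-equivariant Iwahori factorization over $S[[\omega]]$ (available since $G$ is split and $\Gamma$ acts through $\tau$ preserving $T$ and permuting/scaling root groups), then chasing the explicit $\omega$-valuations through the conjugation by $\Delta$; the bookkeeping of which power of $\omega$ survives reduction mod $\omega$ is the one place where care with $[\,\cdot\,]$ versus exact integrality of $(\mu,\alpha)$ is essential, and it is precisely there that the hypothesis $\mathcal{Q} \subseteq \mathcal{P}$ (so $\mu'$ in the closed face of $\mu$) gets used.
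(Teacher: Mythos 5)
Your proposal is correct in substance and rests on the same computation as the paper's proof --- tracking, root group by root group, how $\omega$-valuations transform under conjugation by $\Delta$, and using the alcove constraints $0 \leq (\mu,\alpha), (\mu',\alpha) \leq 1$ to compare $[(\mu,\alpha)]$, $(\mu,\alpha)$, and $[(\mu',\alpha)]$ --- but it is organized differently. The paper argues top-down: it asserts at the outset that the inverse image of $Q$ in $G(A')^\Gamma$ is a group of the form $\langle T(A), U_\alpha(\omega^{k_\alpha}A), \alpha \in R\rangle$, with $k_\alpha = 0$ exactly when $(\mu,\alpha) = [(\mu',\alpha)]$, and then conjugates each generator by $\Delta^{-1}$, checking in the two cases (using, for the second, that $0 < (\mu,\alpha), (\mu',\alpha) < 1$ for positive $\alpha$) that the result is $U_\alpha(z^{-[(\mu',\alpha)]}A)$, i.e.\ the corresponding generator of $\mathcal{Q}$. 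You instead argue element by element, reducing via the splitting $\iota$ to showing that the kernel of $\mathcal{P} \rightarrow C_G(\tau)$ lies in $\mathcal{Q}$, which then requires an equivariant Iwahori factorization to decompose an arbitrary kernel element into torus and root-group pieces. The obstacle you flag at the end is real, but it is precisely the structural input that the paper's first sentence takes for granted: that the preimage of a parabolic subgroup of the reductive quotient of the special fiber is again generated by $T(A)$ and root groups at determined levels is the standard Bruhat--Tits fact underlying both arguments, and neither your sketch nor the paper proves it from scratch. What the top-down phrasing buys is that the only remaining verification is the numerical identity of exponents; your bottom-up version must additionally justify the factorization, and (a minor point) your lifting of $\iota(\pi(\bar g))$ into the image of $\mathcal{Q}$ is more cleanly phrased as the elementary identity that the preimage of $Q = \pi(\mathcal{Q})$ under a surjection equals $\mathcal{Q}\cdot\ker$, which avoids $\iota$ entirely. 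Granting the generated-by-root-groups description of the preimage, as the paper does, your exponent bookkeeping is correct and the argument closes.
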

\begin{proof}
The inverse image of $Q$ in $G(A')^\Gamma$ can be described as the group
\[
\langle T(A), U_\alpha(\omega^{k_\alpha} A), \alpha \in R \rangle
\]
for some non-negative integers $k_\alpha$.  There are two cases: either $(\mu, \alpha) = [(\mu', \alpha)]$ and $k_\alpha = 0$, or
$k_\alpha > 0$.  Note that $\alpha$ satisfies $(\mu, \alpha) = [(\mu', \alpha)]$ if and only if $-\alpha$ does.  Then in the first
case, we see that
\[
\Delta^{-1} U_\alpha(\omega^{k_\alpha} A) \Delta = U_\alpha(\omega^{-(\mu, \alpha)} A) = U_\alpha(\omega^{-[(\mu', \alpha)]} A).
\]
Now in the second case, we know that $(\mu, \alpha) > [(\mu', \alpha)]$.  In the case that $\alpha \in R^+$, we 
therefore know that $[(\mu', \alpha)] = 0$, since $0 \leq (\mu', \alpha), (\mu, \alpha) \leq 1$.  This implies that
$0 \leq (\mu', \alpha) < 1$, and $0 < (\mu, \alpha) \leq 1$.  We also know that $[(\mu', -\alpha)] < (\mu, -\alpha) < 0$,
so that in fact $0 < (\mu',\alpha), (\mu, \alpha) < 1$.  Therefore in this case we have
\[
\Delta^{-1} U_\alpha(\omega^{k_\alpha} A) \Delta = U_\alpha(\omega^{-[(\mu, \alpha)]} A) = U_\alpha(\omega^{-[(\mu', \alpha)]} A),
\]
finishing the proof.
\end{proof}

\subsubsection{Relative flag structures}

Now we return to the global situation over $X$, and define relative flag structures for parahoric bundles.

Let $\mathcal{G}$ be the group scheme over $X$ corresponding to parahoric subgroups $\mathcal{P}_1, \ldots, \mathcal{P}_n$, 
and let $\mathcal{Q}_i \subseteq \mathcal{P}_i$ be subgroups.  As above, let $\mu_i$ be the cocharacter 
associated to each $\mathcal{P}_i$, with associated $\Delta_i \in G(K')$, and representation $\tau_i: \Gamma_{p_i} \rightarrow T$.
Let $Q_i$ be the image of each $\mathcal{Q}_i$ in $C_G(\tau_i)$, and let $\widetilde{Q}_i$ be the inverse image
of $Q_i$ in the closed fiber $\mathcal{G}(p_i)$.  Then we have the following definition.

\begin{defin}
Let $\text{Parbun}_\mathcal{G}(\vec{\mathcal{Q}})$ be the moduli stack of $\mathcal{G}$-bundles together
with flags $\overline{g}_i \in \mathcal{E}(p_i)/\widetilde{Q}_i \cong C_G(\tau_i)/Q_i$.
\end{defin}

Then we claim the following.

\begin{prop}\label{FLGSTR}
We have an isomorphism of stacks:
\[
\normtext{Bun}_{\mathcal{G}_\mathcal{Q}} \cong \normtext{Parbun}_\mathcal{G}(\vec{\mathcal{Q}}).
\]
\end{prop}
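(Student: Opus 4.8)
The plan is to realize both stacks as categories fibered over the scheme-valued points of the base, and exhibit mutually inverse functors, using the étoffé property of the parahoric group schemes to reduce the construction to a purely local question near each marked point $p_i$. First I would observe that away from $\vec{p}$ both $\mathcal{G}$ and $\mathcal{G}_\mathcal{Q}$ are the trivial group scheme $X^*\times G$, so the morphism $\text{Bun}_{\mathcal{G}_\mathcal{Q}}\rightarrow\text{Bun}_{\mathcal{G}}$ is an isomorphism over $X^*$, and the entire content of the statement is concentrated in a formal disk $\text{Spec}(A)$ around each $p_i$. So the proof reduces to the local claim: for each $i$, giving a $\mathcal{G}_{\mathcal{Q}_i}$-torsor on $\text{Spec}(A)$ is the same as giving a $\mathcal{G}_i$-torsor on $\text{Spec}(A)$ together with a section of the associated $\mathcal{G}_i(p_i)/\widetilde{Q}_i$-bundle over the closed point $p_i$, functorially in test schemes $S$. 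I would then glue these local equivalences with the generic triviality via the standard Beauville--Laszlo type descent for torsors, which is how one builds $\text{Bun}_{\mathcal{G}}$ and $\text{Parbun}_G$ in the first place.

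Next I would carry out the local analysis. Since $\mathcal{G}_i$ is smooth over $\text{Spec}(A)$, a $\mathcal{G}_i$-torsor over $\text{Spec}(A\otimes_k R)$ is étale-locally trivial, and for the relevant purposes $L^+\mathcal{P}_i$-torsors over $S$ give, after restriction, $\mathcal{G}_i(p_i)$-torsors over $S$ via the evaluation $L^+\mathcal{P}_i(S)\rightarrow\mathcal{G}_i(p_i)(S)$, which is \emph{surjective} by Proposition \ref{CLSDFIBER}. A $\mathcal{G}_{\mathcal{Q}_i}$-torsor is the same as a $\mathcal{G}_i$-torsor together with a reduction of structure group along the inclusion $\mathcal{G}_{\mathcal{Q}_i}\hookrightarrow\mathcal{G}_i$; restricting to the closed point, such a reduction is precisely a section of the associated bundle with fiber $\mathcal{G}_i(p_i)/\mathcal{G}_{\mathcal{Q}_i}(p_i)$. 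By Proposition \ref{INVSIMG}, $\mathcal{Q}_i$ is the full inverse image of $Q_i$ in $\mathcal{P}_i$, which on closed fibers says $\mathcal{G}_{\mathcal{Q}_i}(p_i)=\widetilde{Q}_i$, the inverse image of $Q_i=\text{image of }\mathcal{Q}_i\text{ in }C_G(\tau_i)$ inside $\mathcal{G}_i(p_i)$; hence $\mathcal{G}_i(p_i)/\widetilde{Q}_i\cong C_G(\tau_i)/Q_i$, matching the definition of $\text{Parbun}_\mathcal{G}(\vec{\mathcal{Q}})$. The only subtlety here is upgrading "a reduction on the closed fiber" to "a reduction over the whole formal disk $\text{Spec}(A)$": this uses that $\mathcal{G}_i\rightarrow\mathcal{G}_i/\mathcal{G}_{\mathcal{Q}_i}$ is smooth and $\mathcal{G}_i/\mathcal{G}_{\mathcal{Q}_i}$ is smooth and \emph{affine} over $A$ — which follows from $\widetilde{Q}_i$ being parabolic in $\mathcal{G}_i(p_i)$ — so that sections over $p_i$ lift uniquely to sections over $\text{Spec}(A)$ by formal smoothness together with the étoffé universal property, and the mod-$\omega^n$ obstructions vanish by the same $\text{H}^1(\Gamma,-)=0$ argument (finite group acting on a vector space over $\mathbb{C}$) used in Proposition \ref{CLSDFIBER}.

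I would then assemble the functors: given an object of $\text{Bun}_{\mathcal{G}_\mathcal{Q}}$, extend structure group to $\mathcal{G}$ and take the induced closed-fiber sections at each $p_i$ to land in $\text{Parbun}_\mathcal{G}(\vec{\mathcal{Q}})$; conversely, given a $\mathcal{G}$-torsor with flags $\overline{g}_i$, form the fiber product over $\text{Spec}(A)$ that reduces structure group to $\mathcal{G}_{\mathcal{Q}_i}$ near each $p_i$ — well-defined by the unique lifting just discussed — and glue to the given bundle over $X^*$. Naturality in $S$ and the verification that these are mutually quasi-inverse is then formal, using uniqueness of the lifts. The step I expect to be the main obstacle is the last technical point of the previous paragraph: showing the quotient $\mathcal{G}_i/\mathcal{G}_{\mathcal{Q}_i}$ is representable by a smooth affine $A$-scheme with the right special fiber, so that reductions over the closed point propagate uniquely over the whole disk; everything else is either local-global gluing, already-proven structural facts about $C_G(\tau_i)$ and $\widetilde{Q}_i$ (Propositions \ref{CONNGRP}, \ref{INVSIMG}, Corollary \ref{ProjFib}), or bookkeeping. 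This is also exactly the place where the hypothesis that $\mathcal{Q}_i$ is itself a \emph{parahoric} subgroup (not an arbitrary closed subgroup scheme) is essential.
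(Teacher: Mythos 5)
Your overall architecture matches the paper's: localize at each $p_i$, and use Proposition \ref{CLSDFIBER} (surjectivity of $L^+\mathcal{P}_i \rightarrow \mathcal{G}(p_i)$) together with Proposition \ref{INVSIMG} (that $\mathcal{Q}_i$ is the full preimage of $Q_i$) to identify the extra datum with a point of $C_G(\tau_i)/Q_i$. But the step you yourself flag as the main obstacle is where the argument breaks. You assert that $\mathcal{G}_i/\mathcal{G}_{\mathcal{Q}_i}$ is smooth and \emph{affine} over $A$ ``because $\widetilde{Q}_i$ is parabolic in $\mathcal{G}_i(p_i)$''; this is backwards. If $\widetilde{Q}_i$ is parabolic, the special-fibre quotient $\mathcal{G}(p_i)/\widetilde{Q}_i \cong C_G(\tau_i)/Q_i$ is \emph{projective} (this is exactly what Corollary \ref{ProjFib} exploits), while the generic fibre of $\mathcal{G}_i/\mathcal{G}_{\mathcal{Q}_i}$ is a single point, since the two group schemes agree over $\text{Spec}(K)$. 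So this quotient sheaf is a non-separated object --- a point generically, a flag variety over the closed point --- that is in no reasonable sense affine over $A$, and its representability is itself not obvious. Moreover, even granting a smooth model, formal smoothness yields \emph{existence} of infinitesimal lifts of a section, never uniqueness, and a smooth proper $A$-scheme generally has many sections through a prescribed point of the special fibre. So the claimed bijection between $\mathcal{G}_{\mathcal{Q}_i}$-reductions over the disk and flags at the closed point --- which is the entire content of the local equivalence --- is not established by your argument.

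The paper avoids the quotient entirely and works with cocycles: after passing to an \'etale cover of the test scheme one trivializes the torsor near $p_i$ and records it by a transition function $\Theta_i$. Surjectivity of $L^+\mathcal{P}_i \rightarrow \mathcal{G}(p_i)$ lets one change the trivialization so that the flag becomes the base point $\overline{e}$, and Proposition \ref{INVSIMG} says that the residual freedom in such a trivialization is exactly left multiplication by $\mathcal{Q}_i$; hence the same transition functions define a $\mathcal{G}_{\mathcal{Q}_i}$-torsor, independently of all choices, and conversely. If you want to keep your reduction-of-structure-group formulation, you must prove the torsor statement for these dilatation-type subgroup schemes directly (which in the end is exactly this cocycle computation), rather than by appeal to affineness and formal smoothness of the quotient.
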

\begin{proof}
For simplicity, we describe the morphisms pointwise;  the same constructions work with families.

Then there is a natural morphism $\mathcal{G}_\mathcal{Q} \rightarrow \mathcal{G}$ which induces a morphism of stacks 
$\text{Bun}_{\mathcal{G}_\mathcal{Q}} \rightarrow \text{Bun}_\mathcal{G}$. 
If $\mathcal{E}_\mathcal{Q}$ is a $\mathcal{G}_\mathcal{Q}$-bundle, via this projection we get a $\mathcal{G}$-bundle
$\mathcal{E}$.  We also have a natural morphism $\mathcal{E}_\mathcal{Q} \rightarrow \mathcal{E}$ and
its restriction to each $p_i$: $\mathcal{E}_\mathcal{Q}(p_i) \rightarrow \mathcal{E}(p_i)$.  This morphism
gives a canonical point in $\mathcal{E}(p_i)/\widetilde{Q}_i$, since $\mathcal{E}_\mathcal{Q}(p_i)/\widetilde{Q}_i$
is simply a point.  This defines a morphism 
$\normtext{Bun}_{\mathcal{G}_\mathcal{Q}} \rightarrow \text{Parbun}_\mathcal{G}(\vec{\mathcal{Q}})$. 

We can also define this morphism in terms of a trivialization.  
Given a $\mathcal{G}_\mathcal{Q}$-bundle, we can describe it in terms of transition functions as
\begin{align*}
(\mathcal{E}_0)_{|U_{p_i}^*} &\xrightarrow{\sim} (\mathcal{E}_i)_{|U_{p_i}^*} \\
(z, g) &\mapsto  (z,\Theta_i(z)g),
\end{align*}
where $U_{p_i}^* \cong \text{Spec}(K)$ is a formal neighborhood around $p_i$.
Then the associated $\mathcal{G}$ bundle is given by the same transition functions, and we take the flag
$\overline{e} \in C_G(\tau_i)/Q_i \cong \mathcal{E}(p_i)/\widetilde{Q}_i$, with respect to this trivialization.
Clearly if we change the trivialization, we get the same $\mathcal{G}$-bundle and flag, since the new
transition function $\Theta_i'$ is given by $\Theta_i' = f\Theta_ig$, where $f \in \mathcal{Q}_i$, and $g$
is the restriction of a morphism $g: X^* \rightarrow G$ to $U_i^*$.  Multiplication on the right by $g$
can be accounted for by changing the trivialization of $\mathcal{E}$ over $X^*$, while multiplication on the
left by $f$ fixes the flag $\overline{e}$.

Going the other direction, we simply choose a trivialization of the underlying $\mathcal{G}_\mathcal{M}$-bundle such that
the flag is $\overline{e}$, then use these transition functions to construct the $\mathcal{G}$-bundle.  We can always do
this because the morphism $L^+\mathcal{M} \rightarrow \mathcal{G}(k)$ is surjective by Prop \ref{CLSDFIBER}.  
By Prop \ref{INVSIMG}, the subgroup of $\mathcal{P}_i$ fixing the flag $\overline{e}$ is exactly the parahoric 
subgroup $\mathcal{Q}_i$, so this definition does not depend on the choice of trivialization.
\end{proof}

\begin{cor}
\label{ProjFib}
For any parahoric subgroups $\mathcal{Q}_i \subseteq \mathcal{P}_i$ the morphism 
$\normtext{Bun}_{\mathcal{G}_{\mathcal{Q}}} \rightarrow \normtext{Bun}_{\mathcal{G}}$ is a smooth, proper and
surjective representable morphism with connected and projective geometric fibers.
\end{cor}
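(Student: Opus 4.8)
The plan is to read off every assertion from the isomorphism $\normtext{Bun}_{\mathcal{G}_\mathcal{Q}} \cong \normtext{Parbun}_\mathcal{G}(\vec{\mathcal{Q}})$ of Proposition~\ref{FLGSTR}. Chasing through its construction, the morphism $\normtext{Bun}_{\mathcal{G}_\mathcal{Q}} \to \normtext{Bun}_\mathcal{G}$ induced by $\mathcal{G}_\mathcal{Q} \to \mathcal{G}$ is carried to the morphism $\normtext{Parbun}_\mathcal{G}(\vec{\mathcal{Q}}) \to \normtext{Bun}_\mathcal{G}$ that forgets the flags $\overline{g}_i \in \mathcal{E}(p_i)/\widetilde{Q}_i$, so it suffices to prove the statement for this forgetful morphism.

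First I would establish representability by realizing the source as a relative flag variety. For a $\mathcal{G}$-bundle $\mathcal{E}$ on $X \times S$, restriction to $\{p_i\} \times S$ produces a torsor $\mathcal{E}_{p_i} \to S$ under the special fibre $\mathcal{G}(p_i)$, which by Proposition~\ref{CLSDFIBER} is a smooth affine algebraic group over $k$. A flag at $p_i$ is precisely a section of the associated bundle $\mathcal{E}_{p_i} \times^{\mathcal{G}(p_i)} \bigl(\mathcal{G}(p_i)/\widetilde{Q}_i\bigr) \to S$, so $\normtext{Parbun}_\mathcal{G}(\vec{\mathcal{Q}})$ is canonically the fibre product over $\normtext{Bun}_\mathcal{G}$ of the $n$ schemes $\mathcal{E}_{p_i} \times^{\mathcal{G}(p_i)} \bigl(\mathcal{G}(p_i)/\widetilde{Q}_i\bigr)$, and the forgetful morphism is representable by this scheme.

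It remains to identify the fibre $\mathcal{G}(p_i)/\widetilde{Q}_i$ and to check the geometric properties. By Proposition~\ref{CLSDFIBER}, $\mathcal{G}(p_i)$ has unipotent radical with reductive quotient $C_G(\tau_i)$, and $\widetilde{Q}_i$ is by definition the preimage of the image $Q_i$ of $\mathcal{Q}_i$; since $Q_i$ is a parabolic subgroup of the connected reductive group $C_G(\tau_i)$ (Proposition~\ref{CONNGRP} and the discussion preceding it) and $\widetilde{Q}_i$ contains the unipotent radical, we obtain $\mathcal{G}(p_i)/\widetilde{Q}_i \cong C_G(\tau_i)/Q_i$, a partial flag variety, hence smooth, connected and projective. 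Properness, smoothness and surjectivity may be checked smooth-locally on $\normtext{Bun}_\mathcal{G}$, so after base change along a smooth atlas $S \to \normtext{Bun}_\mathcal{G}$ the morphism becomes the flag bundle $\prod_i \bigl(\mathcal{E}_{p_i} \times^{\mathcal{G}(p_i)} C_G(\tau_i)/Q_i\bigr) \to S$; this is étale-locally on $S$ isomorphic to $S \times_k \prod_i C_G(\tau_i)/Q_i$, hence smooth and proper (properness descending along the étale cover), and surjective because each flag variety is nonempty. The geometric fibre over $[\mathcal{E}] \in \normtext{Bun}_\mathcal{G}$ is $\prod_i C_G(\tau_i)/Q_i$, which is connected and projective.

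Thus the corollary is essentially formal once Propositions~\ref{CLSDFIBER}, \ref{CONNGRP} and \ref{FLGSTR} are available; the only point requiring a little care is that the pointwise constructions in the proof of Proposition~\ref{FLGSTR} — in particular the identification $\mathcal{E}(p_i)/\widetilde{Q}_i \cong C_G(\tau_i)/Q_i$ — are compatible with base change, which I would handle exactly as indicated there, performing all constructions with families of bundles rather than with closed points.
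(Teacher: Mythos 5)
Your proposal is correct and follows essentially the same route as the paper: the paper's own proof simply invokes Proposition~\ref{FLGSTR} for representability and the identification of the geometric fibers with $\prod_i C_G(\tau_i)/Q_i$, and then Proposition~\ref{CONNGRP} for connectedness and projectivity. You merely spell out the details the paper leaves implicit (the relative flag-bundle description giving representability, the passage through the unipotent radical to identify $\mathcal{G}(p_i)/\widetilde{Q}_i$ with $C_G(\tau_i)/Q_i$, and the smooth-local verification of smoothness, properness and surjectivity), all of which are consistent with the paper's argument.
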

\begin{proof}
By the above proposition, the morphism is clearly representable, and the geometric fibers are isomorphic to
the product
\[
\prod_{i=1}^n C_G(\tau_i)/Q_i.
\]
Then by Proposition \ref{CONNGRP}, the geometric fibers are connected and projective.
\end{proof}

\subsection{Line bundles on stacks of parahoric bundles}
\label{PProp}

In this section we use the above description of $\text{Bun}_{\mathcal{G}}$ to prove that conformal blocks descend to this stack,
finishing the proof of Theorem \ref{GlblSecDesc}.  The key to the proof is that the fibers of 
$\normtext{Parbun}_G \rightarrow \normtext{Bun}_{\mathcal{G}}$ are connected and projective.
First we show that the locus of semistable bundles can be defined like semistability is defined in GIT, which will
be needed later.

\begin{prop} \label{SEMIEQ}
An equivariant bundle $E \in \equivBun$ is $\Gamma$-semistable if and only if there exists $s \in \normtext{H}^0(\equivBun, D(V)^N)$
for some integer $N>0$ and a faithful representation $V$ of $G$ such that $s(E) \neq 0$.
\end{prop}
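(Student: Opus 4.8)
The plan is to connect the intrinsic notion of $\Gamma$-semistability (defined via $\Gamma$-equivariant $P$-reductions and the sign of $\sigma^*\mathcal{F}(\mathfrak{g}/\mathfrak{p})$) with the GIT-style notion (existence of a non-vanishing section of a power of the determinant line bundle $D(V)$). First I would recall that $D(V)$ on $\equivBun$, pulled back from $\normtext{Bun}_G$ over $Y$ by the forgetful-and-restrict maps, can be understood in terms of the determinant of cohomology of the associated equivariant vector bundle $E(V)$ on $Y$; its first Chern class, restricted to one-parameter subgroups coming from $\Gamma$-equivariant $P$-reductions, computes exactly the parabolic/equivariant degree invariant $\sigma^*\mathcal{F}(\mathfrak{g}/\mathfrak{p})$ up to a positive multiple. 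This is the standard Hilbert--Mumford / numerical criterion package adapted to the equivariant setting, and it is essentially the content of the analogous statement for ordinary $G$-bundles (the semistability of a $G$-bundle is detected by sections of powers of determinant-of-cohomology line bundles), which I would cite in the form used in \cite{TWPB} and \cite{BAS}.

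The key steps, in order, would be: (1) For the ``only if'' direction, assume $E$ is $\Gamma$-semistable. One shows that the line bundle $D(V)$ is ample (or at least semiample with enough sections) on a suitable projective GIT quotient or on the moduli space of $\Gamma$-semistable equivariant bundles constructed by Balaji--Seshadri, and that $E$ lies in the semistable locus of that GIT problem; then by definition of GIT semistability there is an invariant section of some $D(V)^N$ not vanishing at $E$, which lifts to a section on the stack $\equivBun$. Here I would use the identification of $\equivBun$ with $\normtext{Bun}_{\mathcal{G}}$ (Balaji--Seshadri's theorem quoted above) and the known construction of projective moduli of parahoric/equivariant bundles. (2) For the ``if'' direction, suppose some section $s\in\normtext{H}^0(\equivBun, D(V)^N)$ has $s(E)\neq 0$, but $E$ is not $\Gamma$-semistable; then there is a $\Gamma$-equivariant $P$-reduction $\sigma$ with $\sigma^*\mathcal{F}(\mathfrak{g}/\mathfrak{p}) < 0$. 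Using this reduction one builds a one-parameter family (a $\mathbb{G}_m$-degeneration of $E$ toward the associated graded, via the Rees construction on the filtration induced by $\sigma$) over which $D(V)^N$ has negative degree, so every section of $D(V)^N$ must vanish on the special fiber; then a specialization/semicontinuity argument forces $s(E) = 0$, a contradiction.

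The main obstacle I expect is step (2): making precise the computation that a destabilizing $\Gamma$-equivariant $P$-reduction yields a $\mathbb{G}_m$-action on a family of equivariant bundles along which $D(V)$ has strictly negative weight, and ensuring the weight is controlled by exactly the quantity $\sigma^*\mathcal{F}(\mathfrak{g}/\mathfrak{p})$ with the correct (positive) proportionality constant depending on $V$ being faithful. This requires carefully tracking how the Dynkin index of $V$ enters, and handling the fact that we are on the cover $Y$ with the $\Gamma$-action, so the degree computation must be done $\Gamma$-equivariantly (or equivalently downstairs on $X$ with the parahoric group scheme $\mathcal{G}$). Because the proof text says this argument is ``essentially identical to part of the proof of Lemma 2.5 in \cite{TWPB}'' in a related place, I would lean heavily on the analogous computation in Teleman--Woodward and in the GIT construction of \cite{BAS}, reducing the new content to checking compatibility of the line bundle $D(V)$ with their setup; the genuinely new bookkeeping is the $\Gamma$-equivariance, which is routine but must be stated.
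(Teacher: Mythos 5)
Your backward direction (section $\Rightarrow$ $\Gamma$-semistable) matches the paper's argument essentially verbatim: both take the canonical destabilizing $P$-reduction, build the associated one-parameter degeneration, and invoke the Hilbert--Mumford numerical criterion, with the weight computation deferred to the Levification calculation (the paper points to its Proposition on pullbacks and to Lemma 3.16 of \cite{BK}). The forward direction is where you genuinely diverge. You propose to go through the projective moduli space of $\Gamma$-semistable equivariant (equivalently parahoric) bundles of Balaji--Seshadri and the ampleness of the descended $D(V)$ there; the paper instead first observes that $\Gamma$-semistability implies ordinary semistability of $E$ as a plain $G$-bundle on $Y$ (by uniqueness of the canonical reduction of an unstable bundle: it would be $\Gamma$-invariant and hence violate $\Gamma$-semistability), and then produces the section from the classical, non-equivariant moduli space $\mathbb{M}$ of semistable $G$-bundles on $Y$, where the determinant of cohomology descends to an ample bundle by \cite{SNR} and sections extend to the stack by \cite{LASOR}; the section on $\equivBun$ is just the pullback along the forgetful map. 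The paper's route buys elementarity: it needs no GIT input about the equivariant moduli problem at all. Your route is viable but you should be careful about circularity: the assertion that a $\Gamma$-semistable $E$ "lies in the semistable locus of that GIT problem" for the linearization $D(V)$ is very close to the statement being proved, so you must cite the Balaji--Seshadri equivalence of slope- and GIT-semistability as an external input rather than derive it, and you must separately check that their GIT linearization is a power of $D(V)$ for a faithful $V$. If you want to keep your argument self-contained, the paper's reduction to ordinary semistability on $Y$ is the cleaner move.
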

\begin{proof}
Suppose $E$ is $\Gamma$-semistable.  Then $E$ is also a semistable $G$-bundle.  This follows from the uniqueness of the canonical
reduction of an unstable $G$-bundle (see section 2.4 in \cite{TWPB}).   Now the bundle $E$ corresponds to a point 
$x \in \mathbb{M}$ of the moduli space $\mathbb{M}$ of $G$-bundles over $Y$.  Then since  a determinant
of cohomology line bundle descends to an ample bundle $\mathcal{L}$ over $\mathbb{M}$ \cite{SNR}, there is a section
$s \in \text{H}^0(\mathbb{M}, \mathcal{L}^N)$ for some $N>0$ such that $s(x) \neq 0$.  In \cite{LASOR} Laszlo and Sorger
showed that $\text{H}^0(\mathbb{M}, \mathcal{L}^N) = \text{H}^0(\text{Bun}_G, \mathcal{L}^N)$, so pulling back and extending $s$ over
$\equivBun$ gives a section $s \in \normtext{H}^0(\equivBun, D(V)^N)$ such that $s(E) \neq 0$.

Now suppose there is an $s \in \normtext{H}^0(\equivBun, D(V)^N)$ such that $s(E) \neq 0$.  For the sake of contradiction,
suppose that $E$ is not $\Gamma$-semistable.  Then there is a (unique) canonical $P$-reduction $\phi_E: Y \rightarrow E/P$
that is a maximum violator of semistability.  This $P$-reduction gives a one-parameter family of
equivariant bundles $f: \mathbb{A}^1 \rightarrow \equivBun$.  But by Mumford's numerical criterion for semistability,
since $s(E) \neq 0$, the index $\mu(E, f)$ is non-negative, which contradicts the assumption that $\phi$ is a maximal violator
of semistability for $E$, since the index is a positive multiple of the degree of $\phi_E$.  (For more details on the construction of
$f$ and calculation of its index, see the proof of our Proposition \ref{PULLBCK} and Lemma 3.16 in \cite{BK}.)   
Therefore $E$ is $\Gamma$-semistable. 
\end{proof}

Note that the same result holds for $\text{Bun}_{\mathcal{G}}$ and $\mathcal{L}_{\vec{w}}$, assuming this line bundle
descends, which we prove below.
The following proposition contains the basic geometric argument behind the proof of Theorem \ref{GlblSecDesc},
assuming we know that the line bundle itself descends.

\begin{prop}
\label{ZarIsom}
Suppose $f: \mathcal{X} \rightarrow \mathcal{Y}$ is a representable morphism of Artin stacks, where $\mathcal{Y}$ is smooth
over $k$ and 
$f$ is smooth, proper and surjective with connected geometric fibers.  Then 
$f_*(\mathcal{O}_\mathcal{X}) = \mathcal{O}_\mathcal{Y}$,
and for any line bundle $\mathcal{L}$ over $\mathcal{Y}$, the pullback via $f$ induces an isomorphism of global
sections: $\normtext{H}^0(\mathcal{Y}, \mathcal{L}) \xrightarrow{\sim} \normtext{H}^0(\mathcal{X}, \mathcal{L})$.
\end{prop}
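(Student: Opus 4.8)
The plan is to reduce to the case of schemes, and then combine Stein factorisation with Zariski's main theorem. Since $f$ is representable and both stacks here are locally of finite type over $k$ (hence locally Noetherian), the first step is to pass to a smooth atlas $a\colon Y \to \mathcal{Y}$ with $Y$ a Noetherian $k$-scheme: the base change $f_Y\colon X \vcentcolon= \mathcal{X}\times_\mathcal{Y}Y \to Y$ is then a morphism of schemes, and it inherits from $f$ the properties of being smooth, proper, surjective, and having connected geometric fibres, while $Y$ is smooth over $k$ because $\mathcal{Y}$ is. Properness makes $f$ quasi-compact and quasi-separated, so flat base change along $a$ gives a canonical isomorphism $a^*(f_*\mathcal{O}_\mathcal{X}) \cong (f_Y)_*\mathcal{O}_X$ compatible with descent data; hence it suffices to prove $(f_Y)_*\mathcal{O}_X = \mathcal{O}_Y$ for every such $Y$, and smooth descent then yields $f_*\mathcal{O}_\mathcal{X} = \mathcal{O}_\mathcal{Y}$. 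We may assume $Y$ connected, hence (being normal) integral.

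Next I would run Stein factorisation on $f_Y$, writing $X \xrightarrow{g} Y' \xrightarrow{h} Y$ with $Y' = \underline{\operatorname{Spec}}_Y\bigl((f_Y)_*\mathcal{O}_X\bigr)$ finite over $Y$ and $g$ proper, surjective, with $g_*\mathcal{O}_X = \mathcal{O}_{Y'}$ and geometrically connected fibres. Because $f_Y$ has connected geometric fibres and each fibre of $g$ is connected and non-empty, $h$ is injective on geometric points; it is also surjective since $f_Y$ is. As $X$ is smooth over $k$, $Y'$ is reduced, and since we are in characteristic zero these facts force $h$ to be generically an isomorphism, i.e.\ birational. A finite birational morphism onto the normal integral scheme $Y$ is an isomorphism by Zariski's main theorem, so $h$ is an isomorphism and $(f_Y)_*\mathcal{O}_X = \mathcal{O}_Y$. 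This is the only point where the normality — hence the smoothness — of $\mathcal{Y}$ enters.

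Granting $f_*\mathcal{O}_\mathcal{X} = \mathcal{O}_\mathcal{Y}$, the statement about global sections follows formally. For a line bundle $\mathcal{L}$ on $\mathcal{Y}$, the projection formula (valid since $\mathcal{L}$ is flat and $f$ is quasi-compact quasi-separated) gives $f_*(f^*\mathcal{L}) \cong \mathcal{L}\otimes f_*\mathcal{O}_\mathcal{X} = \mathcal{L}$, and this isomorphism is inverse to the adjunction unit $\mathcal{L} \to f_*f^*\mathcal{L}$. Applying $\Gamma(\mathcal{Y},-)$ and using $\Gamma(\mathcal{X}, f^*\mathcal{L}) = \Gamma(\mathcal{Y}, f_*f^*\mathcal{L})$, I conclude that the pullback map $\mathrm{H}^0(\mathcal{Y},\mathcal{L}) \to \mathrm{H}^0(\mathcal{X}, f^*\mathcal{L})$ is an isomorphism, which is the assertion of the proposition.

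The genuinely substantive step is the scheme-level input in the second paragraph: upgrading "connected geometric fibres" to "$(f_Y)_*\mathcal{O}_X = \mathcal{O}_Y$", which requires both the reducedness of the fibres (supplied here by smoothness of $f$) and the normality of the base. The stack-theoretic wrapper in the first paragraph is routine once one knows $f$ is representable, but one must take a little care that flat base change and smooth descent legitimately apply to $f_*\mathcal{O}_\mathcal{X}$, which is exactly what properness (hence quasi-compactness and quasi-separatedness) of $f$ guarantees.
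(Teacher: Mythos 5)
Your proof is correct and follows essentially the same route as the paper's: Stein factorization, the observation that connected fibres force the finite part to be bijective, the use of characteristic zero and normality of the base to conclude it is an isomorphism, and then the projection formula. The only difference is one of packaging — you reduce to schemes via a smooth atlas and spell out the birationality of the finite piece, whereas the paper invokes Stein factorization for Artin stacks directly — so the mathematical content is the same.
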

\begin{proof}
By Stein factorization of Artin stacks (see \cite{ARTINSHV}) $f$ factors as 
$\mathcal{X} \xrightarrow{f'} \mathcal{Y}' \xrightarrow{e} \mathcal{Y}$, where
$f'$ is proper with connected fibers, $f'_*(\mathcal{O}_V) \cong \mathcal{O}_{U'}$, and $e$ is finite.
But since $f$ is surjective and has connected fibers, $e$ must have connected fibers.  But a finite morphism
with connected fibers is an isomorphism since we are working over an algebraically closed field of characteristic $0$,
and $\mathcal{Y}$ is normal.  Therefore $f_*(\mathcal{O}_\mathcal{X}) = \mathcal{O}_\mathcal{Y}$,
and by the projection formula, $\normtext{H}^0(\mathcal{Y}, \mathcal{L}) \rightarrow \normtext{H}^0(\mathcal{X}, \mathcal{L})$
is an isomorphism.
\end{proof}

We can now finish the proof of Theorem \ref{GlblSecDesc}.

\begin{proof}[Proof of Theorem \ref{GlblSecDesc}]

By Proposition \ref{ZarIsom} the pullback of global sections of any line bundle on $\text{Bun}_{\mathcal{G}}$ to $\text{Parbun}_G$
is an isomorphism. It remains to show that $\mathcal{L}_{\vec{w}}$ descends to $\text{Bun}_\mathcal{G}$, assuming $\mathcal{G}$ is the parahoric
group scheme associated to $\vec{w}$.  First note that by the work in section 6 of \cite{BK}, a power of the line bundle $\mathcal{L}_{\vec{w}}$ can
be identified with the pullback to $\text{Parbun}_G$ of a determinant of cohomology bundle on $\equivBun$.  Therefore
by Balaji and Seshadri's identification of the stacks of parahoric bundles and equivariant bundles, a power of $\mathcal{L}_{\vec{w}}$ 
descends to $\text{Bun}_\mathcal{G}$.  In particular, a power of $\mathcal{L}_{\vec{w}}$ is trivial over the fibers of 
$f: \text{Parbun}_G \rightarrow \text{Bun}_\mathcal{G}$.  Now since the fibers of $f$
are isomorphic to a product of connected flag varieties, the Picard groups of the fibers are torsion free, and therefore
$\mathcal{L}_{\vec{w}}$ itself is trivial over the fibers of $f$.

We want to show that $f_*(\mathcal{L}_{\vec{w}})$ is a line bundle, and that its pullback to $\text{Parbun}_G$ is $\mathcal{L}_{\vec{w}}$.
(The following argument is essentially a solution of exercise III.12.4 in \cite{HART}.)  Let $U \rightarrow \text{Bun}_\mathcal{G}$ be a smooth morphism
and let $V$ be the fiber product of $U$ and $\text{Parbun}_G$.  By definition, the pullback of $f_*(\mathcal{L}_{\vec{w}})$
to $U$ is the pushforward of the pullback of $\mathcal{L}_{\vec{w}}$ to $V$.  Since $\mathcal{L}_{\vec{w}}$ is trivial on the fibers,
and the fibers are projective and connected, we have $\text{H}^0(V_y, \mathcal{L}_{\vec{w}}) = k$ for any $y \in U$.
Therefore by Grauert's Theorem, $f_*(\mathcal{L}_{\vec{w}})$ is locally free of rank $1$ over $U$, and therefore over 
$\text{Bun}_\mathcal{G}$ \cite[Corollary III.12.9]{HART}.  Now by the adjoint property of pullbacks, there is a natural
morphism of sheaves $f^*f_*(\mathcal{L}_{\vec{w}}) \rightarrow \mathcal{L}_{\vec{w}}$.  To show this is an isomorphism,
it is sufficient to check it on fibers.  Let $x \in \text{Parbun}_G$ be a $k$-valued point, and $y$ its image in $\text{Bun}_\mathcal{G}$.
Then the fiber of $f^*f_*(\mathcal{L}_{\vec{w}})$ over $x$ is $\text{H}^0((\text{Parbun}_G)_y, \mathcal{L}_{\vec{w}})$, and the morphism
to the fiber of $\mathcal{L}_{\vec{w}}$ is simply the evaluation map.  But since $\text{H}^0((\text{Parbun}_G)_y, \mathcal{L}_{\vec{w}})$
is the space of constant functions on $(\text{Parbun}_G)_y$, this map is nonzero, and therefore 
$f^*f_*(\mathcal{L}_{\vec{w}}) \cong \mathcal{L}_{\vec{w}}$.
\end{proof}

\begin{cor}
Let $X \cong \mathbb{P}^1$.  Then for any weight data $\vec{w}$, the space  $\normtext{H}^0(\normtext{Bun}_{\mathcal{G}}, \mathcal{L}_{\vec{w}}^N)$ is nonzero for some $N$ if and
only if $\vec{w}$ is in the multiplicative polytope.
\end{cor}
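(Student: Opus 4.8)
The plan is to deduce the corollary from two results already in hand: Theorem~\ref{GlblSecDesc}, which gives a natural isomorphism $\normtext{H}^0(\normtext{Bun}_{\mathcal{G}}, \mathcal{L}_{\vec{w}}) \cong \mathcal{V}^{\dagger}_{\mathfrak{g}, \vec{w}}(X, \vec{p})$, and the genus-zero characterization of the multiplicative polytope recorded at the end of Section~\ref{CBDef}, which says that for $X \cong \mathbb{P}^1$ one has $\dim \mathcal{V}^{\dagger}_{N\vec{\lambda}, N\ell}(X, \vec{p}) > 0$ for some $N > 0$ if and only if $\vec{\mu} = (\kappa(\lambda_1)/\ell, \ldots, \kappa(\lambda_n)/\ell) \in \mathcal{A}^n$ lies in $\Delta_n$. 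The bridge between these is the observation that tensoring $\mathcal{L}_{\vec{w}}$ with itself amounts to scaling the weight data.

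First I would observe that $\mathcal{L}_{\vec{w}}^N \cong \mathcal{L}_{N\vec{w}}$ on $\normtext{Parbun}_G$, where $N\vec{w} = (N\lambda_1, \ldots, N\lambda_n, N\ell)$: this follows from $\mathcal{L}_{\vec{w}} = \mathcal{L}^\ell \otimes \mathcal{L}_{\lambda_1} \otimes \cdots \otimes \mathcal{L}_{\lambda_n}$ together with $\mathcal{L}_{N\lambda} \cong \mathcal{L}_\lambda^{\otimes N}$, the latter because $\lambda \mapsto \chi_\lambda$ is a homomorphism of groups of characters. Next I would note that the parahoric group scheme $\mathcal{G}$ attached to $N\vec{w}$ coincides with the one attached to $\vec{w}$, since the associated alcove point is $\kappa(N\lambda_i)/(N\ell) = \kappa(\lambda_i)/\ell$, so the face of the Bruhat--Tits building whose stabilizer defines each $\mathcal{P}_i$ is unchanged. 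Applying Theorem~\ref{GlblSecDesc} to the weight data $N\vec{w}$ then yields
\[
\normtext{H}^0(\normtext{Bun}_{\mathcal{G}}, \mathcal{L}_{\vec{w}}^N) \cong \normtext{H}^0(\normtext{Bun}_{\mathcal{G}}, \mathcal{L}_{N\vec{w}}) \cong \mathcal{V}^{\dagger}_{\mathfrak{g}, N\vec{w}}(\mathbb{P}^1, \vec{p}) = \mathcal{V}^{\dagger}_{N\vec{\lambda}, N\ell}(\mathbb{P}^1, \vec{p}).
\]
Hence $\normtext{H}^0(\normtext{Bun}_{\mathcal{G}}, \mathcal{L}_{\vec{w}}^N) \neq 0$ for some $N > 0$ precisely when $\dim \mathcal{V}^{\dagger}_{N\vec{\lambda}, N\ell}(\mathbb{P}^1, \vec{p}) > 0$ for some $N > 0$, and by the characterization cited above this is equivalent to $\vec{\mu} \in \Delta_n$, i.e. to $\vec{w}$ lying in the multiplicative polytope in the sense of our conventions.

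I expect no serious obstacle here; the single point that warrants care is the interaction of the descent statement with tensor powers, and this is handled by the remark that the parahoric group scheme depends only on the alcove point $\vec{\mu}$ and not on the chosen representative $(\vec{\lambda}, \ell)$, so that Theorem~\ref{GlblSecDesc} can be invoked directly for each $N\vec{w}$ rather than re-argued. Everything else is a formal consequence of the two quoted theorems.
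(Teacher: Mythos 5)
Your proposal is correct and is essentially the paper's own argument: the paper proves this corollary in one line by citing Theorem~\ref{GlblSecDesc} together with Theorem~5.2 of \cite{BK} (the latter being the genus-zero characterization you quote from the end of Section~\ref{CBDef}). Your additional observations --- that $\mathcal{L}_{\vec{w}}^N \cong \mathcal{L}_{N\vec{w}}$ and that the parahoric group scheme depends only on the alcove point $\vec{\mu}$, so Theorem~\ref{GlblSecDesc} applies verbatim to $N\vec{w}$ --- are exactly the details the paper leaves implicit.
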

\begin{proof}
This follows from Theorem 5.2 in \cite{BK} and Theorem \ref{GlblSecDesc}.
\end{proof}

\section{Beginning of the proof of the reduction theorem: Stacks of $P$-reductions}\label{PReductionSec}

We are now ready to begin the proof of the reduction theorem for conformal blocks.  First we want to outline
the strategy of the proof in the language of parahoric bundles and equivariant bundles.
For the remainder of the proof of the reduction theorem we will fix the following data. Assume $X \cong \mathbb{P}^1$
and fix distinct points $p_1, \ldots, p_n \in X$.  Let $\vec{w}$ be weight 
data in the multiplicative polytope.  Assume that $\vec{w}$ lies on a face of the polytope corresponding to 
the quantum product $\sigma_{u_1} \ast \cdots \ast \sigma_{u_n} = q^d \text{[pt]}$ in $\text{QH}^*(G/P)$.
Let $\mathcal{G}$ be the parahoric group scheme over $X$ corresponding to $\vec{w}$, and let
$\equivBun$  be a stack of equivariant bundles over a curve $Y$ such that $\equivBun \cong \text{Bun}_{\mathcal{G}}$.
Let $L \subseteq P$ be the Levi subgroup containing $T$, and $L' = [L,L]$.

Consider the following diagram.
\[
\includegraphics{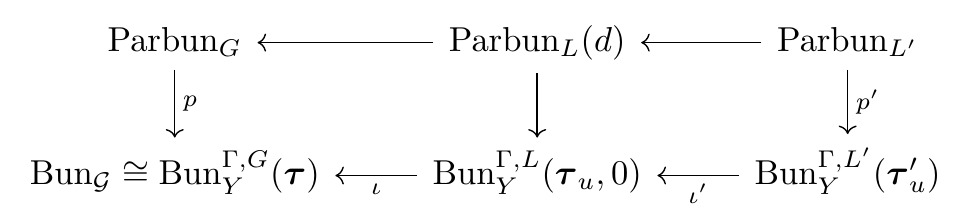}
\]
The morphisms $p$ and $p'$ are the projection morphisms for parahoric bundles discussed above.  The morphism $\iota$
is simply induced by extension of structure group.  The morphism $\iota'$ is induced by extension of structure group
and some additional non-canonical twisting if $d>0$, and will be described in more detail in section \ref{ProofSec}.
The other morphisms are the natural ones making the diagram commutative.

The basic strategy of the proof is to use these morphisms to prove that there is a natural isomorphism of global
sections of the line bundle $\mathcal{L}_{\vec{w}}$ over $\text{Parbun}_G$ and the associated line bundle $\mathcal{L}_{\vec{w}'}$
over $\text{Parbun}_{L'}$.  By the results in section \ref{ParahoricSec}, both $p$ and $p'$ induce an isomorphism of 
global sections.  The proof that $\iota'$ induces an isomorphism of global sections is similar
to the argument in section 7 of \cite{BK}, and we prove it in section \ref{ProofSec}.

In order to show that $\iota$ induces an isomorphism of sections, we use a method originally due to Ressayre \cite{RSS10}.
We start with a stack $\mathcal{C} \rightarrow \text{Parbun}_G$, the fibers of which correspond to $P$-reductions
of parabolic bundles of degree $d$ and relative position $(u_1, \ldots, u_n)$.  Our cohomology assumption guarantees
that this morphism is \emph{birational}, by which we simply mean there is an open subset of $\mathcal{C}$
mapping isomorphically to its image in $\text{Parbun}_G$.  We then embed this stack into a larger stack $\mathcal{Y}$, containing
$\mathcal{C}$ as a dense substack.  This stack fits into the following diagram.
\[
\includegraphics{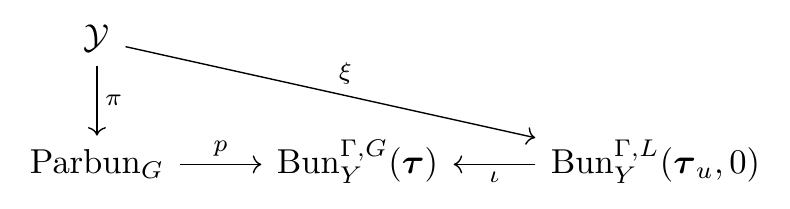}
\]
This diagram is \emph{not} $2$-commutative.  However, it does induce a commutative diagram of global sections via pullback 
(see Proposition \ref{PULLBCK}).
The main theorem of this section is that $\pi$ is proper over the semistable locus of $\text{Parbun}_G$.

\begin{thm}
\label{PropThm}
The reduction stack $\mathcal{C}$ is embedded in a stack $\mathcal{Y}$ over $\text{Parbun}_G$ such that
the restriction to semistable bundles $\pi: \mathcal{Y}^{ss} \rightarrow \text{Parbun}_G^{ss}$ is proper.
\end{thm}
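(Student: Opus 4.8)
The stack $\mathcal{Y}$ will be the moduli stack of pairs $(\widetilde{\mathcal{E}},W)$ where $\widetilde{\mathcal{E}}\in\text{Parbun}_G$ and $W$ is a $P$-reduction of degree $d$ whose relative position at $p_i$ lies in the Schubert variety $X_{u_i}$ for each $i$; it is representable over $\text{Parbun}_G$, being cut out inside a relative Hilbert scheme, and $\mathcal{C}\subseteq\mathcal{Y}$ is the open dense substack where the relative position at $p_i$ is exactly $u_i$. Since $\pi$ is representable, properness of $\pi\colon\mathcal{Y}^{ss}\to\text{Parbun}_G^{ss}$ is checked by the valuative criterion: I take a discrete valuation ring $R$ with fraction field $F$, put $C=\text{Spec}(R)$ with generic point $C^*$ and closed point $0$, and assume given a family $\mathcal{E}\to X\times C$ of semistable parabolic bundles together with a $C^*$-point of $\mathcal{Y}$, i.e. a $P$-reduction $W_F$ of $\mathcal{E}|_{X\times C^*}$ of degree $d$ with relative position in $\prod_i X_{u_i}$. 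After a finite extension of $R$ if necessary, I must produce a unique $C$-point of $\mathcal{Y}^{ss}$ extending it. Uniqueness is just separatedness: two extensions give $P$-reductions of $\mathcal{E}_0$ agreeing over a dense open, hence equal. So the point is existence.

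The first step is to pass to the ramified cover. Let $p\colon Y\to X$ be the Galois cover with group $\Gamma$ for which $\text{Bun}_{\mathcal{G}}\cong\equivBun$ as in Balaji--Seshadri, so $\mathcal{E}$ pulls back to a $\Gamma$-equivariant family $\mathcal{F}\to Y\times C$ of $G$-bundles, fiberwise $\Gamma$-semistable, whose underlying $G$-bundles are semistable by uniqueness of the canonical reduction (cf. the proof of Proposition~\ref{SEMIEQ}). Since $C^*$ is a single point, the family of $P$-reductions over it has constant relative position, so $W_F$ lifts to a $\Gamma$-equivariant $P$-reduction $\sigma_F\colon Y_F\to(\mathcal{F}/P)|_{Y\times C^*}$ by the lifting procedure of \cite{TWPB}; it is exactly the jumping of the relative position that makes that procedure discontinuous, and over one point no such jump occurs.

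Next I extend $\sigma_F$ across the closed fiber. The surface $Y\times C$ is regular, so the rational section $\sigma_F$ of the projective fibration $\mathcal{F}/P\to Y\times C$ is defined away from a finite set of closed points lying in $Y\times\{0\}$; its graph is an $F$-point of the relative Hilbert scheme $\mathrm{Hilb}$ of $\mathcal{F}/P$ over $C$, which is proper over $C$. By the valuative criterion for $\mathrm{Hilb}$ this extends uniquely to a $C$-point whose closed fiber is a subscheme $\Sigma_0\subseteq\mathcal{F}_0/P$, a priori the graph of a rational section of $\mathcal{F}_0/P\to Y$ together with finitely many vertical ``ghost'' components. Here semistability enters decisively: by the no-ghosts theorem of Holla--Narasimhan \cite{HNA}, applicable because $\mathcal{F}_0$ is semistable, no ghost components occur, so $\Sigma_0$ is the graph of an honest section $\sigma_0\colon Y\to\mathcal{F}_0/P$, i.e. a genuine $P$-reduction $W_0$ of $\mathcal{F}_0$, and $\deg W_0=d$ by flatness. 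The construction commutes with $\Gamma$ since the Hilbert-scheme limit is unique, so $W_0$ is $\Gamma$-equivariant, and its relative position over each $p_i$ is a specialization of that of $\sigma_F$ and hence still lies in $X_{u_i}$. Therefore $W_0$ descends to a $P$-reduction of the parahoric bundle $\mathcal{E}_0$ of the type parametrized by $\mathcal{Y}$, and as $\mathcal{E}_0$ is semistable this yields the required $C$-point of $\mathcal{Y}^{ss}$.

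The main obstacle is the third step: controlling the flat limit of $P$-reductions over the puncture. Without semistability of the central fiber the limiting reduction could degenerate into a section of smaller degree plus ghost fibers, and the Holla--Narasimhan no-ghosts theorem is precisely what rules this out while keeping the degree equal to $d$; the remaining delicate points, that the limit retains the correct specialized relative position at the marked points and stays $\Gamma$-equivariant, both follow from the uniqueness of the Hilbert-scheme limit.
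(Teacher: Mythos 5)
Your overall strategy for the existence half of the valuative criterion --- pass to the Galois cover $Y$, lift the generic $P$-reduction equivariantly, take the flat limit in a relative Hilbert scheme of $\mathcal{F}/P$, rule out ghost components with the Holla--Narasimhan lemma using semistability, and conclude by uniqueness of the limit --- is exactly the paper's. The genuine gap is in your choice of the compactifying stack $\mathcal{Y}$, and it breaks your last step. You define $\mathcal{Y}$ as pairs $(\widetilde{\mathcal{E}},W)$ with $W$ a $P$-reduction of $\mathcal{E}$ over $X$ of \emph{degree $d$} and relative position in $\prod_i X_{u_i}$. But the limit your argument actually produces is a degree-zero $\Gamma$-equivariant $P$-reduction $W_0$ of $\mathcal{F}_0$ over $Y$, and the final claim --- that $W_0$ descends to a reduction of $\mathcal{E}_0$ ``of the type parametrized by $\mathcal{Y}$,'' with relative position a specialization of that of $\sigma_F$ --- is unjustified and in general false. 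The dictionary between reductions over $X$ and their equivariant lifts over $Y$ converts the \emph{parabolic} degree $\sum_i\langle\omega_P,w_i^{-1}\lambda_i\rangle-\ell d$ into the plain degree over $Y$; so when the behaviour at a marked point degenerates in the limit, the descended reduction over $X$ keeps parabolic degree $0$ precisely by changing its plain degree away from $d$, and what the flat equivariant limit preserves is the local type $u_i^{-1}\tau_i u_i$ up to $P$-conjugacy, not a Bruhat specialization into $X_{u_i}$. This is exactly the discontinuity the paper warns about (a jump in parabolic degree over $X$ becomes a jump in plain degree over $Y$), and it means your limit point need not lie in your $\mathcal{Y}$ at all.

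There is a second, related problem: properness must be tested on $C^*$-points landing in the boundary $\mathcal{Y}\setminus\mathcal{C}$, where the relative position is some $w_i$ strictly below $u_i$; there the parabolic degree of $W_F$, hence the $\xi$-degree of its equivariant lift, is no longer $0$ but in general strictly positive, and the no-ghosts dichotomy $\deg(D_{\mathrm{red}},\xi)<\deg(Z|_{C^*},\xi)$ then yields no contradiction with $\Gamma$-semistability (which only forbids $\xi$-degree $<0$). The paper resolves both issues by defining $\mathcal{Y}$ as the closure of $\mathcal{C}$ inside $\text{Parbun}_G\times_{\equivBun}\equivBunPTwo{0}$ --- so the boundary points are by definition degree-$0$ equivariant $P$-reductions of fixed local type $\boldsymbol{\tau}_u$ over $Y$, which is where your flat limit genuinely lives --- and by proving properness for the stack $\mathcal{R}_0$ of \emph{all} degree-$0$ reductions over $\equivBunSS$, every point of which has $\xi$-degree exactly $0$ so that the no-ghosts argument applies uniformly, with $\equivBunPTwo{0}$ closed in $\mathcal{R}_0$ because fixing the local type is a closed condition.
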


The proof then goes as follows.  By a version of Zariski's main theorem, pullback via the birational, proper morphism 
$\pi: \mathcal{Y}^{ss} \rightarrow \text{Parbun}_G^{ss}$ induces an isomorphism of global sections for any line bundle.  
We will show that $\xi$ is surjective (see Proposition \ref{PULLBCK} and Theorem \ref{PLIFT}) so pullback via $\xi$ is injective.  
Then by a simple diagram chase, pullback via $\iota$ is an isomorphism.

\subsection{Universal reduction stacks}

We begin the construction of $\mathcal{Y}$ by reviewing  the analogous space in the 
``classical" case, replacing conformal blocks with spaces of invariants, and parabolic bundles with tuples of flags.
In \cite{BKR} Belkale, Kumar and Ressayre use these varieties to prove a generalization of Fulton's conjecture. 

Let $u_1, \ldots ,u_n \in W^P$.  Then $B$ fixes the Schubert cell $C_{u_i}$, and we can form the fiber bundle 
$\mathfrak{C}_{u_i} = G \times^B C_{u_i}$ over $G/B$.  Note that we have a natural projection map 
$\mathfrak{C}_{u_i} \rightarrow G/B$ and a map $\mathfrak{C}_{u_i} \rightarrow G/P$ defined by 
$[g,x] \mapsto gx$.  Let $\mathfrak{C} = \mathfrak{C}_{u_1} \times \cdots \times \mathfrak{C}_{u_n}$.  Then the \emph{universal intersection scheme} $\mathcal{C}$
is defined as the fiber product of the map $\mathcal{C} \rightarrow (G/P)^n$ with the diagonal $\delta : G/P \rightarrow (G/P)^n$.
So we have natural maps $g : \mathcal{C} \rightarrow (G/B)^n$
and $h : \mathcal{C} \rightarrow G/P$, and the points of $\mathcal{C}$
correspond to tuples $(\overline{g}_1, \ldots, \overline{g}_n, x)$ with each $\overline{g}_i \in G/B$, and $x \in G/P$
in the intersection of $g_1 C_{u_1}, \ldots, g_n C_{u_n}$. In a similar way we define the 
universal intersection schemes for the closed Schubert variety $X_{u_i}$ and its smooth locus $Z_{u_i}$, denoting them 
$\mathcal{X}$ and $\mathcal{Z}$ respectively.

The generalization to parabolic bundles is the reduction stack $\mathcal{C} \rightarrow \text{Parbun}_G$,
which replaces points in $G/P$ with $P$-reductions of specified degree and relative Schubert condition.
This stack was used by Belkale and Kumar in their work determining the irredundant inequalities of the
multiplicative polytope \cite{BK}.

\begin{defin}
The \emph{universal reduction stack $\mathcal{C}$ relative to $u_1, \ldots, u_n$ and degree $d$} is the stack of pairs of parabolic bundles 
$\widetilde{\mathcal{E}} \in \text{Parbun}_G$ and $P$-reductions $\mathcal{P} \times^P G \cong \mathcal{E}$ of degree $d$ and 
relative Schubert position $u_1, \ldots, u_n$ (see section \ref{Moduli}).  The \emph{smooth reduction stack} $\mathcal{Z}$ 
also includes $P$-reductions with relative positions $w_1, \ldots, w_n$ such that $C_{w_i} \subseteq Z_{u_i}$,
where $Z_{u_i}$ is the smooth locus of the closed Schubert variety $X_{u_i}$.
\end{defin}

More formally, we can describe $\mathcal{C}$ as a locally closed substack of $\text{Parbun}_{G,P}$, the moduli
stack of parabolic bundles paired with $P$-reductions.  Let $\widetilde{\mathcal{E}}$ be a family of parabolic bundles
over $S$, and 
write $\mathcal{E}_i$ for the restriction of $\mathcal{E}$ to $\{p_i\} \times S$.
Now we can consider $\mathfrak{C}_{u_i}$ as a locally closed subvariety of 
$G/B \times G/P$ via the morphism $\mathfrak{C}_{u_i} \rightarrow G/B \times G/P$ sending $(g, x) \mapsto (\overline{g}, gx)$.  
In a similar way
we can consider the fiber bundle $\mathcal{E}_i \times^B \mathfrak{C}_{u_i}$ as a locally closed subscheme of 
$\mathcal{E}_i/B \times \mathcal{E}_i/P$.  Now the $B$- and $P$-reductions of $\mathcal{E}_i$ correspond to a section
$s_i: S \rightarrow \mathcal{E}_i/B \times \mathcal{E}_i/P$.  The Schubert condition is that $s_i$ must factor through 
$\mathcal{E}_i \times^B \mathfrak{C}_{u_i}$ for each $i$.  

Clearly then $\mathcal{C}$ is a locally closed substack of $\text{Parbun}_{G,P}$.  Furthermore, since the morphism 
$\text{Parbun}_{G,P} \rightarrow \text{Parbun}_G$ is representable (see Proposition \ref{PREDREP}), so is $\mathcal{C} \rightarrow \text{Parbun}_G$,
and therefore $\mathcal{C}$ is an algebraic stack.
Similarly we have stacks $\mathcal{Z}$ and $\mathcal{X}$ which are representable over $\text{Parbun}_G$.  We
also have natural open embeddings $\mathcal{C} \subseteq \mathcal{Z} \subseteq \mathcal{X}$.
  
Our next task will be to prove some properties of these intersection stacks.

\subsubsection{Basic geometric properties of $\mathcal{Z}$}

It will be important later that $\mathcal{C}$ is reduced and irreducible.  To prove this, we 
show that $\mathcal{Z}$ is smooth over $\text{Spec}(k)$ and irreducible.  The same properties about
$\mathcal{C}$ follow since $\mathcal{C}$ is an open substack of $\mathcal{Z}$.

\begin{prop}
The stack $\mathcal{Z}$ is smooth over $\text{Spec}(k)$ and irreducible.
\end{prop}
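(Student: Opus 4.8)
The plan is to establish smoothness and irreducibility of $\mathcal{Z}$ by exhibiting it as a fiber bundle over $\text{Parbun}_G$ (in the smooth topology) with smooth, irreducible fibers, and then using the fact that $\text{Parbun}_G$ itself is smooth and irreducible. First I would recall that $\mathcal{Z}$ was defined as a locally closed substack of $\text{Parbun}_{G,P}$ cut out by the smooth Schubert conditions $C_{w_i} \subseteq Z_{u_i}$ at each marked point, and that the forgetful morphism $\text{Parbun}_{G,P} \to \text{Parbun}_G$ is representable. So it suffices to analyze the morphism $q \colon \mathcal{Z} \to \text{Parbun}_G$. After choosing a smooth atlas $S \to \text{Parbun}_G$ with a $G$-bundle $\mathcal{E}$ over $X \times S$ of the appropriate degree, the pullback $\mathcal{Z} \times_{\text{Parbun}_G} S$ is the scheme parametrizing $P$-reductions $\phi \colon X \times S \to \mathcal{E}/P$ of degree $d$ whose relative position at each $p_i$, relative to the flag $\overline{g}_i$, lies in $Z_{u_i}$; concretely, $\phi$ is a section of the $G/P$-bundle $\mathcal{E}/P \to X \times S$ of degree $d$ subject to an open condition at the $n$ marked fibers.

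The key step is to show this space of sections is smooth over $S$. For this I would use deformation theory: the obstruction to deforming a $P$-reduction $\phi$ with the prescribed Schubert conditions is governed by $H^1$ of a twisted sheaf. Specifically, the deformations of a degree-$d$ section of $\mathcal{E}/P$ satisfying the open Schubert conditions at the $p_i$ are controlled by $H^0$ and $H^1$ of the relative tangent sheaf $\phi^*T_{(\mathcal{E}/P)/X}$, which is the associated bundle $\sigma^*\mathcal{E}(\mathfrak{g}/\mathfrak{p})$ for the $P$-reduction $\sigma$ — and since $X \cong \mathbb{P}^1$, this sheaf is a direct sum of line bundles. The degree-$d$, relative-position-$(u_1,\ldots,u_n)$ condition together with the Schubert-smoothness assumption forces the relevant degrees so that $H^1$ of the twisted-down sheaf (twisted to account for fixing the flags only up to the open Schubert cell at each $p_i$, which imposes conditions in the smooth locus $Z_{u_i}$) vanishes; this is essentially the transversality / dimension count that Belkale and Kumar carry out in their analysis, and the assumption $C_{w_i} \subseteq Z_{u_i}$ is exactly what guarantees the relevant vanishing, hence unobstructedness and smoothness of $q$.

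Having shown $q \colon \mathcal{Z} \to \text{Parbun}_G$ is smooth, smoothness of $\mathcal{Z}$ over $\text{Spec}(k)$ follows since $\text{Parbun}_G$ is smooth over $k$ (being the stack of parabolic bundles, with the forgetful map to $\text{Bun}_G$ smooth and $\text{Bun}_G$ smooth), and the composition of smooth morphisms is smooth. For irreducibility, I would argue that the fibers of $q$ are irreducible and $\text{Parbun}_G$ is irreducible, which gives irreducibility of the total space. To see the fibers are irreducible: over a general $G$-bundle, the fiber is an open subscheme of the space of degree-$d$ sections of the associated $G/P$-bundle, and the universal intersection scheme $\mathcal{Z}$ in the classical (flag) case — namely $\mathcal{Z} = \mathcal{Z}_{u_1,\ldots,u_n}$ built from the $Z_{u_i}$ — is itself irreducible because each $Z_{u_i}$ is irreducible (it is an open dense subset of the irreducible Schubert variety $X_{u_i}$) and the fiber product defining it is a generically transverse intersection over $G/P$. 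The degree-$d$ analogue over $\mathbb{P}^1$ is handled the same way, using that the relevant evaluation/space-of-sections construction is a vector-bundle (affine-space) torsor over an irreducible base when $H^1$ vanishes.

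The main obstacle I anticipate is the fiber-dimension/vanishing computation: one must pin down precisely which twisted sheaf on $\mathbb{P}^1$ controls the deformations of a $P$-reduction with relative position in $Z_{u_i}$ at each $p_i$, and verify that the $H^1$ vanishes given the hypotheses — in particular dealing carefully with the fact that the relative positions $w_i$ allowed in $\mathcal{Z}$ range over all $w_i$ with $C_{w_i} \subseteq Z_{u_i}$, not just $w_i = u_i$, so the relevant sheaf has fiber $\mathfrak{g}/\mathfrak{p}$ twisted down by the $w_i$-dependent amount along $p_i$, and one needs the Schubert-smoothness to ensure the estimate is uniform. This is exactly the content that Belkale and Kumar establish in their treatment of these reduction stacks, so I would cite their computation rather than redo it, and simply note that smoothness and irreducibility of $\mathcal{Z}$ (hence of the open substack $\mathcal{C}$) follow formally once the fibers of $q$ are known to be smooth and irreducible.
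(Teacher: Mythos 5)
Your proposal fibers $\mathcal{Z}$ over the wrong base, and the key step fails. You propose to show that $q\colon \mathcal{Z} \to \text{Parbun}_G$ is smooth with irreducible fibers, the fibers being spaces of degree-$d$ $P$-reductions subject to the Schubert conditions, with smoothness coming from vanishing of $H^1$ of a twisted-down copy of $\sigma^*\mathcal{E}(\mathfrak{g}/\mathfrak{p})$ on $\mathbb{P}^1$. But that vanishing depends on the splitting type of the underlying bundle, not on the Schubert-smoothness condition $C_{w_i}\subseteq Z_{u_i}$ (which only controls the local geometry at the $n$ marked points); over special, e.g.\ very unstable, parabolic bundles the space of $P$-reductions jumps in dimension and can be singular and reducible. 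Indeed, in the setting of this section $\mathcal{C}\to\text{Parbun}_G$ is birational, so the generic fiber of $q$ over its image is a single point; if $q$ were smooth it would have locally constant relative dimension on the irreducible $\mathcal{Z}$ and hence be \'etale, contradicting the jumping of fibers that the rest of the paper has to work around (this is precisely why the properness argument over the semistable locus is needed). So neither the smoothness nor the fiber-irreducibility claim for $q$ can be salvaged.

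The paper instead projects the other way: it uses $\mathcal{Z}\to \text{Bun}_{G,P}(d)$, the stack of $G$-bundles equipped with a degree-$d$ $P$-reduction. Over that base the fiber is just the product over $i$ of the choices of flag $\overline{g}_i\in \mathcal{E}_i/B$ placing the fixed reduction point in the prescribed Schubert locus, i.e.\ a fiber of the smooth surjection $\mathcal{E}_i\times^B \mathfrak{Z}_{u_i}\to\mathcal{E}_i/P$; these fibers are smooth and irreducible for purely finite-dimensional reasons (smoothness of $G\times^B Z_u\to G/P$, from \cite{BKR}). Smoothness of $\mathcal{Z}$ then follows from smoothness of $\text{Bun}_{G,P}(d)\cong\text{Bun}_P(d)$, and irreducibility from openness of $|\mathcal{Z}|\to|\text{Bun}_{G,P}(d)|$, irreducibility of $\text{Bun}_P(d)$ (via $\pi_0(\text{Bun}_P)\cong\pi_1(P)\cong\mathbb{Z}$), and irreducibility of the fibers. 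If you reorganize your argument around this projection, the deformation-theoretic $H^1$ computation you were worried about disappears entirely.
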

\begin{proof}
Consider the natural projection $\mathcal{Z} \rightarrow \text{Bun}_{G,P}(d)$, where $\text{Bun}_{G,P}(d)$ is the
stack of principal $G$-bundles over $X$ paired with degree $d$ $P$-reductions.
Since $\text{Bun}_{G,P}(d) \cong \text{Bun}_{P}(d)$ is smooth over $\text{Spec}(k)$, 
it is sufficient to show that $\mathcal{Z} \rightarrow \text{Bun}_{G,P}(d)$ is smooth.  
It is easy to see that the fibers of the projection $\mathcal{Z} \rightarrow \text{Bun}_{G,P}$ are locally closed 
subschemes of Hilbert schemes, and therefore locally of finite type.  Finally,
$\mathcal{Z} \rightarrow \text{Bun}_{G,P}(d)$ is formally smooth because each
projection $\mathcal{E}_i \times^B \mathfrak{Z}_{u_i} \rightarrow \mathcal{E}_i/P$ is smooth (see \cite{BKR} section 5).
Therefore $\mathcal{Z}$ is smooth.

In particular the induced morphism of topological spaces $|\mathcal{Z}| \rightarrow |\text{Bun}_{G,P}(d)|$
is open. 
Furthermore $\text{Bun}_P(d)$ is
irreducible, since $\pi_0(\text{Bun}_P) \cong \pi_1(P) \cong \mathbb{Z}$ (cf. \cite{HFF} for a proof).
We claim that the fibers of $\mathcal{Z} \rightarrow \text{Bun}_{G,P}(d)$ over $k$-rational points are also irreducible. 
After choosing local trivializations of the $G$-bundle, the fiber of the projection over the point 
$\text{Spec}(k) \rightarrow \text{Bun}_{G,P}(d)$ is simply the independent
choices of flags in $G/B$ satisfying the Schubert conditions with respect to each point $x_i \in G/P$ given by
the $P$-reduction $\mathcal{P}$ and our chosen Weyl group elements $u_i$.  It is easy to see that each space of
possible choices in $G/B$ is an irreducible variety.  Therefore the fiber over the given point is a product
of irreducible varieties, which is irreducible.

Now we show that $\mathcal{Z}$ is irreducible.  Let $U_1$, $U_2$ be nonempty open subsets of $|\mathcal{Z}|$.  
Then since the projection 
$|\mathcal{Z}| \rightarrow |\text{Bun}_{G,P}(d)|$ is open, and the stack $\text{Bun}_{G,P}(d)$ is 
irreducible, the images of $U_1$ and $U_2$ in $|\text{Bun}_{G,P}(d)|$ are open and intersect non-trivially; say the 
intersection is $V$, a nonempty open subset of $|\text{Bun}_{G,P}(d)|$.  Then 
we have that the fiber over a $k$-rational point of $V$ intersects both $U_1$ and $U_2$
non-trivially.  But we know that such a fiber is irreducible, and therefore that
$U_1$ and $U_2$ must intersect non-trivially.
\end{proof}

\subsection{Lifting families of $P$-reductions}

We need to embed $\mathcal{C}$ in a larger stack $\mathcal{Y}_0$ in order to construct $\mathcal{Y}$
by taking the closure.  Our approach is to first lift the $P$-reductions in $\mathcal{C}$ to
$P$-reductions of equivariant bundles.

Say $\widetilde{\mathcal{E}}$ is a parabolic $G$-bundle on $X$, and $E$ is a $(\Gamma,G)$-bundle on $Y$, of local type
$\boldsymbol{\tau}$ such that $E$ is the image of $\widetilde{\mathcal{E}}$ in $\equivBun$.  Then
$P$-reductions of $\mathcal{E}$ can clearly be lifted individually to $E$, since generically $E$ is just the pullback
of $\mathcal{E}$ to $Y$ with the trivial $\Gamma$ structure (see below), and the closure of a generic $P$-reduction exists and is
unique.  However in families this process is \emph{discontinuous}.  This can be seen by considering a connected family 
of $P$-reductions of $\mathcal{E}$ that jumps in parabolic degree: once lifted to $E$ this becomes a change in plain
degree.

Let $\equivBunPTwo{0}$ be the
stack of $(\Gamma, G)$-bundles of local type $\boldsymbol{\tau}$ together with invariant $P$-reductions of local
type $\boldsymbol{\tau}_u = (u_1^{-1}\tau_1u_1, \ldots, u_n^{-1}\tau_nu_n)$ and degree $0$.
The goal of this section is to prove the following theorem.

\begin{thm}\label{PLIFT}
There exists a representable, surjective morphism $\mathcal{C} \rightarrow \equivBunPTwo{0}$.
\end{thm}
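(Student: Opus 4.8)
The plan is to build the morphism explicitly as the promised lift of $P$-reductions, and then to check representability and surjectivity; both the existence of the morphism (as opposed to a mere map on points) and the identification of its target are local problems at the ramification points.

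\textbf{Construction of the morphism.} Composing $\text{Parbun}_G \to \normtext{Bun}_{\mathcal{G}}$ with the inverse of the Balaji--Seshadri isomorphism $\equivBun \xrightarrow{\sim} \normtext{Bun}_{\mathcal{G}}$ gives a morphism $q: \text{Parbun}_G \to \equivBun$; precomposing with $\mathcal{C} \to \text{Parbun}_G$ sends a point $(\widetilde{\mathcal{E}},\sigma)$ of $\mathcal{C}$ --- a parabolic bundle $\widetilde{\mathcal{E}} = (\mathcal{E},\overline{g}_1,\dots,\overline{g}_n)$ together with a degree $d$ $P$-reduction $\sigma$ of relative position $u_1,\dots,u_n$ --- to a $(\Gamma,G)$-bundle $E$ of local type $\boldsymbol{\tau}$. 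To lift $\sigma$: over $X^* = X \setminus \{p_1,\dots,p_n\}$ the group scheme $\mathcal{G}$ is trivial and $E|_{Y^*} = p^*(\mathcal{E}|_{X^*})$ with trivial $\Gamma$-action, so $\sigma$ pulls back to a $\Gamma$-invariant $P$-reduction of $E|_{Y^*}$; since $E/P \to Y$ is proper and $Y$ is a smooth curve, this extends uniquely to a $P$-reduction $\widetilde{\sigma}$ of $E$ over all of $Y$, and uniqueness forces $\widetilde{\sigma}$ to stay $\Gamma$-invariant. In families one runs the same construction and takes the schematic closure of the relative $P$-reduction inside $E/P$ over $Y \times S$; the point to verify is that this closure is flat over $Y\times S$, and here the constant-relative-position condition built into $\mathcal{C}$ is exactly what is needed: it keeps the local structure of the closure at each ramification point of $p\times\mathrm{id}_S$ constant along $S$, by a Hilbert-scheme argument, so the closure is flat and the construction is a morphism of stacks landing in a single component.

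\textbf{Identification of the target.} It remains to see $(E,\widetilde{\sigma}) \in \equivBunPTwo{0}$, i.e.\ that $\widetilde{\sigma}$ has local type $\boldsymbol{\tau}_u = (u_1^{-1}\tau_1 u_1,\dots,u_n^{-1}\tau_n u_n)$ and degree $0$; both are local computations at a ramification point $y$ over $p_i$. Trivialize $\mathcal{E}$ near $p_i$ so $\overline{g}_i$ is the base flag; then $\sigma(p_i) \in Bu_iP/P$ by the relative-position hypothesis, and in the Balaji--Seshadri model $E$ near $y$ is the $\tau_i$-twist of $p^*\mathcal{E}$, with gluing by the cocharacter $\chi_i$ where $\tau_i(\gamma) = \chi_i(\zeta_i^\gamma)$. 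Lifting $\sigma$ to $\beta(z)u_i$ with $\beta$ a $B$-valued holomorphic function and transporting through the $\chi_i$-twist, one finds that $\widetilde{\sigma}$ extends across $y$ with value $u_iP$ in a suitable trivialization of $E$; since $u_i^{-1}\tau_i(\gamma)u_i \in u_i^{-1}Tu_i = T \subseteq P$, this coset is $\Gamma_y$-invariant, and the induced $\Gamma_y$-action on the adapted trivialization of the $P$-reduction is $\gamma \mapsto u_i^{-1}\tau_i(\gamma)u_i$, which is the $i$-th component of $\boldsymbol{\tau}_u$. The same computation tracks the order of the relevant transition function against $\omega_P$ and identifies the degree of $\widetilde{\sigma}$ with (a positive multiple of) the parabolic degree $\sum_{i=1}^n \langle \omega_P, u_i^{-1}\lambda_i\rangle - \ell d$ of $\sigma$; since $\vec{w}$ lies on the face corresponding to $\sigma_{u_1}\ast\cdots\ast\sigma_{u_n} = q^d[\normtext{pt}]$, the Teleman--Woodward inequality $\sum_i \omega_P(u_i^{-1}\mu_i) \le d$ holds with equality, so this parabolic degree vanishes and $\widetilde{\sigma}$ has degree $0$.

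\textbf{Representability and surjectivity.} The forgetful morphism $\equivBunPTwo{0} \to \equivBun$ is representable (its fibers parametrize $P$-reductions of fixed type, which sit inside relative Hilbert schemes of sections of $E/P \to Y$), and the composite $\mathcal{C} \to \equivBunPTwo{0} \to \equivBun$ equals $\mathcal{C} \to \text{Parbun}_G \xrightarrow{q} \equivBun$, both factors of which are representable (the first as noted after the definition of $\mathcal{C}$, the second because the fibers of $\text{Parbun}_G \to \normtext{Bun}_{\mathcal{G}}$ are products of flag varieties, as in the proof of Theorem \ref{GlblSecDesc}); hence $\mathcal{C} \to \equivBunPTwo{0}$ is representable by the standard fact that representability of $g\circ f$ and of $g$ implies that of $f$. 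For surjectivity it is enough to hit every $k$-point: given $(E,\widetilde{\sigma}) \in \equivBunPTwo{0}$, lift the image of $E$ in $\normtext{Bun}_{\mathcal{G}}$ to some $(\mathcal{E},\overline{g}_1,\dots,\overline{g}_n) \in \text{Parbun}_G$, descend $\widetilde{\sigma}|_{Y^*}$ to a $P$-reduction of $\mathcal{E}|_{X^*}$ by $\Gamma$-invariance, and extend it over the $p_i$; running the local computation above in reverse, the hypothesis that $\widetilde{\sigma}$ has local type $\tau_{u,i}$ forces the relative position to be $u_i$ once $\overline{g}_i$ is chosen appropriately within its flag variety, and degree $0$ on $Y$ forces degree $d$ on $X$, so the resulting point of $\mathcal{C}$ maps to $(E,\widetilde{\sigma})$.

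\textbf{Main obstacle.} The substantive difficulty is twofold, and both halves are local over the $p_i$: first, proving that the lift is genuinely a morphism of stacks and not merely a map on points --- equivalently, that closures of families of $P$-reductions of constant relative position are flat over $Y\times S$ --- which is exactly the failure illustrated by the degree-jumping example and the reason the constant-relative-position hypothesis cannot be dropped; and second, the bookkeeping at the ramification points that simultaneously pins down the local type as $\boldsymbol{\tau}_u$ and identifies the degree with the parabolic degree of $\sigma$, so that the face hypothesis is precisely what lands $\widetilde{\sigma}$ in the \emph{degree $0$} stack $\equivBunPTwo{0}$.
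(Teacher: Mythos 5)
Your outline matches the paper's at the level of strategy: lift the $P$-reduction over $Y^*$ where $E$ is just the pullback of $\mathcal{E}$, extend across the ramification points, compute the local type and degree, then check representability and surjectivity. The target identification (local type $u_i^{-1}\tau_i u_i$, degree a positive multiple of the parabolic degree, which vanishes by the face hypothesis) and the surjectivity argument are essentially the paper's. Your representability argument is a legitimate variant: the paper instead shows directly that $\mathcal{C} \rightarrow \mathcal{Y}_0 = \text{Parbun}_G \times_{\equivBun} \equivBunPTwo{0}$ is a monomorphism locally of finite type, hence representable; your cancellation ($g$ and $g\circ f$ representable imply $f$ representable) does work here, but only because the forgetful morphism $\equivBunPTwo{0} \rightarrow \equivBun$ is separated (Proposition \ref{PREDREP}), so that its diagonal is representable by schemes --- that hypothesis should be cited, not left implicit in ``the standard fact.''

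The genuine gap is at the central step: extending the twisted section $\Delta_y\psi$ across the ramification points \emph{in families}. You correctly note that the pointwise extension (properness of $E/P \rightarrow Y$ over a smooth curve) does not globalize, and you propose to take the schematic closure of the relative $P$-reduction and to prove flatness ``by a Hilbert-scheme argument'' from constancy of relative position. That argument is never supplied, and it is exactly the hard point: a priori the closure can acquire vertical components or positive-dimensional fibers over $\{p_i\}\times S$, and constancy of the relative position of the original reduction $\sigma$ does not in any obvious way control the closure of the twisted section $\Delta_y\psi$ (the twist by $\Delta_y$ is where the discontinuity phenomenon lives). The paper avoids closures entirely: Lemma \ref{NOPOLE} and Proposition \ref{EXTEN} produce the extension by an explicit formula. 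Writing $\psi_0 = g_0\cdot(wf_0)$ via $U_{w^{-1}}\times P \cong BwP$ and setting $\psi' = wf_0$, one checks that $(\psi')^{-1}\Delta^{-1}\psi'$ lands in $P$ and that $\Delta\psi(\psi')^{-1}\Delta^{-1}$ lies in $G(A')^{\Gamma}$ because $\psi(\psi')^{-1}$ reduces into $B$ at $\omega = 0$; hence $\Delta\psi$ agrees modulo $P$ with $\bigl(\Delta\psi(\psi')^{-1}\Delta^{-1}\bigr)\bigl(\Delta\psi'\bigr)$, which is regular on all of $N\times S$. This formula is uniform in $S$, which is precisely what upgrades the pointwise lift to a morphism of stacks; without it (or an actual proof of your flatness claim) the construction is incomplete. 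A smaller omission in the same step: you must also verify that the resulting reduction of $E$ is independent of the chosen trivialization of $\mathcal{E}$ near each $p_i$ (the paper does this via the $\prod L^+\mathcal{I}$-torsor of trivializations and the inclusion $\Delta\mathcal{I}\Delta^{-1}\subseteq G(A')^{\Gamma}$), since this is what lets the construction descend to $\mathcal{C}$ itself.
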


\subsubsection{$\equivBun \xrightarrow{\sim} \normtext{Bun}_{\mathcal{G}}$ in terms of transition functions}

We need to make the identification of parahoric bundles with equivariant bundles more explicit for what follows.  
For more details see \cite{BAS}.

By \cite{DESC} and \cite{TWPB} we can describe a $(\Gamma, G)$-bundle $E$ as follows.  Let $\mathcal{R}$ be the
ramification locus of $p: Y \rightarrow X$.  Let $E_0$ be the trivial $G$-bundle over 
$Y^* = Y \setminus \mathcal{R}$ with trivial $\Gamma$ action, and for each $y \in \mathcal{R}$ such that $p(y) = p_i$, let $E_y$ be 
the trivial $(\Gamma, G)$-bundle over $N_y$ with local type $\tau_i$.  Then $E$ is isomorphic to the $(\Gamma, G)$-bundle
corresponding to a choice of $\Theta_i \in G(K)$, giving transition functions
\begin{align*}
(E_0)_{|N_y^*} &\xrightarrow{\sim} (E_y)_{|N_y^*} \\
(\omega, g)         &\mapsto  (\omega,\Delta_i(\omega) \Theta_i(\omega)g),
\end{align*}
where $\Delta_i \in G(K')$ is associated to $\mu_i$ as in section \ref{ParahoricSec}. Note that the choice of $\Theta_i$ is not unique.  Changing the trivialization of $E_0$ multiplies $\Theta_i$ on
the right by an element of $G(K)$, and changing the trivialization of $E_y$
multiplies $\Theta_i$ on the left by an element of the parahoric subgroup $\mathcal{P}_i$ corresponding to $\mu_i$.

Now $\equivBun \xrightarrow{\sim} \text{Bun}_{\mathcal{G}}$ is given as follows.
Let $F \in \equivBun$ be the bundle where each $\Theta_y = e$.    
Let $\mathcal{G}_F$ be the adjoint bundle $F \times^G G$, with $G$ acting on itself by 
conjugation, and let $\mathcal{G}' = p_*^\Gamma(\mathcal{G}_F)$ be the invariant push-forward of this group scheme.  
The group scheme $\mathcal{G}_F$ can be identified with the sheaf of automorphisms of $F$, and $\mathcal{G}'$ is a 
representable by a smooth group scheme over $X$ isomorphic to the parahoric group scheme $\mathcal{G}$.
Let $\text{Isom}(E,F)$ be the sheaf of local isomorphisms of $E$ and $F$.  This sheaf is a  right 
$\mathcal{G}_F$-torsor.  Then $p_*^\Gamma(\text{Isom}(E,F))$ is 
representable by a smooth variety over $X$ and is naturally a right $\mathcal{G}$-torsor.  

Let $E$ be an equivariant bundle with transition functions given by $\Delta_i(\omega) \Theta_i$
as above.
Then the $\mathcal{G}$-bundle $\mathcal{E}$ corresponding to  $E$ can be described as follows.  If $\mathcal{E}_0 = X^* \times G$ is the trivial $G$-bundle 
and $\mathcal{E}_i = \mathcal{G}_i$ is the parahoric group scheme corresponding to each $\mu_i$,
then $\mathcal{E}$ is isomorphic to the $\mathcal{G}$-bundle given by $\Theta_i \in G(A)$, giving transition morphisms
\begin{align*}
(\mathcal{E}_0)_{|U_{p_i}^*} &\xrightarrow{\sim} (\mathcal{E}_i)_{|U_{p_i}^*} \\
(z, g) &\mapsto  (z,\Theta_i(z)g).
\end{align*}

\subsubsection{Construction of $\mathcal{C} \rightarrow \equivBunPTwo{0}$}

Consider a morphism $S \rightarrow \mathcal{C}$, where $S$ is an arbitrary scheme.  This corresponds to a family of parabolic
bundles over $S$ and a family of $P$-reductions of the underlying family of $G$-bundles $\mathcal{E} \rightarrow X \times S$.  
Now by the uniformization theorem in \cite{UNIF} there is an \'{e}tale cover $\widetilde{S} \rightarrow S$ such that $\mathcal{E}$
is trivialized over $X^* \times \widetilde{S}$ and $U_x \times \widetilde{S}$ for each $x \in \{p_1, \ldots, p_n \}$.  Let $\mathcal{E}_0$
be the restriction of $\mathcal{E}$ to $X^* \times \widetilde{S}$, and let $\mathcal{E}_x$ be the restriction of $\mathcal{E}$ to
each $U_x \times \widetilde{S}$. 
Suppose $\Theta_x: U_x^* \times \widetilde{S} \rightarrow G$ gives the transition map over
$U_x^* \times \widetilde{S}$ with respect to some trivialization of $\mathcal{E}_0$
and $\mathcal{E}_x$.  
Let $\overline{g}_x: \widetilde{S} \rightarrow G/B$ be the family of flags at $x$.  Then taking a further refinement of $\widetilde{S}$
(which we continue to denote $\widetilde{S}$) we can lift this morphism to $g_x: \widetilde{S} \rightarrow G$.  So clearly we can choose
a trivialization of $\mathcal{E}$ near $x$ such that the family of flags is identically trivial.
Then these transition maps are also transition maps for the corresponding $\mathcal{G}$-bundle,
and by the above discussion the corresponding $\Gamma$-equivariant bundle $E$ is given by the transition
functions $\Delta_y \Theta_x$ for each $p(y) = x$.

Now locally near $x$, with respect to the above trivializations, the 
$P$-reduction of $\mathcal{E}$ corresponds to a morphism $\psi: U_x \times \widetilde{S} \rightarrow G/P$.  Then since the flags
are trivial, the generic $P$-reduction of the corresponding $\mathcal{G}$-bundle near $x$ is just the restriction of 
$\psi$ to $U^*_x \times \widetilde{S}$, and the generic $P$-reduction of $E$ is given by 
$\Delta_y \psi: N^*_y \times \widetilde{S} \rightarrow G/P$.  Our first task is to show that this morphism extends to
all of $N_y \times \widetilde{S}$, and that therefore the generic $P$-reduction of $E$ extends to all of $Y \times S$.

\begin{lemma}\label{NOPOLE}
For a scheme $S$ and a morphism $\psi: U \times S \rightarrow G$, if the restriction $\psi_0$ to $\normtext{Spec}(k) \times S$
factors through $B$, then $\Delta \psi \Delta^{-1}$ is defined on all of $N \times S$.
\end{lemma}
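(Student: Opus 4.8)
The plan is to recognize the hypothesis as saying that $\psi$ takes values in the Iwahori subgroup, and then to read off the conclusion from the description of parahoric group schemes recalled in Section~\ref{FIBERS}. Write $A = k[[z]]$, so that $U = \spec{A}$, and $A' = k[[\omega]]$ with $z = \omega^m$; let $\mu \in \mathcal{A}$ be the cocharacter attached to the parahoric $\mathcal{P} = \mathcal{G}(A)$, with associated $\Delta \in T(K')$ as in Section~\ref{FIBERS}, and regard $\psi$ as a $G$-valued function over $\spec{A'} \times S$ via $z \mapsto \omega^m$. The hypothesis says exactly that the restriction of $\psi$ along $z = 0$ lands in $B$; since the Iwahori subgroup $\mathcal{I}$ is by definition the preimage of $B$ under the evaluation $ev_0 \colon G(A) \to G$, and this is a closed condition, $\psi$ lies in $\mathcal{I}$ (over the base $S$).

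The first step is to check $\mathcal{I} \subseteq \mathcal{P}$. This is immediate from the root-group presentations: $\mathcal{I}$ is generated by $T(A)$, the groups $U_\alpha(A)$ for $\alpha \in R^+$, and the groups $U_\alpha(zA)$ for $\alpha \in R^-$, while $\mathcal{P} = \langle T(A),\ U_\alpha(z^{-[(\mu,\alpha)]}A) : \alpha \in R \rangle$. Because $\mu$ lies in the closed fundamental alcove, $0 \le (\mu,\alpha) \le 1$ for every $\alpha \in R^+$ — nonnegativity from $\alpha_i(\mu) \ge 0$, the bound $\le 1$ from $\alpha \le \theta$ and $\theta(\mu) \le 1$ — so $[(\mu,\alpha)] \ge 0$ for $\alpha \in R^+$ and $[(\mu,\alpha)] \ge -1$ for $\alpha \in R^-$; hence every generating root group of $\mathcal{I}$ is contained in the corresponding one of $\mathcal{P}$. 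Thus $\psi$ lies in $\mathcal{P} = \mathcal{G}(A)$ (over $S$).

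To conclude, recall from Section~\ref{FIBERS} that conjugation by $\Delta$ carries $\mathcal{P} = \mathcal{G}(A)$ isomorphically onto $\mathcal{G}'(A) = G(A')^{\Gamma} \subseteq G(A')$ — this is the Bruhat--Tits identification used by Balaji and Seshadri to realize $\mathcal{G}$ as an invariant pushforward group scheme \cite{BAS, BTII}. Base-changing this identification along $S$ gives $\Delta\psi\Delta^{-1} \in G(A')$ over $S$, i.e. $\Delta\psi\Delta^{-1}$ extends to a morphism $N \times S \to G$ with $N = N_y = \spec{A'}$, as claimed. I do not expect a genuine obstacle; the only point that deserves a word of care is the passage to a general base $S$ — that ``$\psi_0$ factors through $B$'' is precisely the $S$-point condition cutting out $\mathcal{I}$, and that the conjugation identity is functorial in $S$ — and both are clear because $\mathcal{I} \hookrightarrow \mathcal{G}$ is a morphism of affine group schemes and conjugation by the fixed $\Delta \in G(K')$ is defined over $k$. (Alternatively, one can argue by hand: writing $\psi = \psi_0\psi_1$ with $\psi_1$ in the first congruence subgroup and expanding in root groups, conjugation by $\Delta$ rescales the $U_\alpha$-coordinate by $\omega^{m(\mu,\alpha)}$, and since $(\mu,\alpha) \ge 0$ for $\alpha \in R^+$ while the congruence-subgroup root groups $U_\alpha(zA)$ already carry the factor $z = \omega^{m}$ and $(\mu,\alpha) \ge -1$ for all $\alpha \in R$, no pole in $\omega$ appears.)
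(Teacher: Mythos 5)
Your proposal is correct and follows essentially the same route as the paper: identify $\psi$ as an $S$-point of $L^+\mathcal{I}$, note $L^+\mathcal{I}\subseteq L^+\mathcal{P}$, and invoke the identification $\Delta\mathcal{P}\Delta^{-1}=G(A')^{\Gamma}\subseteq G(A')$ from the Balaji--Seshadri/Bruhat--Tits description to conclude that $\Delta\psi\Delta^{-1}$ extends over $N\times S$. The only difference is that you spell out the root-group verification of $\mathcal{I}\subseteq\mathcal{P}$ and add a parenthetical direct computation, both of which the paper leaves implicit.
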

\begin{proof}
Since $\psi_0$ lands in $B$, the morphism $\psi$ corresponds to a morphism $S \rightarrow L^+\mathcal{I} \rightarrow L^+\mathcal{P}$,
where $\mathcal{P}$ is the parahoric subgroup corresponding to $\Delta$.  
Conjugation by $\Delta$ induces an isomorphism of group schemes $\mathcal{G} \cong \mathcal{G}'$, where $\mathcal{G}$ is
the parahoric group scheme corresponding to $\mathcal{P}$, and $\mathcal{G}'$ is the group scheme obtained by invariant
pushforward.  Therefore $L^+\mathcal{P} = L^+\mathcal{G} \cong L^+\mathcal{G}'$, and $L^+\mathcal{G}' \subseteq L^+G(A')$.
\end{proof}
\begin{prop}\label{EXTEN}
For any scheme $S$ and morphism $\psi: N \times S \rightarrow G$ such that $\psi_0= \psi(0, s)$ factors through the Schubert cell $C_w^P$, the morphism 
$\Delta \psi: N^* \times S \rightarrow G/P$ can be uniquely extended to $N \times S$.
\end{prop}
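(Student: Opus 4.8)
The plan is to reduce the statement — which is local on $S$ — to a direct computation with root subgroups. The crux is that, because $\mu$ lies in the alcove $\mathcal{A}$, conjugation by $\Delta$ introduces a pole of order at most $m$ in any root direction, and this is cancelled by the fact that $\psi$, being pulled back from $U_x$, is a function of $z=\omega^{m}$ and so vanishes to order at least $m$ along the ramification locus once its value there has been normalized to the identity.

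First I make the statement concrete. It suffices to treat $S=\operatorname{Spec}(R)$ affine, so that $\psi$ is an $R[[z]]$-point of $G$; via $z=\omega^{m}$ we regard it also as an $R[[\omega]]$-point of $G$, one that happens to be defined over the subring $R[[\omega^{m}]]$, and $\Delta=(m\mu)(\omega)\in T(K')$. We must show that the $R((\omega))$-point $\overline{\Delta\psi}$ of $G/P$ extends to an $R[[\omega]]$-point; such an extension, if it exists, is unique, since $R[[\omega]]\hookrightarrow R((\omega))$ is injective, so $N^{*}\times S$ is schematically dense in $N\times S$, and $G/P$ is separated. I use repeatedly that $m\mu$ is integral and that for $\mu\in\mathcal{A}$ one has $0\le\langle\beta,\mu\rangle\le 1$ for every $\beta\in R^{+}$ — the lower bound because $\mu$ lies in the closure of the dominant chamber, the upper because $\langle\theta,\mu\rangle\le 1$ and $\theta-\beta$ is a nonnegative sum of positive roots — hence $|\langle\gamma,\mu\rangle|\le 1$ for every root $\gamma$.

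Next, a harmless normalization. Right multiplication of $\psi$ by an element of $P\bigl(R[[z]]\bigr)$ does not change $\overline{\Delta\psi}$; since $\psi_{0}$ maps into $C_{w}^{P}=BwP$, after an étale base change on $S$ and such a right multiplication by the constant extension of an $R$-point of $P$ we may assume $\psi_{0}=u_{0}n_{w}$, where $n_{w}\in N(T)$ is a fixed representative of $w$ and $u_{0}$ is an $R$-point of $U$. Writing $\eta:=(u_{0}n_{w})^{-1}\psi$, we have $\eta\equiv 1\pmod{z}$, hence $\eta\equiv 1\pmod{\omega^{m}}$; factoring the congruence subgroup through the big cell $U^{-}\times T\times U^{+}$ we write $\eta=u^{-}\,t\,u^{+}$ with $u^{\pm}$ ordered products of root-group elements $U_{\gamma}(f_{\gamma})$ and $t$ an $R[[z]]$-point of $T$ congruent to $1$ modulo $z$, where every $f_{\gamma}\in zR[[z]]$, so $\operatorname{ord}_{\omega}(f_{\gamma})\ge m$. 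Now set $\Delta^{w}:=n_{w}^{-1}\Delta n_{w}=(m\,w^{-1}\mu)(\omega)\in T(K')$; then
\[
\Delta\psi \;=\; (\Delta u_{0}\Delta^{-1})\;n_{w}\;\bigl(\Delta^{w}\eta(\Delta^{w})^{-1}\bigr)\;\Delta^{w}.
\]
The first factor equals $\prod_{\beta\in R^{+}}U_{\beta}\bigl(\omega^{m\langle\beta,\mu\rangle}c_{\beta}\bigr)$ and lies in $G\bigl(R[[\omega]]\bigr)$ because each exponent $m\langle\beta,\mu\rangle$ is a nonnegative integer. The third factor equals $\prod_{\gamma}U_{\gamma}\bigl(\omega^{m\langle w\gamma,\mu\rangle}f_{\gamma}\bigr)\cdot t$ and lies in $G\bigl(R[[\omega]]\bigr)$ because $w\gamma$ is a root, so $m\langle w\gamma,\mu\rangle\ge -m$, and this is absorbed by $\operatorname{ord}_{\omega}(f_{\gamma})\ge m$, while $t$ is unchanged by the torus conjugation and already lies in $T\bigl(R[[\omega]]\bigr)$. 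Finally $\Delta^{w}\in T(K')\subseteq P(K')$, so it disappears on passing to $G/P$. Hence on $N^{*}\times S$ the point $\overline{\Delta\psi}$ agrees with the image in $G/P$ of the morphism $(\Delta u_{0}\Delta^{-1})\,n_{w}\,\bigl(\Delta^{w}\eta(\Delta^{w})^{-1}\bigr)\colon N\times S\to G$, which is the sought extension; the étale-local normalizations glue by uniqueness.

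The only genuinely delicate point, and the sole place where the hypotheses are used essentially, is the one highlighted at the outset: $\psi$ must be the pullback of a morphism on $U_{x}\times S$ (equivalently, a function of $z=\omega^{m}$), for this is exactly what makes $\eta$ vanish to order $\ge m$ and thereby cancels the pole of $\Delta^{w}$. Without it the statement fails — e.g.\ for $G=\mathrm{SL}_{2}$, $P=B$, $w=e$, and $\mu$ the far vertex of $\mathcal{A}$, a $\psi$ whose lower-left entry is $\omega t$ over $S=\mathbb{A}^{1}_{t}$ gives $\overline{\Delta\psi}=[\omega:t]$, which has a base point at the origin. The remaining work — propagating the normalization of $\psi_{0}$ through families and recording the big-cell factorization of the congruence subgroup — is routine and is carried out étale-locally on $S$ as indicated.
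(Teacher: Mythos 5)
Your proof is correct and structurally parallel to the paper's: both arguments factor $\psi_0$ through $BwP \cong U_{w^{-1}} \times P$, discard the $P$-component (invisible in $G/P$), and then check that conjugation by $\Delta$ introduces no poles in what remains; uniqueness is separatedness plus density in both cases. The difference lies in how the no-poles statement is established. The paper multiplies $\Delta\psi$ on the right by $(\psi')^{-1}\Delta^{-1}\psi' \in P(K')$ with $\psi' = wf_0$ constant, reducing to the case $\psi_0 \in B$, and then invokes Lemma \ref{NOPOLE}, whose content is the abstract inclusion $\Delta\, L^+\mathcal{I}\,\Delta^{-1} \subseteq G(A')$ coming from the Balaji--Seshadri identification of $\mathcal{G}$ with the invariant pushforward group scheme. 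You instead move $\Delta$ past $n_w$ explicitly and verify regularity by hand, via the big-cell factorization of the congruence subgroup and the bound $|\langle\gamma,\mu\rangle| \le 1$ for every root $\gamma$ and $\mu \in \mathcal{A}$; this amounts to a self-contained, $w$-twisted proof of Lemma \ref{NOPOLE}, trading the parahoric machinery for some root-group bookkeeping. Nothing is lost. One observation of yours deserves emphasis: the hypothesis that $\psi$ be pulled back from $U_x \times S$ (a function of $z = \omega^m$) is essential, not cosmetic. As literally printed the proposition allows an arbitrary morphism on $N \times S$, and your $\mathrm{SL}_2$ example $[\omega : t]$ shows the conclusion then fails at the origin; the paper's own proof tacitly assumes the stronger hypothesis, since Lemma \ref{NOPOLE} is stated for morphisms on $U \times S$, and the application in Corollary \ref{PREDMORPH} only ever feeds in pullbacks from $U_x$, so your reading is the intended one.
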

\begin{proof}  
By assumption $\psi_0$ factors through $BwP \subseteq G$.  Now as shown in section 8.3 of 
\cite{SLAG}, $U_{w^{-1}} \times P \cong BwP$, where $U_{w^{-1}}$ is a subgroup of the unipotent radical of $B$, and the 
isomorphism is given by $(u,p) \mapsto uwp$.  Let $f_0$ and $g_0$ be the compositions of $\psi_0$ and the 
projections to $P$ and $U_{w^{-1}}$, and let $\psi' = w f_0$, extended to
$N \times S$.  Clearly then, since $\Delta$ is a one-parameter subgroup of $T$, $(\psi')^{-1} \Delta^{-1} \psi'$ maps 
to $P$.   Therefore $\Delta \psi \cdot (\psi')^{-1} \Delta^{-1} \psi'$ composed with 
$G \rightarrow G/P$ is equal to $\Delta \psi$.  But $\psi \cdot (\psi')^{-1}$ is
just $g_0: S \rightarrow B$ at $\omega = 0$, and therefore by Lemma \ref{NOPOLE} $\Delta \psi \cdot (\psi')^{-1} \Delta^{-1} \psi'$
is defined for all of $N \times S$.  Since $G/P$ is projective, the extension is clearly unique.
\end{proof}
\begin{cor} \label{PREDMORPH}
There exists a morphism of stacks $\mathcal{C} \rightarrow \normtext{Bun}_Y^{\Gamma, G;P}(\boldsymbol{\tau})$.
\end{cor}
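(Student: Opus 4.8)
The plan is to construct, functorially in an arbitrary test scheme $S$, an object of $\normtext{Bun}_Y^{\Gamma, G;P}(\boldsymbol{\tau})$ out of a morphism $S \to \mathcal{C}$; such a functorial assignment is precisely a morphism of stacks. The underlying $(\Gamma,G)$-bundle is forced: composing $S \to \mathcal{C} \to \text{Parbun}_G \to \text{Bun}_{\mathcal{G}} \cong \equivBun$, where the last identification is the isomorphism of \cite{BAS}, yields a bundle $E$ on $Y \times S$ of local type $\boldsymbol{\tau}$. Over $Y^* \times S$ the bundle $E$ is canonically the pullback of the underlying $G$-bundle $\mathcal{E}$ along $p$, with its canonical equivariant structure, so the $P$-reduction of $\mathcal{E}$---part of the datum of the $S$-point of $\mathcal{C}$---restricted to $X^* \times S$ and pulled back to $Y^* \times S$ gives a $\Gamma$-invariant $P$-reduction $\sigma^{\circ}$ of $E|_{Y^* \times S}$. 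Everything reduces to extending $\sigma^{\circ}$ across the ramification locus to a $\Gamma$-invariant $P$-reduction of $E$ on all of $Y \times S$.

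First I would pass to an \'{e}tale cover $\widetilde S \to S$ over which $\mathcal{E}$ trivializes over $X^* \times \widetilde S$ and over each $U_{p_i} \times \widetilde S$, and, after a further refinement, over which the flags and the local morphisms $\psi_x : U_x \times \widetilde S \to G/P$ giving the $P$-reduction lift to $G$ --- this is exactly the setup assembled in the discussion preceding Proposition \ref{EXTEN}. Since the $S$-point lies in $\mathcal{C}$, the relative position at $p_i$ is exactly $u_i$, so the restriction of the lift to $\{0\} \times \widetilde S$ factors through the Schubert cell $C_{u_i} = Bu_iP$, and Proposition \ref{EXTEN} (with $w = u_i$) applies: the generic $P$-reduction $\Delta_y \psi_x$ of $E$ near each ramification point $y$ over $p_i$ extends uniquely over $N_y \times \widetilde S$. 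By construction these local extensions agree with $\sigma^{\circ}$ on the punctured neighborhoods $N_y^* \times \widetilde S$, so they glue to a $P$-reduction $\sigma_{\widetilde S}$ of $E$ over $Y \times \widetilde S$.

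It remains to check that $\sigma_{\widetilde S}$ is $\Gamma$-invariant and descends to $S$, and the uniqueness clause of Proposition \ref{EXTEN} handles both. For invariance: over $Y^* \times \widetilde S$ the reduction $\sigma_{\widetilde S}$ is pulled back from $X^* \times \widetilde S$, hence $\Gamma$-invariant, so for each $\gamma \in \Gamma$ the section $\gamma \cdot \sigma_{\widetilde S}$ is again an extension of $\sigma^{\circ}$ across the ramification locus, whence $\gamma \cdot \sigma_{\widetilde S} = \sigma_{\widetilde S}$ by uniqueness. For descent: the two pullbacks of $\sigma_{\widetilde S}$ to $Y \times (\widetilde S \times_S \widetilde S)$ restrict over $Y^* \times (\widetilde S \times_S \widetilde S)$ to one and the same section, the pullback of $\sigma^{\circ}$; applying the uniqueness in Proposition \ref{EXTEN} over the scheme $\widetilde S \times_S \widetilde S$ forces the two pullbacks to coincide compatibly with the canonical descent datum on $E$, so $\sigma_{\widetilde S}$ descends to a $\Gamma$-invariant $P$-reduction $\sigma$ of $E$ on $Y \times S$. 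The pair $(E,\sigma)$ is the desired object of $\normtext{Bun}_Y^{\Gamma, G;P}(\boldsymbol{\tau})$, and since pullback, the \'{e}tale-local extension, and the descent are all functorial in $S$, this defines the morphism $\mathcal{C} \to \normtext{Bun}_Y^{\Gamma, G;P}(\boldsymbol{\tau})$. I expect the only genuine work to be bookkeeping --- tracking the various trivializations so that the extensions of Proposition \ref{EXTEN} really do glue to $\sigma^{\circ}$, and checking the cocycle conditions for descent --- with no geometric input beyond Proposition \ref{EXTEN} itself.
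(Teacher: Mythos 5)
Your construction is correct and follows the same skeleton as the paper's proof of Corollary \ref{PREDMORPH}: transport the $S$-point of $\mathcal{C}$ to a $(\Gamma,G)$-bundle $E$, trivialize \'{e}tale-locally on $S$ so that Proposition \ref{EXTEN} extends the generic reduction $\Delta_y\psi$ across each ramification point, and then descend. Where you genuinely diverge is in how well-definedness is established. The paper introduces the auxiliary stack $\mathcal{T}\to\mathcal{C}$ of trivializations making the flags trivial, observes that it is a torsor under $\prod^n L^+\mathcal{I}$, and kills the dependence on the trivialization by the explicit computation $\Delta f\psi=(\Delta f\Delta^{-1})(\Delta\psi)$ together with $\Delta\mathcal{I}\Delta^{-1}\subseteq G(A')^{\Gamma}$, so that a change of trivialization of $\mathcal{E}$ only induces a change of trivialization of $E$. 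You instead observe that the object being extended --- the generic reduction $\sigma^{\circ}$, i.e.\ the pullback of the given $P$-reduction under the canonical identification $E|_{Y^{*}\times S}\cong p^{*}\mathcal{E}|_{X^{*}\times S}$ --- is defined without any choices, and that its extension across $\mathcal{R}\times S$ is unique because $E/P$ is separated over $Y\times S$ and $Y^{*}\times S$ is schematically dense; uniqueness then delivers independence of the trivialization, $\Gamma$-invariance, and descent to $S$ in one stroke. This is a valid and arguably cleaner route, and it makes the $\Gamma$-invariance of the lifted reduction explicit, which the paper's proof of the corollary leaves implicit and only verifies in the subsequent proposition on local types. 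What the paper's explicit $\Delta f\Delta^{-1}$ computation buys in exchange is the concrete transformation law for the local trivializations of $E$, which is reused immediately afterwards to identify the degree and local type $\boldsymbol{\tau}_u$ of the lifted $P$-reduction --- a computation you would still need to carry out to land in $\equivBunPTwo{0}$ rather than merely in $\normtext{Bun}_Y^{\Gamma, G;P}(\boldsymbol{\tau})$, though for the corollary as stated your argument suffices.
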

\begin{proof}
By the above proposition, we have constructed a $P$-reduction of $E$ over $Y \times \widetilde{S}$.  The descent data
for $\mathcal{E}$ gives descent data for $E$, and it is easy to see that the $P$-reduction we constructed descends to 
$E$ over $Y \times S$.  Let $\mathcal{T}$ be the stack over $\mathcal{C}$ adding the data of a trivialization near each 
ramification point making the flag trivial.  Then we have constructed a morphism 
$\mathcal{T} \rightarrow \normtext{Bun}_Y^{\Gamma, G;P}(\boldsymbol{\tau})$.
It is well known that $\mathcal{T} \rightarrow \mathcal{C}$ is a torsor with respect to the action on $\mathcal{T}$ by
\[
\prod^n L^+\mathcal{I},
\]
and therefore it suffices to show our construction does not depend on the choice of trivialization (see for example \cite{LASOR}).
Using the same notation as above, a change in trivialization of $\mathcal{E}$ multiplies $\psi$ on the left by some $f \in L^+\mathcal{I}(\widetilde{S})$.
But then $\Delta f \psi = \Delta f \Delta^{-1} \Delta \psi$ and $\Delta f \Delta^{-1}$ corresponds to a change of 
trivialization of $E$, since $\Delta \mathcal{I} \Delta^{-1} \subseteq G(A')^\Gamma$.  Therefore the $P$-reduction
does not depend on the choice of trivialization, finishing the proof.
\end{proof}

We also want to identify the degree and local type of the reductions in the image of this morphism.

\begin{prop}
The morphism $\mathcal{C} \rightarrow \normtext{Bun}_Y^{\Gamma, G;P}(\boldsymbol{\tau})$
factors through $\equivBunPTwo{0}$, the substack of degree $0$ $P$-reductions of local type $\boldsymbol{\tau}_u$.
\end{prop}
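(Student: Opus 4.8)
The plan is to pin down the local type and the degree of the invariant $P$-reduction produced by the morphism $\mathcal{C} \to \normtext{Bun}_Y^{\Gamma, G;P}(\boldsymbol{\tau})$ of Corollary \ref{PREDMORPH}, by a direct computation in the formal neighborhood of each ramification point, together with one global degree count. Both the local type and the degree of an invariant $P$-reduction are discrete, deformation-invariant data, so $\normtext{Bun}_Y^{\Gamma, G;P}(\boldsymbol{\tau})$ decomposes into open and closed pieces indexed by them; since $\mathcal{C}$ is irreducible (being open in the irreducible stack $\mathcal{Z}$), its image meets only one such piece, and it is enough to identify the local type and degree --- which the computations below do uniformly for every point of $\mathcal{C}$.

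First I would compute the local type at a ramification point $y$ with $p(y) = p_i$. In the notation of the construction, near $y$ the bundle $E$ is trivial with $\Gamma_y$-action $\gamma \cdot (\omega, g) = (\gamma\omega, \tau_i(\gamma)g)$, and the invariant $P$-reduction is trivialized by the regular morphism $\widehat{\sigma} = \Delta_i\,(\psi(\psi')^{-1})\,\Delta_i^{-1}\,\psi'$ of Proposition \ref{EXTEN}, where $\psi$ is the local $P$-reduction of $\mathcal{E}$ (pulled back from the coordinate $z = \omega^{m_i}$ on $U_{p_i}$, so that $\psi(\gamma\omega) = \psi(\omega)$), with $\psi(0) \in B u_i P$ since the flag is trivial and the relative position is $u_i$, and $\psi' = u_i f_0$ the constant coming from $B u_i P \cong U_{u_i^{-1}} \times P$. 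The local type at $y$ is then read off as the homomorphism $\gamma \mapsto \widehat{\sigma}(\gamma\omega)^{-1}\,\tau_i(\gamma)\,\widehat{\sigma}(\omega)$. The key simplification is the identity $\Delta_i(\gamma\omega) = \tau_i(\gamma)\Delta_i(\omega)$, valid because $\Delta_i = m_i\mu_i$ is a cocharacter of $T$ and $\tau_i(\gamma) = (m_i\mu_i)(\zeta_i^\gamma)$; using it together with $\psi(\gamma\omega) = \psi(\omega)$, all the $\Delta_i$ and $\psi$ factors cancel and one is left with $\widehat{\sigma}(\gamma\omega)^{-1}\tau_i(\gamma)\widehat{\sigma}(\omega) = (\psi')^{-1}\tau_i(\gamma)\psi' = f_0^{-1}\,(u_i^{-1}\tau_i u_i)(\gamma)\,f_0$. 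Since $f_0 \in P$, this shows the local type of the reduction at $y$ is the $P$-conjugacy class of $u_i^{-1}\tau_i u_i$, i.e. the $i$-th entry of $\boldsymbol{\tau}_u$.

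For the degree I would run the same local analysis on $\sigma^*E(\mathfrak{g}/\mathfrak{p})$, whose degree is exactly the quantity entering the definition of $\Gamma$-semistability and of $\equivBunPTwo{0}$. Over $Y \setminus \mathcal{R}$ this bundle is the pullback of $\phi^*\mathcal{E}(\mathfrak{g}/\mathfrak{p})$, where $\phi$ is the $P$-reduction of $\mathcal{E}$, and so contributes the plain degree $d$ of $\phi$; at each ramification point over $p_i$ the twist by $\Delta_i = m_i\mu_i$ and the relative position $u_i$ introduce a local correction which, by the computation of the previous paragraph, is a fixed positive multiple of $-\langle\omega_P, u_i^{-1}\mu_i\rangle$. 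Summing, the equivariant degree of $\sigma^*E(\mathfrak{g}/\mathfrak{p})$ is a positive multiple of $d - \sum_{i=1}^n \langle\omega_P, u_i^{-1}\mu_i\rangle$, which is (up to the scalar $\ell$) the negative of the parabolic degree $\sum_i\langle\omega_P, u_i^{-1}\lambda_i\rangle - \ell d$ of the $P$-reduction of $\widetilde{\mathcal{E}}$. This vanishes precisely because $\vec{w}$ lies on the face corresponding to $\sigma_{u_1}\ast\cdots\ast\sigma_{u_n} = q^d[\normtext{pt}]$, i.e. $\sum_i\langle\omega_P, u_i^{-1}\mu_i\rangle = d$; hence the reduction has degree $0$, and the morphism factors through $\equivBunPTwo{0}$.

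The main obstacle I expect is the bookkeeping at the ramification points: fixing the normalization of the equivariant degree so that the local weight corrections assemble exactly into the parabolic degree, and thereby making the face hypothesis applicable. (One can also package this step as the dictionary between equivariant and parahoric bundles --- the same degree comparison underlying the equivalence of the two notions of semistability in Proposition \ref{SEMIEQ} --- which identifies the equivariant degree of the reduction of $E$ with the parabolic degree of the reduction of $\widetilde{\mathcal{E}}$ directly.) Once the local picture is settled, the global conclusion is immediate, either from the explicit formula or from connectedness of $\mathcal{C}$ together with the local constancy of the local type and degree.
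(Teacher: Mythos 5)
Your proposal is correct and follows essentially the same route as the paper: the local type is obtained by the same explicit cocycle computation at each ramification point (the paper writes the completed reduction as $\phi = f u_i p$ with $f(0)\in C_G(\tau_i)$ and reads off $p^{-1}u_i^{-1}\tau_i u_i p$, which is your $(\psi')^{-1}\tau_i\psi'$), and the degree-zero claim reduces to the identification of the equivariant degree of the lifted reduction with the parabolic degree of the original one, which the paper simply cites from Teleman--Woodward before invoking the face hypothesis exactly as you do. The only cosmetic difference is that you offer to re-derive that degree comparison locally rather than quote it, and you add an (unneeded but harmless) irreducibility/local-constancy remark.
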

\begin{proof}
Let $\widetilde{\mathcal{E}}$ be a parabolic bundle and $\mathcal{F}$ be
a $P$-reduction, together corresponding to a point in $\mathcal{C}$.  Let $E$ be the corresponding 
$(\Gamma, G)$-bundle, and $F$ be the corresponding $\Gamma$-invariant $P$-reduction of $E$.
Then Teleman and Woodward showed that the degree of $F$ is a positive scalar multiple of the parabolic degree (see section \ref{Moduli}) of the
original $P$-reduction $\mathcal{F}$ \cite{TWPB}.  By the assumption that $\vec{w}$ is
on the face of the multiplicative polytope corresponding to $\sigma_{w_1} \ast \cdots \ast \sigma_{w_n} = q^d \text{[pt]}$
 the parabolic degree is $0$, so the degree of $F$ is also $0$.

Following the proof of Proposition \ref{EXTEN}, if the $P$-reduction $\mathcal{F}$ is given near $x$ by $\psi: U_x \rightarrow G/P$,
then the $P$ reduction of $E$ is given by $\Delta_y \psi$.  That is, locally the $P$-reduction corresponds to 
a map:
\begin{align*}
(F_y)_{|N_y^*} &\xrightarrow{} (E_y)_{|N_y^*} \\
(\omega, p) &\mapsto  (\omega, \Delta_y(\omega) \psi(z) p).
\end{align*}
Now say $\psi(0) = bu_ip$.  Then the completion of $\Delta_y \psi$ is $\Delta_y \psi p^{-1} u_i^{-1} \Delta_y^{-1} u_i p$.
Note that $f(\omega) = \Delta_y \psi p^{-1} u_i^{-1} \Delta_y^{-1}$ is in $G(A')^{\Gamma_y}$, since $\psi p^{-1} u_i^{-1}$ is
in the Iwahori subgroup $\mathcal{I}$, and therefore $f(0)$ is in the centralizer $C_G(\tau_i)$.  

Now the $P$-reduction given by $\phi = fu_ip$ is $\Gamma_y$-invariant, which means that for every $\gamma \in \Gamma_y$ 
there is a $p(\gamma, \omega) \in P(A')$ such that 
\[
\tau_i(\gamma) \phi(\omega) = \phi(\gamma \omega) p(\gamma, \omega).
\]
Then the induced $\Gamma_y$ action on $F_y$ is given by $p(\gamma, \omega) = \phi(\gamma \omega)^{-1}\tau_i(\gamma) \phi(\omega)$, and
therefore changing trivializations of $F_y$ as in \cite{TWPB}, we see that the local type of $F$ is 
$\phi(0)^{-1}\tau_i(\gamma) \phi(0) = p^{-1} u_i^{-1} f(0)^{-1} \tau_i f(0) u_i p = p^{-1} u_i^{-1}  \tau_i u_i p$,
finishing the proof.
\end{proof}

\subsubsection{Proof of Theorem \ref{PLIFT}}

We've proven that we have a morphism of stacks $\mathcal{C} \rightarrow \equivBunPTwo{0}$.  It remains to show this
morphism is representable and surjective.

\begin{proof}[Proof of Theorem \ref{PLIFT}]
Consider the following diagram, where $\mathcal{Y}_0$ is the pullback of $\equivBunPTwo{0}$.
\[
\includegraphics{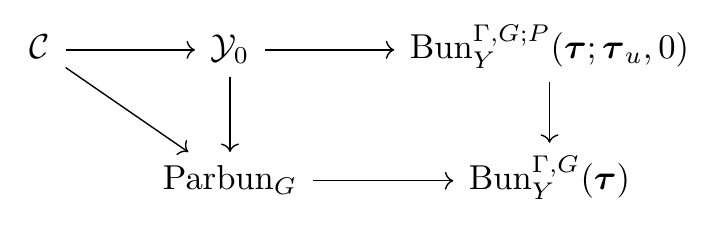}
\]
Then since $\text{Parbun}_G \rightarrow \equivBun$ is representable, $\mathcal{Y}_0 \rightarrow \equivBunPTwo{0}$ is
representable.  Therefore we just need to show $\mathcal{C} \rightarrow \mathcal{Y}_0$ is representable.  We claim
that this morphism is a monomorphism and locally of finite type, and therefore representable (by schemes, see \cite[{Cor 8.1.3}]{LMB} \cite[{Tag 0B89}]{STKS}).  The morphism
is locally of finite type since $\mathcal{C} \rightarrow \text{Parbun}_G$ is locally of finite type.  Suppose we have two parabolic bundles 
$\widetilde{\mathcal{E}}_1$, $\widetilde{\mathcal{E}}_2$, with $P$-reductions $\phi_i: X \times S \rightarrow \mathcal{E}_i$
giving morphisms $f_1,f_2:S \rightarrow \mathcal{C}$.  Let $E_1$, $E_2$ be the corresponding equivariant bundles, and $\phi_i^Y$
the corresponding invariant $P$-reductions.  Then to show that $\mathcal{C} \rightarrow \mathcal{Y}_0$ is a monomorphism,
it is sufficient (by definition) to show that an isomorphism $\widetilde{\mathcal{E}}_1 \cong \widetilde{\mathcal{E}}_2$
identifies $\phi_1$ and $\phi_2$ if and only if it identifies $\phi_1^Y$ and $\phi_2^Y$.  But since $E_i$ is just the pullback
of $\mathcal{E}_i$ away from $p_1, \ldots, p_n$, clearly this is true generically.  Then since $\mathcal{E}_i/P$ and $E_i/P$
are separated over $X \times S$, if the $P$-reductions are identified away from $p_1, \ldots, p_n$, they are the same over
all of $X \times S$.  Therefore $\mathcal{C} \rightarrow \mathcal{Y}_0$ is a monomorphism and representable.

\emph{Second proof:} We sketch a second proof that provides a local description of $\mathcal{C}$ in $\mathcal{Y}_0$,
and additionally shows that $\mathcal{C}$ is immersed in $\mathcal{Y}_0$.
 Suppose we have a morphism $S \rightarrow \mathcal{Y}_0$.  
This morphism corresponds to the following data: a family of parabolic bundles $\widetilde{\mathcal{E}}$
over $S$, a corresponding $\Gamma$-equivariant family of $G$-bundles
$E \rightarrow Y \times S$ and a $\Gamma$-invariant $P$-reduction $Y \times S \rightarrow E/P$, with the given local types.
Then passing to an \'{e}tale cover of $S$ we can trivialize $\mathcal{E}$ over $X^* = X \setminus \{p_1, \ldots, p_n\}$
and formal neighborhoods $x \in U_x$ for each branch point $x \in \{p_1, \ldots, p_n \}$ so that the flags are trivial.
This induces a trivialization of $E$ over $Y^*$ and formal neighborhoods $N_y$ for each ramification point $y$.
  Say the $P$-reduction near $y$ is given
by $\phi: N_y \times S \rightarrow G$, so that letting $F$ be the $P$-bundle corresponding to the $P$-reduction
of $E$ the $P$-reduction then corresponds to
\begin{align*}
(F)_{|N_y} &\rightarrow (E)_{|N_y} \\
(\omega, s, p) &\mapsto  (\omega, s, \phi(\omega, s)p),
\end{align*}
where the $\Gamma$-action on $E$ and $F$ are constant with respect to $\omega$ and $s$ and given by 
$\tau$ and $\tau_u$, respectively, passing to another \'{e}tale cover if necessary.

Now it is easy to see that $\phi(\omega,s) u_i^{-1}$ gives a morphism $S \rightarrow L^+G(A')^\Gamma$.
Let $\Delta_y$ be the rational OPS as above, and let $B'$ be the image of $\Delta_y \mathcal{I} \Delta_y^{-1}$ in $G$.  Then $B'$
is contained in a Borel subgroup $B_\mu$ of $G$, where $B_\mu = w_\mu B w_\mu$ (see Proposition \ref{CONNGRP}).  So letting $P_\mu = w_\mu P w_\mu$
we see that $\phi(\omega,s) w_\mu$ gives a well-defined morphism to $C_G(\tau_i)u_i w_\mu P_\mu/P_\mu$, and that $B'u_i  w_\mu P_\mu/P_\mu$
is contained in $C_G(\tau_i)u_i w_\mu P_\mu/P_\mu$.  Then the pullback $S_\mathcal{C}$ of $S \rightarrow C_G(\tau_i)u_i w_\mu P_\mu/P_\mu$ 
to $B'u_i  w_\mu P_\mu/P_\mu$ for each $i$ is the scheme representing the stack-theoretic fiber
product of $S \rightarrow \mathcal{Y}_0$ and $\mathcal{C} \rightarrow \mathcal{Y}_0$.

\emph{Surjectivity:} Now to show the morphism is surjective, suppose we have an equivariant bundle $E$ over 
$Y \times \text{Spec}(k')$ with an invariant $P$-reduction.  Then since $G(A')^\Gamma$ surjects onto $C_G(\tau)$, we can choose 
trivializations of $E$ such that the $P$-reduction gives elements $x \in B'u_i  w_\mu P_\mu/P_\mu$
over the ramification points of $Y$.  Then taking the transition functions of $E$ with respect to this trivialization, and modifying
them by $\Delta_y^{-1}$, we get transition functions for a $G$-bundle over $X$.  Taking the trivial flags we get a parabolic bundle 
$\widetilde{\mathcal{E}}$ which maps to $E$.  By the above work the $P$-reduction of $E$ corresponds a $P$-reduction of $\mathcal{E}$ 
giving a point in $\mathcal{C}$.
\end{proof}

\subsection{Properness over the semistable locus}

Now we can set up the properness calculation.  
Let $\equivBunPTwo{0}$ be as above.
The letters $ss$ will mean we are working with semistable objects with respect to the given weight data $\vec{w}$; when
it appears on $\text{Parbun}_G$ we mean the inverse image of the semistable locus of $\text{Bun}_{\mathcal{G}}$.
Then by the above work there exists an embedding 
$\mathcal{C} \hookrightarrow \mathcal{Y}_0 = \text{Parbun}_G  \times_{\equivBun}  \equivBunPTwo{0}$ 
making the following diagram commute:
\[
\includegraphics{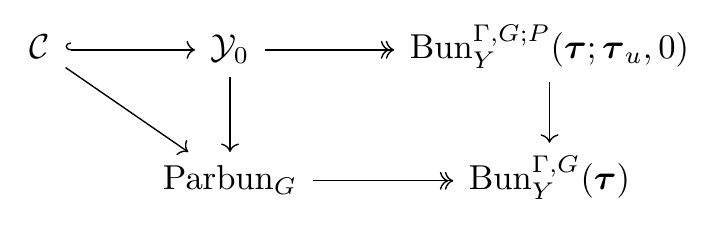}.
\]
Let $\mathcal{Y}$ be the closure of $\mathcal{C}$ in $\mathcal{Y}_0$, which is a valid construction since
$\mathcal{C} \rightarrow \mathcal{Y}_0$ is representable.
Like $\mathcal{C}$, $\mathcal{Y}$ is an integral algebraic stack over $\text{Parbun}_G$.  
Then from this diagram it is clear that if the projection
$\equivBunPTwoSS{0} \rightarrow \equivBunSS$  is proper,
then $\mathcal{Y}^{ss} \rightarrow \text{Parbun}_G^{ss}$ is proper.

Let $\mathcal{R}_0$ be the stack of all degree $0$ $P$-reductions over $\equivBun$.  The following proposition
is a standard application of the theory of Quot schemes.

\begin{prop}\label{PREDREP}
The morphism $\mathcal{R}_0 \rightarrow \equivBun$ is representable, separated and of finite type, and
the stack $\equivBunPTwo{0}$ is a closed substack of the stack $\mathcal{R}_0$.
\end{prop}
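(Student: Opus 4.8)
The plan is to present $\mathcal{R}_0 \to \equivBun$ as an open-and-closed piece of a relative Hilbert scheme of sections, and then to cut out $\equivBunPTwo{0}$ inside it by the (locally constant) local-type invariant of the reduction.

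\emph{Representability, separatedness, finite type.} First I would take an arbitrary morphism $T \to \equivBun$ from a scheme of finite type, corresponding to a $(\Gamma,G)$-bundle $E$ over $Y\times T$ of local type $\boldsymbol{\tau}$, and describe the fibre product $\mathcal{R}_0 \times_{\equivBun} T$ directly. A $P$-reduction of $E$ is a section of the projective morphism $E/P \to Y\times T$ (fibres $\cong G/P$), and a $\Gamma$-invariant $P$-reduction is a $\Gamma$-equivariant section, where $\Gamma$ acts on $Y$, trivially on $T$, and compatibly on $E/P$. By the standard theory of Hilbert schemes, the functor of sections of $E/P \to Y\times T$ relative to $T$ is representable by the open subscheme of the relative Hilbert scheme of $E/P$ over $T$ consisting of closed subschemes that project isomorphically onto $Y\times T$. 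Each section $\phi$ has a well-defined degree $\deg\phi \in \mathbb{Z}$ (as in the discussion of $\Gamma$-semistability above), and because the genus of $Y$ is fixed, $\deg\phi$ determines the Hilbert polynomial of the graph of $\phi$; hence the degree-$0$ sections form a union of connected components of the Hilbert scheme, each of which is projective over $T$. Finally, $\Gamma$ acts on this section scheme, and the $\Gamma$-equivariant sections are exactly the $\Gamma$-fixed locus, which is a closed subscheme since the section scheme is separated. Therefore $\mathcal{R}_0 \times_{\equivBun} T$ is a closed subscheme of a scheme projective over $T$, which shows $\mathcal{R}_0 \to \equivBun$ is representable, separated and of finite type.

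\emph{The closed substack.} By definition, $\equivBunPTwo{0}$ is the substack of $\mathcal{R}_0$ on which the $\Gamma$-invariant $P$-reduction of $E$ --- equivalently, the associated $(\Gamma,P)$-bundle --- has local type $\boldsymbol{\tau}_u = (u_1^{-1}\tau_1 u_1, \ldots, u_n^{-1}\tau_n u_n)$. The local type at a ramification point over $p_i$ is the $P$-conjugacy class of the corresponding homomorphism $\Gamma_{p_i} \to P$; since $\Gamma_{p_i}$ is finite cyclic and, in characteristic $0$, finite-order elements of $P$ are semisimple, hence conjugate into a maximal torus, there are only finitely many such classes, and the stack of $(\Gamma,P)$-bundles decomposes into open-and-closed substacks indexed by local type. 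Thus the local type of the reduction is locally constant in families, and $\equivBunPTwo{0}$ is the open-and-closed --- in particular closed --- substack of $\mathcal{R}_0$ defined by the condition ``local type $=\boldsymbol{\tau}_u$''.

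\emph{Main obstacle.} The argument is mostly bookkeeping with Hilbert schemes, as the paper notes; the two points that need care are (i) that the degree-$0$ condition pins down the Hilbert polynomial, so the relevant locus is genuinely of finite type and not merely locally of finite type, and (ii) the discreteness of the local-type invariant for $\Gamma$-equivariant $P$-bundles, which is what makes $\equivBunPTwo{0}$ closed in $\mathcal{R}_0$; for (ii) I would either invoke directly the finiteness of conjugacy classes of elements of order dividing $m_i$ in $P$, or cite the known decomposition of stacks of equivariant bundles into components indexed by local type.
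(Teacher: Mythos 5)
Your overall route is the paper's: the fibre of $\mathcal{R}_0 \rightarrow \equivBun$ over a morphism $T \rightarrow \equivBun$ is the scheme of sections of $\mathcal{E}/P \rightarrow Y\times T$, realized inside a relative Hilbert scheme (the paper invokes Altman--Kleiman's $\mathfrak{Mor}$-schemes, which are constructed exactly this way), and $\equivBunPTwo{0}$ is cut out by first imposing $\Gamma$-invariance (closed because the section scheme is separated) and then the local type condition. For the latter, the paper observes that the conjugacy class of $u_i^{-1}\tau_i u_i$ is closed in $P$ because that element is semisimple; your argument via ``finitely many closed conjugacy classes of elements of order dividing $m_i$, hence the local type is locally constant'' is a mild strengthening of the same point and is correct. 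Those parts match.

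There is, however, a false step in your representability paragraph. The scheme of sections is an \emph{open} subscheme of the Hilbert scheme of $\mathcal{E}/P$ over $T$, and fixing the degree (equivalently, the Hilbert polynomial of the graph) intersects it with a projective union of components of the Hilbert scheme; but the degree-$0$ sections do \emph{not} form a union of connected components, and $\mathcal{R}_0\times_{\equivBun} T$ is not a closed subscheme of a scheme projective over $T$. A one-parameter family of sections with fixed Hilbert polynomial can degenerate to a flat limit that is no longer a section --- a horizontal component of smaller degree together with vertical ``ghost'' components over finitely many points of $Y$ --- so the section locus is open but not closed in the fixed-polynomial piece. Had your claim been correct, $\mathcal{R}_0 \rightarrow \equivBun$ would be proper outright, contradicting the role of Theorem \ref{PropThm}: properness holds only over the semistable locus, and the entire content of that proof (via the Holla--Narasimhan no-ghosts lemma) is to exclude precisely these degenerations when the underlying bundle is semistable. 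The damage to the present proposition is limited, since ``locally closed in a projective $T$-scheme'' still yields representable, separated and of finite type; but you must replace ``union of connected components, each projective over $T$'' by ``open subscheme of the projective fixed-Hilbert-polynomial locus, hence quasi-projective over $T$,'' and accordingly drop the final claim that the fibre is closed in a projective $T$-scheme.
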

\begin{proof}
Let $S$ be noetherian scheme, and consider the fiber $\mathcal{W}$
of $\mathcal{R}_0 \rightarrow \equivBun$  over a morphism $S \rightarrow  \equivBun$.  Then $\mathcal{W}$ is
isomorphic to the stack $\mathfrak{Mor}_{Y \times S}(Y \times S, \mathcal{E}/P)$ 
of sections of $\mathcal{E}/P \rightarrow Y \times S$.  But since $Y \times S$ and $\mathcal{E}/P$ are strongly projective
(in the sense of Altman and Kleiman) over $Y \times S$, $\mathfrak{Mor}_{Y \times S}(Y \times S, \mathcal{E}/P)$ is representable
by a quasi-projective scheme over $Y \times S$ (see \cite{AK80}).
Therefore $\mathcal{R}_0 \rightarrow \equivBun$ is representable, separated and of finite type.

For the second statement: the substack of $\Gamma$-invariant $P$-reductions is closed because 
$\mathfrak{Mor}_{Y \times S}(Y \times S, \mathcal{E}/P)$ is separated, and the substack of invariant
$P$-reductions with fixed local type $\boldsymbol{\tau}_u$ is closed because the conjugacy class of 
each $u_i^{-1} \tau_i u_i$ is closed in $P$, since $u_i^{-1} \tau_i u_i$ is semisimple.
\end{proof}
 
Therefore $\equivBunPTwoSS{0} \rightarrow \equivBunSS$ is proper if $\mathcal{R}_0^{ss} \rightarrow \equivBunSS$ is proper, and
it is sufficient to show that $\mathcal{R}_0^{ss} \rightarrow \equivBunSS$ satisfies the existence part of
the valuative criterion for properness to complete the proof of Theorem \ref{PropThm}.

The following proposition is an easy consequence of the main lemma in \cite{HNA}, which is used to prove a no-ghosts
theorem similar to the one we need, the main difference being that our $G$-bundle is not fixed.

\begin{prop}
Suppose $C$ is a DVR with an algebraically closed residue field, and that $X$ and $Y$ are integral schemes, flat and projective
of relative dimension 1 over $C$, with $Y$ furthermore smooth over $C$.  Suppose we
have a flat $C$-morphism $f:X \rightarrow Y$ that is an isomorphism over $C^*$, and $\xi$ is a relatively ample 
line bundle on $X$.  

Then if the restriction $f_0: X_0 \rightarrow Y_0$ of $f$ to the closed point of $C$ is not an
isomorphism, there is a unique component $D$ of $X_0$ such that $f_0: D_{\text{red}} \rightarrow Y_0$ is an isomorphism
and $\text{deg}(D_{\text{red}}, \xi) < \text{deg}(X_{|C^*}, \xi)$.
\end{prop}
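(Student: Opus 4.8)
The plan is to deduce the statement from the main lemma of \cite{HNA} after recording some elementary facts about the two special fibres; the only genuinely new point is that here the curve $Y$ is allowed to vary in the family rather than being fixed, and this causes no difficulty because the decisive part of the argument is local on $Y$.

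First I would set up the geometry of the fibres. Since $X$ is flat over the discrete valuation ring $C$, the uniformizer of $C$ is a nonzerodivisor on $\mathcal{O}_X$, so the special fibre $X_0$ is an effective Cartier divisor on $X$, of pure dimension one; since $Y$ is integral and smooth over $C$ of relative dimension one, $Y_0$ is an integral smooth projective curve, in particular normal. Because $f$ is a proper dominant morphism of $C$-schemes it is surjective, and $f^{-1}(Y_0)=X_0$ as sets, so $f_0\colon X_0\to Y_0$ is surjective; and since $Y$ is normal and $f$ is proper and birational, $f_*\mathcal{O}_X=\mathcal{O}_Y$. Finally, as $\xi$ is relatively ample it restricts to an ample line bundle on $X_0$ and on the generic fibre $X_{|C^*}$, and by flatness of $X$ over $C$ the degree is constant in the family, so $\deg(X_0,\xi)=\deg(X_{|C^*},\xi)$.

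Next I would analyse $X_0$ near the generic point $\eta_0$ of $Y_0$, which is where the main lemma of \cite{HNA} enters. Localizing $f$ at $\eta_0$ gives a proper morphism to the spectrum of the discrete valuation ring $\mathcal{O}_{Y,\eta_0}$ whose fibre over the generic point is the single reduced point $\operatorname{Spec}k(Y)$, because $f$ is birational and $X$ is integral; one checks this morphism is quasi-finite, hence finite, and combining finiteness with $f_*\mathcal{O}_X=\mathcal{O}_Y$ shows that $X_0$ has exactly one component $D$ dominating $Y_0$, that $D$ occurs in $X_0$ with multiplicity one, and that $D_{\text{red}}\to Y_0$ is a proper, quasi-finite, birational morphism onto the normal curve $Y_0$, hence an isomorphism. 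The remaining components of $X_0$ do not dominate $Y_0$ and are therefore contracted by $f_0$ to points of $Y_0$ — these are the \emph{ghost} components — and by construction $D$ is the unique component with the stated property. This structural statement is precisely the content of the main lemma of \cite{HNA}. To finish, I would extract the degree inequality: by the no-ghosts part of \cite{HNA}, the hypothesis that $f_0$ is not an isomorphism forces the presence of at least one ghost component, so as one-cycles $[X_0]=[D_{\text{red}}]+\sum_j m_j[E_j]$ with the $E_j$ a nonempty set of ghost components and $m_j\ge 1$; since $\xi|_{X_0}$ is ample, every one-dimensional subscheme of $X_0$ has strictly positive $\xi$-degree, so
\[
\deg(D_{\text{red}},\xi)=\deg(X_0,\xi)-\sum_j m_j\deg(E_j,\xi)<\deg(X_0,\xi)=\deg(X_{|C^*},\xi).
\]

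I expect the main obstacle to be the correct invocation of \cite{HNA} in the second and third steps. Beyond matching hypotheses (our $Y$ varies, theirs is fixed — harmless, the argument being local near $\eta_0$), the crucial delicate point is that the failure of $f_0$ to be an isomorphism must be witnessed by a genuine one-dimensional ghost component, and not merely by embedded points of $X_0$, which would contribute nothing to $\xi$-degrees and would break the strict inequality. This ``no ghosts'' phenomenon, resting ultimately on the flatness of $X$ over $C$ together with the Hilbert-scheme structure in the setting of \cite{HNA}, is exactly the mechanism we borrow rather than reprove.
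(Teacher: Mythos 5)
Your argument is correct and takes the same route as the paper, which offers no proof at all beyond asserting that the proposition is an easy consequence of the main lemma of \cite{HNA}; your write-up supplies exactly the elementary bookkeeping (flatness of $X/C$ giving constancy of the $\xi$-degree of the fibers, localization at the generic point of $Y_0$ giving a unique, multiplicity-one component dominating $Y_0$ whose reduction maps isomorphically by properness, quasi-finiteness and normality of $Y_0$, and ampleness giving strictly positive degree on each contracted component) that makes that assertion true. The one step you flag as borrowed from \cite{HNA} --- that failure of $f_0$ to be an isomorphism is witnessed by a genuine one-dimensional contracted component rather than merely by embedded points --- can in fact be closed without the reference: $Y$ is regular (being smooth over a DVR) and $f$ is proper and birational, so if every fiber of $f$ were finite then $f$ would be finite, hence by normality of $Y$ an isomorphism, hence $f_0$ an isomorphism; thus a non-isomorphic $f_0$ forces a positive-dimensional fiber, i.e.\ an irreducible curve in $X_0$ contracted to a point, which is necessarily one of your components $E_j$.
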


Now we are ready to complete the properness proof.

\begin{proof}[Proof of Theorem \ref{PropThm}]
Now suppose we have the following diagram, where $C$ is the spectrum of a complete discrete valuation ring with an
algebraically closed residue field $k'$, and $C^*$ is the spectrum of its quotient field.
\[
\includegraphics{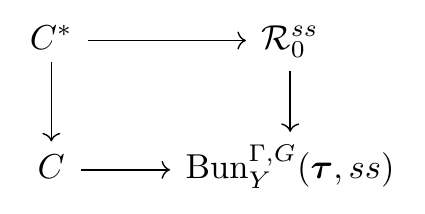}
\]
In order to prove the right vertical arrow is universally closed, it is sufficient (see \cite{EGA2} II.7.3.8 and \cite{LMB} Theorem 7.3 and 7.5)
to show that we can find a lift $C \rightarrow  \mathcal{R}_0^{ss}$ making the above diagram
2-commutative.
This diagram corresponds to a family of semistable $(\Gamma, G)$-bundles $\mathcal{E} \rightarrow Y \times C$ and a family
of degree $0$ $P$-reductions $\phi :Y \times C^* \rightarrow \mathcal{E}/P$.  Now since $\mathcal{E}/P$ is
projective over $C$, we can complete the subscheme $\phi(Y \times C^*)$ to a closed subscheme $Z \subseteq \mathcal{E}/P$,
with $Z$ flat over $C$.  Our goal is to show that this subscheme corresponds to a section of $\mathcal{E}/P \rightarrow Y \times C$.

We claim that $f:Z_0 \rightarrow Y_0 \cong Y \times \text{Spec}(k')$ is an isomorphism.
Suppose not.  Then by the above proposition there is a unique component $D$ of $Z_0$ such that $f: D_{\text{red}} \rightarrow Y_0$
is an isomorphism.
Now let $T_\pi$ be the tangent bundle along the fibers of $\mathcal{E}/P \rightarrow Y \times C$, and
let $\xi$ be the restriction of the determinant of this bundle to $Z_0$.  Then $\xi$ is ample and therefore by the above
proposition we have $\text{deg}(D_{red}, \xi) < \text{deg}(Z_{|C^*}, \xi)$.  
But by assumption $\text{deg}(Z_{|C^*}, \xi)=0$ and therefore $\text{deg}(D_{red}, \xi) < 0$, which violates the 
$\Gamma$-semistability of $\mathcal{E}_{|p_0}$.  
(It is well-known that a bundle is $\Gamma$-semistable if and only if it is semistable; see the proof of Proposition \ref{SEMIEQ}.)   
Therefore $f:Z_0 \rightarrow Y_0$ is an isomorphism.

Finally we need to prove that the map $f: Z \rightarrow Y \times C$ is an isomorphism.  Now we know that 
$Z$ is integral, since it is the closure of a subscheme of $\mathcal{E}/P$ isomorphic to $Y \times C^*$.  Furthermore
$f$ is birational by assumption.  Let $V$ be the open subset of points $x$ in $Y \times C$ such that their
fibers $f^{-1}(x)$ are zero-dimensional.  Then the restriction $f: f^{-1}(V) \rightarrow V$ is projective
and quasi-finite, and therefore finite.  Then since it is also birational and $V$ is normal, it is an isomorphism
by Zariski's main theorem: see Lemme 8.12.10.1 in \cite{EGAIV3}. Therefore $V$ is contained in the largest open set $U$ such that there exists a morphism $U \rightarrow Z$
representing the birational inverse of $f$.  But then by the above work, $Y_0 \subseteq V \subseteq U$.
Therefore $f: Z \rightarrow Y \times C$ is an isomorphism, proving that we have a lift $C \rightarrow  \mathcal{R}_0^{ss}$,
finishing the proof of Theorem \ref{PropThm}.
\end{proof}

\section{Conclusion of the proof of the reduction theorem}\label{ProofSec}

In the previous section we constructed the reduction stack $\mathcal{Y}$, and showed that
restricted to semistable bundles the projection morphism $\mathcal{Y} \rightarrow \text{Parbun}_G$ is proper.
We now use $\mathcal{Y}$ to prove the reduction theorem (Theorem \ref{MNTHM2}).
We continue to assume that $X \cong \mathbb{P}^1$, and $\vec{w}$ is weight data in the multiplicative polytope lying on the
face of the multiplicative polytope corresponding to $\sigma_{w_1} \ast \cdots \ast \sigma_{w_n} = q^d\text{[pt]}$. 

We prove the reduction theorem in two steps: first, we reduce to the Levi subgroup $L$ of $P$ using the properties
of $\mathcal{Y}$ proven in section \ref{PReductionSec}.  Then we reduce to the derived subgroup $L' = [L,L]$ using
an argument similar to the one in section 7 of \cite{BK}.  For an outline of the strategy of the proof,
see the discussion at the beginning of section \ref{PReductionSec}.

\subsection{Reduction to the Levi subgroup $L \subseteq P$}

The first step is to lift conformal blocks to $\mathcal{Y}$.

\begin{prop}
\label{ZarIsom2}
Suppose $f: \mathcal{X} \rightarrow \mathcal{Y}$ is a representable morphism of Artin stacks, where $\mathcal{Y}$ is smooth
over $k$, 
$\mathcal{X}$ is integral, and $f$ is birational and proper.  (By birational we simply mean that there is a non-empty
open substack $\mathcal{U} \subseteq \mathcal{X}$ such that $f$ restricted to $\mathcal{U}$ is an isomorphism onto its image.)
Then for any line bundle $\mathcal{L}$ over $\mathcal{Y}$, the pullback via $f$ induces an isomorphism of global
sections: $\normtext{H}^0(\mathcal{Y}, \mathcal{L}) \xrightarrow{\sim} \normtext{H}^0(\mathcal{X}, \mathcal{L})$.
\end{prop}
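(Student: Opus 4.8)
The plan is to mimic the proof of Proposition \ref{ZarIsom}, replacing ``smooth proper surjective with connected fibers'' by ``birational and proper'', and to run the argument through Stein factorization together with Zariski's main theorem. First I would recall that since $\mathcal{X}$ is integral and $f: \mathcal{X} \to \mathcal{Y}$ is birational, there is a dense open substack $\mathcal{U} \subseteq \mathcal{X}$ mapping isomorphically to its image in $\mathcal{Y}$; in particular the generic fiber of $f$ is a single reduced point, so $f$ is generically injective with reduced fibers. Because $f$ is proper and $\mathcal{Y}$ is smooth (hence normal), I would apply Stein factorization for Artin stacks (as in \cite{ARTINSHV}) to write $f$ as $\mathcal{X} \xrightarrow{f'} \mathcal{Y}' \xrightarrow{e} \mathcal{Y}$, where $f'_*\mathcal{O}_{\mathcal{X}} \cong \mathcal{O}_{\mathcal{Y}'}$ and $e$ is finite. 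Generic injectivity of $f$ forces $e$ to be generically an isomorphism, hence birational; being finite and birational onto a normal stack over an algebraically closed field of characteristic zero, $e$ is an isomorphism by Zariski's main theorem. Therefore $f_*\mathcal{O}_{\mathcal{X}} = f'_*\mathcal{O}_{\mathcal{X}} \cong \mathcal{O}_{\mathcal{Y}}$.

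Given $f_*\mathcal{O}_{\mathcal{X}} = \mathcal{O}_{\mathcal{Y}}$, the projection formula yields, for any line bundle $\mathcal{L}$ on $\mathcal{Y}$,
\[
f_*(f^*\mathcal{L}) \cong \mathcal{L} \otimes f_*\mathcal{O}_{\mathcal{X}} \cong \mathcal{L},
\]
and taking global sections gives $\normtext{H}^0(\mathcal{Y}, \mathcal{L}) \cong \normtext{H}^0(\mathcal{X}, \mathcal{L})$, with the isomorphism realized by pullback along $f$. One small point to address is injectivity of pullback on sections: since $\mathcal{X}$ is integral and $f$ is dominant, a section of $f^*\mathcal{L}$ vanishing nowhere-dense detects vanishing of the corresponding section of $\mathcal{L}$, so pullback is injective on the nose, and surjectivity follows from the projection-formula identity above. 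This is essentially the same ``solution of an exercise in Hartshorne'' bookkeeping used in the proof of Theorem \ref{GlblSecDesc}.

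The main obstacle is the application of Stein factorization and Zariski's main theorem in the stacky setting: one must check that $f'$ really does have $f'_*\mathcal{O}_{\mathcal{X}} = \mathcal{O}_{\mathcal{Y}'}$ (this uses properness of $f$ and is the content of the Stein factorization statement for Artin stacks), and that a finite birational morphism onto a normal Artin stack in characteristic zero is an isomorphism (this is the stacky form of Zariski's main theorem, which can be checked smooth-locally on $\mathcal{Y}$, reducing to the scheme statement already invoked in the proof of Theorem \ref{PropThm}). Everything else is formal. It is worth emphasizing that, unlike Proposition \ref{ZarIsom}, we do \emph{not} need $f$ surjective or its fibers connected a priori: birationality of a proper morphism from an integral stack automatically forces the finite part of the Stein factorization to be an isomorphism, which is exactly what makes this the right statement to apply to $\pi: \mathcal{Y}^{ss} \to \normtext{Parbun}_G^{ss}$.
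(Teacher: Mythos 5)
Your proof is correct, but it is organized differently from the paper's. The paper does not run Stein factorization on $f:\mathcal{X}\to\mathcal{Y}$ itself (that device is reserved for Proposition \ref{ZarIsom}); instead it pulls everything back along a smooth chart $U\to\mathcal{Y}$ from a connected scheme, proves as a separate step that the base change $V=U\times_{\mathcal{Y}}\mathcal{X}$ is irreducible (this takes a full paragraph and is needed so that ``birational'' and the function field make sense on the chart), and then establishes $f_*\mathcal{O}_V=\mathcal{O}_U$ by the elementary inclusion $\mathcal{O}_U\subseteq f_*\mathcal{O}_V\subseteq K(U)$ together with coherence and normality of $U$, before descending the resulting isomorphism of sections back to the stacks. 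Your route stays at the stack level: Stein factorization gives $\mathcal{X}\to\mathcal{Y}'\to\mathcal{Y}$ with the finite part $e$ birational (after the standard shrinking so that $f^{-1}$ of the isomorphism locus equals $\mathcal{U}$, which you should make explicit), and finite plus birational onto a normal target forces $e$ to be an isomorphism. Both arguments rest on exactly the same input --- normality of $\mathcal{Y}$ yields $f_*\mathcal{O}_{\mathcal{X}}=\mathcal{O}_{\mathcal{Y}}$, then the projection formula finishes --- so the tradeoff is one of packaging: your version avoids the irreducibility lemma for the chart but leans on Stein factorization and Zariski's main theorem being available for a proper, representable, non-finite morphism of Artin stacks; the paper's version reduces to schemes where the integral-closure argument is completely elementary. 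Two small remarks: your closing paragraph on injectivity of pullback is redundant (the projection formula already gives the isomorphism on $\mathrm{H}^0$), and both your argument and the paper's implicitly use that $\mathcal{Y}$ is irreducible so that the closed image of the proper birational $f$ is all of $\mathcal{Y}$; this holds in the intended application to $\mathrm{Parbun}_G^{ss}$ but is not literally in the hypotheses.
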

\begin{proof}
Let $U \rightarrow \mathcal{Y}$ be a smooth morphism from a connected (and therefore irreducible) scheme $U$.
First we show that, using Zariski's main theorem, the pullback of global sections is an isomorphism 
$\text{H}^0(U, \mathcal{L}) \xrightarrow{\sim} \text{H}^0(V, \mathcal{L})$,
where $V$ is the pullback of $U$ to $\mathcal{X}$.

Assume that $f$ is birational and proper, and that $\mathcal{X}$ is integral.  Let $V \rightarrow \mathcal{X}$ 
be the pullback of $U$.  Our first goal is to show that $V$ is irreducible.  Let $X = |\mathcal{X}|$
and $Y = |\mathcal{Y}|$ be the sets of points of these stacks with the Zariski topology. Now by definition $|V| \rightarrow X$ 
and $|U| \rightarrow Y$ are continuous, open, and surjective maps.  Furthermore by assumption, the map $X \rightarrow Y$ is 
an isomorphism over some open set $T \subseteq Y$, and therefore $|V| \rightarrow |U|$ is an isomorphism
over some $S \subseteq |U|$, since $|V|$ is the fiber product of $X$ and $|U|$ over $Y$.  Now suppose we have two non-empty 
open sets $V_1, V_2 \subseteq |V|$.  Then their images in $X$ must intersect in $T$, since $X$ is irreducible.  But then
$V_1$ and $V_2$ must both intersect $S$, which is irreducible since $|U|$ is irreducible.  Therefore they must intersect, so $V$ is
irreducible.

So $V \rightarrow U$ is a birational and proper morphism of integral schemes over $k$, with $U$ smooth over $k$.
Then by the projection formula, $f_*f^* \mathcal{L} \cong \mathcal{L} \otimes f_*\mathcal{O}_V$.  Now since $f$ is proper,
$f_*\mathcal{O}_V$ is a coherent $\mathcal{O}_U$-module, and in fact we have $\mathcal{O}_U \subseteq f_*\mathcal{O}_V \subseteq K$
where $K$ is the function field of $U$ and $V$.  But since $U$ is nonsingular, it is in particular normal, and so the structure
sheaf $\mathcal{O}_U$ is locally integrally closed, and since $f_*\mathcal{O}_V$ is coherent, we have $f_*\mathcal{O}_V = \mathcal{O}_U$.
But then $\text{H}^0(V, f^*\mathcal{L}) = \text{H}^0(U, f_*f^*\mathcal{L}) = \text{H}^0(U, \mathcal{L})$.

Now let a collection of smooth morphisms $U_i \rightarrow \mathcal{Y}$ as above be jointly surjective,
and let $\mathbf{U} = \bigsqcup_i U_i$.  Then since both $\mathcal{X}$ and $\mathcal{Y}$ are reduced,
it is easy to see that the isomorphism $\text{H}^0(\mathbf{U}, \mathcal{L}) \xrightarrow{\sim} \text{H}^0(\mathbf{V}, f^*\mathcal{L})$
descends to $\text{H}^0(\mathcal{Y}, \mathcal{L}) \xrightarrow{\sim} \text{H}^0(\mathcal{X}, \mathcal{L}).$
\end{proof}

\begin{prop} \label{YISOM}
We have via the natural pullback map, an isomorphism: 
\[
\normtext{H}^0(\normtext{Parbun}_G, \mathcal{L}_{\vec{w}}) \xrightarrow{\sim} \normtext{H}^0(\mathcal{Y}, \mathcal{L}_{\vec{w}}).
\]
\end{prop}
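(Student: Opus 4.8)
The plan is to deduce this from Proposition~\ref{ZarIsom2}, applied over the semistable locus, together with the elementary fact that global sections of $\mathcal{L}_{\vec{w}}$ are detected on the semistable locus. Write $\pi\colon \mathcal{Y}\to\text{Parbun}_G$ for the projection, and recall that $\mathcal{Y}$ is integral and contains $\mathcal{C}$ as a dense open substack, and that by Theorem~\ref{PropThm} the restriction $\pi^{ss}\colon\mathcal{Y}^{ss}\to\text{Parbun}_G^{ss}$ is proper, where $\mathcal{Y}^{ss}=\pi^{-1}(\text{Parbun}_G^{ss})$ and $\text{Parbun}_G^{ss}$ is the semistable locus (equivalently, the locus where some power of $\mathcal{L}_{\vec{w}}$ has a non-vanishing section, cf.\ section~\ref{Moduli} and Proposition~\ref{SEMIEQ}).

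\emph{Step 1: semistable reduction on the base.} I claim the restriction map $\text{H}^0(\text{Parbun}_G,\mathcal{L}_{\vec{w}})\to\text{H}^0(\text{Parbun}_G^{ss},\mathcal{L}_{\vec{w}})$ is an isomorphism. It is injective because every global section of $\mathcal{L}_{\vec{w}}$ vanishes identically on the complementary unstable locus, directly from the definition of semistability in section~\ref{Moduli}. For surjectivity one invokes that, for $\vec{w}$ in the multiplicative polytope, the unstable locus has codimension at least two in the smooth --- hence normal --- stack $\text{Parbun}_G$, so sections extend uniquely across it; here one also uses that $\text{Parbun}_G^{ss}$ is nonempty, which is exactly the hypothesis that $\vec{w}$ lies in the polytope.

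\emph{Step 2: Zariski's main theorem over $\text{Parbun}_G^{ss}$.} Since $\pi$ is dominant ($\pi(\mathcal{C})$ being dense) and $\text{Parbun}_G^{ss}$ is a nonempty open substack, $\mathcal{Y}^{ss}$ is a nonempty open --- hence integral --- substack of $\mathcal{Y}$; and the open locus of $\mathcal{C}$ on which $\mathcal{C}\to\text{Parbun}_G$ is an isomorphism onto its image meets $\mathcal{Y}^{ss}$, so $\pi^{ss}$ is birational. As $\pi^{ss}$ is also representable and proper with target smooth over $k$, Proposition~\ref{ZarIsom2} gives an isomorphism $(\pi^{ss})^*\colon\text{H}^0(\text{Parbun}_G^{ss},\mathcal{L}_{\vec{w}})\xrightarrow{\sim}\text{H}^0(\mathcal{Y}^{ss},\mathcal{L}_{\vec{w}})$. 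Composing with Step~1, the map ``pull back along $\pi$, then restrict to $\mathcal{Y}^{ss}$'' is an isomorphism $\text{H}^0(\text{Parbun}_G,\mathcal{L}_{\vec{w}})\xrightarrow{\sim}\text{H}^0(\mathcal{Y}^{ss},\mathcal{L}_{\vec{w}})$.

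\emph{Step 3: from $\mathcal{Y}^{ss}$ back to $\mathcal{Y}$.} Finally $\pi^*\colon\text{H}^0(\text{Parbun}_G,\mathcal{L}_{\vec{w}})\to\text{H}^0(\mathcal{Y},\mathcal{L}_{\vec{w}})$ is injective because $\pi$ is dominant and $\mathcal{Y}$ is integral; it is surjective because, given $t\in\text{H}^0(\mathcal{Y},\mathcal{L}_{\vec{w}})$, the composite isomorphism of Step~2 produces $u\in\text{H}^0(\text{Parbun}_G,\mathcal{L}_{\vec{w}})$ with $\pi^*u$ and $t$ agreeing on the dense open substack $\mathcal{Y}^{ss}$, whence $\pi^*u=t$ since $\mathcal{Y}$ is integral and $\mathcal{L}_{\vec{w}}$ is a line bundle. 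The only non-formal inputs are Theorem~\ref{PropThm}, Proposition~\ref{ZarIsom2}, and the codimension estimate in Step~1 --- the last being the point I expect to demand the most care.
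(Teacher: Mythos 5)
Your Steps 2 and 3 reproduce the paper's argument essentially verbatim: one applies Proposition~\ref{ZarIsom2} to the proper, birational, representable morphism $\mathcal{Y}^{ss}\to\normtext{Parbun}_G^{ss}$ furnished by Theorem~\ref{PropThm}, and then passes from $\mathcal{Y}^{ss}$ to $\mathcal{Y}$ using that $\mathcal{Y}$ is integral and $\mathcal{Y}^{ss}$ is a nonempty open, hence dense, substack. The gap is in Step~1. The surjectivity of the restriction $\normtext{H}^0(\normtext{Parbun}_G,\mathcal{L}_{\vec{w}})\to\normtext{H}^0(\normtext{Parbun}_G^{ss},\mathcal{L}_{\vec{w}})$ is the one genuinely non-formal ingredient of this proposition, and your justification --- that the unstable locus has codimension at least two, so that sections extend across it by normality --- is both unproved and false in general. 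Already for $G=\normtext{SL}_2$ over $\mathbb{P}^1$ the Harder--Narasimhan stratum of bundles isomorphic to $\mathcal{O}(1)\oplus\mathcal{O}(-1)$ has codimension one in $\normtext{Bun}_{\normtext{SL}_2}$, and for sufficiently small parabolic weights (which can perfectly well lie on a TW-face) every parabolic bundle supported on that stratum violates the semistability inequality of section~\ref{Moduli} regardless of the flags; since passing to $\normtext{Parbun}_G$ adds the same flag varieties to every stratum, the unstable locus there still has codimension one. A Hartogs-type extension argument therefore cannot close this step.

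What the paper actually does here is quite different: it first descends $\mathcal{L}_{\vec{w}}$ and its sections to $\normtext{Bun}_{\mathcal{G}}\cong\equivBun$ (Theorem~\ref{GlblSecDesc}), identifies the semistable locus in GIT terms via non-vanishing of sections of powers of a determinant line bundle (Proposition~\ref{SEMIEQ}), and then invokes Teleman's quantization theorem \cite[Theorem 9.10]{QUANT}, which asserts that restriction to the semistable locus of the stack of ($\Gamma$-equivariant) $G$-bundles induces an isomorphism on sections of such line bundles. That is a deep vanishing-type theorem, not a codimension estimate, and it (or something of comparable strength) is what you would need to supply to make Step~1 correct. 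The remainder of your write-up can stand as written.
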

\begin{proof}
It was shown in \cite{BK} that $\mathcal{C} \rightarrow \text{Parbun}_G$ is birational.  Since by Theorem \ref{PropThm} the morphism 
$\mathcal{Y}^{ss} \rightarrow \text{Parbun}_G^{ss}$ is proper, by Proposition \ref{ZarIsom2} we have an isomorphism
$\normtext{H}^0(\text{Parbun}_G^{ss}, \mathcal{L}_{\vec{w}}) \xrightarrow{\sim} \normtext{H}^0(\mathcal{Y}^{ss}, \mathcal{L}_{\vec{w}})$.
By Theorem \ref{GlblSecDesc}, Proposition \ref{SEMIEQ}, and \cite[Theorem 9.10]{QUANT},  the left vertical arrow in the following diagram is an isomorphism:
\[
\includegraphics{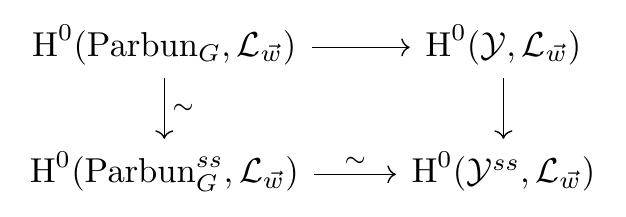}.
\]
By commutativity of this diagram, the right vertical arrow is surjective.  Furthermore, the right vertical arrow is injective, 
since $\mathcal{Y}$ is integral and $\mathcal{Y}^{ss}$ is a nonempty open substack.
Note we're assuming $\vec{w}$ is in the eigen-polytope, so that some power of $\mathcal{L}_{\vec{w}}$ has global sections.  
Then the right vertical arrow is an isomorphism, and therefore we have
$\text{H}^0(\text{Parbun}_G, \mathcal{L}_{\vec{w}}) \xrightarrow{\sim} \text{H}^0(\mathcal{Y}, \mathcal{L}_{\vec{w}}).$

\end{proof}

Now let $P$ be the maximal parabolic associated to the product $\sigma_{u_1} \ast \cdots \ast \sigma_{u_n} = q^d[pt]$, 
and $L \subseteq P$ the Levi factor containing the maximal torus of $G$.  Let $\equivBunL{d}$ be the moduli stack of $\Gamma$-equivariant
$L$-bundles of local type $\boldsymbol{\tau}_u=(u_1^{-1}\tau_1u_1, \ldots, u_n^{-1}\tau_nu_n)$ and degree $d$.  Let 
$\xi: \equivBunPTwo{0} \rightarrow \equivBunL{d}$
be the natural projection given by $P \rightarrow L$, and $\iota: \equivBunL{d} \rightarrow \equivBun$ be the morphism given
by extending the structure group to $G$.

Then we have the following proposition.

\begin{prop} \label{PULLBCK}
The following diagram
\[
\includegraphics{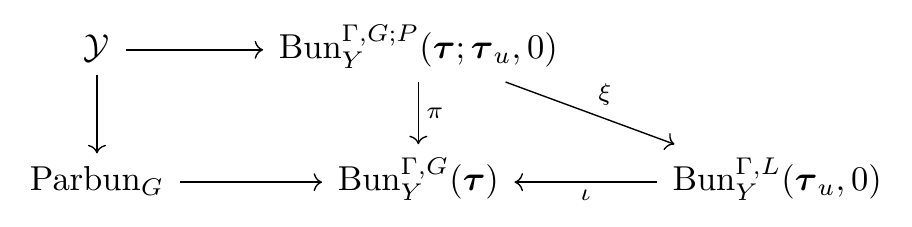}
\]
induces a commutative diagram of global sections of $\mathcal{L}_{\vec{w}}$.  Furthermore $\xi$ is surjective.
\end{prop}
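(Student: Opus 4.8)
The plan is to prove the two assertions separately. For the commutativity of the induced diagram of global sections, the essential point is that although the square is \emph{not} $2$-commutative --- the two composites $\mathcal{Y} \rightarrow \equivBun$, namely $p \circ \pi$, which records the equivariant $G$-bundle $E$ attached to a parabolic bundle, and the composite through $\equivBunPTwo{0} \xrightarrow{\xi} \equivBunL{d} \xrightarrow{\iota} \equivBun$, which records the $G$-bundle built from the associated graded $L$-bundle of the $P$-reduction, twisted by the $u_i$, are genuinely different morphisms --- their pullbacks of $\mathcal{L}_{\vec{w}}$ to $\mathcal{Y}$ nevertheless agree canonically. Granting such a canonical isomorphism of the two pullbacks, commutativity of the square of $\normtext{H}^0$'s is formal, since pullback of sections is functorial and the isomorphism is constructed compatibly with the individual factor morphisms.

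To produce the identification I would use the determinant-of-cohomology description of $\mathcal{L}_{\vec{w}}$ established in the proof of Theorem \ref{GlblSecDesc}: a power of $\mathcal{L}_{\vec{w}}$ on $\equivBun$ is the determinant-of-cohomology bundle $D(V)$ for a suitable representation $V$ of $G$, and since the Picard groups in play are torsion free, an identification for that power descends to one for $\mathcal{L}_{\vec{w}}$ itself. Over $\mathcal{Y}$, the pullback of $D(V)$ along $p \circ \pi$ is the determinant of cohomology of $E(V)$, while along the other route it is the determinant of cohomology of the associated bundle, with fibre $V$, of the equivariant $P$-reduction. Since $V$ restricted to $P$ carries a filtration with graded pieces the irreducible $L$-modules, multiplicativity of the determinant of cohomology in short exact sequences reduces the comparison to a single line bundle on $\mathcal{Y}$ whose triviality is equivalent to the vanishing of the parabolic degree $\sum_i \langle \omega_P, u_i^{-1}\lambda_i\rangle - \ell d$ of the reduction. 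Now $\mathcal{Y}$ is by construction a closed substack of $\mathcal{Y}_0 = \text{Parbun}_G \times_{\equivBun} \equivBunPTwo{0}$, so over every point the equivariant $P$-reduction has degree $0$ and local type $\boldsymbol{\tau}_u$; combined with the hypothesis that $\vec{w}$ lies on the facet associated to $\sigma_{u_1} \ast \cdots \ast \sigma_{u_n} = q^d[\text{pt}]$ --- exactly the locus where $\sum_i \langle \omega_P, u_i^{-1}\lambda_i\rangle = \ell d$ --- this forces that line bundle to be trivial over all of $\mathcal{Y}$, and not merely over the dense open substack $\mathcal{C}$. Hence the two pullbacks of $\mathcal{L}_{\vec{w}}$ to $\mathcal{Y}$ are canonically isomorphic, which proves the first claim.

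For the surjectivity of $\xi : \equivBunPTwo{0} \rightarrow \equivBunL{d}$ I would exhibit a section. Given a $\Gamma$-equivariant $L$-bundle $\mathcal{F}_L$ of local type $\boldsymbol{\tau}_u$ and degree $d$, extend its structure group along $L \hookrightarrow P$ to an equivariant $P$-bundle $\mathcal{F}_P$, and then along $P \hookrightarrow G$ to an equivariant $G$-bundle $E$; then $\mathcal{F}_P$ is tautologically an equivariant $P$-reduction of $E$, and the construction is functorial in $\mathcal{F}_L$, so it defines a morphism $s : \equivBunL{d} \rightarrow \equivBunPTwo{0}$ with $\xi \circ s = \normtext{id}$, since the Levi quotient of $\mathcal{F}_P$ recovers $\mathcal{F}_L$. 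The only point to check is that $s$ indeed lands in $\equivBunPTwo{0}$: the local type of $\mathcal{F}_P$ is $\boldsymbol{\tau}_u$ because $\tau_i \in T \subseteq L$ and extension of structure group preserves local type, while the degree of the $P$-reduction $\mathcal{F}_P \subseteq E$ is the $\omega_P$-degree $d$ of $\mathcal{F}_L$ corrected by the contribution $\sum_i \langle \omega_P, u_i^{-1}\mu_i\rangle = d$ of the local type $\boldsymbol{\tau}_u$, hence equals $0$ --- again by the facet assumption. Thus $\xi$ admits a section and is in particular surjective.

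The step I expect to be the main obstacle is the comparison of the two pullbacks of $\mathcal{L}_{\vec{w}}$ in the second step: one must identify precisely how $E(V)$ and the $V$-bundle of the equivariant $P$-reduction differ along the ramification divisor, and verify that the class of the difference is governed exactly by the parabolic degree, so that it vanishes on the TW-facet. The remaining ingredients --- functoriality of pullback and the construction of the section of $\xi$ --- are formal once this is in hand, and it is precisely here that the hypothesis on $\vec{w}$ enters in an essential way.
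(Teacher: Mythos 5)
Your treatment of the surjectivity of $\xi$ is essentially the paper's: one exhibits a section of $\xi$ by extension of structure group along $L \hookrightarrow P$, and that part is fine. The gap is in the first assertion. You claim that once the two pullbacks of $\mathcal{L}_{\vec{w}}$ are identified, ``commutativity of the square of $\mathrm{H}^0$'s is formal.'' It is not. The two composites send a point $(E,F)$ of $\equivBunPTwo{0}$ to two genuinely different points of $\equivBun$: the bundle $E$ itself, and the bundle $E_0$ obtained by extending the Levi quotient of the $P$-reduction back to $G$. Commutativity of the induced diagram of global sections is the statement that, under the chosen identification of fibers, $s(E)=s(E_0)$ for every global section $s$ of $\mathcal{L}_{\vec{w}}$ --- a comparison of the values of $s$ at two \emph{different points of the base}, which no fiberwise isomorphism of line bundles can deliver by itself (a priori $s$ could vanish at one of $E$, $E_0$ and not the other). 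This is exactly what the paper's Levification argument supplies: the family $F_t = F\times^{\phi_t}P$, $E_t = F_t\times^P G$ over $\mathbb{A}^1$ interpolates between $E$ at $t=1$ and $E_0$ at $t=0$, the pulled-back line bundle acquires a $\mathbb{G}_m$-equivariant structure for which $t\mapsto s(E_t)$ is an invariant section, and that section extends constantly across $t=0$ (forcing $s(E)=s(E_0)$) precisely when the $\mathbb{G}_m$-weight on the fiber at $t=0$ vanishes. That weight is computed to be $\sum_\gamma \deg(V_\gamma)\gamma$, a multiple of $\deg\bigl((F_0)_s\times^L\mathbb{C}_{-\omega_P}\bigr)$, and this is the one place the facet hypothesis (degree zero of the equivariant reduction) is used.

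Relatedly, your determinant-of-cohomology computation misplaces the role of that hypothesis. Since $E(V)=F\times^P V$ carries a filtration by subbundles with associated graded $\bigoplus_\gamma F_L\times^L V_\gamma$, which is exactly the $V$-bundle associated to $\iota(\xi(E,F))$, multiplicativity of the determinant of cohomology in short exact sequences identifies the two pullbacks of $D(V)$ \emph{unconditionally}: there is no residual line bundle whose triviality detects the parabolic degree. So the proposed mechanism proves too much (an identification of line bundles that exists off the facet as well) and too little (it carries no information relating the pullbacks of a given section). The degeneration over $\mathbb{A}^1$ and the weight computation are not an optional refinement of your argument; they are the substance of the proof.
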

\begin{proof}
The morphism $\xi$ is surjective because there is a section $j: \equivBunL{0} \rightarrow \equivBunPTwo{0}$ given
by extension of structure group.

The left square is 2-commutative, so we just need to show the triangle on the right induces a commutative diagram
of global sections.  We use the methods of \cite{BK}, translated to equivariant bundles.  Let $N_P$ be the smallest positive integer such 
that $N_P x_P$ is in the coroot lattice, and let $\bar{x}_P = N_P x_P$.  Let $t^{\bar{x}_P}$ be the associated 
cocharacter, and $\phi_t: P \rightarrow P$ be the homomorphism sending $p \mapsto t^{\bar{x}_P} p t^{-\bar{x}_P}$.
Then the family of homomorphisms $\phi: P \times \mathbb{G}_m \rightarrow P$ extends to a family over $\mathbb{A}^1$,
and $\phi_0:P \rightarrow P$ factors through $L \subseteq P$.

Now assume we have a morphism $S \rightarrow \equivBunPTwo{0}$, corresponding to an equivariant $G$-bundle $E$ and 
a $P$-reduction $\sigma: Y \times S \rightarrow E/P$, which in turn corresponds to an equivariant $P$-bundle $F$ with local type
$\boldsymbol{\tau}_u$.  Then let $F_t = F \times^{\phi_t} P$, and $E_t = F_t \times^P G$.  Clearly then $(E_1, F_1)$
is isomorphic to the pair $(E,F)$ (in fact the same is true for $(E_t, F_t)$ for any $t \neq 0$), and $E_0$
has a reduction to an $L$-bundle, which we will also denote $F_0$.  This is what Belkale and Kumar call the 
\emph{Levification process}.

So we get a canonical morphism $f: S \times \mathbb{A}^1 \rightarrow \equivBunPTwo{0}$, and $F_0$ is (isomorphic to) the
image of $S \rightarrow \equivBunPTwo{0}$ via $\xi$.  Let $\mathcal{L}_1$ be the pullback of $\mathcal{L}_{\vec{w}}$ via $\pi$,
and $\mathcal{L}_2$ the pullback via $\iota \circ \xi$.  Clearly then there is a canonical isomorphism
 $f_1^* \mathcal{L}_2 \cong f_0^* \mathcal{L}_1$.  Our goal is to show that this is a canonical isomorphism 
$f_1^* \mathcal{L}_1 \cong f_0^* \mathcal{L}_1$, which will complete the proposition.  We will do this using
a $\mathbb{G}_m$-action on $f^*\mathcal{L}_1$, which we will show is trivial over $t=0$.

The equivariant $\mathbb{G}_m$ action is defined as follows.  There is a natural $\mathbb{G}_m$-action on 
the $G$-bundle corresponding to $f$, defined over $\mathbb{A}^1$ by right multiplication by $t^{\bar{x}_P}$. 
This induces a $\mathbb{G}_m$-action on $f^*\mathcal{L}_1$.  Now Belkale and Kumar show in section 6 of \cite{BK} that
 there is some $N>0$ such that $\mathcal{L}_{\vec{w}}^N$
is isomorphic to the determinant bundle $D(V)$, where $V$ is the adjoint representation of $G$.
Then $f_0^* \mathcal{L}_1^N \cong D(F_0 \times^L \mathfrak{g})$.

Let $s \in S$ be a point and $(F_0)_s$ be the fiber over $s$.  Now we can filter $\mathfrak{g}$ so that the action
of $\text{Ad}(t^{\bar{x}_P})$ on the associated graded pieces is $t^{-\gamma}$, for some integer $\gamma$.  Denote
the associated graded piece with an action of $t^{-\gamma}$ as $\mathfrak{g}_\gamma$, and $(F_0)_s \times^L \mathfrak{g}_\gamma$
as $V_\gamma$ (note that $\mathfrak{g}_\gamma$ is fixed by $L$, since $t^{\bar{x}_P}$ is central).  Then $\mathbb{G}_m$ acts by 
$t^{-\gamma}$ on $V_\gamma$, and therefore by $t^{\chi(Y, V_\gamma) \gamma}$ on $D(V_\gamma)$.  Then the exponent of the action on 
$D((F_0)_s \times^L \mathfrak{g})$  is
\begin{align}
\sum_\gamma \chi(Y, V_\gamma) \gamma &= \sum_\gamma (\text{deg}(V_\gamma) + (1-g) \text{rk}(V_\gamma)) \gamma \\
                                     &= \sum_\gamma \text{deg}(V_\gamma) \gamma
\end{align}
where the first equality is Riemann-Roch, and the second follows from the fact that 
$\text{dim } \mathfrak{g}_\gamma = \text{dim } \mathfrak{g}_{-\gamma}$.

Now let $R_\gamma \subseteq R$ be the set of roots of $\mathfrak{g}_\gamma$.  Then clearly
\[
\text{deg}(V_\gamma) = \sum_{\alpha \in R_\gamma} \text{deg}((F_0)_s \times^L \mathbb{C}_{-\omega_P}) \cdot \alpha(\alpha^\vee_P),
\]
noting that $\alpha(\alpha^\vee_P)$ is the coefficient of $\omega_P$ in $\alpha$.  But $d = \text{deg}((F_0)_s \times^L \mathbb{C}_{-\omega_P})$
is zero.  Therefore the $\mathbb{G}_m$-action on $D(F_0 \times^L \mathfrak{g})$ 
is trivial, which implies the action on $f_0^* \mathcal{L}_1$ is trivial.  Then the $\mathbb{G}_m$-action gives a canonical identification 
$f_1^* \mathcal{L}_1 \cong f_0^* \mathcal{L}_1$ by taking the limit $t \rightarrow 0$.
\end{proof}

Now we are ready to reduce to the Levi.

\begin{cor}
\label{LRed}
We have $\normtext{H}^0(\equivBunL{0}, \iota^*\mathcal{L}_{\vec{w}}) \xleftarrow{\sim} \normtext{H}^0(\normtext{Bun}_\mathcal{G}, \mathcal{L}_{\vec{w}}) \cong \normtext{H}^0(\normtext{Parbun}_G, \mathcal{L}_{\vec{w}})$.
\end{cor}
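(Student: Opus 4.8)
The right-hand isomorphism $\normtext{H}^0(\normtext{Bun}_\mathcal{G}, \mathcal{L}_{\vec w}) \cong \normtext{H}^0(\normtext{Parbun}_G, \mathcal{L}_{\vec w})$ is precisely Theorem \ref{GlblSecDesc}, so the content of the corollary is the left-hand map, and the plan is to obtain it by combining Proposition \ref{YISOM} with Proposition \ref{PULLBCK} and a short diagram chase. Write $\mathrm{pr}\colon \mathcal{Y}\to\equivBunPTwo{0}$ for the structure morphism (the restriction to $\mathcal{Y}$ of the projection off the fiber product $\mathcal{Y}_0 = \normtext{Parbun}_G\times_{\equivBun}\equivBunPTwo{0}$), $\xi\colon\equivBunPTwo{0}\to\equivBunL{d}$ for the projection induced by $P\to L$, and $\iota\colon\equivBunL{d}\to\equivBun\cong\normtext{Bun}_\mathcal{G}$ for extension of structure group. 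By Proposition \ref{PULLBCK} the pullbacks $\pi^*\mathcal{L}_{\vec w}$ and $(\iota\circ\xi\circ\mathrm{pr})^*\mathcal{L}_{\vec w}$ on $\mathcal{Y}$ are canonically identified, and with respect to this identification the resulting square of pullback maps on global sections commutes: the composite $\mathrm{pr}^*\circ\xi^*\circ\iota^*\colon \normtext{H}^0(\normtext{Bun}_\mathcal{G},\mathcal{L}_{\vec w})\to\normtext{H}^0(\mathcal{Y},\pi^*\mathcal{L}_{\vec w})$ equals $\pi^*$ precomposed with the pullback along $\normtext{Parbun}_G\to\normtext{Bun}_\mathcal{G}$.

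By Theorem \ref{GlblSecDesc} that last pullback is an isomorphism, and by Proposition \ref{YISOM} the map $\pi^*\colon\normtext{H}^0(\normtext{Parbun}_G,\mathcal{L}_{\vec w})\to\normtext{H}^0(\mathcal{Y},\pi^*\mathcal{L}_{\vec w})$ is an isomorphism; hence the composite $\mathrm{pr}^*\circ\xi^*\circ\iota^*$ is an isomorphism. Now $\mathrm{pr}$ is surjective (the morphism $\mathcal{C}\to\equivBunPTwo{0}$ of Theorem \ref{PLIFT} is surjective and $\mathcal{C}$ is dense in $\mathcal{Y}$) and $\xi$ is surjective (Proposition \ref{PULLBCK}); since all the stacks involved are reduced, pullback of sections of a line bundle along a dominant morphism of reduced stacks is injective, so both $\mathrm{pr}^*$ and $\xi^*$ are injective. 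A composite that is an isomorphism forces its last factor $\mathrm{pr}^*$ to be surjective, hence (being injective) an isomorphism; cancelling it, $\xi^*\circ\iota^*$ is an isomorphism, so $\xi^*$ is surjective, hence an isomorphism; cancelling that, $\iota^*$ is an isomorphism. Together with Theorem \ref{GlblSecDesc} this produces the chain of isomorphisms asserted in Corollary \ref{LRed}.

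The substantive input here is entirely in the two propositions invoked --- properness of $\pi$ over the semistable locus together with Zariski's main theorem for Proposition \ref{YISOM}, and the $\mathbb{G}_m$-action (Levification) argument for Proposition \ref{PULLBCK} --- so the only real care needed at this stage is bookkeeping. One must carry the canonical isomorphism $\pi^*\mathcal{L}_{\vec w}\cong(\iota\circ\xi\circ\mathrm{pr})^*\mathcal{L}_{\vec w}$ coherently through the whole diagram so that ``commutative diagram of global sections'' applies verbatim, and one must track the degree and twist data relating $\equivBunL{d}$ to the stack $\equivBunL{0}$ in the statement: when $d=0$ this is immediate, while for $d>0$ it is effected by the non-canonical twist already built into the morphisms (to be spelled out in the subsequent reduction to $L'=[L,L]$). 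I would therefore expect the $d>0$ case to demand the most attention, but it introduces no new conceptual obstacle beyond the constructions of Section \ref{PReductionSec}.
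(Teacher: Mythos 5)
Your proposal is correct and follows essentially the same route as the paper: the right-hand isomorphism is Theorem \ref{GlblSecDesc}, the left-hand one comes from combining Proposition \ref{YISOM}, Proposition \ref{PULLBCK}, and the surjectivity of $\xi$ and of $\mathcal{Y}\to\equivBunPTwo{0}$ (Theorem \ref{PLIFT}), followed by the diagram chase that the paper leaves implicit and you spell out. Your closing remark about tracking the degree/twist bookkeeping between $\equivBunL{d}$ and $\equivBunL{0}$ is apt, since the paper's own notation wavers on this point, but it does not affect the argument.
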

\begin{proof}
Since $\xi$ and $\mathcal{Y} \rightarrow \equivBunPTwo{0}$ are surjective by the above proposition and Theorem \ref{PLIFT}, 
$\text{H}^0(\equivBunL{0}, \mathcal{L}_{\vec{w}}) \rightarrow \text{H}^0(\mathcal{Y}, \mathcal{L}_{\vec{w}})$
is injective.  Since by Proposition \ref{YISOM} pullback via $\mathcal{Y} \rightarrow \text{Parbun}_G$ is surjective,
by Proposition \ref{PULLBCK} and the fact that conformal blocks descend to $\equivBun \cong \normtext{Bun}_\mathcal{G}$ 
(Theorem \ref{GlblSecDesc},)  the proof is finished by a simple diagram chase. 
\end{proof}

\subsection{Reductions to $L'$ and completion of the proof of the main theorem}

To complete the proof of the reduction theorem, we need to identify $\normtext{H}^0(\equivBunL{0}, \iota^*\mathcal{L}_{\vec{w}})$
with a space of global sections over $\text{Parbun}_{L'}$.  In order to accomplish this for an arbitrary degree $d$, we
need to add weights to our weight data.  Having done this, we conclude the section with a proof of the identification
of conformal blocks bundles when $d=0$.

We write our weight data for $L$ bundles as $\vec{w}^L=(\lambda_1^L, \ldots, \lambda_n^L, \ell)$, where $\lambda_i^L = u_i^{-1} \lambda_i$.
Note that these weights satisfy the equation $\sum_{i=1}^n \langle \omega_P, \lambda_i^L \rangle = \ell \cdot d$.  

It is easy to see that $L'$ is simply connected, since $G$ is simply connected.  Furthermore we know the Dynkin type of $L'$: it is given
by removing the vertex of the Dynkin diagram of $G$ corresponding to $P$.  Therefore $L' \cong G_1 \times G_2 \times G_3$, where
$G_1$, $G_2$ and $G_3$ are simple, simply connected groups.  Note that one or more of the groups may be trivial, and most commonly
there are exactly two non-trivial factors.  The following discussion follows closely section 7 of \cite{BK}.

Let $Z_0$ be the connected component of the identity of $L$, and let $L' \times Z_0 \rightarrow L$ be the natural
homomorphism.  This homomorphism is in fact an isogeny, with kernel say of size $k_L$.  
Alternatively, $k_L$ is the size of the kernel of the isogeny $Z_0 \rightarrow L/L'$.
Let $N_P$ be the smallest positive integer such that $N_P x_P$ is in the
coroot lattice, and let $\bar{x}_P = N_P x_P$.  Then it is easy to see that 
\[
k_L = \omega_P(\bar{x}_P) = 2 N_P \frac{\langle \omega_P, \omega_P \rangle}{\langle \alpha_P, \alpha_P \rangle}.
\]

The basic result we use to reduce conformal blocks to $L'$ is the following proposition.

\begin{prop} \cite{BK}
Suppose weight data $\vec{w}^L = (\lambda_1^L, \ldots, \lambda_n^L, \ell)$ for $L$ satisfies the equation
$\sum_{i=1}^n \langle \omega_P, \lambda_i^L \rangle = \ell \cdot d$
and that $d = d'k_L$.  Then there is a surjective morphism $\iota': \normtext{Parbun}_{L'} \rightarrow \normtext{Parbun}_L(d)$ 
such that the induced pullback of global sections of $\mathcal{L}_{\vec{w}}$ is an isomorphism.
\end{prop}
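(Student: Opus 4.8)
The plan is to realize $\iota'$ as extension of structure group along the isogeny $L'\times Z_0\to L$, precomposed with the choice of a fixed $Z_0$-bundle of the correct degree, and then to reduce the statement about global sections to the elementary fact that on $\mathbb{P}^1$ the passage from $L'$ to $L$ adds only the $Z_0=\mathbb{G}_m$ worth of automorphisms. Concretely, let $t^{\bar{x}_P}\colon\mathbb{G}_m\to Z_0$ be the cocharacter attached to $\bar{x}_P=N_Px_P$, and let $\mathcal{N}$ be the $Z_0$-bundle obtained from a line bundle of degree $d'$ on $X$ by pushout along $t^{\bar{x}_P}$; its $\omega_P$-degree is $d'\,\omega_P(\bar{x}_P)=d'k_L=d$. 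Define $\iota'$ on objects by $\mathcal{F}'\mapsto(\mathcal{F}'\times\mathcal{N})\times^{L'\times Z_0}L$, transporting flags via the canonical identification $L'/B_{L'}\cong L/B_L$ (the centre acts trivially on the flag variety). By construction this lands in $\text{Parbun}_L(d)$.

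For surjectivity: given $\widetilde{\mathcal{E}}_L\in\text{Parbun}_L(d)$, I would lift its $L$-bundle along the central isogeny $L'\times Z_0\to L$ with kernel $\mu_{k_L}$. The obstruction lies in $\text{H}^2(\mathbb{P}^1,\mu_{k_L})\cong\mathbb{Z}/k_L$ (\'etale cohomology; the Kummer sequence together with $\mathrm{Br}(\mathbb{P}^1_{\mathbb{C}})=0$), it depends only on the topological type, and under $\pi_1(L)\twoheadrightarrow\pi_1(L)/\mathrm{im}\,\pi_1(L'\times Z_0)\cong\mathbb{Z}/k_L$ it is $d\bmod k_L=0$ by the hypothesis $d=d'k_L$. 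Any lift has $Z_0$-component of $\omega_P$-degree $d$, hence isomorphic to $\mathcal{N}$ since $\mathrm{Pic}^0(\mathbb{P}^1)=0$, and its flags match those of $\widetilde{\mathcal{E}}_L$; so $\widetilde{\mathcal{E}}_L\cong\iota'(\mathcal{F}')$ for the $L'$-component $\mathcal{F}'$ of the lift. This is exactly where $d=d'k_L$ is used twice over: a $Z_0$-bundle on $\mathbb{P}^1$ can only have $\omega_P$-degree in $k_L\mathbb{Z}$, so $\iota'$ reaches $\text{Parbun}_L(d)$ only when $k_L\mid d$, and the same divisibility makes the lifting obstruction vanish.

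For the statement about sections, factor $\iota'$ as $\text{Parbun}_{L'}\xrightarrow{\sigma}\text{Parbun}_{L'}\times\text{Bun}_{Z_0}^{(d)}=\text{Parbun}_{L'\times Z_0}^{(d)}\xrightarrow{e}\text{Parbun}_L(d)$, where $\sigma=\mathrm{id}\times[\mathcal{N}]$ and $e$ is extension of structure group. On $\mathbb{P}^1$ one has $\text{Bun}_{Z_0}^{(d)}\cong B\mathbb{G}_m$ (the unique $Z_0$-bundle $\mathcal{N}$ with its automorphisms), so $\text{Parbun}_{L'\times Z_0}^{(d)}\cong\text{Parbun}_{L'}\times B\mathbb{G}_m$. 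The map $e$ is a $\mu_{k_L}$-gerbe over all of $\text{Parbun}_L(d)$ — nonempty fibres by the obstruction computation, and the fibres are copies of $B\mu_{k_L}$ because $\text{H}^1(\mathbb{P}^1,\mu_{k_L})=0$ — and since $\mathcal{L}_{\vec{w}}$ (here the line bundle on $\text{Parbun}_L$ attached to the weight data $\vec{w}^L$) comes from $\text{Parbun}_L(d)$, its pullback has trivial $\mu_{k_L}$-weight and $e_*e^*\mathcal{O}=\mathcal{O}$, giving $\text{H}^0(\text{Parbun}_L(d),\mathcal{L}_{\vec{w}})\xrightarrow{\sim}\text{H}^0(\text{Parbun}_{L'\times Z_0}^{(d)},e^*\mathcal{L}_{\vec{w}})$. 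Finally $e^*\mathcal{L}_{\vec{w}}\cong(\iota')^*\mathcal{L}_{\vec{w}}\boxtimes\mathcal{O}(m)$ on $\text{Parbun}_{L'}\times B\mathbb{G}_m$, and, by the determinant line bundle weight computation of \cite[\S6--7]{BK}, $m$ is a positive multiple of $\sum_{i=1}^n\langle\omega_P,\lambda_i^L\rangle-\ell d$, which is $0$ by hypothesis. Hence $m=0$, $\sigma^*$ is an isomorphism on $\text{H}^0$ by K\"unneth (with $\text{H}^0(B\mathbb{G}_m,\mathcal{O})=k$), and $(\iota')^*=\sigma^*\circ e^*$ is the desired isomorphism.

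The step I expect to be the main obstacle is the last one: identifying precisely the $Z_0$-weight $m$ of $\mathcal{L}_{\vec{w}}$ along the $B\mathbb{G}_m$-direction and checking that it is governed exactly by $\sum\langle\omega_P,\lambda_i^L\rangle-\ell d$. This requires simultaneously tracking how the determinant-of-cohomology line bundle changes under the $Z_0$-twist by $\mathcal{N}$ (a term proportional to $\ell d$, via Riemann-Roch) and the contributions of the flag line bundles $\mathcal{L}_{\lambda_i^L}$ (a term $\sum\langle\omega_P,\lambda_i^L\rangle$); it is the point at which the two hypotheses of the proposition interlock, and — as the introduction flags — the part that genuinely requires more care when $d>0$.
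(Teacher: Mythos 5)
Your proposal is correct, and it is worth noting that the paper does not actually prove this proposition: it is quoted from \cite{BK} (Lemma 7.1 and Corollary 7.6 there), and the paper only reproves the equivariant analogue in Proposition \ref{L'Red}. Your construction of $\iota'$ (pushout along $L'\times Z_0\to L$ after twisting by a fixed $Z_0$-bundle $\mathcal{N}$ of $\omega_P$-degree $d$) and your surjectivity argument (vanishing of the lifting obstruction modulo $k_L$) agree with the source. Where you genuinely diverge is the descent of sections: \cite{BK}, and the paper's Proposition \ref{L'Red}, argue at the level of objects and isomorphisms --- choose lifts, show that any isomorphism of $L$-bundles can be corrected by an element of $Z_0$ so that it lifts to the chosen $L'$-lifts, and conclude that sections glue because $Z_0$ acts trivially on $\mathcal{L}_{\vec{w}}$ --- whereas you factor $\iota'$ through $\normtext{Parbun}_{L'}\times B\mathbb{G}_m$ followed by a $\mu_{k_L}$-gerbe and conclude by a weight computation plus K\"unneth. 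These are the same two facts in different clothing: the band $\mu_{k_L}$ of your gerbe is exactly the ambiguity in the central correction of \cite{BK}, and your vanishing weight $m=0$ is exactly the computation (compare the proof of Proposition \ref{PULLBCK}) that the central $\mathbb{G}_m$ acts on $\mathcal{L}_{\vec{w}}$ with exponent a multiple of $\sum_i\langle\omega_P,\lambda_i^L\rangle-\ell d$. What your packaging buys is a cleaner and more structural statement over $\mathbb{P}^1$; what it costs is reliance on genus-zero inputs ($\normtext{H}^1(\mathbb{P}^1,\mu_{k_L})=0$ and $\normtext{Pic}^0(\mathbb{P}^1)=0$) that make $e$ a gerbe and $\normtext{Bun}_{Z_0}^{(d)}\cong B\mathbb{G}_m$. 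The cocycle-correction form of the argument is what survives when the paper later needs the analogous statement over the higher-genus Galois cover $Y$ in Proposition \ref{L'Red}, where lifts are no longer unique and your fibers would cease to be $B\mu_{k_L}$.
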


To reduce down to $L'$ for general $d$, one needs to change the degree of the $L$ bundles.
For each parabolic $P$ Belkale and Kumar show the existence of an element of the coroot lattice $\mu_P$ lying in the fundamental
alcove of $L$ such that $|\omega_P(\mu_P)| = 1$.   They use $\mu_P$ to shift the degree of the stack of parabolic $L$-bundles,
since for the reduction to $L'$, it is necessary that $k_L$ divides the degree.
Let $d_0$ be the smallest positive integer such that $d+d_0 \omega_p(\mu_P) \equiv 0 \text{ (mod }k_L)$.  
Let $\normtext{Parbun}_{L}^{\lbrack d_0 \rbrack}(d)$ be the stack of parabolic degree $d$ $L$-bundles with full flags over $n+d_0$
points in $X\cong \mathbb{P}^1$.  Let $\mathcal{L}_{\vec{w}^L}$ be the pullback of $\mathcal{L}_{\vec{w}}$ to
$\normtext{Parbun}_{L}^{\lbrack d_0 \rbrack}(d)$ via $\iota$ and the forgetful functor. This is the line bundle associated to a level $\ell$, weights 
$\lambda_1^L, \ldots, \lambda_n^L$, and the zero weight on the remaining $d_0$ points.
Then Corollary $7.6$ in \cite{BK} says the following.

\begin{prop} \cite{BK}
\label{LShiftRed}
Associated to $\mu_P$ is a natural isomorphism 
$\tau_\mu: \normtext{Parbun}_{L}^{\lbrack d_0 \rbrack}(d+d_0 \omega_p(\mu_P)) \rightarrow \normtext{Parbun}^{\lbrack d_0 \rbrack}_L(d)$.
The weights of $\tau_\mu^*\mathcal{L}_{\vec{w}^L}$ are $\lambda_1^L, \ldots, \lambda_n^L$, and
$d_0$ copies of $\ell \cdot \kappa(\mu_P)$, and the level remains the same.
\end{prop}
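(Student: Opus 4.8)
The plan is to construct $\tau_\mu$ as a Hecke modification supported at the $d_0$ extra marked points, using the cocharacter $\mu_P \in T \subseteq L$ together with the flag data there, and then to track how the degree and the line bundle $\mathcal{L}_{\vec{w}^L}$ transform. This is exactly the argument of section 7 of \cite{BK}, carried out for the reductive group $L$; I would reconstruct it as follows. The construction is described pointwise, since it extends to families verbatim. Given a parabolic $L$-bundle $(\mathcal{F}, \overline{g}_1, \ldots, \overline{g}_{n+d_0})$ with full flags whose $\omega_P$-degree is $d + d_0\omega_P(\mu_P)$, for each of the last $d_0$ points $q$ the flag $\overline{g}$ at $q$ selects, on a formal disc around $q$, a trivialization of $\mathcal{F}$ for which the flag is the standard one. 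Since $\mu_P$ lies in the coroot lattice of $L$, it defines an element $z^{\mu_P} \in T(K)$; modifying the gluing of $\mathcal{F}$ at $q$ by $z^{\mu_P}$ produces a new $L$-bundle $\mathcal{F}'$, carrying the standard flag at $q$ (because $\mu_P \in T$ normalizes $B_L$) and with the flags over $p_1, \ldots, p_n$ untouched. Applying the same construction with $-\mu_P$ in place of $\mu_P$ — which again represents a face of the fundamental alcove of $L$ after acting by the longest element of $W_L$, as arranged in \cite{BK} — gives a two-sided inverse, so $\tau_\mu$ is an isomorphism of stacks.

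Next I would compute the degree shift and the pullback of the line bundle. For any character $\chi$ of $L$, a modification by $z^{\mu_P}$ at $q$ changes the degree of the associated line bundle $\mathcal{F}\times^L \mathbb{C}_\chi$ by $\langle \chi, \mu_P \rangle$; summing over the $d_0$ modified points and taking $\chi = \omega_P$ shows that $\mathcal{F}'$ has $\omega_P$-degree $d$. For the line bundle, write $\mathcal{L}_{\vec{w}^L} = \mathcal{L}^\ell \otimes \mathcal{L}_{\lambda_1^L} \otimes \cdots \otimes \mathcal{L}_{\lambda_n^L}$, with trivial characters on the last $d_0$ points. The flag bundles $\mathcal{L}_{\lambda_i^L}$ for $i \le n$ pull back to themselves, since $\mathcal{F}$ and its flags are unchanged near $p_1, \ldots, p_n$. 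For the level factor I would use that a power of $\mathcal{L}$ is a determinant of cohomology bundle $D(V)$ (section 6 of \cite{BK}), and run Riemann--Roch along the short exact sequence relating $\mathcal{F}(V)$ and $\mathcal{F}'(V)$; this shows $D(V)$ is twisted, at each of the $d_0$ points, by the flag line bundle attached to a character proportional to $\kappa(\mu_P)$ with constant the Dynkin index of $V$, so that $\mathcal{L}^\ell$ is twisted by $\mathcal{L}_{\ell\kappa(\mu_P)}$ at each such point. Since the weight there was $0$, the new weight is $\ell\kappa(\mu_P)$ and the power of $\mathcal{L}$, i.e.\ the level $\ell$, is unchanged, which is the assertion; I would cite \cite[Cor.~7.6]{BK} for the exact constant.

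The hard part will be this last computation: verifying that the determinant of cohomology changes by exactly $\mathcal{L}_{\ell\kappa(\mu_P)}$ per modified point, with the correct normalization — the Dynkin index of $V$, the Killing-form identification $\kappa$, and the sign of the modification all have to line up. Everything else — setting up the Hecke modification so that it is functorial in families, checking that $\tau_\mu$ is an isomorphism of stacks, and computing the degree shift — is essentially formal once the modification is defined carefully.
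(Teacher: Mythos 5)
The paper does not actually prove Proposition \ref{LShiftRed}: it is imported verbatim from Belkale--Kumar (Corollary 7.6 of \cite{BK}), and the surrounding text only sets up notation ($d_0$, the stack $\normtext{Parbun}_L^{[d_0]}(d)$, and the convention that $\mathcal{L}_{\vec{w}^L}$ carries the zero weight at the $d_0$ auxiliary points). So there is no in-paper argument to compare against; your reconstruction has to be measured against the cited source, and it does follow the same route as that source: a degree-shifting Hecke modification by $z^{\mu_P}$ at each auxiliary point, followed by a determinant-of-cohomology computation to identify $\tau_\mu^*\mathcal{L}_{\vec{w}^L}$. In that sense the strategy is right, and your proposal is in fact more detailed than what the paper itself supplies.

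Two points in your sketch are thinner than they look, and both sit in the part you labeled ``essentially formal.'' First, well-definedness of the modification: the trivialization of $\mathcal{F}$ on the formal disc making the flag standard is only unique up to the Iwahori $\mathcal{I}_L$ (elements $g$ of $L(A)$ with $g(0)\in B_L$), so you must check that $z^{\mu_P}\,\mathcal{I}_L\,z^{-\mu_P}\subseteq L(A)$ with your sign convention, so that the modified transition function defines the same bundle-with-flag for every admissible choice. This is exactly where the hypothesis that $\mu_P$ lies in the fundamental alcove of $L$ enters (it gives $-1\le\alpha(\mu_P)\le 1$ with the right signs on positive and negative roots of $L$), and it is not implied by your remark that $\mu_P$ normalizes $B_L$, which only concerns the residual flag. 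Second, the inverse: $-\mu_P$ does \emph{not} satisfy the same containment, so ``the same construction with $-\mu_P$'' is not well defined by the identical recipe; one must either conjugate $-\mu_P$ into the alcove by the longest element of $W_L$ and then verify the composite is the identity, or prove directly that the modification is an equivalence of stacks. Neither issue is fatal---both are resolved in \cite{BK}---and your treatment of the genuinely delicate normalization (that the determinant factor twists by exactly $\mathcal{L}_{\ell\kappa(\mu_P)}$ per point while the level is unchanged) by deferring to the source is consistent with how the paper itself handles the entire proposition.
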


Note that the forgetful morphism $\normtext{Parbun}^{\lbrack d_0 \rbrack}_L(d) \rightarrow \normtext{Parbun}_L(d)$
induces an isomorphism of global sections for any line bundle for the same basic reason that conformal blocks descend to
stacks of parahoric bundles.
Combining this fact and the above propositions, we can identify global sections of $\mathcal{L}_{\vec{w}^L}$ over $\text{Parbun}_L(d)$
with its pullback to $\text{Parbun}^{[d_0]}_{L'}$ via $\tau_\mu$ and $\iota'$.

There is a morphism $\text{Parbun}_L(d) \rightarrow \equivBunL{0}$, defined in the same way as $\text{Parbun}_G \rightarrow \equivBun$,
so that the pullback of $\iota^*\mathcal{L}_{\vec{w}}$ to $\text{Parbun}_L(d)$ is the line bundle associated to the weight data $\vec{w}^L$.
One way to finish the proof of the reduction theorem would be to show that the pullback of global sections of any line bundle with 
respect to this morphism is an isomorphism.  This could be proven in the same way that we showed that conformal blocks
descend to stacks of parahoric bundles: the geometric fibers of this morphism should be products of quotients of centralizers
in $L$ by Borel subgroups.  Any centralizer of a torus element of $L$ will be reductive and connected, since $L$ is
connected and $L'$ is simply connected.  Unfortunately, we do not have the references in the reductive case to
feel confident in this approach.  

Instead, we simply replicate the above propositions for equivariant bundles.  More precisely, we want to construct 
a morphism $\iota': \equivBunLP \rightarrow \equivBunL{0}$ so that it fits into the following diagram.
\[
\includegraphics{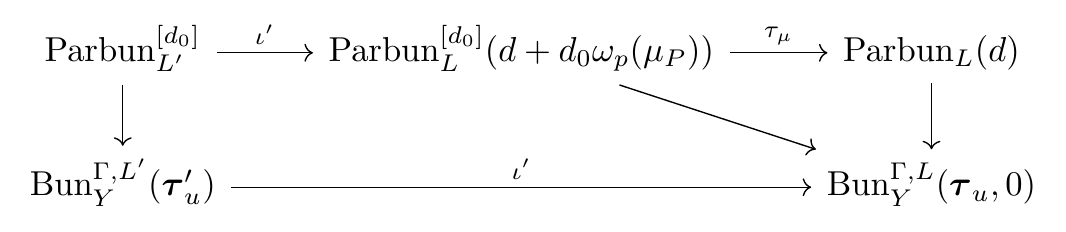}
\]

First let's review the definition of $\iota'$.  Suppose $d + d_0 \omega_p(\mu_P) = d'k_L$, and let $\mathcal{F}$ be an $L'$-bundle.  
Then $F \times \mathcal{O}_X(d')$ is an $L' \times Z_0$ bundle, and therefore extending the structure group via $L' \times Z_0 \rightarrow L$
we get an $L$-bundle $\mathcal{F}_L$ of degree $d + d_0 \omega_p(\mu_P)$.  Parabolic structures are transferred in the
obvious way.  The idea of the construction of $\iota'$ for equivariant bundles is to use an equivariant version of $\mathcal{O}_X(d')$
over $Y$.

There is a canonical identification
of the rational coweights of $L$, and the rational coweights of $L' \times Z_0$.  Therefore, given
a rational coweight $\mu$ of $L$, we can factor it uniquely as $\mu' \cdot \mu''$, where $\mu'$ is
a rational coweight of $L'$, and $\mu''$ is a rational coweight of $Z_0$.  Note that a coweight of $L$
may factor into rational coweight of $L'$ and $Z_0$.

Assume we have chosen $Y$ so that all its ramification indices are divisible by $k_L$, and such that
there are $d_0$ extra ramified orbits of $\Gamma$, with the isotropy subgroup acting trivially over
these points.  This is already necessary for $\equivBunLP$ to be defined.  Let $\mu_1, \ldots, \mu_{n+d_0}$
be the rational coweights associated to $\vec{w}^L$.  In other words, $\mu_i = \frac{1}{\ell}\kappa(\lambda^L_i)$ for
$1 \leq i \leq n$, and $\mu_i = \mu_P$ for $n+1 \leq i \leq n+d_0$.  Then given a parabolic $L'$-bundle
$\mathcal{F}$, the coweights $\mu_1', \ldots, \mu_{n+d_0}'$ allow one to construct the associated equivariant
bundle $F$.  Similarly, the $Z_0$-coweights $\mu_1'', \ldots, \mu_{n+d_0}''$ allow one to construct a (unique)
equivariant line bundle $\mathcal{O}_Y(d', \vec{\mu})$.  Then $\iota': \equivBunLP \rightarrow \equivBunL{0}$
is defined as follows: for any $F \in \equivBunLP$, extend the structure group of $F \times \mathcal{O}_Y(d', \vec{\mu})$
to $L$ via $L' \times Z_0 \rightarrow L$.  It is easy to check that this morphism is well defined and
fits into the above diagram.  Let $\mathcal{L}_{\vec{w}^L}$ be the pullback of $\mathcal{L}_{\vec{w}}$
 via $\iota: \equivBunL{0} \rightarrow \equivBun$; note that this line bundle pulls back to
 $\mathcal{L}_{\vec{w}^L}$ over $\text{Parbun}_L(d)$.
Then we have the following proposition, where we let $\lambda_{n+i} = \ell \kappa(\mu_P)$.

\begin{prop} \label{L'Red}
Suppose weight data $\vec{w}^L = (\lambda_1^L, \ldots, \lambda_{n+d_0}^L, \ell)$ for $L$ satisfies the equation
$\sum_{i=1}^{n+d_0} \langle \omega_P, \lambda_i^L \rangle = \ell \cdot d$. 
Then the morphism $\iota':\equivBunLP \rightarrow \equivBunL{0}$ induces an isomorphism of global sections
of $\mathcal{L}_{\vec{w}^L}$.
\end{prop}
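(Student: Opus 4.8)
The plan is to run the argument of section~7 of \cite{BK} directly in the equivariant setting, with $\equivBunLP$ and $\equivBunL{0}$ in the roles of $\normtext{Parbun}_{L'}$ and $\normtext{Parbun}_L(d)$ and with the equivariant twisting bundle $\mathcal{O}_Y(d',\vec{\mu})$ in the role of $\mathcal{O}_X(d')$. (We do not deduce this from the $\normtext{Parbun}$ statement plus a comparison $\normtext{Parbun}_L(d)\to\equivBunL{0}$: as noted above, the connectedness/reductivity input for centralizers in the reductive group $L$ is not available in the form one would need, so the argument is carried out over $Y$.) Commutativity of the square relating $\iota'$ to the $\normtext{Parbun}$ morphisms is already recorded, so I may take it for granted. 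First I would check that $\iota'$ is representable, finite and surjective, with geometric fibres torsors under the kernel $\mathcal{K}$ of the isogeny $L'\times Z_0\to L$, a finite (étale, since $k=\mathbb{C}$) central group scheme of order $k_L$. Finiteness is clear once one observes that $\iota'$ factors as an isomorphism of $\equivBunLP$ onto the substack of $\normtext{Bun}_Y^{\Gamma,L'\times Z_0}$ with $Z_0$-factor fixed to $\mathcal{O}_Y(d',\vec{\mu})$, followed by extension of structure group along $L'\times Z_0\to L$. Surjectivity is the only delicate point: an object of $\equivBunL{0}$ has $Z_0$-component of degree divisible by $k_L$ — this is exactly what the $d_0$ extra $\Gamma$-orbits and the coweight $\mu_P$ arrange, so that $d+d_0\,\omega_P(\mu_P)=d'k_L$ — whence the obstruction to lifting it through the isogeny vanishes, and after untwisting by $\mathcal{O}_Y(d',\vec{\mu})$ one recovers an object of $\equivBunLP$ over it.

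\textbf{The section-level argument.} Next I would identify $\iota'^*\mathcal{L}_{\vec{w}^L}$: extension of structure group preserves the level, and tensoring by $\mathcal{O}_Y(d',\vec{\mu})$ only shifts the $Z_0$-weights, so $\iota'^*\mathcal{L}_{\vec{w}^L}$ is the line bundle on $\equivBunLP$ with levels $m_1\ell,m_2\ell$ and weights the restrictions to the simple factors of $u_1^{-1}\lambda_1,\dots,u_n^{-1}\lambda_n$; by Balaji--Seshadri and Theorem~\ref{GlblSecDesc} its sections form the desired tensor product of conformal blocks for the simple factors of $L'$, so Theorem~\ref{MNTHM2} follows from the proposition. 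Since $\iota'$ is a surjective cover, $\iota'^*$ is injective on global sections; since $\iota'$ is a $\mathcal{K}$-torsor, $\iota'_*\mathcal{O}=\bigoplus_{\chi\in\widehat{\mathcal{K}}}\mathcal{L}_\chi$, so by the projection formula $\normtext{H}^0(\equivBunLP,\iota'^*\mathcal{L}_{\vec{w}^L})=\bigoplus_{\chi}\normtext{H}^0(\equivBunL{0},\mathcal{L}_{\vec{w}^L}\otimes\mathcal{L}_\chi)$, the term $\chi=1$ being the image of $\iota'^*$. Thus $\iota'^*$ is an isomorphism precisely when $\normtext{H}^0(\equivBunL{0},\mathcal{L}_{\vec{w}^L}\otimes\mathcal{L}_\chi)=0$ for every nontrivial $\chi$. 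Exactly as in section~7 of \cite{BK}, the character line bundle $\mathcal{L}_\chi$ (coming from a character of $\mathcal{K}\subseteq Z_0$) shifts the effective weight data by a non-integral multiple of $\omega_P$ — here is where the hypothesis $\sum_{i=1}^{n+d_0}\langle\omega_P,\lambda_i^L\rangle=\ell d$ with $k_L\mid d$ is used — so $\mathcal{L}_{\vec{w}^L}\otimes\mathcal{L}_\chi$ is not of the form $\mathcal{L}_{\vec v}$ for integral weight data and carries no sections. This gives the claimed isomorphism.

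\textbf{Main obstacle.} I expect the real work to be bookkeeping around $\mathcal{O}_Y(d',\vec{\mu})$ and the three structure-group operations, not any new idea. One must construct $\mathcal{O}_Y(d',\vec{\mu})$ as the unique $\Gamma$-equivariant line bundle on $Y$ with the prescribed $Z_0$-local types $\mu_1'',\dots,\mu_{n+d_0}''$ and total degree $d'$ (existence and uniqueness being the torus case of Balaji--Seshadri, where $\normtext{Bun}$ of the corresponding parahoric group scheme over $\mathbb{P}^1$, degree fixed, is a one-point stack), verify that it plays the role of $\mathcal{O}_X(d')$ under that equivalence, and track how $\mathcal{L}_{\vec{w}^L}$ transforms through $F\mapsto F\times\mathcal{O}_Y(d',\vec{\mu})\mapsto (F\times\mathcal{O}_Y(d',\vec{\mu}))\times^{L'\times Z_0}L$ — the level unchanged, the weights shifting by the $Z_0$-coweights — so that the degree/congruence computation above comes out on the nose. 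The cases where a simple factor of $L'$ is trivial, or $Z_0$ has small rank, require a remark but no real work.
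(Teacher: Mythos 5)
Your overall architecture (surjectivity of $\iota'$ gives injectivity of $\iota'^*$; a structural analysis of the fibres of $\iota'$ plus triviality of the central action on $\mathcal{L}_{\vec{w}^L}$ gives surjectivity) is reasonable, and you correctly locate where the hypothesis $\sum_i\langle\omega_P,\lambda_i^L\rangle=\ell d$ enters. But the step you dismiss as ``clear'' --- that $\iota'$ is finite with geometric fibres torsors under the kernel $\mathcal{K}$ of $L'\times Z_0\to L$ --- is both unjustified and, as literally stated, false, and it is exactly the crux of the proposition. Since $\mathcal{K}$ is central, it acts on every $(\Gamma,L'\times Z_0)$-bundle by automorphisms, hence trivially on isomorphism classes; so $\iota'$ cannot be a $\mathcal{K}$-torsor, and the relative structure is gerbe-like. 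More importantly, extension of structure group along the isogeny $L'\times Z_0\to L$ over the cover $Y$ (which has positive genus in general) has fibres whose isomorphism classes form a torsor under the group of $\Gamma$-equivariant $\mathcal{K}$-torsors on $Y$, i.e.\ an $\normtext{H}^1_\Gamma(Y,\mathcal{K})$ worth of a priori distinct lifts, not a single one; fixing the $Z_0$-component to $\mathcal{O}_Y(d',\vec{\mu})$ does not obviously collapse this. Consequently the identity $\iota'_*\mathcal{O}=\bigoplus_{\chi\in\widehat{\mathcal{K}}}\mathcal{L}_\chi$, on which your entire surjectivity argument rests, is not established (for a $\mathcal{K}$-gerbe one has $f_*\mathcal{O}=\mathcal{O}$, and the character decomposition lives elsewhere). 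A secondary, fixable gap: vanishing of $\normtext{H}^0(\equivBunL{0},\mathcal{L}_{\vec{w}^L}\otimes\mathcal{L}_\chi)$ for $\chi\neq 1$ does not follow merely from the twisted bundle ``not being of integral weight form''; one must argue that the generic central automorphism group acts on its fibres by a nontrivial character, forcing every section to vanish.

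The paper proves precisely the collapse you assume, by hand: given two lifts $F_1',F_2'\in\equivBunLP$ of isomorphic bundles $F_1\cong F_2$ in $\equivBunL{0}$, the induced automorphism of the common $L/L'$-bundle (canonically $\mathcal{O}_Y(d',\vec{\mu})$ pushed out to $L/L'$) is a scalar $zL'\in L/L'$, which lifts to $z\in Z_0$; correcting $\phi$ by $z^{-1}$ produces an isomorphism that lifts to $F_1'\cong F_2'$. This, together with the triviality of the $Z_0$-action on the fibres of $\mathcal{L}_{\vec{w}^L}$ (the content of the hypothesis, via the computation in Proposition \ref{PULLBCK}), canonically identifies the fibres of $\iota'^*\mathcal{L}_{\vec{w}^L}$ over a fibre of $\iota'$ and lets sections descend directly --- no isotypic decomposition is needed. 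If you want to salvage your route, you would have to prove this same normalization lemma first (e.g.\ via injectivity of $\normtext{H}^1(Y,\mathcal{K})\to\normtext{H}^1(Y,Z_0)$ after fixing the $Z_0$-part), at which point the descent argument is already in hand and the character decomposition becomes superfluous.
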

\begin{proof}
Firstly, we note that $\iota'$ is surjective.
It is easy to see that $\text{Parbun}_{L'}^{[d_0]} \rightarrow \equivBunLP$ and 
$\text{Parbun}_{L}^{[d_0]}(d + d_0 \omega_p(\mu_P)) \rightarrow \equivBunL{0}$ are surjective: the first case is well known
since $L'$ is semi-simple.
In the other case, given a $(\Gamma,L)$-bundle $F$ over $Y$, one constructs a parabolic $L$-bundle over $X$ by simply taking
the quotient over $Y^*$, and using \'{e}tale-local trivializations of $F$ over the ramification points to construct 
a parabolic $L$-bundle over $X$, following the above work for $(\Gamma, G)$-bundles.  Note that we do not need a generic
trivialization of $F$ or an understanding of the effect of the choice of trivialization to show the morphism is surjective;
we defer such analysis to future work.  Therefore by the above diagram $\iota'$ is surjective, and therefore the pullback of 
global sections of any line bundle is injective.

To show the pullback of global sections is surjective we follow the proof of Lemma 7.1 in \cite{BK}.  
Assume we have two $(\Gamma, L)$-bundles $F_1$ and $F_2$, and choose lifts to $(\Gamma, L')$-bundles $F_1'$, $F_2'$.
Suppose further we have an isomorphism $\phi: F_1 \xrightarrow{\sim} F_2$.  We want to show we can modify this isomorphism
by multiplication by an element of $Z_0$ such that it lifts to an isomorphism of $F_1'$ and $F_2'$.  This will give a
canonical identification of the fibers of $\mathcal{L}_{\vec{w}}$ and its pullback, since $Z_0$ acts trivially on $\mathcal{L}_{\vec{w}}$
(see proof of Prop \ref{PULLBCK}), and therefore show that the pullback of global sections is surjective. But $\phi$
gives an isomorphism of the associated $L/L'$-bundles, and since $L/L'$ is a torus, the isomorphism therefore corresponds
canonically to some $zL' \in L/L'$.  Some more care could be taken here: the $L/L'$-bundles associated to $F_1'$, $F_2'$ can
be canonically identified with $\mathcal{O}_Y(d', \vec{\mu})$ extended to an $L/L'$-bundle; $\phi$ then induces an 
automorphism of this bundle giving $z$.  But $Z_0 \rightarrow L/L'$ is surjective, 
so we can lift $zL'$ to $z \in Z_0$.  It can be easily checked that composing $\phi$ with the automorphism of $F_2$ 
induced by $z^{-1}$ gives an automorphism that lifts to $\phi': F_1' \rightarrow F_2'$.
\end{proof}

By the results in section \ref{ParahoricSec}, the morphism $\text{Parbun}_{L'} \rightarrow \equivBunLP$ induces an isomorphism
of global sections of $\mathcal{L}_{\vec{w}'}$.  Note that the weights in $\vec{w}'$ are the restrictions of $u_i^{-1} \lambda_i$
to $L'$.  All that remains is the identification of the levels.  

The level(s) of the reduced conformal blocks depends on the Dynkin indices (see the section \ref{CBDEF})  of $L'$ in $G$.  
Let $m_1$, $m_2$, and $m_3$ be the Dynkin indices of each subalgebra $\mathfrak{g}_1$, $\mathfrak{g}_2$, and $\mathfrak{g}_3$ in $\mathfrak{g}$. 
Let $V$ be a faithful representation of $G$, and $D(V)$ be the 
associated determinant bundle over $\text{Parbun}_G$.  The level of $D(V)$ is the Dynkin index of $V$.
Then the pullback of this line bundle to $\text{Parbun}_{G_i}$ is just $D(V_{|G_i})$.  But by the results in section 5 
of \cite{SNR} and section 7 of \cite{BK}, the level of this bundle is the Dynkin index of $V_{|G_i}$, which is equal to 
the index of $V$ times the index of $G_i$ in $G$.  Therefore by linearity pulling back a line bundle $\mathcal{L}$ of
level $\ell$ gives a bundle of level $m_i \ell$ over $\text{Parbun}_{G_i}$.

This completes the proof of Theorem \ref{MNTHM2}.
Finally, we prove that when $d=0$ this isomorphism can be extended to an isomorphism of vector bundles.

\begin{cor}[Corollary \ref{CBBCOR}]\label{CBBPF}
When $d=0$ we in fact have an isomorphism of conformal blocks bundles on $\overline{\text{M}}_{0,n}$:
\[
\mathbb{V}_{\mathfrak{g}, \vec{w}} \cong \mathbb{V}_{\mathfrak{g}_1, \vec{w}_1} \otimes \mathbb{V}_{\mathfrak{g}_2, \vec{w}_2} \otimes \mathbb{V}_{\mathfrak{g}_3, \vec{w}_3}.
\]
\end{cor}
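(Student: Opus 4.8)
The plan is to first upgrade the isomorphism of Theorem \ref{MNTHM2} to an isomorphism of vector bundles over the interior $\text{M}_{0,n}\subseteq\overline{\text{M}}_{0,n}$, then extend it across the boundary by induction on $n$ using factorization of conformal blocks together with global generation in genus zero and Roth's reduction theorem, and finally read off the divisor identity from the Whitney formula for $c_1$ of a tensor product. First I would note that when $d=0$ we have $d_0=0$, so no auxiliary points are added and the entire chain of identifications constructed in Sections \ref{PReductionSec}--\ref{ProofSec} (the morphisms $p,p',\iota,\iota',\xi,\pi$, the identification $\text{Parbun}_G\cong\text{Bun}_{\mathcal{G}}$, and $\equivBun\cong\text{Bun}_{\mathcal{G}}$) is canonical and makes sense for $\mathbb{P}^1$ with an arbitrary configuration of $n\geq 3$ marked points, commuting with isomorphisms of pointed curves; the only rigid ingredients, the Weyl elements $u_i$ and the Levification cocharacter $\bar x_P$, are fixed combinatorial data independent of the curve. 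Hence the isomorphism $\mathcal{V}^\dagger_{\mathfrak{g},\vec w}(\mathbb{P}^1,\vec p)\cong\bigotimes_i\mathcal{V}^\dagger_{\mathfrak{g}_i,\vec w_i}(\mathbb{P}^1,\vec p)$ varies algebraically with $(\mathbb{P}^1,\vec p)\in\text{M}_{0,n}$, and dualizing yields an isomorphism of vector bundles $\mathbb{V}_{\mathfrak{g},\vec w}|_{\text{M}_{0,n}}\xrightarrow{\sim}\bigl(\bigotimes_i\mathbb{V}_{\mathfrak{g}_i,\vec w_i}\bigr)|_{\text{M}_{0,n}}$; in particular the ranks agree, as is also forced by the dimension count in Theorem \ref{MNTHM2}.

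The substantive step is the extension across the boundary divisor $\overline{\text{M}}_{0,n}\setminus\text{M}_{0,n}$. I would induct on $n$, the base case $n=3$ being Theorem \ref{MNTHM2} itself since $\overline{\text{M}}_{0,3}$ is a point. For the inductive step, restrict both bundles to a boundary divisor $D_{A,B}\cong\overline{\text{M}}_{0,A\cup\{*\}}\times\overline{\text{M}}_{0,B\cup\{*\}}$ and apply factorization: each side decomposes as a direct sum, indexed by intermediate dominant weights (at level $\ell$ on the $\mathfrak{g}$ side, at levels $m_i\ell$ on the $\mathfrak{g}_i$ side), of external tensor products of conformal blocks bundles with strictly fewer marked points. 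The key point is that the wall condition $\sigma_{u_1}\cdots\sigma_{u_n}=[\text{pt}]$ is inherited by the summands: only those intermediate weights occur for which the induced weight data at the node again lies on a degree-zero wall of the appropriate type, and Roth's reduction theorem for invariants \cite{RTH} (equivalently, the same combinatorics on the additive eigencone, since the inequality does not depend on $\ell$) matches the intermediate weights for $\mathfrak{g}$ with tuples of intermediate weights for $\mathfrak{g}_1,\mathfrak{g}_2,\mathfrak{g}_3$. This identifies the two factorized bundles summand by summand, compatibly with the isomorphism already fixed over $\text{M}_{0,n}$, and by the inductive hypothesis each summand is an isomorphism over all of $D_{A,B}$. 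Since $\overline{\text{M}}_{0,n}$ is smooth, the boundary divisors cover $\overline{\text{M}}_{0,n}\setminus\text{M}_{0,n}$, and both bundles are globally generated in genus zero \cite{FAK}, the map and its inverse extend over codimension one and therefore everywhere, giving the bundle isomorphism $\mathbb{V}_{\mathfrak{g},\vec w}\cong\bigotimes_i\mathbb{V}_{\mathfrak{g}_i,\vec w_i}$ on $\overline{\text{M}}_{0,n}$.

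Finally, taking first Chern classes and applying $c_1(E_1\otimes E_2\otimes E_3)=\text{rk}(E_2)\,\text{rk}(E_3)\,c_1(E_1)+\text{rk}(E_1)\,\text{rk}(E_3)\,c_1(E_2)+\text{rk}(E_1)\,\text{rk}(E_2)\,c_1(E_3)$ produces the asserted relation among the divisors $\mathbb{D}_{\mathfrak{g},\vec w}$ and $\mathbb{D}_{\mathfrak{g}_i,\vec w_i}$. I expect the main obstacle to be the middle step: verifying that the wall condition, and hence the reduction itself, is compatible with factorization at every boundary stratum, so that the two factorized decompositions line up term by term; this is exactly where Roth's theorem (and control of which intermediate weights survive on a face) does the essential combinatorial work, and where the genus-zero global generation is needed to glue the strata-wise isomorphisms into a global one.
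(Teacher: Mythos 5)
Your overall strategy (fiberwise isomorphism on the interior, then extension across the boundary by induction on $n$ via factorization) diverges from the paper's and has two genuine gaps. First, the inductive extension step does not work as stated: even granting that the restrictions of both bundles to a boundary divisor $D_{A,B}$ decompose compatibly, a morphism of vector bundles defined on the dense open $\text{M}_{0,n}$ of a smooth variety need not extend across a divisor (it is a section of $\mathbb{V}^\vee_{\mathfrak{g},\vec w}\otimes\bigotimes_i\mathbb{V}_{\mathfrak{g}_i,\vec w_i}$ with possible poles along the boundary), and neither global generation nor a separately constructed isomorphism on $D_{A,B}$ rules out such poles; you would still have to prove that the interior isomorphism degenerates to the boundary one. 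Moreover, the combinatorial input you invoke --- that only intermediate weights $\mu$ for which $(\vec\lambda_A,\mu,\ell)$ again lies on a degree-zero wall of the appropriate type contribute to the factorization, and that these match bijectively with tuples of intermediate weights on the $\mathfrak{g}_1\times\mathfrak{g}_2\times\mathfrak{g}_3$ side --- is a substantive unproven claim, not a consequence of Roth's theorem, which says nothing about how walls propagate under attaching a node. Second, the assertion that the fiberwise isomorphism of Theorem \ref{MNTHM2} ``varies algebraically'' over $\text{M}_{0,n}$ is plausible but is not established by an appeal to canonicity; it would require relative versions over the base of the stacks and of the properness and descent arguments.

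The paper sidesteps both problems. It uses the globally defined surjection $\mathbb{A}_{\mathfrak{g},\vec\lambda}\twoheadrightarrow\mathbb{V}_{\mathfrak{g},\vec w}$ from the \emph{trivial} bundle of classical invariants over all of $\overline{\text{M}}_{0,n}$, together with Roth's isomorphism $\mathbb{A}_{\mathfrak{g}_1,\vec\lambda_1}\otimes\mathbb{A}_{\mathfrak{g}_2,\vec\lambda_2}\otimes\mathbb{A}_{\mathfrak{g}_3,\vec\lambda_3}\cong\mathbb{A}_{\mathfrak{g},\vec\lambda}$ of trivial bundles. The only thing to check is that the composite kills the kernel of $\bigotimes_i\mathbb{A}_{\mathfrak{g}_i,\vec\lambda_i}\to\bigotimes_i\mathbb{V}_{\mathfrak{g}_i,\vec w_i}$; this is a closed condition on $\overline{\text{M}}_{0,n}$, so it suffices to verify it fiber by fiber on the dense open $\text{M}_{0,n}$, where it follows from the main theorem via the substacks of trivial bundles (this is exactly where $d=0$ is used, since for $d>0$ trivial $L'$-bundles do not map to trivial $G$-bundles). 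The induced map $\bigotimes_i\mathbb{V}_{\mathfrak{g}_i,\vec w_i}\to\mathbb{V}_{\mathfrak{g},\vec w}$ is then a surjection of bundles of equal rank, hence an isomorphism, with no boundary analysis or induction on $n$ required. Your closing Chern-class computation is fine, but the route to the bundle isomorphism needs to be repaired, most easily by adopting this trivial-bundle argument.
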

\begin{proof}
Letting $\mathbb{A}_{\mathfrak{g}, \vec{w}}$ be the trivial bundle of invariants over $\overline{\text{M}}_{0,n}$, we
have the following diagram of vector bundles:
\[
\includegraphics{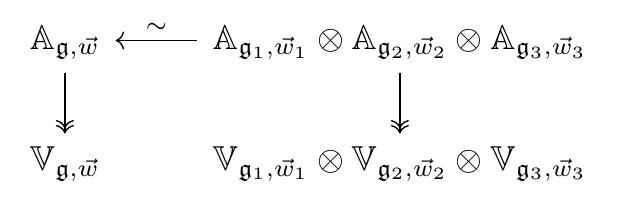}
\]
where the horizontal isomorphism follows from the factorization result for invariants in \cite{RTH}, or alternatively, the
above theorem, choosing a high enough level.  So we just need to show that the composition
$\mathbb{A}_{\mathfrak{g}_1, \vec{w}_1} \otimes \mathbb{A}_{\mathfrak{g}_2,\vec{w}_2} \otimes \mathbb{A}_{\mathfrak{g}_3, \vec{w}_3} \rightarrow \mathbb{V}_{\mathfrak{g}, \vec{w}}$
descends to $\mathbb{V}_{\mathfrak{g}_1, \vec{w}_1} \otimes \mathbb{V}_{\mathfrak{g}_2, \vec{w}_2} \otimes \mathbb{V}_{\mathfrak{g}_3, \vec{w}_3}$, since we've already shown the conformal blocks bundles are
the same rank.  Furthermore, it is sufficient to check this on $\text{M}_{0,n}$, which is dense in $\overline{\text{M}}_{0,n}$, 
and since these are vector bundle morphisms, we can check it fiber by fiber.  The necessary diagram of fibers is induced
by the following diagram:
\[
\includegraphics{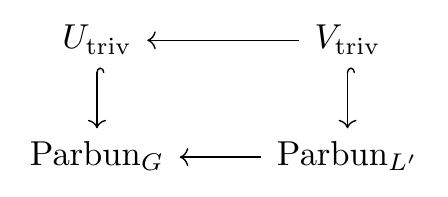}
\]
where $U_{\text{triv}}$ and $V_{\text{triv}}$ are the substacks of trivial bundles, and the diagram of fibers is obtained by taking
global sections of $\mathcal{L}_{\vec{w}}$, then taking the duals of each map.  Note that the fact that $V_{\text{triv}}$
maps into $U_{\text{triv}}$ requires $d=0$.
\end{proof}

The above method fails when $d > 0$, because in this case trivial bundles in $\text{Parbun}_{L'}$ do not map to trivial bundles
in $\text{Parbun}_G$.  We give two examples: the first is on a $d=0$ wall and illustrates the divisor
relation in the above corollary, and the second is on a $d>0$ wall and shows that this corollary
does not directly generalize to these walls.

\begin{examp}
Here we exhibit an example with the degree of the cohomology product equal to zero.  
Let $G = \text{SL}_4$, and $G/P = \text{Gr}(2,4)$.  Let $I_1=I_2=\{2,3\}$ and $I_3 = \cdots = I_6 = \{3,4\}$, 
which correspond to Schubert cells in the usual way.
Then $\sigma_{I_1} \cdots \sigma_{I_6} =  \lbrack \text{pt} \rbrack$, so this product defines a degree $0$
facet of the multiplicative polytope.  Let $\lambda_1 = \lambda_2 = 4\omega_1 + \omega_2 $, and 
$\lambda_3 = \cdots = \lambda_6 = \omega_1 + \omega_3$, and $\ell=5$.  Then this weight data lies 
on the given facet, and the reduced weights are
 $\lambda_1' = \cdots = \lambda_6' = \omega_1$,  $\lambda_1'' = \lambda_2'' =  5\omega_1 $,
and $\lambda_3'' = \cdots = \lambda_6'' = \omega_1$.
Then using Swinarski's conformal blocks package for Macaulay 2 \cite{CBD,M2} one can calculate that 
$\text{rk}(\mathbb{V}_{\mathfrak{sl}_{4}, \vec{\lambda}, 5}) = 10$, 
$\text{rk}(\mathbb{V}_{\mathfrak{sl}_{2}, \vec{\lambda}', 5}) = 5$, and
$\text{rk}(\mathbb{V}_{\mathfrak{sl}_{2}, \vec{\lambda}'', 5}) = 2$.  Five is above the critical level
(see \cite{BGM15}) for $\vec{\lambda}'$, so the corresponding vector bundle is trivial.  
Finally, again using Swinarski's conformal blocks package, one calculates that the symmetrized conformal
blocks divisors of these bundles satisfies the relation in Corollary \ref{CBBCOR}:
$\mathbb{SD}_{\mathfrak{sl}_{4}, \vec{\lambda}, 5} = 5 \cdot \mathbb{SD}_{\mathfrak{sl}_{2}, \vec{\lambda}'', 5} = 1920D_2 + 2160D_3$.
\end{examp}

\begin{examp}\label{POSDEGEX}
We give an example of a reduction on a positive degree wall.  Our group is $\text{SL}_4$, and our Grassmannian $\text{Gr}(1,4)$.
Now it is well-known that $\text{QH}^*(\text{Gr}(1,4)) \cong \mathbb{Z}[T,q]/(T^4 - q)$.  
Let $I_1=\{2\}$ and $I_2 = \cdots = I_6 = \{3\}$ be sets corresponding to Schubert varieties in the usual way,
where $I=\{4\}$ corresponds to the big cell, and $I=\{1\}$ corresponds to a point.
Then $\sigma_{I_1} \ast \cdots \ast \sigma_{I_6} = q \lbrack \text{pt} \rbrack$.  Let 
$\lambda_1 = \cdots \lambda_4 = 2\omega_3$, and $\lambda_5 = \lambda_6 = \omega_1+\omega_3$ and $\ell=2$.
It is easy to see that these weights lie on the wall corresponding to the given product.  Furthermore $k_L = 4$ in this case,
so that $d_0 = 1$. Then $\lambda_1'' = \cdots \lambda_4'' = 2\omega_2$ and 
$\lambda_5'' = \lambda_6'' = \omega_1+\omega_2$, and we add seventh weight $\lambda_7'' = 2 \mu_P^* = 2\omega_1$.
Then using Swinarski's conformal blocks package one calculates that $\text{rk}(\mathcal{V}^{\dagger}_{\mathfrak{sl}_{4}, \vec{\lambda}, 2}) = 1$ and 
$\text{rk}(\mathcal{V}^{\dagger}_{\mathfrak{sl}_{3}, \vec{\lambda}'', 2}) = 1$, satisfying the statement of the theorem.

Increasing the level moves the weight data off the given facet of the polytope, and so we would expect the ranks to
be different in general.  Indeed: $\text{rk}(\mathcal{V}^{\dagger}_{\mathfrak{sl}_{4}, \vec{\lambda}, 3}) = 12$
and $\text{rk}(\mathcal{V}^{\dagger}_{\mathfrak{sl}_{3}, \vec{\lambda}'', 3}) = 24$.  If we raise the level above the critical
level (which is $7$ in both cases) so that the spaces of conformal blocks become isomorphic to spaces of tensor
invariants, we see that $\text{rk}(\mathcal{V}^{\dagger}_{\mathfrak{sl}_{4}, \vec{\lambda}, \ell}) = 21$
and $\text{rk}(\mathcal{V}^{\dagger}_{\mathfrak{sl}_{3}, \vec{\lambda}'', \ell}) = 124$, showing that the two spaces of 
invariants are not isomorphic.

One can compare the divisors over $\overline{\text{M}}_{0,7}$ by adding a zero weight to $\vec{\lambda}$.
Using Swinarski's conformal blocks package one calculates that
$\mathbb{SD}_{\mathfrak{sl}_{4}, \vec{\lambda}, 2} = 1920D_2 + 2880D_3$ and
$\mathbb{SD}_{\mathfrak{sl}_{3}, \vec{\lambda}'', 2} = 3840D_2 + 4320D_3$.
Therefore the isomorphism in the reduction theorem does not extend to an isomorphism of conformal blocks bundles 
in this case.
\end{examp}

\bibliographystyle{abbrv}
\begin{footnotesize}
\bibliography{ConfRed}
\end{footnotesize}

\end{document}